\documentclass[11pt]{article}

\usepackage[all,2cell]{xy} \UseAllTwocells \SilentMatrices
\usepackage{amsfonts,amsmath}
\usepackage{amssymb,latexsym}
\usepackage{amscd}
\usepackage{amsthm}
\usepackage[hmargin=3cm,vmargin=3.5cm]{geometry}
\usepackage{hyperref}

\normalfont\upshape

\usepackage{fancyheadings}
\pagestyle{fancyplain}

\lhead[\fancyplain{}{\bfseries\thepage}]{\fancyplain{}{\sl\bfseries\rightmark}}
\rhead[\fancyplain{}{\sl\bfseries\leftmark}]{\fancyplain{}{\bfseries\thepage}}
\cfoot{}
\hfuzz=6pc

\title{{\bf Hopfological Algebra}}
\author{You Qi}
\date{\today}

\theoremstyle{plain}
\newtheorem{thm}{Theorem}[section]
\newtheorem{prop}[thm]{Proposition}
\newtheorem{lemma}[thm]{Lemma}
\newtheorem{cor}[thm]{Corollary}

\theoremstyle{definition}
\newtheorem{defn}[thm]{Definition}
\newtheorem{eg}[thm]{Example}
\newtheorem*{define}{Definition} 
\newtheorem{rmk}[thm]{Remark}

\begin{document}

\maketitle \baselineskip 14pt

\def\R{\mathbb R}
\def\Q{\mathbb Q}
\def\Z{\mathbb Z}
\def\N{\mathbb N}
\def\C{\mathbb C}
\def\l{\lbrace}
\def\r{\rbrace}
\def\o{\otimes}
\def\Lo{\otimes^\mathbf{L}}
\def\lra{\longrightarrow}
\def\HOM{\mathrm{HOM}} 
\def\RHom{\mathbf{R}\mathrm{Hom}} 
\def\RHOM{\mathbf{R}\mathrm{HOM}} 
\def\Id{\mathrm{Id}}
\def\mc{\mathcal}
\def\mf{\mathfrak}
\def\Ext{\mathrm{Ext}}
\def\End{\mathrm{End}}
\def\yesnocases#1#2#3#4{\left\{
\begin{array}{ll} #1 & #2 \\ #3 & #4
\end{array} \right. }

\def\gdim{{\mathrm{gdim}}}
\def\dmod{{\mathrm{-mod}}}
\def\vect{\Bbbk\!-\!\mathrm{vect}}
\def\dgmod{{\!\mathrm{-gmod}}}
\def\udmod{\!-\!\underline{\mathrm{mod}}}
\def\udgmod{\!-\!\underline{\mathrm{gmod}}}
\def\Pf#1{\begin{proof}#1
\end{proof}} 
\def\kom#1{\mathcal{M}_{#1}}
\def\com#1{\mathcal{K}_{#1}}
\def\D#1{\mathcal{D}(#1)}
\newcommand{\Hom}{{\rm Hom}}

\newcommand{\Ind}{{\rm Ind}}
\newcommand{\Res}{{\rm Res}}

\newcommand{\Sone}{\mc{S}'} 
\newcommand{\Stwo}{\mc{S}} 

\begin{abstract}
We develop some basic homological theory of hopfological algebra
as defined by Khovanov \cite{Kh}. Several homological properties in
hopfological algebra analogous to those of usual homological theory
of DG algebras are obtained.
\end{abstract}

\tableofcontents


\section{Introduction}

Since its birth, homological algebra has commonly been regarded as
being centered around the equation $d^2=0$. Such a view can be best
seen through the famous quote of Henri Cartan:

\begin{quotation}
If I could only understand the beautiful consequence following from
the concise proposition $d^2=0$.

\hfill -Henri Cartan.\footnote{ See the foreword of \cite{GM}.}
\end{quotation}

Thus it is a natural question to ask whether and how we could deform
this equation while maintaining an equally beautiful and useful
theory. Indeed, in \cite{May1,May2}, Mayer defined a ``new
simplicial homology'' theory over a field of characteristic $p>0$ by
forgetting the usual alternating signs in the definition of boundary
maps. The boundary maps satisfy $\partial^p=0$, and associated with
this kind of ``$p$-chain complex'' one obtains the ``$p$-cohomology
groups'' $\text{Ker}(\partial^q)/\text{Im}(\partial^{p-q})$, for any
$1\leq q \leq p-1$. Furthermore, when applied to singular chains on
topological spaces, this construction results in a ``new homology
theory'' which is a topological invariant of the underlying space!
Exciting as it might seem, however, Spanier \cite{Spa} soon found
out that these homology groups can be recovered from the usual
singular homology groups, due to the restrictions placed on any
topological homology theory by the Eilenberg-Steenrod axioms. This
immediately extinguished most of the interest in Mayer's invariant,
and people paid little attention to these pioneering works on
$p$-complexes; they remained buried among historical documents
until several decades later. In 1996, Kapranov \cite{Ka}, and independently Sarkaria \cite{Sar2}, studied a
``quantum'' analogue of the equation $d^p=0$, working over a field
of characteristic zero with $n$-th roots of unity (e.g. the $n$-th
cyclotomic field $\Q[\zeta_n]$). The analogous construction yields
$n$-complexes where the boundary maps satisfy $d^n=0$ for some $n
\in \N$. Similar homology groups of these complexes as in
\cite{May1,May2} are defined. This construction, as a purely
algebraic object, rekindled more interest this time and found
applications in theoretic physics. Nowadays there is a vast
collection of literature on the subject. See, for instance, Angel-D\'{\i}az \cite{AD},
Bichon \cite{Bi}, Cibils-Solotar-Wisbauer \cite{CSW},
Dubois-Violette \cite{Dub}, Sitarz \cite{Si}, Kassel-Wambst \cite{KW},
and many of the references therein. It is worth mentioning
that \cite{KW} put both $d^p=0$ and $d^n=0$ on equal footing, and
developed some general homological theory for both cases.

Meanwhile, Pareigis \cite{Pa} reinterpreted the usual homological
algebra over a base ring $K$ as (co)modules over a non-commutative,
non-cocommutative Hopf algebra. In fact, using Majid's
``bosonisation process'' \cite{Maj}, one can understand this Hopf
algebra as a graded Hopf-algebra object $K[d]/(d^2)$ in the category
of graded super modules over the ground ring $K$. Similar
reformulations for the deformations $d^n=0$ were given by Bichon
\cite{Bi}. One crucial feature of such Hopf algebras used by these
authors is that their (co)module categories are Frobenius. Indeed,
finite dimensional Hopf algebras or objects bearing enough similar
properties are well-known to have a left (co)integral, which in turn
can be used to define non-degenerate associative bilinear forms on
the algebras. See for instance \cite{Ku} for an arrow-diagrammatic
proof of this result.

To this end, the work of Khovanov \cite{Kh} can be
regarded as a general framework to unify both points of view about
the homological algebra of $d^n=0$. There he considers (co)module
algebras over any finite dimensional Hopf algebra (or a finite
dimensional Hopf-algebra object in some category). In this
framework, Mayer's original $p$-complexes can be identified with
(co)modules over the $\Z$-graded finite dimensional Hopf algebra
$\Bbbk[\partial]/(\partial^p)$, where $\Bbbk$ is a field of
characteristic $p>0$. Moreover the usual notion of a differential
graded algebra (DGA) can be reinterpreted as a module-algebra over
the graded Hopf super algebra $K[d]/(d^2)$, and therefore affords a
generalization to arbitrary module-algebras over finite dimensional
Hopf algebras, among which the Hopf algebra $\Bbbk[\partial^p]/(\partial^p)$
over a field of characteristic $p>0$
is the simplest example. Nonetheless, one question dating back to
Mayer-Spanier should still be addressed: why should we care about
this construction if its homology gives us nothing new?

One answer to this question was given by Khovanov in \cite{Kh}.
Instead of homology, the Grothendieck groups $K_0$ of the
triangulated (stable) categories $H\udgmod$ are isomorphic to the
$p$-th (equivalently the $2p$-th) cyclotomic integers
$\Z[\zeta]/(1+\zeta+ \cdots+\zeta^{p-1})\cong \Z[\zeta_p]$.
Furthermore, the (triangulated) module category over such a Hopf
module-algebra inherits a (triangulated) module category structure.
Therefore the Grothendieck group of such a module category will be a
module over the ring of cyclotomic integers. Finding interesting
such module-algebras could potentially realize the dreams dating
back to Crane-Frenkel on categorification of quantum three-manifold
invariants at certain roots of unity and extend them into 4d
topological quantum field theories \cite{CF}. With this motivation,
Khovanov coined the terminology ``hopfological algebra'' since this
new framework is a mixture of Hopf algebra and homological algebra.
We follow  his suggestion and use this term vaguely to refer to the
general homological theory of Hopf module-algebras and their module
categories.

In the present work, we develop some general homological properties
of hopfological algebra (or following \cite{Kh}, we should say
``hopfological properties'') in analogy with the usual homological theory
of DG algebras. The
strategy is rather straightforward since there are now beautiful
structural expositions on DG algebras to mimic, such as the book by
Bernstein and Lunts \cite[Section 10]{BL}, the less formal and very
readable online lecture notes by Kaledin \cite{Kal}, or the papers
of Keller \cite{Ke1,Ke2}. We will mainly follow Keller's approach in
\cite{Ke1}.

Now we give a rough summary of the content of this paper. We start
by briefly reviewing Khovanov's original constructions in the first
three sections and giving ways to construct distinguished triangles
in the ``homotopy'' and ``derived'' categories of hopfological
modules, in analogy with DG algebras. Then we analyze more closely
the morphism spaces in the homotopy category, which is needed to
define the notion of cofibrant hopfological modules. As in the DG
case, we show that any hopfological module has a cofibrant replacement
(Theorem \ref{thm-bar-resolution}), and the morphism spaces between
cofibrant objects in the derived category coincide with their
morphism spaces in the homotopy category. Such cofibrant
replacements are also needed to define derived functors and to
construct derived equivalences of different hopfological module
categories. Next, we show that the derived categories of
hopfological modules are compactly generated, and this allows us to
use the formidable machinery of Ravenel-Neeman \cite{Ra, Nee, Nee2}
to give a characterization of compact objects in the derived category
(Corollary \ref{cor-characterizing-compact-objects}),
as well as to make precise the definition of Grothendieck groups of
hopfological module categories. Finally, a restrictive version of Morita
equivalence between derived categories is given (Corollary
\ref{cor-morita-equivalent-hopfological-algebra}).
Throughout, the
general theory is illustrated by three specific examples in parallel comparison, namely
the usual DG algebra, Kapranov-Sarkaria's $n$-DG algebra, and
Mayer's $p$-DG algebra.

As this paper will mainly serve as a tool kit for our work in
progress on categorification at roots of unity, there are some
important caveats we have to make clear. The first remark to make is
that we do not attempt to develop hopfological theory for Kapranov's
characteristic zero ``$n$-differential graded algebra'' in full
generality. In Section 8, we need to assume that the underlying Hopf
algebra be \emph{(co)commutative}. One reason is that, given a left
$H$-module algebra $A$, we could not find a natural way to define a
left $H$-module algebra structure on $A^{op}$ for arbitrary $H$. Another
problem is that, given two module-algebras equipped with $n$-differentials
(i.e. $d(ab)=d(a) b + \zeta^{\text{deg}(a)}ad(b)$, and $d^n=0$ for
any elements $a,~b\in A$), there does not seem to be a natural way
to define a module-algebra structure on the tensor product algebra. This problem
was already pointed out in \cite{Si}.
Such a monoidal structure plays a very important role in
many existing examples of categorification, for instance \cite{KL}.
Secondly, we will not develop in this paper the full analogue of DG
Morita theory (as in Keller \cite{Ke1}), as we wish to control the
length of the paper. Such a theory might be better treated in a more
categorical setting than the one we use here. In subsequent works we
will investigate this question in parallel with To\"{e}n's framework
\cite{To} on DG categories, as well as more related $K$-theoretical
questions.

\paragraph{Acknowledgements.} The author would like to express his
deep gratitude towards his advisor Professor Mikhail Khovanov for
many discussions and encouragements on the project, and more
importantly, a great deal of inspirational education throughout the
past years. He would also like to thank Professors A. Johan de
Jong, Bernhard Keller and Dmitri Orlov for many oral and email
communications on DG categories and some geometric examples, and his
friends Ben Elias, Alex P. Ellis, Zach Maddock, Kris Putyra and Josh Sussan for
reading a first draft of the manuscript and suggesting numerous
helpful corrections. The author was partially supported by the Columbia
topology RTG grant DMS-0739392.


\section{Module categories}
In this and the next two sections we review the basic constructions
of hopfological algebra, following~\cite[Sections 2.1-2.3]{Kh}. Then
we will develop some basic properties of hopfological algebra,
adapting the framework for DG-categories (algebras) in \cite{Ke1}.
Our goal is to show that, as predicted in \cite{Kh}, a fair amount of
the general theory of DG-algebra generalizes to hopfological
algebra.

\subsection{The base category}
Let $H$ be a finite dimensional Hopf algebra over a field
$\Bbbk$. We denote by $\Delta$ the comultiplication, by $\epsilon$
the counit, and by $S$ the antipode of $H$. It is well-known that
$S$ is an invertible algebra anti-automorphism. We will fix a
non-zero left integral $\Lambda$ of $H$ once and for all, which is
uniquely determined (see, for instance, Corollary 3.5
of~\cite[Section 3]{Ku}), up to a non-zero constant in the ground
field $\Bbbk$ by the property that, for any $h\in H$,
$$h\Lambda=\epsilon(h)\Lambda.$$
The category $H\dmod$ of left $H-$modules is monoidal, with $H$
acting on the tensor product $M \o N$ of two $H$-modules $M$ and $N$
via the comultiplication $\Delta$. In what follows, we will
constantly use the Sweedler notation: for any $h\in H$,
$\Delta(h)=\sum_{(h)} h_{(1)}\o h_{(2)}\in H\o H$, and we will omit
the summation symbol if no confusion can arise. Moreover, we will
freely use the fact that, for any $h\in H$,
$h_{(2)}S^{-1}(h_{(1)})=\epsilon(h)=S^{-1}(h_{(2)})h_{(1)}$, which
follows by applying the anti-automorphism $S^{-1}$ to the axiom
$h_{(1)}S(h_{(2)})=\epsilon(h)=S(h_{(1)})h_{(2)}$.

By convention, when a tensor product sign $\o$ is undecorated, we
always mean that it is over the base field $\Bbbk$. Moreover, when
tensor products ``$\o$'' and direct sums ``$\oplus$'' appear
together without brackets, tensor products always take precedence
over direct sums. By modules over an algebra we will always mean
left modules over the algebra unless otherwise stated.

\begin{prop} \label{prop-basic-property-H-mod}
\begin{enumerate}
\item For any $H$-module $M$, we have a canonical isomorphism of
$H$-modules $M\o H\cong M_0\o H$, where $M_0$ denotes $M$ as a
$\Bbbk$-vector space equipped with the trivial $H$-module structure.
\item $H$ is a Frobenius algebra, so that it is self-injective.  The
associated stable module category $H\udmod$ is triangulated
monoidal.
\item The shift functor $T$ on $H\udmod$ is given as follows:
for any $H$-module $M$, let $M \subset I$ be the inclusion of $M$
into the injective $H$-module $I=M \o H$,
given by $\Id_M \o \Lambda :M \lra M \o H$. Then $T(M)$ is defined
to be the cokernel of this inclusion:
$$
T:H\udmod  \lra  H\udmod, \quad M  \mapsto M \o (H/\Bbbk \Lambda).
$$
\item The tensor product of $H$-modules descends to an exact bifunctor on
$H\udmod$ $$\o :H\udmod\times H\udmod \lra H\udmod,$$ which is
compatible with the shift functor above. $H\udmod$ is symmetric
monoidal if $H$ is cocommutative. Here compatibility means that, for
any $M,~N\in H\dmod$,
$$T(M)\o N \cong T(M\o N) \cong M\o T(N).$$
\end{enumerate}
\end{prop}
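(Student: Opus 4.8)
The plan is to establish the four assertions in order, the first being the technical backbone for the rest. For (1) I would exhibit an explicit natural isomorphism. Give $M\o H$ the diagonal action $h\cdot(m\o g)=h_{(1)}m\o h_{(2)}g$ and $M_0\o H$ the action on the right tensorand alone, $h\cdot(m\o g)=m\o hg$, and define $\beta:M_0\o H\lra M\o H$ by $\beta(m\o h)=h_{(1)}m\o h_{(2)}$. A short Sweedler computation shows $\beta$ is $H$-linear, since expanding $\beta(m\o gh)$ and $g\cdot\beta(m\o h)$ both give $g_{(1)}h_{(1)}m\o g_{(2)}h_{(2)}$. For the inverse I set $\alpha(m\o h)=S^{-1}(h_{(1)})m\o h_{(2)}$ and check $\alpha\beta=\Id$ and $\beta\alpha=\Id$ using coassociativity together with the antipode identities $h_{(2)}S^{-1}(h_{(1)})=\epsilon(h)=S^{-1}(h_{(2)})h_{(1)}$ recalled in the text and the counit axiom; $\alpha$ is then automatically $H$-linear as the inverse of an $H$-linear bijection. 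Naturality in $M$ is immediate from the formulas, yielding the canonical isomorphism.

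For (2), that a finite dimensional Hopf algebra is Frobenius is the Larson--Sweedler theorem, for which I would cite \cite{Ku}, the non-degenerate associative form being built from the integral $\Lambda$. A Frobenius algebra is self-injective, so projective and injective $H$-modules coincide, and the stable category $H\udmod$ is triangulated by Happel's classical result. To descend the monoidal structure it suffices to show the projectives form a two-sided tensor ideal: by (1), and its evident left-handed analogue $H\o N\cong H\o N_0$ (proved the same way, with $S$ in place of $S^{-1}$), the modules $M\o H$ and $H\o N$ are free, so any $M\o P$ or $P\o N$ with $P$ projective is a summand of a free module, hence projective. Since $\o$ over $\Bbbk$ is exact, it then passes to a bifunctor on $H\udmod$.

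For (3), I would recall that in the stable category of a self-injective algebra the suspension is the cosyzygy $\Omega^{-1}$, computed by embedding a module into any injective and taking the cokernel. First check that $\Id_M\o\Lambda:M\lra M\o H$ is $H$-linear: since $\Lambda$ is a left integral, $h_{(2)}\Lambda=\epsilon(h_{(2)})\Lambda$, whence $h\cdot(m\o\Lambda)=h_{(1)}m\o h_{(2)}\Lambda=hm\o\Lambda$; it is injective because $\Lambda\neq0$. By (1) the target $M\o H$ is free, hence injective, so it is an admissible choice of injective containing $M$. Finally, as $\Bbbk\Lambda$ is a left ideal ($h\Lambda=\epsilon(h)\Lambda$), the cokernel is $(M\o H)/(M\o\Bbbk\Lambda)\cong M\o(H/\Bbbk\Lambda)$ with the diagonal action, identifying $T(M)$ as claimed.

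For (4), exactness of $\o$ over a field gives an exact bifunctor, and (2) guarantees it descends to $H\udmod$. The compatibility with $T$ I would extract from the defining short exact sequence $0\lra M\lra M\o H\lra T(M)\lra 0$: tensoring on the right with $N$ preserves exactness and, by the tensor-ideal property of (2), makes the middle term $M\o H\o N$ projective-injective, so its cokernel $T(M)\o N$ is a realization of the cosyzygy of $M\o N$ and hence isomorphic to $T(M\o N)$ in $H\udmod$; tensoring the sequence for $T(N)$ on the left with $M$ gives $M\o T(N)\cong T(M\o N)$ symmetrically. When $H$ is cocommutative the flip $M\o N\to N\o M$ becomes $H$-linear, so the standard symmetry of $\vect$ descends and $H\udmod$ is symmetric monoidal. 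I expect the main obstacle to be the careful Sweedler-notation bookkeeping in (1), since that isomorphism is exactly what makes $M\o H$ injective and underlies both the tensor-ideal property and the computation of the shift; once it is in place, the remaining items follow formally from the general theory of stable categories.
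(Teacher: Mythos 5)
Your proposal is correct, and on the one part the paper actually proves in-house --- part 1 --- it is essentially the paper's argument: your two maps are exactly the paper's pair, since $\alpha(m\otimes h)=S^{-1}(h_{(1)})m\otimes h_{(2)}$ is precisely the map $f_M$ of the paper and $\beta(m\otimes h)=h_{(1)}m\otimes h_{(2)}$ is its stated inverse $f_M^{-1}$. The only difference is organizational, and mildly in your favor: the paper verifies $H$-linearity of $f_M$ directly, which requires juggling the antipode identities, and leaves the inverse verification to the reader; you instead verify $H$-linearity of $\beta$, a one-line computation needing no antipode at all, deduce linearity of $\alpha$ formally as the inverse of an $H$-linear bijection, and confine the antipode identities to the check that the two maps are mutually inverse. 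For parts 2--4 the paper gives no argument, deferring entirely to Section 1 of Khovanov's paper \cite{Kh}; you supply complete and correct standard proofs: Larson--Sweedler plus Happel's theorem for part 2, together with the tensor-ideal property (projectives absorb tensoring on either side, via part 1 and its left-handed analogue $H\otimes N\cong H\otimes N_0$, which indeed goes through with $S$ in place of $S^{-1}$) to descend the monoidal structure; the cosyzygy description of the suspension for part 3, including the needed checks that $\Id_M\otimes\Lambda$ is an $H$-linear injection into an injective module; and, for part 4, the observation that tensoring the defining short exact sequence $0\to M\to M\otimes H\to T(M)\to 0$ with $N$ keeps the middle term projective-injective, so that $T(M)\otimes N$ and $M\otimes T(N)$ both compute the cosyzygy of $M\otimes N$ in the stable category. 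So the mathematical route coincides where the texts overlap; what your write-up buys is self-containedness, replacing the paper's citation of \cite{Kh} with the standard Frobenius/stable-category arguments that citation rests on.
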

\begin{proof} We give the proof of part 1 here. The rest of the
statements are proved in \cite[Section 1]{Kh}. We define a map of
$H$-modules: $f_M:M\o H\lra M_0\o H$ by sending $m\o l\mapsto
S^{-1}(l_{(1)})m\o l_{(2)}$, for any $l \in H$, $m\in M$. Then we
check that it's an $H$-module map: for any $h\in H$,
$$
\begin{array}{rcl}
f_{M}(h(m\o l))& = & f_{M}(h_{(1)}m \o h_{(2)}l)  =
S^{-1}((h_{(2)}l)_{(1)}) h_{(1)}m \o (h_{(2)}l)_{(2)}\\
& = &S^{-1}(l_{(1)})S^{-1}(h_{(2)})h_{(1)}m \o h_{(3)}l_{(2)}
 =  S^{-1}(l_{(1)})\epsilon(h_{(1)})m \o h_{(2)}l_{(2)}\\
& = & S^{-1}(l_{(1)}) m \o hl_{(2)}= h f_{M}(m \o l),
\end{array} $$
where we used that $S^{-1}(h_{(2)})h_{(1)}=\epsilon(h)$ and
$h_{(1)}\epsilon(h_{(2)})=h$. Notice that in the second to the last
equality, $h$ only acts on the second factor. Finally, $f_{M}$ is
invertible whose two sided inverse is given by $f_M^{-1}: M_0\o H
\lra M\o H$, $m\o h \mapsto h_{(1)}m\o h_{(2)}$. We leave this
verification to the reader.
\end{proof}

We briefly remind the reader of the notion of a stable category
associated with a Frobenius category (e.g. modules over a Frobenius
algebra), and this will explain some of the notations we used in the
above proposition. For more details, see \cite[Section 2, Chapter
1]{Ha}. An abelian category $\mc{C}$ (e.g. $H\dmod$) is called
Frobenius if it has enough injectives and enough projectives, and
moreover the class of injectives coincides with that of the
projectives. If $\mc{C}$ is such a category, we denote by
$\underline{\mc{C}}$ the stable category associated with it, whose
objects are the same as that of $\mc{C}$, and the morphism space
between any two objects $X,Y\in \textrm{Ob}(\mc{C})$ are constructed
as the quotient
$$\Hom_{\underline{\mc{C}}}(X,Y):=\Hom_\mc{C}(X, Y)/I(X,Y),$$
where $I(X,Y)$ stands for the space of morphisms between $X$ and $Y$
in $\mc{C}$ that factor through an injective($=$ projective) object
in $\mc{C}$. Theorem 2.6 of \cite[Section 2, Chapter 1]{Ha} shows
that $\underline{\mc{C}}$ is triangulated. The translation
endo-functor of $T:\underline{\mc{C}}\lra \underline{\mc{C}}$ is
given as follows. For any $X\in \textrm{Ob}(\mc{C})$, choose a
monomorphism $\lambda_X: X \lra I(X)$ of $X$ into an injective
object $I(X)$. We define $T(X):=I(X)/\textrm{Im}(\lambda_X)$,
considered as an object of $\underline{\mc{C}}$. It can be checked
that the isomorphism class of $T(X)$ in $\underline{\mc{C}}$ is
independent of choices of $I(X)$, and this leads to
a well-defined functor on $\underline{\mc{C}}$. Happel also shows that
$T$ is an automorphism of $\underline{\mc{C}}$ (Proposition 2.2
of \cite[Chapter 1]{Ha}), and it is readily checked that its inverse is
given as follows: for any $X \in \textrm{Ob}(\mc{C})$, take an
epimorphism from a projective object $\mu_X:P(X)\lra X$, then
$T^{-1}(X):=\textrm{ker}(\mu_X)$, regarded as an object in
$\underline{\mc{C}}$. Finally, every short exact sequence of objects
in $\mc{C}$ descends to a distinguished triangle in
$\underline{\mc{C}}$, and conversely any distinguished triangle in
$\underline{\mc{C}}$ is isomorphic to one that arises in this way
(Lemma 2.7 \cite[Chapter 1]{Ha}).

\begin{eg} \label{eg-hopf-algebras} We give some simple examples
of finite dimensional (graded, super) Hopf algebras and their left
integrals.
\begin{itemize}
\item Let $G$ be a finite group and $H=\Bbbk G$ be its group ring over a
field $\Bbbk$. Then $H$ is a Hopf algebra with $\Delta(g)=g \o g$,
$S(g)=g^{-1}$ and $\epsilon(g)=1$, for any $g\in G$. The element
$\sum_{g\in G}g$ spans the space of (left and right) integrals.
\item Let $V$ be an $(n+1)$-dimensional vector space over a field
$\Bbbk$, and let $H=\mathbf{\Lambda}^{*}V$ be the exterior algebra
over $V$. Then $H$ becomes a graded super Hopf algebra if we define
any non-zero element $v \in V$ to be of degree one; $\Delta(v)=v\o
1+ 1\o v$; $S(v)=-v$; $\epsilon(v)=0$. The space spanned by a
non-zero (left and right) integral can be canonically identified
with $\mathbf{\Lambda}^{n+1}(V)\cong \Bbbk v_0 \wedge \cdots \wedge
v_n$, where $\{v_0,\cdots, v_n\}$ forms a basis of $V$.
\item Let $\Bbbk$ be a field of positive characteristic $p$. Let $H=
\Bbbk[\partial]/(\partial^p)$, with $\Delta(\partial)=\partial\o
1+1\o \partial$, $S(\partial)=-\partial$, and
$\epsilon(\partial)=0$. $H$ will be graded if we fix a degree for
$\partial$. The space of (left and right) integrals in $H$ is
spanned by $\partial^{p-1}$.
\item Let $H_n$ be the Taft algebra (see \cite{Bi} or \cite[Section
4, Characteristic 0 case]{Kh}) over the $n$-th cyclotomic field
$\Bbbk=\mathbb{Q}[\zeta]$, where $\zeta$ is a primitive n-th root of
unity. As a $\Bbbk$-algebra, $H_n$ is generated by $K$,$K^{-1}$ and
$d$, subject to the relations $K^{-1}K=KK^{-1}=1$, $K^n=1$,
$Kd=\zeta dK$, and $d^n=0$. $H_n$ is an $n^2$-dimensional Hopf
algebra with $\Delta(K)=K\o K$, $\Delta(d)=d\o 1+K\o d$,
$S(K)=K^{-1}$, $S(d)=-K^{-1}d$, $\epsilon(K)=1$, $\epsilon(d)=0$.
It is easily checked using the commutator relations that a non-zero
left integral is given by
$\Lambda_l=\frac{1}{n}(\sum_{i=0}^{n-1}K^i)d^{n-1}$, while a
non-zero right integral is given by
$\Lambda_r=\frac{1}{n}d^{n-1}(\sum_{i=0}^{n-1}K^i)$.
\end{itemize}
\end{eg}

The following lemma is a slight generalization of Proposition 2 of
\cite[Section 1]{Kh}, which will be needed for technical reasons
later.

\begin{lemma}\label{lemma-infinite-projective-H-modules}
 Let $M$ be an arbitrary $H-$module and $N$ be a projective
$H-$module. Then $M\o_\Bbbk N$, $\Hom_\Bbbk(M,N)$ and
$\Hom_\Bbbk(N,M)$ are projective as $H-$modules. The $H$-module
structures are defined in the usual way: for any $h\in H$, $m\in M$,
$n\in N$, $f \in \Hom_\Bbbk(M,N)$,
$$h\cdot(m\o n) :=\sum h_{(1)}\cdot m \o h_{(2)}\cdot n,$$
$$(h\cdot f)(m):=\sum h_{(2)}\cdot f(S^{-1}(h_{(1)})\cdot m).$$
\end{lemma}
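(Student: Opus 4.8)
The plan is to reduce all three assertions to the single ``building block'' in which the projective module $N$ is the regular module $H$, and then to propagate projectivity along direct summands, direct sums, and direct products. The two structural inputs I would use throughout are: first, part (1) of Proposition \ref{prop-basic-property-H-mod}, together with its left-handed analogue (proved by the same Sweedler-notation computation, with $S^{-1}$ replaced by $S$ and the tensor factors swapped), which together say that for \emph{any} $H$-module $V$ both $V\otimes_{\Bbbk}H$ and $H\otimes_{\Bbbk}V$ are free $H$-modules; and second, part (2), namely that $H$ is Frobenius, so that projective $=$ injective for $H$-modules. From the latter I get the closure properties I need: direct summands of projectives are projective, arbitrary direct sums of projectives are projective, and --- since arbitrary products of injectives are injective --- arbitrary products of projectives are again projective.

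First I would write $N$ as a direct summand of a free module $\bigoplus_{I}H$. For the tensor product this finishes things almost immediately: since $-\otimes_{\Bbbk}-$ commutes with direct sums, $M\otimes_{\Bbbk}(\bigoplus_I H)\cong\bigoplus_I(M\otimes_{\Bbbk}H)$ is a direct sum of free modules by part (1), hence free, and $M\otimes_{\Bbbk}N$ is a direct summand of it, hence projective. For $\Hom_{\Bbbk}(N,M)$, contravariance turns the summand $N\mid\bigoplus_I H$ into a summand $\Hom_{\Bbbk}(N,M)\mid\Hom_{\Bbbk}(\bigoplus_I H,M)\cong\prod_I\Hom_{\Bbbk}(H,M)$. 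I would compute the single factor using that $H$ is \emph{finite dimensional}: there is an $H$-equivariant isomorphism between $\Hom_{\Bbbk}(H,M)$ and $M\otimes_{\Bbbk}H^{*}$ (the precise order of the factors being fixed by a routine equivariance check), where $H^{*}$ carries the dual module structure. Since $H^{*}\cong H$ as left $H$-modules --- the Hopf-algebraic incarnation of the Frobenius property --- part (1) and its analogue show this factor is free. A product of free, hence injective, modules is injective $=$ projective, and $\Hom_{\Bbbk}(N,M)$, being a summand of it, is projective.

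The remaining case $\Hom_{\Bbbk}(M,N)$ is where I expect the real obstacle, because $M$ is allowed to be infinite dimensional and $\Hom_{\Bbbk}(M,-)$ does \emph{not} commute with the infinite direct sum $\bigoplus_I H$; so I cannot simply distribute the Hom over the summands of a free resolution of $N$. The trick I would use to circumvent this is to keep the index set $I$ intact and instead exploit finite dimensionality of $H$ on the \emph{other} factor. Writing the free module as $\bigoplus_I H\cong V_0\otimes_{\Bbbk}H$ with $V_0=\Bbbk^{(I)}$ the trivial module, and using that $H$ is finite dimensional to pull it out of the Hom, I would establish an $H$-equivariant isomorphism
$$\Hom_{\Bbbk}\big(M,\ V_0\otimes_{\Bbbk}H\big)\ \cong\ \Hom_{\Bbbk}(M,V_0)\otimes_{\Bbbk}H ,$$
with the diagonal action on the right-hand side. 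By part (1) the right-hand side is free, so $\Hom_{\Bbbk}(M,\bigoplus_I H)$ is free; since $\Hom_{\Bbbk}(M,-)$ is covariant and additive, $\Hom_{\Bbbk}(M,N)$ is a direct summand of a free module and hence projective.

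The only genuine verifications left are routine Sweedler-notation checks that I would relegate to the reader: that the isomorphisms $\Hom_{\Bbbk}(H,M)\cong M\otimes_{\Bbbk}H^{*}$ and $\Hom_{\Bbbk}(M,V_0\otimes_{\Bbbk}H)\cong\Hom_{\Bbbk}(M,V_0)\otimes_{\Bbbk}H$ are indeed $H$-linear for the action defined in the statement, and the left-handed analogue of part (1). The conceptual content is carried entirely by the two structural facts --- the ``absorption'' of an arbitrary module into a free one in part (1), and the coincidence of projectives with injectives for the Frobenius algebra $H$ --- with finite dimensionality of $H$ used precisely to convert the awkward infinite Hom-spaces into tensor products to which part (1) applies.
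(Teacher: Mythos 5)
Your proof is correct, but it takes a genuinely different route from the paper's. The paper reduces everything to the finite-dimensional case (Proposition 2 of Khovanov, quoted as a black box): it writes $M$ as a filtered union of finite-dimensional submodules and $N$ as a direct sum of finite-dimensional projectives, then handles $M\otimes N$ and $\Hom_\Bbbk(N,M)$ by distributing over the sum/product, and handles the hardest case $\Hom_\Bbbk(M,N)$ by presenting $M$ via a two-term colimit sequence, applying $\Hom_\Bbbk(-,N)$, and arguing that the resulting short exact sequence of $H$-modules splits. You instead never filter $M$ at all: your only structural input is the absorption isomorphism of Proposition \ref{prop-basic-property-H-mod}(1) (valid for arbitrary, possibly infinite-dimensional $M$), plus self-injectivity, and you treat all three cases uniformly as direct summands of manifestly free modules after replacing $N$ by a summand of $\bigoplus_I H$. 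Your resolution of the $\Hom_\Bbbk(M,N)$ case — writing $\bigoplus_I H\cong V_0\otimes H$ with $V_0$ trivial and pulling the finite-dimensional $H$ out of the Hom, $\Hom_\Bbbk(M,V_0\otimes H)\cong \Hom_\Bbbk(M,V_0)\otimes H$ with diagonal action — is the key new idea, and it is cleaner than the paper's colimit-plus-splitting argument; it also sidesteps the paper's footnote about direct sums of injectives over noetherian rings, since you phrase everything through sums/products of projectives and then invoke projective $=$ injective. The price you pay is a handful of Sweedler-notation verifications the paper does not need: the left-handed analogue of part (1), the equivariance of the pull-out isomorphism, and — the one spot deserving genuine care — the identification $\Hom_\Bbbk(H,M)\cong H^{\vee}\otimes M$ where $H^{\vee}$ carries the $S^{-1}$-twisted dual action; one must check $H^{\vee}$ is still free, which holds because it is the twist of the standard dual $H^{*}\cong H$ by the algebra automorphism $S^{\mp 2}$, and twisting by an algebra automorphism preserves freeness. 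Your flagging of these as routine checks is fair, and all of them go through.
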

\begin{proof} The case when either one of $M$ or $N$ is finite
dimensional follows from Proposition 2 of \cite{Kh}. When both $M$
and $N$ are infinite dimensional, we can write $M$ as a union of its
finite dimensional submodules $M=\cup_{i\in I}M_i$ where $I$ is some filtered
partially ordered set, with $i \leq j$ in $I$ if and only if $M_i\subset M_j$. In
other words, we regard $I$ as a small filtered category in which there is an arrow $i\lra j$ if and
only if $M_i\subset M_j$, and then $M$ is the colimit of $I$.
We also write $N$ as a direct sum of finite dimensional injective (= projective) modules
$N=\oplus_{j\in J} P_j$. Now the tensor product is injective since we can write it
as
$$M\o N \cong M\o (\bigoplus_{j\in J}P_j)\cong \bigoplus_{j\in J}M\o P_j.$$
which is a direct sum of injectives \footnote{Any product of
injectives over a ring is injective; an infinite direct sum of
injectives is injective if and only if the ring is
noetherian \cite[Theorem 3.46]{Lam}.}, where each term $M\o P_j$ is
injective by Proposition 2 of \cite{Kh}.

Next, $\Hom_\Bbbk(N,M)$ can be rewritten as
$$\Hom_\Bbbk(\bigoplus_{j\in J}P_j,M)\cong \prod_{j\in J}\Hom_{\Bbbk}
(P_j,M)\cong \prod_{j\in J}P_j^* \o M .$$ Each $P_j^*$ is injective
since $P_j$ is also finite dimensional projective, and we
are again reduced to the case of Proposition 2 of \cite{Kh}.

Finally, for $\Hom_\Bbbk(M,N)$, we use the short exact sequence of
vector spaces
$$0\lra \bigoplus_{(i \lra j)\in I} M_{i}\stackrel{\Psi}{\lra} \bigoplus_{k
\in I}M_k\lra M \lra 0,$$
where the first direct sum is over all arrows in $I$, the second direct sum
is over all objects of $I$, and $\Psi$ restricted on each summand $M_i$ labeled by $i\lra j$ is given by
composing
$$M_i \lra M_i\oplus M_j;~m_i\mapsto (m_i, -m_i)$$
with the natural inclusion map
$$ M_i \oplus M_j \hookrightarrow \bigoplus_{i\in I}M_i.$$
Applying $\Hom_\Bbbk(-,N)$ to this exact sequence, we get a short
exact sequence of $H-$modules:
$$0\lra\Hom_\Bbbk(M,N)\lra \prod_{k\in
I}\Hom_\Bbbk(M_k,N)\stackrel{\Psi^*}{\lra} \prod_{(i\lra j) \in
I}\Hom_\Bbbk(M_i,N)\lra 0.$$
Notice that
$$\prod_{k\in I} \Hom_\Bbbk(M_k, N)\cong \prod_{k\in I}(\bigoplus_{j\in J}
\Hom_\Bbbk(M_i,P_j)) \cong \prod_{i\in I}(\bigoplus_{j\in J}P_j\o
M_i^*),$$
so that it is injective once again by the finite
dimensional case \cite[Proposition 2]{Kh}. Likewise for the last
term in the short exact sequence. It
follows that the above sequence of H-modules splits, and
$\Hom_\Bbbk(M,N)$ is injective.
\end{proof}

\subsection{Comodule algebras and stable module categories}
Now we recall the notion of a (right) comodule-algebra over $H$. We
slightly modify the convention used in~\cite{Kh} to better suit the
special case of DG-algebras over the base field $\Bbbk$. In
particular we will be mainly using the notion of right
H-comodule-algebras as opposed to left comodule-algebras. The proofs
of~\cite{Kh} go through almost unchanged with appropriate ``left''
notions switched to the ``right'' ones.

\begin{defn} \label{def-right-comodule-algebra}A \emph{right
$H$-comodule-algebra}
$B$ is a unital, associative $\Bbbk-$algebra equipped with a map
$$\Delta_B: B \lra B \o H$$
making $B$ into a right $H$-comodule and such that $\Delta_B$ is a
map of algebras. Equivalently, we have the following identities:

\begin{eqnarray*}
(\Id_B \o \epsilon)\Delta_B=\Id_B,& (\Id_B \o \Delta)\Delta_B=
(\Delta_B \o \Id_H)\Delta_B,\\
\Delta_B(1)=1\o 1,& \Delta_B(ab)=\Delta_B(a)\Delta_B(b).
\end{eqnarray*}
Here $B \o H$ is equipped with the product algebra structure.
\end{defn}

Let $V$ be an $H$-module, and $M$ be a $B$-module. The tensor product
$M \o V$ is naturally a $B$-module, via $\Delta_B$. The tensor
product gives rise to a bifunctor
$$B\dmod \times H \dmod \lra B \dmod$$
compatible with the monoidal structure of $H\dmod$, and in turn this
makes $B\dmod$ into a (right) module-category over $H\dmod$.

\begin{defn}\label{def-null-homotopy-category-of-B}Let $B_H\udmod$ be
the quotient category of $B\dmod$ by the ideal of morphisms that
factor through a $B-$module of the form $N \o H$, where $N$ is some
$B-$module.
\end{defn}

More precisely, we call a morphism of $B$-modules $f:M_1 \lra M_2$
\emph{null-homotopic} if there exists a $B$-module $N$ such that $f$
factors as
$$M_1 \lra N \o H \lra M_2.$$
The space of null-homotopic morphisms forms an ideal in $B\dmod$.
The quotient category $B_H\udmod$ by this ideal by definition has
the same objects as $B\dmod$, while the  $\Bbbk-$vector space of
morphisms in $B_H\udmod$ between any two objects $M_1$, $M_2$ is the
quotient of $\Hom_{B}(M_1,M_2)$ by the subspace of
null-homotopic morphisms.

We also recall the following useful lemma, which gives an
alternative characterization of the ideal of null-homotopic
homomorphism.

\begin{lemma}\label{lemma-characterizing-null-homotopy}
A map $f:M \lra N$ of $B$-modules is null-homotopic if and only if
it factors through the map $M \xrightarrow{\Id_M \o \Lambda} M\o
H$.
\end{lemma}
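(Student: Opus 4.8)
The plan is to prove the two implications separately, treating ``$\Leftarrow$'' as formal and putting all the weight on ``$\Rightarrow$''. As a preliminary I would first check that the canonical map $\Id_M \o \Lambda : M \lra M \o H$ is itself a morphism of $B$-modules and a monomorphism. For $B$-linearity, write $\Delta_B(b) = \sum b_{(0)} \o b_{(1)}$; then $b\cdot(m \o \Lambda) = \sum b_{(0)} m \o b_{(1)}\Lambda = \sum b_{(0)} m \o \epsilon(b_{(1)})\Lambda = (bm)\o\Lambda$, using the left-integral identity $h\Lambda = \epsilon(h)\Lambda$ and the counit axiom $(\Id_B \o \epsilon)\Delta_B = \Id_B$ of Definition \ref{def-right-comodule-algebra}. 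It is a monomorphism because $\Lambda \neq 0$ forces $m \o \Lambda = 0$ to imply $m = 0$. Granting this, ``$\Leftarrow$'' is immediate: $M \o H$ is a module of the form $N \o H$ (take $N = M$), so any $f$ factoring through $\Id_M \o \Lambda$ is null-homotopic by Definition \ref{def-null-homotopy-category-of-B}.

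For ``$\Rightarrow$'', suppose $f$ is null-homotopic, say $f = v \circ u$ with $u : M \lra P \o H$ and $v : P \o H \lra N$ both $B$-linear. I claim it is enough to factor $u$ through $\Id_M \o \Lambda$: if $u = w \circ(\Id_M \o \Lambda)$ for some $B$-module map $w : M \o H \lra P \o H$, then $f = (v\circ w)\circ(\Id_M\o\Lambda)$ does what we want, with $s := v \circ w$. Thus the whole statement reduces to the assertion that every $B$-module map $u : M \lra P \o H$ extends along the monomorphism $\Id_M \o \Lambda$ to a map out of $M \o H$.

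Conceptually this is a lifting problem against a mono, and it is solved the instant one knows that the induced module $P \o H$ is injective in $B\dmod$ — the $B$-module counterpart of Proposition \ref{prop-basic-property-H-mod}(1) and Lemma \ref{lemma-infinite-projective-H-modules}. Since that injectivity has not been set up in this excerpt, the self-contained route is to build $w$ by hand from the Hopf structure. Using the canonical isomorphism $f_H : H \o H \xrightarrow{\cong} H_0 \o H$, $\;y \o x \mapsto S^{-1}(x_{(1)})\,y \o x_{(2)}$, of Proposition \ref{prop-basic-property-H-mod}(1) (here $H_0$ is $H$ with trivial $H$-action), every diagonal-equivariant map $\mu' : H \o H \lra H$ has the form $\mu'(y\o x) = x_{(2)}\,\gamma(S^{-1}(x_{(1)})\,y)$ for a linear $\gamma : H \lra H$. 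I would solve $\sum \Lambda_{(2)}\,\gamma\big(S^{-1}(\Lambda_{(1)})\,y\big) = y$ for $\gamma$, so that $\mu'(-\o\Lambda) = \Id_H$, and then set $w := (\Id_P \o \mu')\circ(u \o \Id_H)$. Here $u \o \Id_H$ is $B$-linear by functoriality of the $H\dmod$-action on $B\dmod$, while $\Id_P \o \mu'$ is $B$-linear precisely because $\mu'$ is $H$-linear for the diagonal action on $H\o H$; and $w\circ(\Id_M\o\Lambda) = u$ follows from $\mu'(-\o\Lambda) = \Id_H$.

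The main obstacle is exactly the solvability of the equation for $\gamma$, i.e. the nondegeneracy of the bilinear pairing on $H$ attached to the integral $\Lambda$. This is the single place where the Frobenius property of $H$ (equivalently, the existence of the nonzero integral) is indispensable; everything else — the $B$-linearity checks and the reduction — is formal. Whether phrased as injectivity of $P\o H$ or as the explicit inversion above, the entire argument rests on this one structural feature of finite-dimensional Hopf algebras.
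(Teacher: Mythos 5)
Your overall strategy is the right one, and it is essentially the argument behind the result the paper cites: the paper gives no proof of this lemma beyond a reference to Lemma 1 of \cite{Kh}, and elsewhere (in the proof of Lemma \ref{lemma-split-ses-lead-to-dt}) it describes that cited lemma precisely as providing a splitting of the $H$-module injection $H \lra H\o H$, $h\mapsto h\o\Lambda$. Your preliminary checks ($B$-linearity and injectivity of $\Id_M\o\Lambda$), the reduction of ``$\Rightarrow$'' to extending $u:M\lra P\o H$ along $\Id_M\o\Lambda$, and the upgrade of an $H$-linear retraction $\mu'$ to the $B$-linear map $w=(\Id_P\o\mu')\circ(u\o\Id_H)$ via functoriality of the $H\dmod$-action on $B\dmod$ are all correct.

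The one genuine soft spot is that you never actually produce $\gamma$: ``I would solve $\sum\Lambda_{(2)}\gamma(S^{-1}(\Lambda_{(1)})y)=y$ for $\gamma$'' is left as an assertion, justified only by an appeal to a nondegenerate pairing that you neither construct nor connect to the equation. As written, the proof is therefore incomplete at its crucial step. The gap is real but closes in one line using only what the paper provides: the map $\iota:H\lra H\o H$, $y\mapsto y\o\Lambda$ (diagonal $H$-action on the target), is an $H$-module monomorphism --- the same computation as your $B$-linearity check, since $l\cdot(y\o\Lambda)=l_{(1)}y\o\epsilon(l_{(2)})\Lambda=ly\o\Lambda$ --- and its source $H$ is injective as an $H$-module because $H$ is Frobenius (part 2 of Proposition \ref{prop-basic-property-H-mod}). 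A monomorphism out of an injective object splits, so a retraction $\mu'$ with $\mu'(y\o\Lambda)=y$ exists, and your construction of $w$ goes through verbatim; the detour through $f_H$ and the parametrization of all diagonal-equivariant maps by $\gamma$ is then unnecessary. Note also that your stated reason for avoiding the injectivity route --- that injectivity of $P\o H$ in $B\dmod$ ``has not been set up'' --- conflates two different statements: you never need $P\o H$ to be injective over $B$; you only need $H$ to be injective over $H$, which the paper does set up, and which your own functoriality observations then propagate to the $B$-module level.
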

\Pf{This is Lemma 1 of \cite[Section 1]{Kh}.}

As a matter of notation, we will denote the canonical $B$-module map
in the lemma by $\lambda_M:M \xrightarrow{\Id_M \o \Lambda} M\o H$
for any $B$-module $M$, as such maps will appear repeatedly in what
follows.

\begin{prop} \label{module-category}
$B_H\udmod$ is a (right) module-category over $H\udmod$.
\end{prop}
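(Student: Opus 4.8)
The plan is to show that the bifunctor $-\o-:B\dmod\times H\dmod\lra B\dmod$ defining the module-category structure descends to a well-defined bifunctor $\bar{\o}:B_H\udmod\times H\udmod\lra B_H\udmod$, and that the associativity and unit constraints, together with their coherence, then descend automatically. Since the null-homotopic morphisms form an ideal in $B\dmod$, and the morphisms that become zero in $H\udmod$ are exactly those factoring through a projective ($=$ injective) $H$-module, it suffices to verify two compatibility statements. First, for a fixed $H$-module $V$, the functor $-\o V$ must send null-homotopic morphisms to null-homotopic morphisms. Second, for a fixed $B$-module $M$, the functor $M\o-$ must send any morphism factoring through a projective $H$-module to a null-homotopic morphism. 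By bifunctoriality, these two facts guarantee that the image of a pair of morphisms lands in the null-homotopic ideal whenever one factor is null-homotopic or the other is stably trivial, which is precisely what the descent requires.

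For the first statement, I would take a null-homotopic $f:M_1\lra M_2$, factoring through $N\o H$ for some $B$-module $N$, so that $f\o\Id_V$ factors through $(N\o H)\o V$. Using the associativity of the module action I would rewrite $(N\o H)\o V\cong N\o(H\o V)$. The crux is the observation---the mirror of part 1 of Proposition \ref{prop-basic-property-H-mod}, provable by the same type of computation---that $H\o V$ is a free, hence projective ($=$ injective), $H$-module, isomorphic to $H\o V_0$ where $V_0$ is $V$ equipped with the trivial $H$-action. Consequently $(N\o H)\o V$ is isomorphic to a direct sum of copies of $N\o H$, i.e. again of the form $N'\o H$; hence $f\o\Id_V$ is null-homotopic.

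For the second statement, if $g:V_1\lra V_2$ is zero in $H\udmod$ it factors through a projective $H$-module $P$, so $\Id_M\o g$ factors through $M\o P$. Writing $P$ as a direct summand of a free module $H^{(\kappa)}$, I would note that $M\o H^{(\kappa)}\cong M^{(\kappa)}\o H$ is of null-homotopic form and that $M\o P$ is a retract of it; threading the retraction through the factorization then shows that $\Id_M\o g$ factors through a module of the form $N'\o H$ and is therefore null-homotopic.

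Once $\bar{\o}$ is well defined on the quotients, I would conclude by observing that the associativity isomorphisms $(M\o V)\o W\cong M\o(V\o W)$ and the unit isomorphisms $M\o\Bbbk\cong M$ are natural isomorphisms already present in $B\dmod$, so their images are natural isomorphisms in $B_H\udmod$; and the pentagon and triangle coherence identities, being equalities of morphisms in $B\dmod$, persist after passing to the quotient. Compatibility with the monoidal structure on $H\udmod$, itself the descent of that on $H\dmod$ by Proposition \ref{prop-basic-property-H-mod}, is inherited in the same way. The main obstacle is the first statement, and within it the essential input is the freeness of $H\o V$ for an arbitrary $H$-module $V$; everything else is formal descent through the two quotient functors.
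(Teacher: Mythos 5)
Your proof is correct and takes essentially the same approach as the paper: the paper's own proof is a one-line assertion that the tensor bifunctor $B\dmod\times H\dmod \lra B\dmod$ descends to $B_H\udmod\times H\udmod \lra B_H\udmod$ compatibly with the monoidal structure, with the details deferred to Khovanov. Your two compatibility checks --- that $-\o V$ preserves null-homotopic morphisms, via the isomorphism $H\o V\cong H\o V_0$ mirroring part 1 of Proposition \ref{prop-basic-property-H-mod}, and that $M\o -$ sends stably trivial morphisms to null-homotopic ones, via the retract-of-a-free-module argument --- are precisely the verification that this assertion requires, and both are carried out correctly.
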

\Pf{The tensor product $B \dmod\times H\dmod \lra B\dmod$ descends
to a bifunctor
$$B_H \udmod \times H \udmod \lra B_H \udmod,$$
compatible with the monoidal structure of $H\udmod$.}

We will be mainly interested in the following class of examples. See
example (g) of \cite[Section 1]{Kh}, or \cite[Chapter 4]{Mo}.

\begin{eg}[The main example] \label{the-main-example}
Let $A$ be a left $H$-module algebra. This means that $A$ is a left
$H$-module, and the multiplication and unit maps of $A$ are left
$H$-module maps. An excellent treatise for such algebras is
\cite{Mo}, which gives a detailed survey of recent research on such
module-algebras and their ring theoretical properties.

\begin{define}
The \emph{smash product algebra} $B=A\#H$ is the $\Bbbk-$vector
space $A\o H$ with the multiplication:
$$(a\o h)(b\o l)=\sum_{(h)} a (h_{(1)}\cdot b)\o h_{(2)}l .$$
Here ``$\cdot$'' denotes the left $H$ action of $h_{(1)}$ on $b$.

$B$ has the structure of a right $H$-module algebra by setting
$\Delta_B:B\lra B\o H$, $\Delta_B(a\o h):=a\o \Delta(h)$ for any
$a\o h \in B$. We will loosely refer to the class of modules over
this kind of smash product ring $B$ as \emph{hopfological modules}.
\end{define}

As special cases of this main example, we have:
\begin{enumerate}
\item If $A=\Bbbk$ with the trivial module structure over $H$, then
$A=\Bbbk \# H=H$. We recover the usual stable category of $H$:
$B_H\udmod=H\udmod$.
\item Slightly more generally, let $A$ be any $\Bbbk$-algebra with the
trivial $H-$module structure. Then $B= A \o H$. We will see later
that the usual notion of chain complexes of modules over the algebra
$B$, or their ``n-complex'' analogs \cite{Ka, Dub}, are examples of
this particular case. We will deal with a more specific class of
examples of this kind in the last section.
\end{enumerate}
\end{eg}


\section{Triangular structure} Now let us recall the shift functor,
the cone construction, and the triangles in $B_H\udmod$. See
\cite[Section 1]{Kh}. We refer the reader to \cite[Chapter IV]{GM}
and \cite[Chapter I]{Ha} for more information about triangulated
categories.

\subsection{The shift functor}
The shift (or translation) functor $T$ on $B_H\udmod$ is the functor
that $B_H\udmod$ inherits from $T$ of $H\udmod$, where we regard
$B_H\udmod$ as a module category over $H\udmod$ (see Proposition
\ref{module-category} above). More precisely, we define:

\begin{defn}\label{def-shift-functor}
For any left $B$-module $M$, let $T(M)$ be
$$T(M):=M \o (H/(\Bbbk \Lambda)).$$
This defines a functor on $B\dmod$ and it descends to be the shift
endo-functor on $B_H \udmod$.
\end{defn}

The above definition is justified thanks to the following.

\begin{prop}\label{prop-invertible-shift-functor}
$T$ is an invertible functor on $B_H \udmod$, whose inverse $T^{-1}$
is given by
$$T^{-1}(M):=M \o \mathrm{ker}(\epsilon).$$
\end{prop}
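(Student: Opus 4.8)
The plan is to deduce everything from the fact that $B_H\udmod$ is a (right) module-category over the monoidal category $H\udmod$ (Proposition \ref{module-category}), together with the structural results of Proposition \ref{prop-basic-property-H-mod}. The point is that both the shift $T(M)=M\o(H/\Bbbk\Lambda)$ and the candidate inverse $M\mapsto M\o\mathrm{ker}(\epsilon)$ are obtained by acting on $M$ with the \emph{fixed} objects $H/\Bbbk\Lambda$ and $\mathrm{ker}(\epsilon)$ of $H\udmod$. (Both are genuine $H$-modules: $\Bbbk\Lambda$ is a submodule of $H$ since $h\Lambda=\epsilon(h)\Lambda$, and $\mathrm{ker}(\epsilon)$ is the augmentation ideal.) Hence it suffices to show that $H/\Bbbk\Lambda$ is an \emph{invertible} object of $H\udmod$ with two-sided tensor-inverse $\mathrm{ker}(\epsilon)$; invertibility then forces $-\o(H/\Bbbk\Lambda)$ to be an auto-equivalence of $B_H\udmod$ with quasi-inverse $-\o\mathrm{ker}(\epsilon)$, via the associativity and unit constraints of the action.

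Concretely, first I would use the associativity constraint to rewrite
$$
T^{-1}T(M)\cong M\o\big((H/\Bbbk\Lambda)\o\mathrm{ker}(\epsilon)\big),
\qquad
TT^{-1}(M)\cong M\o\big(\mathrm{ker}(\epsilon)\o(H/\Bbbk\Lambda)\big),
$$
and the unit constraint $M\o\Bbbk\cong M$ of the action (where $\Bbbk$ is the monoidal unit of $H\udmod$). Thus the whole statement reduces to the two object-level isomorphisms
$$
(H/\Bbbk\Lambda)\o\mathrm{ker}(\epsilon)\cong\Bbbk\cong\mathrm{ker}(\epsilon)\o(H/\Bbbk\Lambda)
\quad\text{in } H\udmod.
$$

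To establish these, I would identify the two factors with the shift of the unit. By the very definition of the translation functor on $H\udmod$ (Proposition \ref{prop-basic-property-H-mod}(3)) we have $H/\Bbbk\Lambda\cong T(\Bbbk)$. On the other hand, applying Happel's description of $T^{-1}$ recalled after Proposition \ref{prop-basic-property-H-mod} to the epimorphism $\epsilon:H\lra\Bbbk$ from the projective ($=$ injective) module $H$ gives $T^{-1}(\Bbbk)\cong\mathrm{ker}(\epsilon)$. Now the compatibility of $T$ with the tensor product (Proposition \ref{prop-basic-property-H-mod}(4)), namely $T(\Bbbk)\o V\cong T(V)\cong V\o T(\Bbbk)$, applied to $V=\mathrm{ker}(\epsilon)\cong T^{-1}(\Bbbk)$, yields
$$
(H/\Bbbk\Lambda)\o\mathrm{ker}(\epsilon)\cong T\big(T^{-1}(\Bbbk)\big)\cong\Bbbk\cong T\big(T^{-1}(\Bbbk)\big)\cong\mathrm{ker}(\epsilon)\o(H/\Bbbk\Lambda),
$$
since $T$ is an automorphism of $H\udmod$ (Proposition \ref{prop-basic-property-H-mod}(3)). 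Because both one-sided products are computed to be $T(V)$, note that cocommutativity of $H$ is not needed here.

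I expect the main obstacle to be bookkeeping rather than a single hard step: one must check that the object-level isomorphisms above are natural in $M$ so as to upgrade them to isomorphisms of \emph{functors}, and that the associativity and unit constraints of the $H\udmod$-action are precisely the ones rendering $T^{-1}T$ and $TT^{-1}$ naturally isomorphic to the identity. The only genuinely delicate identification is $\mathrm{ker}(\epsilon)\cong T^{-1}(\Bbbk)$ in the stable category, which relies on $H$ being self-injective so that $\epsilon:H\lra\Bbbk$ is simultaneously an epimorphism from a projective and the relevant Happel syzygy; everything else is a formal consequence of the module-category formalism.
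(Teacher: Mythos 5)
Your proof is correct, but it necessarily differs from the paper's, because the paper gives no argument at all here: it cites Proposition 3 of \cite[Section 1]{Kh}, where the statement is verified directly at the level of modules --- one compares the short exact sequences $0 \lra M \xrightarrow{\lambda_M} M\o H \lra T(M)\lra 0$ and $0\lra M\o \mathrm{ker}(\epsilon)\lra M\o H \xrightarrow{\Id_M\o\epsilon} M\lra 0$, whose middle terms vanish in $B_H\udmod$, and identifies the composites $T^{-1}T(M)$ and $TT^{-1}(M)$ with $M$ by a Schanuel-type uniqueness of (co)syzygies. Your route instead pushes the whole question into the monoidal category $H\udmod$: the identifications $H/\Bbbk\Lambda\cong T(\Bbbk_0)$ and $\mathrm{ker}(\epsilon)\cong T^{-1}(\Bbbk_0)$ (Happel's descriptions of the shift and its inverse, recalled after Proposition \ref{prop-basic-property-H-mod}), combined with the tensor compatibility $T(M)\o N\cong T(M\o N)\cong M\o T(N)$ of part 4 of that proposition and with Happel's theorem that $T$ is an automorphism of $H\udmod$, yield the stable isomorphisms $(H/\Bbbk\Lambda)\o\mathrm{ker}(\epsilon)\cong\Bbbk_0\cong\mathrm{ker}(\epsilon)\o(H/\Bbbk\Lambda)$, and the $B$-level claim then follows formally by transporting these through the $H\udmod$-action on $B_H\udmod$ (Proposition \ref{module-category}). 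This buys three things: naturality in $M$ comes for free, since your isomorphisms are the (natural) associativity and unit constraints of the action composed with fixed stable isomorphisms of $H$-modules; the argument works verbatim for an arbitrary right $H$-comodule algebra $B$, which is the correct level of generality at this point of the paper (the specialization to $B=A\#H$ only happens in the next section); and, as you observe, it makes transparent that cocommutativity of $H$ plays no role. One small correction to your closing remark: the identification $\mathrm{ker}(\epsilon)\cong T^{-1}(\Bbbk_0)$ needs only that $H$ is projective over itself (automatic) together with Happel's description of $T^{-1}$; self-injectivity of $H$ is what makes the stable category triangulated and $T^{-1}$ well defined in the first place, not what makes $\epsilon$ a usable projective presentation.
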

\Pf{Omitted. This is Proposition 3 of \cite[Section 1]{Kh}.}

\subsection{Distinguished triangles}
For any $B-$module morphism $u:X \lra Y$ denote by $\underline{u}$
its residue class in the stable category $B_H\udmod$ (this and the
following $\overline{u}$ notation etc. are taken from \cite{Ha}).

\begin{defn}\label{def-cone-construction}
The cone $C_u$ is defined as the pushout of $u$ and $\lambda_X$ in
$B\dmod$, so that it fits into the following Cartesian diagram
$$\xymatrix{X \ar[r]^{u} \ar[d]_{\lambda_X} & Y\ar[d]_{v}\\
X \o H \ar[r]^-{\overline{u}} & C_{u}.\\
}$$
\end{defn}

Now, let $u:X \lra Y$ be a morphism of $B-$modules. We denote by
$\overline{\lambda_X}$ the quotient map from $X \o H$ to $TX$, so
that there is the following diagram of short exact sequences in
$B\dmod$:

$$\xymatrix{0\ar[r]\ar[d] & X\ar[r]^-{\lambda_X}\ar[d]_u & X \o H \ar[r]^{\overline{\lambda_X}}
\ar[d]_{\overline{u}} & TX\ar[r]\ar@{=}[d]&0\ar[d]\\
0\ar[r]& Y \ar[r]^v & C_u \ar[r]^w & TX\ar[r] & 0.}$$

\begin{defn}\label{def-standard-dt}A \emph{standard distinguished
triangle}
in $B_H\udmod$ is defined to be the sextuple:
$$\xymatrix{X\ar[r]^{\underline{u}} & Y\ar[r]^{\underline{v}} &
 C_u\ar[r]^{\underline{w}} & TX}$$
associated with some morphism $u$ of $B$-modules. A sextuple
$\xymatrix{X\ar[r]^{\underline{u}} & Y\ar[r]^{\underline{v}} &
Z\ar[r]^{\underline{w}} & TX}$ in $B_H\udmod$ of objects and
morphisms in $B_H\udmod$ is called a \emph{distinguished triangle}
if it is isomorphic in $B_H\udmod$ to a standard distinguished
triangle.
\end{defn}

\begin{thm}The category $B_H\udmod$ is triangulated, with the shift
functor $T$ and the class of distinguished triangles defined as
above.
\end{thm}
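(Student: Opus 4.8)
The plan is to realize $B_H\udmod$ as the stable category of a Frobenius exact category and then invoke the fact that the stable category of a Frobenius exact category is triangulated (Happel, \cite[Chapter 1]{Ha}), rather than to verify the axioms (TR1)--(TR4) directly. First I would equip the abelian category $B\dmod$ with the exact structure $\mc{E}$ whose conflations are those short exact sequences $0\lra X\lra Y\lra Z\lra 0$ that split as sequences of $\Bbbk$-vector spaces. That $\mc{E}$ satisfies Quillen's axioms is routine, the only point being that pullbacks and pushouts of $\Bbbk$-split sequences along arbitrary $B$-module maps remain $\Bbbk$-split. In particular the defining sequence $0\lra X \xrightarrow{\lambda_X} X\o H \xrightarrow{\overline{\lambda_X}} TX \lra 0$ lies in $\mc{E}$, and so does the lower row $0\lra Y\xrightarrow{v} C_u \xrightarrow{w} TX \lra 0$ of the cone diagram of Definition \ref{def-cone-construction}, being a pushout of the former along $u$.

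The heart of the argument is to identify the relatively projective and relatively injective objects for $\mc{E}$. I claim both classes coincide with the direct summands of modules of the form $N\o H$, $N\in B\dmod$. Concretely, I would show (i) that each $N\o H$ is relatively injective, i.e. $\Hom_B(-,N\o H)$ carries conflations to short exact sequences, and (ii) that each $N\o H$ is relatively projective, i.e. $\Hom_B(N\o H,-)$ does the same. These are the $B$-module analogues of Proposition \ref{prop-basic-property-H-mod}(1) and Lemma \ref{lemma-infinite-projective-H-modules}, and their proofs run along the same lines, using the integral $\Lambda$ and the invertible antipode $S^{-1}$ of $H$ to write down the requisite extensions and liftings explicitly. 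Granting (i) and (ii), the category has enough relative injectives and projectives: every $M$ sits in the $\Bbbk$-split conflation above via $\lambda_M:M\lra M\o H$, and dually $\Id_M\o\epsilon:M\o H \lra M$ (with $\Bbbk$-section $m\mapsto m\o 1$) is a deflation off the relative projective $M\o H$, whose kernel is $T^{-1}(M)=M\o\ker(\epsilon)$ as in Proposition \ref{prop-invertible-shift-functor}. Any relative projective is then a summand of some $N\o H$ and any relative injective likewise; since $N\o H$ is simultaneously relatively projective and relatively injective and both classes are closed under summands, they coincide, so $(B\dmod,\mc{E})$ is Frobenius.

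By Lemma \ref{lemma-characterizing-null-homotopy} a $B$-module map is null-homotopic precisely when it factors through some $\lambda_M$, equivalently through a relatively projective-injective object; hence $B_H\udmod$ is exactly the stable category of $(B\dmod,\mc{E})$, which is therefore triangulated. It then remains to match Happel's structure with the one defined here. Choosing $\lambda_X:X\lra X\o H$ as the distinguished embedding of $X$ into an injective, Happel's translation sends $X$ to the cokernel $X\o(H/\Bbbk\Lambda)=T(X)$, in agreement with Definition \ref{def-shift-functor}; and Happel attaches to $u:X\lra Y$ the triangle built from the pushout of $u$ along $\lambda_X$, which is precisely the cone $C_u$. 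Thus the standard distinguished triangles of Definition \ref{def-standard-dt} are exactly Happel's, and closing under isomorphism yields the stated class.

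The main obstacle is step (ii) combined with (i): verifying that $N\o H$ is at once relatively projective and relatively injective for the $\Bbbk$-split structure. This is where the genuine Hopf-algebraic input is needed, since the honest injective and projective objects of $B\dmod$ as an abelian category do \emph{not} coincide. The self-duality that forces the Frobenius property comes from $H$ being a Frobenius algebra together with the existence of the integral $\Lambda$, and the explicit extension and lifting maps must be built by hand from $\Lambda$, $\epsilon$ and $S^{-1}$ exactly as in Lemma \ref{lemma-infinite-projective-H-modules}. Once that self-duality is established, the remainder of the argument is formal.
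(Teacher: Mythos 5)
Your overall strategy---exhibiting $B_H\udmod$ as the stable category of a Frobenius exact structure on $B\dmod$ and invoking Happel---is a legitimate route, and is essentially how the DG homotopy category is handled in \cite{Ke1}; note, though, that the paper itself gives no proof here but cites Theorem 1 of \cite{Kh}, whose argument is a direct Happel-style verification of the axioms using the cone of Definition \ref{def-cone-construction}. The fatal problem with your proposal is the choice of exact structure. Over the field $\Bbbk$, \emph{every} short exact sequence of $B$-modules is $\Bbbk$-split (all surjections of vector spaces split), so your class $\mc{E}$ is simply the class of all short exact sequences, i.e.\ the abelian exact structure; relative injectives and projectives for $\mc{E}$ are then the honest injective and projective $B$-modules. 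You yourself observe that these two classes do not coincide in general, and indeed your claims (i) and (ii) are false: take $H=\Bbbk[\partial]/(\partial^p)$ over a field of characteristic $p$ acting trivially on $A=\Bbbk[x]$, so $B=A\o H$. Since $B$ is free of finite rank over $A$ on either side, restriction $B\dmod\lra A\dmod$ preserves injectives and projectives. For $N=B$, the module $N\o H$ restricts to a free $\Bbbk[x]$-module of finite rank, which is not injective over $\Bbbk[x]$, so $N\o H$ is not (relatively) injective; for $N=\Bbbk[x]/(x)$ with trivial $\partial$-action, $N\o H$ restricts to a torsion $\Bbbk[x]$-module, so it is not (relatively) projective. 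Hence $(B\dmod,\mc{E})$ is not Frobenius and its projective-injectives are not the summands of the $N\o H$, so the argument does not get off the ground.

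The missing idea is a genuinely coarser exact structure, and identifying it is where the real content of the theorem lies. For the smash-product case $B=A\#H$ the right choice is the class of \emph{$A$-split} short exact sequences: $\lambda_X$ is $A$-split injective (this is exactly the observation opening the proof of Lemma \ref{lemma-split-ses-lead-to-dt}), and Lemma \ref{lemma-A-summand-as-B-summand} is precisely the statement that $Z\o H$ is relatively projective/injective for this class; with that substitution your outline can be completed. But beware that your proposed proofs of (i) and (ii) ``along the same lines'' as Proposition \ref{prop-basic-property-H-mod}(1) and Lemma \ref{lemma-infinite-projective-H-modules} cannot work as stated even then: those results concern $H$-modules, their proofs use an $H$-action on modules and on $\Hom$-spaces which exists only when $H\subset B$ (i.e.\ for smash products, not for the general comodule algebras to which the theorem applies), and the trivialization $f_M$ of Proposition \ref{prop-basic-property-H-mod} is not $A$-linear, so self-injectivity of $H$ does not transfer to $B$ (indeed $B$ above is not self-injective). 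For a general right $H$-comodule algebra $B$ one must either take conflations to be sequences that become $B$-split after applying $-\o H$ (using that $-\o H$ is adjoint to itself up to the $H$-module isomorphism $H\cong H^{*}$ coming from the Frobenius structure of $H$), or abandon the Frobenius packaging and verify the triangulated axioms directly as in \cite{Kh}, following \cite{Ha}. As written, your proof does not establish the theorem.
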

\Pf{Omitted. This is Theorem 1 of \cite[Section 1]{Kh}.}

\subsection{Triangulated module category}
Recall that an additive functor $F:\mc{C}\lra\mc{D}$ between
triangulated categories is called \emph{exact} if it commutes with
the respective shift functors and takes distinguished triangles to
distinguished triangles. The lemma below implies that, if  $V$ is an
$H-$module, then tensoring a distinguished triangle
$\xymatrix{X\ar[r] & Y\ar[r] & Z \ar[r] & TX}$ with $V$ gives a
distinguished triangle in $B_H\udmod$:
$$\xymatrix{X \o V\ar[r] & Y \o V\ar[r] & Z \ar[r]\o V & T(X\o V)},$$
so that tensoring with any $H-$module $V$ is an exact functor on
$B_H\udmod$. We say informally that $B_H\udmod$ is a ``triangulated
(right) module-category'' over $H\dmod$.

\begin{lemma}
There exists a functorial-in-$V$ isomorphism of $H$-modules $$r: H
\o V \lra V\o H$$ intertwining the $H$-module inclusions $\Lambda \o
\Id_V :V \lra H\o V$, and $\Id_V \o \Lambda : V \lra V \o H$.
\end{lemma}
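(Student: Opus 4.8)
The plan is to first record that the two stated maps really are $H$-module maps, then to manufacture $r$ as a composite of the canonical ``untwisting'' isomorphisms already available, and finally to check the compatibility with $\Lambda$, which I expect to be the only genuine difficulty.

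First I would verify that $\Lambda\o\Id_V$ and $\Id_V\o\Lambda$ are maps of $H$-modules. Both are the same one-line Sweedler computation used for $\lambda_M$ in Proposition \ref{prop-basic-property-H-mod}: for instance $h\cdot(\Lambda\o v)=\sum h_{(1)}\Lambda\o h_{(2)}v=\sum\epsilon(h_{(1)})\Lambda\o h_{(2)}v=\Lambda\o hv$, using the defining property $h_{(1)}\Lambda=\epsilon(h_{(1)})\Lambda$ of the left integral, and symmetrically for $\Id_V\o\Lambda$; since $\Lambda\neq 0$ both are injective. Thus the content of the lemma is the existence of an $H$-linear isomorphism $r\colon H\o V\lra V\o H$, natural in $V$, satisfying $r\circ(\Lambda\o\Id_V)=\Id_V\o\Lambda$.

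To build $r$ I would pass through the trivial module. There is a canonical $H$-module isomorphism $H\o V\cong H\o V_0$, the left-handed analogue of part 1 of Proposition \ref{prop-basic-property-H-mod}, given by $h\o v\mapsto\sum h_{(1)}\o S(h_{(2)})v$ with inverse $h\o v\mapsto\sum h_{(1)}\o h_{(2)}v$; here $V_0$ carries the trivial action, the antipode must sit on the inner factor, and a short Sweedler check using the antipode axiom $\sum S(h_{(1)})h_{(2)}=\epsilon(h)$ confirms $H$-linearity. Composing this with the flip $H\o V_0\cong V_0\o H$, which is an $H$-map because both sides are free on the regular factor, and with the inverse of the isomorphism $V\o H\cong V_0\o H$ of part 1 of Proposition \ref{prop-basic-property-H-mod}, produces an $H$-linear isomorphism $r\colon H\o V\lra V\o H$. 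Each factor is manifestly natural in $V$ and invertible, so $r$ is a natural isomorphism, and this settles every part of the statement except the intertwining.

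The hard part will be the intertwining $r(\Lambda\o v)=v\o\Lambda$. The clean tool I would reach for is the integral identity $\sum_{(x)}x_{(1)}\Lambda\o x_{(2)}=\Lambda\o x$, valid for every $x\in H$ and proved in one line from $x_{(1)}\Lambda=\epsilon(x_{(1)})\Lambda$, together with the antipode axioms. The subtlety is that the value $r(\Lambda\o v)$ unwinds to an expression of the shape $\sum \Lambda_{(1)}S(\Lambda_{(3)})v\o\Lambda_{(2)}$, and collapsing this to $v\o\Lambda$ forces the three legs of $\Delta^{(2)}(\Lambda)$ to interact correctly with the antipode. A careless placement of $S$ (or of $S^{-1}$) in the untwisting isomorphisms need not carry the integral exactly to the integral when $H$ fails to be unimodular, so the real work is to normalize the construction of the previous paragraph, adjusting the antipode placement and, if necessary, inserting the distinguished grouplike reflecting the modular structure of $H$, so that $\Lambda$ is sent precisely to $\Lambda$. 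I expect this bookkeeping, rather than any structural obstruction, to be the crux; once the correct normalization is fixed the intertwining follows by a direct Sweedler computation, and the mutually-inverse relation of $r$ with the analogously defined $s\colon V\o H\lra H\o V$ is checked the same way.
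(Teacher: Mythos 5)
Your opening and your construction of $r$ are both fine: the two inclusions are $H$-linear exactly as you compute, the untwisting maps you chose are the correct ($H$-linear) ones, and the composite works out to the natural $H$-module isomorphism $r(h\o v)=\sum h_{(1)}S(h_{(3)})v\o h_{(2)}$. The genuine gap is that this $r$ does \emph{not} satisfy the intertwining identity, and the intertwining is the entire content of the lemma: without it the statement is nearly vacuous, since $H\o V$ and $V\o H$ are both free on the underlying vector space of $V$. Concretely, take $H=H_2$, the Taft algebra at $n=2$ (Sweedler's four-dimensional Hopf algebra, one of this paper's running examples), with $K^2=1$, $d^2=0$, $Kd=-dK$, $\Delta(d)=d\o 1+K\o d$, $S(d)=-Kd$, and $\Lambda=(1+K)d$. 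Expanding the six terms of $\Delta^{(2)}(\Lambda)$ gives
$$\sum \Lambda_{(1)}S(\Lambda_{(3)})\o \Lambda_{(2)}\;=\;2\,d\o 1+K\o d-2\,d\o K+K\o Kd\;\neq\;1\o d+1\o Kd\;=\;1\o \Lambda ,$$
so $r(\Lambda\o v)\neq v\o\Lambda$ already for $V=H$, $v=1$. Thus what you have built is a natural isomorphism, but not one doing what the lemma asserts, and you have not produced one that does.

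You do flag this, but the repair you defer to is not bookkeeping, and of the two knobs you propose, one does not exist. ``Adjusting the antipode placement'' is unavailable: the placements you used are forced, since the alternative untwistings (e.g.\ $v\o h\mapsto\sum S(h_{(2)})v\o h_{(1)}$) fail to be $H$-linear for $H_2$ (they are $H$-linear only under hypotheses like cocommutativity). The only remaining freedom is to compose with a natural $H$-linear automorphism of the free-module functor $V\mapsto V_0\o H$; a Yoneda argument shows these are parametrized by elements $c\in H\o H$, acting by $v\o h\mapsto \sum c^{(1)}v\o hc^{(2)}$, so the lemma is \emph{equivalent} to exhibiting, for every finite-dimensional $H$, an invertible $c$ matching the two images of the integral. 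That is a theorem about integrals, not a normalization. Your instinct about the distinguished grouplike is pointing the right way --- for $H_2$ one checks that inserting $g=K$ works, i.e.\ $r(h\o v)=\sum h_{(1)}\,g\,S(h_{(3)})v\o h_{(2)}$ does send $\Lambda\o v$ to $v\o\Lambda$ --- but identifying this element and proving the resulting identity $\sum\Lambda_{(1)}\,g\,S(\Lambda_{(3)})\o\Lambda_{(2)}=1\o\Lambda$ for arbitrary $H$ requires Radford-type relations among $\Delta(\Lambda)$, the antipode, and the modular elements. That is precisely the content of Lemma 2 of Khovanov's paper, which is also all the present paper offers as proof (it cites that lemma and takes $r$ to be the inverse of the intertwiner there). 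Read as a self-contained argument, your proposal stops exactly where the actual work begins.
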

\Pf{Omitted. See Lemma 2 of \cite[Section 1]{Kh}. We take $r$ to be
the inverse of the functorial intertwiner in the lemma there.}

\begin{rmk}[Graded versions] Before proceeding to other hopfological
constructions, we remark here that all of our constructions above
apply without much change to finite dimensional graded Hopf
algebras, finite dimensional graded Hopf super-algebras, or more
generally, any finite dimensional Hopf-algebra object in a symmetric
monoidal category which admits integrals (see \cite[Section 3]{Ku}
where a diagrammatic construction of integrals in these cases are
exhibited). A good example to keep in mind is when
$H=\Bbbk[d]/(d^2)$ is the $\Z-$graded Hopf super algebra where
$\textrm{deg}(d)=1$. As we will see, a $\Z$-graded algebra $A$ being
an $H$-module algebra means that $A$ is a differential graded (DG)
algebra over the field $\Bbbk$, as defined in \cite[Section 10]{BL}.
The categories $A\# H\dmod$, $\mc{C}(A,H)$, and $\mc{D}(A, H)$
correspond respectively to the abelian category of complexes of DG
modules over $A$, the homotopy category of complexes of DG modules
over $A$, and the derived category of DG modules over $A$, with the
latter two being triangulated. The morphism spaces in these cases are
slightly different: as we will see later, the morphism spaces are
given by the usual $\RHom$ of complexes in $\mc{C}(A,H)$ and
$\mc{D}(A,H)$, at least between ``nice'' complexes. See the first
example of \cite[Section 2]{Kh} for more details.
\end{rmk}

\subsection{Examples}
We now describe the objects of $B_H\udmod$ more explicitly for some
particular smash product algebras $B=A\# H$ (see the main example
\ref{the-main-example}). By regarding the usual notion of DG modules
over a DG algebra as a special example, we will see that examples of
this kind are naturally generalizations of the DG case.

\begin{itemize}
\item Let $H=\Bbbk[d]/(d^2)$ be the graded Hopf super algebra over
$\Bbbk$, where $\textrm{deg}(d)=1$. For a graded $\Bbbk$-algebra $A$
to carry an $H$-module structure, it is equivalent to have a degree
one differential $d:A\lra A$ satisfying the following conditions: for
any $a, b\in A$,
$$d(ab)=d(a)b+(-1)^{|a|}ad(b),\ \ \ \ d^2(a)=0,$$
i.e. $A$ is a DG algebra over $\Bbbk$. Notice that $d(1)=0$ follows
automatically from the first equation. A (left)
$A\#H$-module $M$ is an $A$-module equipped with a compatible
$H$-action. Since $H$ is generated by $d$, it suffices to specify
the $d$-action on $M$ and require it to be compatible with the
$A$-module structure on $M$ and $d$-action on $A$. This amounts to
saying that, for any $a\in A$, $m \in M$, we have
$$d(am)=d(a)m+(-1)^{|a|}ad(m),\ \ \ \ d^2(m)=0,$$
i.e. $M$ is a (left) DG module over the DG algebra $A$. We refer the
reader to \cite[Section 10]{BL} for details about the homological
properties of DG modules.

\item Let $H=H_n$ be the Taft algebra over $\Q[\zeta]$ (see
\ref{eg-hopf-algebras}), and let $A$ be an $H_n$-module algebra.
Since $K$ generate a subalgebra of $H_n$ isomorphic to the group
algebra of $\Z/n\Z$, $A$ must be $\Z/n\Z$-graded and the
multiplication on $A$ must respect this grading. For any homogeneous
element $a\in A$ of degree $|a|$, $K$ acts on $a$ by $K\cdot
a=\zeta^{|a|}a$. Furthermore, the relation $Kd=\zeta dK$ applied to
$a$ gives us $Kd(a)=\zeta^{|a|+1}d(a)$, i.e. $d(a)$ is homogeneous of
degree $|a|+1$. Equivalently, $d$ has to increase the degree by one.
Thirdly, $\Delta(d)=d\o 1+K\o d$, when applied to any product of
homogeneous elements $a_1,~a_2\in A$, imposes the differential
condition that $d(a_1a_2)=d(a_1)a_2+\zeta^{|a_1|}a_1d(a_2)$. Finally
$d^n=0$ just says that $d^n(a)=0$ for all $a\in A$. Thus we conclude
that an $H_n$-module algebra is just a $\Z/n\Z$-graded algebra
equipped with a degree one differential such that
$$d(a_1a_2)=d(a_1)a_2+\zeta^{|a_1|}a_1d(a_2), \ \ \ \ d^n(a)=0.$$
Following \cite{Bi, Dub, Ka, KW}, we say that $A$ is an
\emph{$n$-differential graded (n-DG) algebra} over $\Q[\zeta]$.
Notice that $A$ could have a $\Z$-grading since any such grading
collapses into a $\Z/n\Z$-grading. Similar as
in the DG case, an $A\#H_n$-module is equivalent to a
$\Z/n\Z$-graded $A$-module, equipped with a degree one differential
$d$, such that for any homogeneous $a\in A$, $m \in M$,
$$d(am)=d(a)m+\zeta^{|a|}ad(m),\ \ \ \ d^n(m)=0.$$
Likewise, we will call such a module an \emph{$n$-DG module}.

\item Let $\Bbbk$ be a field of positive characteristic $p$, and
$H=\Bbbk[\partial]/(\partial^p)$. This case is entirely analogous to
the above n-DG algebra case, and we just state the results. An
$H$-module algebra $A$ comes with differential $\partial$ such that
for all $a, a_1, a_2\in A$,
$$\partial(a_1a_2)=\partial (a_1)a_2+a_1\partial (a_2),\ \ \ \ \partial^p(a)=0.$$
Notice the lack of coefficients before $a_1$ on the right hand side
of the first equation. Similarly, an $A\#H$-module $M$ is an
$A$-module equipped with a differential $\partial$ on it compatible
with the $A$-module differential, i.e. for all $a\in A$, $m\in M$,
$$\partial(am)=\partial (a)m+a\partial(m)\ \ \ \ \partial^p(m)=0.$$
Algebras and modules of this kind will be refereed to as
\emph{$p$-DG algebras} and \emph{$p$-DG modules}. We can also
require some compatible grading on $\partial$, $A$ and $M$, but the
formulas remain unchanged. We leave the details to the reader.
\end{itemize}


\section{Derived categories}
From now on, we will focus on the case of the main example
\ref{the-main-example} above, where derived categories can
be defined.

\subsection{Quasi-isomorphisms}
Suppose $B=A\# H$ is the smash product of $H$ and a left $H$-module
algebra $A$. Since $H\cong \Bbbk \o H$ is a subalgebra of $B$, we
have the restriction functor from $B \dmod$ to $H\dmod$:
$$\mathrm{Res}: B\dmod\lra H\dmod.$$
This descends to an exact functor on the quotient categories
$$\underline{\mathrm{Res}}: B_H\udmod \lra H\udmod.$$
In what follows, we will introduce a new notation for the
triangulated category $B_H\udmod$ for the special case of the main
example \ref{the-main-example}:
$$\mc{C}(A,H):=B_H\udmod.$$
The notation stands informally for ``the category of chain complexes
of $A$-modules up to homotopy''. The reason for using this term will
be clear once we understand the $\Hom$ spaces better, and realize
the category $\mc{C}(A,H)$ as an analogue of the homotopy category
of DG-modules in the next section.

\begin{defn}\label{def-cohomology-and-quasi-isomorphism}
\begin{enumerate}
\item[(i).] We define the \emph{total cohomology functor} to be the restriction functor:
$$\underline{\mathrm{Res}}: \mc{C}(A, H)\udmod \lra H\udmod.$$

\item[(ii).] A morphism $f:M\lra N$ in $\mc{C}(A,H)$ is a called a
\emph{quasi-isomorphism} if its restriction
$\underline{\mathrm{Res}}(f)$ is an isomorphism in $H\udmod$.
\item[(iii).] A $B-$module $M$ is called \emph{acyclic} if $0\lra M$ is a
quasi-isomorphism.
\end{enumerate}
\end{defn}

\begin{thm}\label{derived-stable-categories}
\begin{enumerate}
\item Quasi-isomorphisms in $\mc{C}(A,H)$ constitute a localizing class.
\item The localization of $\mc{C}(A,H)$ with respect to the quasi-isomorphisms,
denoted $\mc{D}(A,H)$, is triangulated. Tensoring with any
$H-$module (on the right) is an exact functor in $\mc{D}(A,H)$.
\end{enumerate}
\end{thm}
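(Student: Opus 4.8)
The plan is to realize $\mc{D}(A,H)$ as a Verdier quotient of the triangulated category $\mc{C}(A,H)$. The whole argument hinges on the exactness of the total cohomology functor $\underline{\mathrm{Res}}:\mc{C}(A,H)\lra H\udmod$, which we have already recorded: it commutes with $T$ and sends standard distinguished triangles to distinguished triangles, because $\mathrm{Res}:B\dmod\lra H\dmod$ is additive, commutes with tensoring by $H$-modules (hence with $T=-\o(H/\Bbbk\Lambda)$), and carries the short exact sequences defining cones in $B\dmod$ to short exact sequences in $H\dmod$. Granting this, the quasi-isomorphisms are exactly the morphisms inverted by an exact functor into a triangulated category, and such classes are controlled by the Verdier localization formalism (see \cite[Chapter IV]{GM}).

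First I would characterize quasi-isomorphisms through their cones. Given $f:X\lra Y$, embed it in a standard triangle $X\xrightarrow{f}Y\lra C_f\lra TX$ and apply $\underline{\mathrm{Res}}$ to obtain a distinguished triangle in $H\udmod$. Since a morphism in a triangulated category is an isomorphism precisely when the third vertex of a triangle built on it vanishes, $\underline{\mathrm{Res}}(f)$ is an isomorphism if and only if $\underline{\mathrm{Res}}(C_f)\cong 0$; that is, $f$ is a quasi-isomorphism if and only if its cone $C_f$ is acyclic.

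Next I would verify that the full subcategory $\mc{N}\subset\mc{C}(A,H)$ of acyclic objects is a \emph{thick} triangulated subcategory. It is closed under $T^{\pm 1}$ because $\underline{\mathrm{Res}}$ intertwines the shifts while $T$ is an autoequivalence of $H\udmod$; it satisfies the two-out-of-three property for triangles because, applying the exact functor $\underline{\mathrm{Res}}$ to a distinguished triangle two of whose vertices are acyclic, the restriction of the remaining vertex is forced to vanish; and it is thick because $\underline{\mathrm{Res}}(M)\oplus\underline{\mathrm{Res}}(M')\cong 0$ in the additive category $H\udmod$ forces each summand to be zero. With $\mc{N}$ thick and the quasi-isomorphisms identified with $\{f\mid C_f\in\mc{N}\}$, Verdier's theorem \cite[Chapter IV]{GM} yields at once that the quasi-isomorphisms form a localizing class compatible with the triangulation and that the quotient $\mc{D}(A,H)=\mc{C}(A,H)/\mc{N}$ is triangulated, with shift functor $T$ and with distinguished triangles the images of the standard ones. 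This establishes parts (1) and (2).

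For the final assertion I would show that $-\o V$ descends to $\mc{D}(A,H)$ and stays exact. Since $-\o V$ is already exact on $\mc{C}(A,H)$, it is enough to check that it preserves quasi-isomorphisms, equivalently that it preserves $\mc{N}$. But $\underline{\mathrm{Res}}(M\o V)\cong\mathrm{Res}(M)\o V$ as $H$-modules, and $M$ acyclic means $\mathrm{Res}(M)$ is projective; by Lemma \ref{lemma-infinite-projective-H-modules} the tensor product of a projective $H$-module with an arbitrary $H$-module is again projective, so $M\o V$ is acyclic. Hence $-\o V$ descends, and its exactness on $\mc{D}(A,H)$ follows from its exactness on $\mc{C}(A,H)$ together with the universal property of the localization. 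The one point deserving genuine care --- and the likeliest source of slips --- is that $\mc{C}(A,H)$ is \emph{not} abelian, so ``acyclic'' and ``quasi-isomorphism'' must be read through the stable category $H\udmod$ rather than through honest cohomology; the main obstacle is thus to pin down that acyclicity coincides with projectivity of the restriction and that this is stable under cones and under $-\o V$, after which everything reduces to the Verdier formalism.
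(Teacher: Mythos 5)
Your argument is correct, and it is in substance the standard Verdier-quotient argument that the paper itself does not spell out (its ``proof'' is the citation to Proposition 4 and Corollary 2 of \cite{Kh}): acyclic objects form a thick triangulated subcategory $\mc{N}$ of $\mc{C}(A,H)$, quasi-isomorphisms are exactly the morphisms whose cone lies in $\mc{N}$, and Verdier localization then gives both the localizing-class property and the triangulated structure on the quotient.

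One citation needs repair in your last step. You invoke Lemma \ref{lemma-infinite-projective-H-modules} for ``projective $\o$ arbitrary is projective,'' but that lemma puts the projective factor on the \emph{right}: it asserts $M\o N$ is projective when $N$ is projective and $M$ arbitrary. Since $H$ is not assumed cocommutative in this part of the paper, $\mathrm{Res}(M)\o V$ and $V\o\mathrm{Res}(M)$ are genuinely different $H$-modules, so the lemma as stated does not literally apply. The fact you need is still true --- it follows from the mirrored isomorphism $H\o V\cong H\o V_0$, $h\o v\mapsto h_{(1)}\o S(h_{(2)})v$, in place of Proposition \ref{prop-basic-property-H-mod}(1) --- but the cleaner route already available in the paper is Proposition \ref{prop-basic-property-H-mod}(4): the tensor product descends to a bifunctor on $H\udmod$, so $\underline{\mathrm{Res}}(M)\cong 0$ immediately gives $\underline{\mathrm{Res}}(M\o V)\cong \underline{\mathrm{Res}}(M)\o V\cong 0\o V\cong 0$, i.e. $M\o V$ is acyclic. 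With that substitution the proof is complete.
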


We will call $\mc{D}(A,H)$ the derived category of $B\dmod$.

\Pf{Omitted. See Proposition 4 and Corollary 2 of \cite[Section
1]{Kh}.}

\subsection{Constructing distinguished triangles}
Now we describe how short exact sequences in the abelian category
$B\dmod$ lead to distinguished triangles in $\mc{C}(A,H)$ and
$\mc{D}(A,H)$. We start with the construction in $\mc{C}(A,H)$.

\begin{lemma}\label{lemma-split-ses-lead-to-dt}Let
$$0 \lra X \stackrel{u}{\lra} Y \stackrel{v}{\lra} Z \lra 0$$
be a short exact sequence in $B\dmod$, which is split exact as a
sequence of $A-$modules. Then associated to it there is a
distinguished triangle in $\mc{C}(A,H)$:
$$X\stackrel{\underline{u}}{\lra} Y\stackrel{\underline{v}}{\lra} Z\lra TX$$
(the connecting homomorphism on the third arrow is described in the
proof below). Conversely, any distinguished triangle in
$\mc{C}(A,H)$ is isomorphic to one that arises in this way.
\end{lemma}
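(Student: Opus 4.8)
The plan is to realize the third term $Z$ as the cone $C_u$ inside $\mc{C}(A,H)=B_H\udmod$, and then to read off the asserted triangle from the standard one $X\xrightarrow{\underline u}Y\xrightarrow{\underline j}C_u\xrightarrow{\underline w}TX$. First I would form the cone $C_u$ of $u$ as in Definition \ref{def-cone-construction} and note that the surjection $v\colon Y\lra Z$ together with the zero map $X\o H\lra Z$ agree on $X$ (since $vu=0$), hence induce a $B$-module map $\phi\colon C_u\lra Z$ out of the pushout. Inspecting the pushout shows $\phi$ is surjective with kernel exactly the image of $\overline{u}\colon X\o H\lra C_u$, and that $\overline u$ is injective because $u$ is; this yields a short exact sequence of $B$-modules
\[
0\lra X\o H \xrightarrow{\ \overline u\ } C_u \xrightarrow{\ \phi\ } Z\lra 0 .
\]

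Next I would show this sequence is split over $A$. The point is that $A\cong A\o\Bbbk\subset B$ acts on $X\o H$ only through the first tensor factor, since $\Delta_B(a\o 1)=(a\o 1)\o 1$ forces $(a\o 1)\cdot(x\o h)=(a\cdot x)\o h$; consequently $\lambda_X=\Id_X\o\Lambda$ is an $A$-split monomorphism, split by $\Id_X\o\xi$ for any functional $\xi$ on $H$ with $\xi(\Lambda)=1$. Combining this with the hypothesis that $0\lra X\lra Y\lra Z\lra 0$ is split over $A$, one checks that $C_u\cong Z\oplus(X\o H)$ as $A$-modules and that the displayed sequence is $A$-split. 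The crux is then the relative injectivity of modules of the form $N\o H$: any $A$-split monomorphism with source $N\o H$ is automatically split over $B$. This is proved by averaging an $A$-linear retraction against the integral $\Lambda$, exactly as in Proposition 2 of \cite[Section 1]{Kh}, and it is the same mechanism underlying Lemma \ref{lemma-characterizing-null-homotopy}. Applying it to $\overline u$ produces a $B$-linear splitting, so $C_u\cong Z\oplus(X\o H)$ as $B$-modules; since $X\o H$ is a zero object of $B_H\udmod$ (its identity map factors through itself, hence is null-homotopic by Lemma \ref{lemma-characterizing-null-homotopy}), the class $\underline\phi$ is an isomorphism in $\mc{C}(A,H)$. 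Transporting the standard triangle along $\underline\phi$ then gives the distinguished triangle $X\xrightarrow{\underline u}Y\xrightarrow{\underline v}Z\xrightarrow{\ \delta\ }TX$ with connecting morphism $\delta=\underline w\circ\underline\phi^{-1}$.

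For the converse, I would start from an arbitrary distinguished triangle and, by Definition \ref{def-standard-dt}, reduce to a standard one $X\xrightarrow{\underline u}Y\xrightarrow{\underline j}C_u\xrightarrow{\underline w}TX$ attached to some $B$-module map $u$, which need not be injective. To present it by an $A$-split short exact sequence I would replace $u$ by the genuine monomorphism $(u,\lambda_X)\colon X\lra Y\oplus(X\o H)$, whose cokernel is canonically isomorphic to $C_u$ up to the harmless sign automorphism of the $X\o H$-summand built into Definition \ref{def-cone-construction}. The resulting sequence $0\lra X\lra Y\oplus(X\o H)\lra C_u\lra 0$ is split over $A$, again because $\lambda_X$ is. Since $X\o H$ is a zero object of $B_H\udmod$, the projection $Y\oplus(X\o H)\lra Y$ is an isomorphism in $\mc{C}(A,H)$ carrying $(u,\lambda_X)$ to $u$; hence the triangle produced by the forward construction from this sequence is isomorphic to the given standard triangle, as desired.

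I expect the main obstacle to be the relative injectivity step — that an $A$-split short exact sequence with a term of the form $N\o H$ splits over all of $B$. Everything else is bookkeeping with the pushout defining $C_u$ and with the fact that $X\o H$ is a zero object, but this is the step where the genuine Hopf-algebraic input enters, through the integral $\Lambda$ and the averaging it enables, and it must be invoked carefully so as not to circularly presuppose the triangulated structure one is trying to exhibit.
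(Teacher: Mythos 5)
Your overall architecture is sound and, in the forward direction, runs parallel to the paper's proof up to the same point: both form the cone $C_u$, extract the $A$-split short exact sequence $0 \lra X\o H \stackrel{\overline{u}}{\lra} C_u \lra Z \lra 0$, and aim to show that $C_u \lra Z$ becomes invertible in $\mc{C}(A,H)$; your mapping-cylinder treatment of the converse is a reasonable fleshing-out of the paper's terse remark. The genuine gap is the justification of your crux step, the ``relative injectivity'' of $X\o H$. The statement itself is true, but it is not proved ``by averaging an $A$-linear retraction against the integral $\Lambda$,'' and it is not Proposition 2 of \cite{Kh} (that proposition concerns $H$-modules: tensoring with a projective $H$-module yields a projective $H$-module). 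Concretely, if $\iota: N\o H \lra C$ is a $B$-module injection with $A$-linear retraction $\rho'$, the averaged map $\Lambda\cdot\rho'$ is indeed $B$-linear, but $(\Lambda\cdot\rho')\circ\iota = \Lambda_{(2)}S^{-1}(\Lambda_{(1)})\cdot(-) = \epsilon(\Lambda)\,\Id = 0$, because $\epsilon(\Lambda)=0$ whenever $H$ is not semisimple --- which is every case of interest here ($\Bbbk[d]/(d^2)$, $\Bbbk[\partial]/(\partial^p)$, Taft algebras). So naive averaging produces the zero map, not a retraction. The asymmetry is essential: the formula of Lemma \ref{lemma-A-summand-as-B-summand}, $\gamma(z\o h)=h_{(2)}\gamma'(S^{-1}(h_{(1)})z\o 1)$, builds maps \emph{out of} a module $Z\o H$ by consuming the $H$-coordinate of the source, and it has no mirror image producing maps \emph{into} $N\o H$. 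Relative injectivity of $N\o H$ is a Frobenius-extension-type statement (it amounts to identifying $N\o H$ with the coinduced module $\Hom_A(B,N)$) and genuinely needs more than the left integral alone.

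The step can be repaired with tools from the paper, but the repair essentially recreates the paper's own detour. Given the $A$-split sequence $0\lra N\o H \stackrel{\iota}{\lra} C \lra Q \lra 0$, tensor it with $H$ on the right: the resulting surjection $C\o H \lra Q\o H$ now lands on a module of the form $(\,\cdot\,)\o H$, so Lemma \ref{lemma-A-summand-as-B-summand} splits it over $B$, yielding a $B$-linear projection $p: C\o H \lra N\o H\o H$ with $p\circ(\iota\o\Id_H)=\Id$. Separately, Lemma \ref{lemma-characterizing-null-homotopy} applied to $\Id_{N\o H}$ (which is null-homotopic, factoring through $N\o H$ itself) gives a $B$-linear retraction $\theta$ of $\lambda_{N\o H}$ --- this, and only this, is the ``mechanism underlying Lemma \ref{lemma-characterizing-null-homotopy}'' that you allude to. Then $\theta\circ p\circ\lambda_C$ is a $B$-linear retraction of $\iota$. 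Note that this is exactly the maneuver in the paper's proof: it tensors with $H$ and passes to the cone $C_{\overline{v}}$ inside $C_u\o H$ precisely so that the splitting it needs is that of a surjection onto $Z\o H$, where the provable projectivity-type lemma applies. So your proposal is repairable, but as written its key step fails, and the cited results do not support it.
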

\begin{proof} The converse part holds by construction,
since $\lambda_X:X\lra X\o H$ is always a split injection of $A$-modules.

Now, according to the definition (\ref{def-standard-dt}), the map
$u:X\lra Y$ gives rise to a commutative diagram in $B\dmod$:
$$\begin{gathered}
\xymatrix{& 0\ar[d] & 0\ar[d] & & \\
0\ar[r] & X\ar[r]^-{\lambda_X}\ar[d]_u & X\o H \ar[r]
\ar[d]_{\overline{u}} & TX\ar[r]\ar@{=}[d]&0\\
0\ar[r]& Y\ar[r]\ar[d]_{v} & C_u \ar[r]\ar[d]_{\overline{v}} & TX\ar[r] & 0\\
& Z\ar[d]\ar@{=}[r] & Z\ar[d] & & \\
& 0 & 0 & &. }
\end{gathered}\quad \quad \quad (\star)
$$ Therefore the cone $C_u$ fits into a short exact
sequence of $B-$modules:
$$0\lra X\o H \stackrel{\overline{u}}{\lra} C_u \stackrel{\overline{v}}{\lra}
Z \lra 0,$$
which is split exact as a sequence of $A$-modules.
Thus, we will be done
with the first half of the lemma once we establish it in the
following special case: in the short exact sequence as above,
$\overline{v}$ becomes an isomorphism in $\mc{C}(A,H)$. The
connecting homomorphism is then taken to be the composition of the
inverse of $\overline{v}$ and $C_u\lra TX$.

To prove the last claim, consider the cone of $\overline{v}$, which
fits into the commutative diagram:
\begin{equation*}
\begin{gathered}
\xymatrix{0\ar[r]\ar[d] &X \o H\ar[r]^-{\overline{u}}\ar@{=}[d] & C_u \ar[r]^{\overline{v}}
\ar[d]_{\lambda_{C_u}} & Z\ar[r]\ar[d]&0\ar[d]\\
0\ar[r]& X \o H \ar[r]^{u^\prime} & C_u\o H \ar[r]^{v^\prime} & C_{\overline{v}}\ar[r] &
0.}
\end{gathered}\quad \quad \quad (\star\star)
\end{equation*}
By assumption, the top short exact sequence splits in $A\dmod$, so does the bottom one since
the third square in $(\star\star)$ is a push-out. We will show that $C_{\overline{v}}\cong 0$ in $\mc{C}(A,H)$,
and the special case will follow since, by construction,
$$C_u\stackrel{\overline{v}}{\lra} Z \lra C_{\overline{v}} \lra T(C_u)$$
is a distinguished triangle in $\mc{C}(A, H)$.

Now we examine the $B$-module structure of $C_{u}\o H$. By tensoring the top short exact sequence with $H$
in the above diagram, we obtain the exact sequence
$$0\lra X\o H \o H \lra C_u\o H \lra Z\o H\lra 0,$$
which is $A$-split. By commutativity of the second square in $(\star\star)$,
$u^\prime: X\o H \lra C_u\o H$ factors through
$$u^\prime:X\o H \xrightarrow{\lambda_{X\o H}} X\o H\o H \lra C_u\o H.$$
Now notice that the map $H \lra H\o H$ which sends $h\mapsto h\o \Lambda$ is an
$H$-module injection, whose quotient $H \o (H/\Bbbk \Lambda)\cong
H^{(\textrm{dim}(H)-1)}$ is an injective and free summand in $H\o H$ which we write as $H^\prime$.
This is true since $H$ is self-injective (see Lemma 1 of \cite{Kh} for an explicit splitting).
Modding out the submodule $X\o H\o \Lambda$ in $C_u\o H$, which is no other than $C_{\overline{v}}$, we get a
short exact sequence of $B$-modules.
$$0\lra X\o H^\prime \stackrel{\alpha}{\lra} C_{\overline{v}} \stackrel{\beta}{\lra} Z\o H \lra 0,$$
which is also $A$-split. The next lemma then shows that
$$C_{\overline{v}}\cong X\o H^\prime \oplus Z\o H,$$
and the result follows.
\end{proof}

\begin{lemma}\label{lemma-A-summand-as-B-summand}
Let $\beta:C\lra Z\o H$ be a surjective map of $B$-modules which admits a section
in $A\dmod$. Then $Z\o H$ is a direct summand of $C$ in $B\dmod$.
\end{lemma}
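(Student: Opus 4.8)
The plan is to upgrade the given $A$-linear section to a $B$-linear one. Write $s\colon Z\o H\lra C$ for the $A$-module map with $\beta s=\Id_{Z\o H}$. It suffices to produce a $B$-module map $\sigma\colon Z\o H\lra C$ with $\beta\sigma=\Id$: then $\sigma$ is a split $B$-monomorphism, $C\cong\ker\beta\oplus\sigma(Z\o H)$ in $B\dmod$, and $\sigma(Z\o H)\cong Z\o H$, which is exactly the claim. So the whole problem reduces to the existence of a $B$-linear splitting of $\beta$.

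The essential structural input is that $Z\o H$ is an \emph{induced}, hence relatively projective, $B$-module. Concretely, I would first establish a natural isomorphism of $B$-modules
$$\Phi\colon B\o_A\Res(Z)\stackrel{\cong}{\lra}Z\o H,$$
the analogue for the subalgebra $A\subset B$ of the isomorphism $M\o H\cong M_0\o H$ in Proposition~\ref{prop-basic-property-H-mod}(1). Here $\Res(Z)$ denotes $Z$ viewed as an $A$-module, $B\o_A\Res(Z)$ carries the left $B$-action on its first factor, and $Z\o H$ carries the tensor $B$-action through $\Delta_B$. Granting $\Phi$, set $\beta':=\Phi^{-1}\beta\colon C\lra B\o_A\Res(Z)$, a $B$-linear surjection that is still $A$-split, with $A$-section $s':=s\Phi$.

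Relative projectivity of an induced module now finishes the argument by the induction--restriction adjunction. The map $\iota\colon Z\lra B\o_A\Res(Z)$, $z\mapsto 1_B\o z$, is $A$-linear, so $\psi:=s'\iota\colon Z\lra C$ is an $A$-module map; extending $B$-linearly yields $\sigma'\colon B\o_A\Res(Z)\lra C$, $\sigma'(b\o z):=b\cdot\psi(z)$, which is well defined over $A$ and $B$-linear by construction. Since $\beta'$ is $B$-linear and $\beta's'=\Id$, one computes $\beta'\sigma'(b\o z)=b\cdot\beta'\psi(z)=b\cdot(1_B\o z)=b\o z$, so $\beta'\sigma'=\Id$. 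Transporting back, $\sigma:=\sigma'\Phi^{-1}$ satisfies $\beta\sigma=\Id_{Z\o H}$ and is $B$-linear, completing the splitting.

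The main obstacle is constructing $\Phi$ and verifying its $B$-linearity. I would write $\Phi$ and its inverse explicitly in Sweedler notation, in the spirit of the map $f_M$ of Proposition~\ref{prop-basic-property-H-mod}(1), and check $B$-linearity using the antipode identities $h_{(2)}S^{-1}(h_{(1)})=\epsilon(h)=S^{-1}(h_{(2)})h_{(1)}$ together with the smash-product relation $(a\o h)(b\o 1)=\sum a(h_{(1)}\cdot b)\o h_{(2)}$. It is worth stressing why the naive shortcut fails: averaging $s$ directly against the integral, using the $H$-action $(h\cdot g)(x)=\sum h_{(2)}g(S^{-1}(h_{(1)})x)$ on $\Hom_A(Z\o H,C)$ from Lemma~\ref{lemma-infinite-projective-H-modules}, yields a $B$-linear map $\Lambda\cdot s$ with $\beta(\Lambda\cdot s)=\epsilon(\Lambda)\Id$, which is worthless whenever $\epsilon(\Lambda)=0$ (for instance $H=\Bbbk[d]/(d^2)$, $\Lambda=d$). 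This is precisely why the induced-module structure, rather than blunt integration, must carry the proof.
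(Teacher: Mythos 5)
Your proposal is correct, and it takes a more structural route than the paper's own proof. The paper argues by hand: it writes down the candidate section $\gamma(z\o h):=h_{(2)}\cdot\gamma'(S^{-1}(h_{(1)})z\o 1)$ (your $s$ is its $\gamma'$) and verifies $A$-linearity, $H$-linearity, and $\beta\circ\gamma=\Id_{Z\o H}$ by three Sweedler computations. You instead recognize $Z\o H$ as the induced module $B\o_A\Res(Z)$ and conclude by relative projectivity via the induction--restriction adjunction; that part of your argument (well-definedness and $B$-linearity of $\sigma'$, and $\beta'\sigma'=\Id$) is complete and airtight. The one step you leave as a plan --- the isomorphism $\Phi$ --- does exist and is standard for smash products (see Montgomery \cite{Mo}, cited in the paper): one may take
$$\Phi\colon B\o_A\Res(Z)\lra Z\o H,\qquad (a\# h)\o z\mapsto \left((a\# h_{(1)})\cdot z\right)\o h_{(2)},$$
with inverse $z\o h\mapsto (1\# h_{(2)})\o S^{-1}(h_{(1)})\cdot z$; checking that $\Phi$ is well defined over $A$, $B$-linear, and bijective is a Sweedler computation of essentially the same length and type as the paper's check of $\gamma$, using exactly the antipode identities you cite. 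So your route does not save computational work; it relocates it into a prior, reusable fact, and in exchange it explains \emph{why} the lemma holds: $Z\o H$ is induced from $A$, hence relatively $(B,A)$-projective, so any $A$-split $B$-epimorphism onto it splits over $B$. Note also that unwinding $\sigma=\sigma'\circ\Phi^{-1}$ through the formulas above gives precisely $\sigma(z\o h)=h_{(2)}\cdot s(S^{-1}(h_{(1)})z\o 1)$, i.e.\ the paper's map; the two proofs produce the same section. Finally, your closing remark is well taken: averaging $s$ against the integral only yields $\epsilon(\Lambda)\cdot\Id$, which vanishes in the cases of interest (e.g.\ $H=\Bbbk[d]/(d^2)$), and this is exactly why the untwisting by $S^{-1}(h_{(1)})$, rather than integration, appears in both arguments.
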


\begin{proof}
Let $\gamma^\prime:Z\o H \lra C$ be a section of $\beta$ as a map of $A$-modules,
so that $\beta\circ \gamma^\prime=\Id_{Z\o H}$. Define
$$\gamma:Z\o H \lra C, \ \ \ \ z\o h \mapsto h_{(2)}\gamma(S^{-1}(h_{(1)})z\o 1).$$
Then we claim that $\gamma$ is a section of $\beta$ in $B\dmod$.

To prove the claim, we first show that $\gamma$ is $A$-linear. For any $a\in A$ and $z\o h\in Z\o H$, we have
\[
\begin{array}{rl}
\gamma(az\o h)& \hspace{-0.1in} = h_{(2)}\gamma^\prime(S^{-1}(h_{(1)})(az) \o 1) = h_{(3)}\gamma^\prime((S^{-1}(h_{(2)})a)( S^{-1}(h_{(1)})z)\o 1)\vspace{0.1in}\\
&\hspace{-0.1in} =  h_{(3)}((S^{-1}(h_{(2)})a)\gamma^\prime(( S^{-1}(h_{(1)})c)\o 1))\vspace{0.1in}\\
&\hspace{-0.1in} =  h_{(3)}(S^{-1}(h_{(2)})a) h_{(4)}(\gamma^\prime( S^{-1}(h_{(1)})z\o 1))\vspace{0.1in}\\
&\hspace{-0.1in} =  (\epsilon(h_{(2)})a) h_{(3)}(\gamma^\prime(S^{-1}(h_{(1)})z\o 1))
 =  a h_{(2)}\gamma^\prime(S^{-1}(h_{(1)})z\o 1)
 =  a \gamma(c\o h),
\end{array}
\]
with the third equality holding because $\gamma^\prime$ is $A$-linear.

Then we show that it is $H$-linear as well. If $l\in H$, $z\o h \in Z\o H$, then
\[
\begin{array}{rl}
\gamma(l(z\o h))&\hspace{-0.1in} = \gamma(l_{(1)}z\o l_{(2)}h) = l_{(3)}h_{(2)}\gamma^\prime(S^{-1}(l_{(2)}h_{(1)})(l_{(1)}z)\o 1)\vspace{0.1in}\\
&\hspace{-0.1in} =l_{(3)}h_{(2)}\gamma^\prime(S^{-1}(h_{(1)})S^{-1}(l_{(2)})l_{(1)}z\o 1)= l_{(2)}h_{(2)}\gamma^\prime(S^{-1}(h_{(1)})\epsilon(l_{(1)})z\o 1)\vspace{0.1in}\\
&\hspace{-0.1in} = lh_{(2)}\gamma^\prime(S^{-1}(h_{(1)})z\o 1)=l\gamma(z\o h).
\end{array}
\]
Finally, we show that $\gamma$ is a $B$-module section of $\beta$. Take $z\o h\in Z\o H$, we have
\[
\begin{array}{rl}
\beta(\gamma(z\o h))&\hspace{-0.1in} =\beta(h_{(2)}\gamma^\prime(S^{-1}(h_{(1)})z\o 1))= h_{(2)}\beta\gamma^\prime(S^{-1}(h_{(1)})z\o 1)) =h_{(2)}(S^{-1}(h_{(1)}z\o 1)) \vspace{0.1in}\\
&\hspace{-0.1in} = h_{(2)}S^{-1}h_{(1)}z\o h_{(3)} = \epsilon(h_{(1)})z\o h_{(2)}= z\o h,
\end{array}
\]
where in the third equality, we used that $\beta$ is $H$-linear. The claim follows.
\end{proof}

Following Happel \cite[Section 2.7]{Ha}, we describe the class of distinguished triangles in the
derived category $\mc{D}(A,H)$.

After localization, any short exact sequence of $B$-modules, not necessarily $A$-split, will lead to a
distinguished triangle in $\mc{D}(A,H)$, as below. Let
$$\xymatrix{0\ar[r]&X\ar[r]^u&Y\ar[r]^v&Z\ar[r]&0}$$
be a short exact sequence of $B$-modules. Then, similar as in the proof of Lemma
\ref{lemma-split-ses-lead-to-dt}, there is a distinguished triangle in $\mc{C}(A,H)$,
$$X\lra Y \lra C_u \lra T(X),$$
coming from the diagram $(\star)$, and $C_u$ fits into a short exact sequence of $B$-modules
$$0\lra X\o H\lra C_u \lra Z\lra 0.$$
By Proposition \ref{prop-basic-property-H-mod} shows that $X\o H$, as an $H$-module, is projective
and injective. It follows that $\overline{v}:C_u\lra Z$ is a quasi-isomorphism which becomes invertible
in the derived category. Therefore we obtain a distinguished triangle
$$X\stackrel{\underline{u}}{\lra} Y \stackrel{\underline{v}}{\lra} Z\stackrel{\underline{w}}{\lra} T(X),$$
where $\underline{w}$ is taken to be the composition of $(\overline{v})^{-1}$ by $C_u\lra TX$.

\begin{lemma}\label{ses-lead-to-dt}In the same notation as in the above
discussion, given any short exact sequence of $B-$modules $0\lra X
\lra Y \lra Z \lra 0$,
$$\xymatrix{X \ar[r]^{\underline{u}} & Y \ar[r]^{\underline{v}} & Z
 \ar[r]^-{\underline{w}}& T(X)}$$
is a distinguished triangle in $\mc{D}(A,H)$. Conversely, any
distinguished triangle in $\mc{D}(A,H)$ is isomorphic to one that
arises in this way. \hfill$\square$
\end{lemma}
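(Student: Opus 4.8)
The plan is to handle the two assertions separately. The forward implication has, in essence, already been assembled in the paragraph preceding the statement, so I would mainly organize those steps and isolate the one genuine input; the converse I would deduce formally from the construction of $\mc{D}(A,H)$ as a localization together with Lemma~\ref{lemma-split-ses-lead-to-dt}.

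For the forward direction, given a short exact sequence $0\lra X\xrightarrow{u} Y\xrightarrow{v} Z\lra 0$ of $B$-modules, I would first form the cone $C_u$ of $u$, obtaining---just as in diagram $(\star)$ of the proof of Lemma~\ref{lemma-split-ses-lead-to-dt}, which does not use any splitting---the standard distinguished triangle $X\xrightarrow{\underline{u}} Y\lra C_u\lra TX$ in $\mc{C}(A,H)$ and the short exact sequence of $B$-modules $0\lra X\o H\lra C_u\xrightarrow{\overline{v}} Z\lra 0$. The crux is to verify that $\overline{v}$ is a quasi-isomorphism, i.e.\ that $\underline{\Res}(\overline{v})$ is an isomorphism in $H\udmod$. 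To see this I would restrict the last short exact sequence to $H$-modules and use that it descends to a distinguished triangle $X\o H\lra C_u\lra Z\lra T(X\o H)$ in $H\udmod$. By part~1 of Proposition~\ref{prop-basic-property-H-mod}, $X\o H\cong X_0\o H$ is a free, hence injective and projective, $H$-module, so it is a zero object of $H\udmod$; therefore $C_u\lra Z$ is an isomorphism in $H\udmod$, which is exactly the statement that $\overline{v}$ is a quasi-isomorphism. Hence $\overline{v}$ becomes invertible in $\mc{D}(A,H)$, and transporting $X\xrightarrow{\underline{u}} Y\lra C_u\lra TX$ along $(\overline{v})^{-1}$ produces the asserted triangle $X\xrightarrow{\underline{u}} Y\xrightarrow{\underline{v}} Z\xrightarrow{\underline{w}} TX$, with $\underline{w}$ the composite of $(\overline{v})^{-1}$ and $C_u\lra TX$.

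For the converse, I would invoke the standard description of distinguished triangles in a localized triangulated category: since $\mc{D}(A,H)$ is the localization of $\mc{C}(A,H)$ at the localizing class of quasi-isomorphisms (Theorem~\ref{derived-stable-categories}), every distinguished triangle of $\mc{D}(A,H)$ is isomorphic to the image, under the localization functor, of a distinguished triangle of $\mc{C}(A,H)$. By the converse half of Lemma~\ref{lemma-split-ses-lead-to-dt}, that triangle in $\mc{C}(A,H)$ is itself isomorphic to one coming from an $A$-split short exact sequence of $B$-modules; and an $A$-split short exact sequence is a fortiori a short exact sequence, so its associated triangle is of the type produced in the forward direction. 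This gives the converse.

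The routine points I would not belabor are the identification of $C_u$ and $\overline{v}$ out of the defining pushout and the bookkeeping of $\underline{w}$, both already in place before the statement. The step demanding the most care---and the nearest thing to an obstacle---is the appeal made in the converse to the fact that the distinguished triangles of the localized category are precisely the images up to isomorphism of those of $\mc{C}(A,H)$; this is the standard but not entirely formal content of Verdier localization, and it is legitimized here by Theorem~\ref{derived-stable-categories}. Everything else is a direct consequence of the cited results.
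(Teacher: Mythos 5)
Your proposal is correct and follows essentially the same route as the paper: the forward direction is exactly the paper's argument (form $C_u$, use the short exact sequence $0\lra X\o H\lra C_u\lra Z\lra 0$ and the fact that $X\o H$ is projective and injective as an $H$-module to see that $\overline{v}$ is a quasi-isomorphism, then invert it in $\mc{D}(A,H)$), and the converse via Verdier localization plus the converse half of Lemma~\ref{lemma-split-ses-lead-to-dt} is precisely the argument the paper leaves implicit.
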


\subsection{Examples}
As an immediate application of the above construction, we calculate
the Grothendieck groups ($K_0$) of the stable categories $H\udmod$
($H\udgmod$) where $H$ is among the examples we gave in
\ref{eg-hopf-algebras}. Note that in our notation, $H\udmod\cong
\mc{C}(\Bbbk, H)\cong \mc{D}(\Bbbk, H)$. Recall that $K_0(H\udmod)$
($K_0(H\udgmod)$) is the abelian group generated by the symbols
$[X]$, where $[X]$'s are isomorphism classes of finite dimensional
objects in $H\udmod$ ($H\udgmod$), modulo the relations
$[Y]=[X]+[Z]$ whenever $X\lra Y\lra Z \lra T(X)$ is a distinguished
triangle in $H\udmod$ ($H\udgmod$). More general discussion about
the Grothendieck groups of $\mc{D}(A,H)$ will be given in Section 2.6.

As a matter of notation, for any graded module $X$ over some graded
ring, we will denote by $X\{r\}$ the same underlying module but with
its grading shifted up by $r$.

\begin{itemize}
\item Let $H$ be the exterior algebra $\mathbf{\Lambda}^*V$ on an
$(n+1)$-dimensional vector space $V$ over $\Bbbk$, where we set
non-zero elements of $V$ to be of degree one. Then $H$ is a graded
Hopf super algebra and we will calculate $K_0(H\udgmod)$. Since $H$
is local with the maximal ideal $\mathbf{\Lambda}^{>0}V$, there is
only one simple $H$-module
$\Bbbk_0:=(\mathbf{\Lambda}^{*}V)/(\mathbf{\Lambda}^{>0}V)$ up to a
grading shift. Therefore $K_0(H\udgmod)$ is generated as a
$\Z[q,q^{-1}]$ module by $[\Bbbk_0]$, where
$q[\Bbbk_0]:=[\Bbbk_0\{1\}]$. Again since $H$ is local and thus
indecomposable as a left module over itself, the only relation
imposed on $[\Bbbk_0]$ comes from $H$ being the iterated extension
of the shifted simple module $\Bbbk_0$:
$$0\subset \mathbf{\Lambda}^{n+1}V \subset \cdots \subset \mathbf{\Lambda}
^{\geq k}V \subset \mathbf{\Lambda}^{\geq k-1}V \subset \cdots
\subset \mathbf{\Lambda}^{\geq 0}V=H,$$ where
$\mathbf{\Lambda}^{\geq k}V/\mathbf{\Lambda}^{\geq k+1}V \cong
(\Bbbk_{0}\{k\})^{\oplus {n+1\choose k}}$. Hence using Lemma
\ref{lemma-split-ses-lead-to-dt} inductively, we get
$$0=[H]=\sum_{k=0}^{n+1}{n+1 \choose k}q^k[\Bbbk_0]
=(1+q)^{n+1}[\Bbbk_0].$$ Therefore it follows that
$$K_0(H\udgmod)\cong \Z[q]/((1+q)^{n+1}).$$
This ring is isomorphic to the cohomology ring of the projective
space $\mathbb{P}(V)$, and this is no coincidence. In fact there is
an equivalence of triangulated categories $H\udgmod \cong
\mc{D}^b(\textrm{Coh}(\mathbb{P}(V)))$, the bounded derived category
of coherent sheaves on $\mathbb{P}(V)$ (see \cite[Section IV.3]{GM}
for the details).
\item Consider the graded Hopf algebra
$H=\Bbbk[\partial]/(\partial^p)$, where $\Bbbk$ is of positive
characteristic $p$. As shown in \cite[Section 3]{Kh},
$K_0(H\udgmod)$ is again generated by the graded simple one
dimensional module $\Bbbk_0:=H/(\partial)$, subject to the only
relation
$$0=[H]=[\Bbbk_0]+q[\Bbbk_0]+\cdots+q^{p-1}[\Bbbk_0].$$
Therefore the Grothendieck group $K_0(H\udgmod)\cong
\Z[q,q^{-1}]/(1+q+\cdots+q^{p-1})\cong \Z[\zeta]$, the ring of $p$-th
cyclotomic integers ($\zeta$, being the image of $q$, is a primitive
$p$-th root of unity). If we forget about the grading, the same
reasoning above gives us $K_0(H\udmod)\cong \Z/p\Z$, the field of
$p$ elements. It was this observation that lead Khovanov to initiate
the program of categorification at certain roots of unity. See
\cite{Kh} for more details about the motivation.
\item Let $H=H_n$ be the Taft algebra as in Example \ref{eg-hopf-algebras}.
Inverting Majid's bosonization process \cite{Maj}, one can identify
the category of $H_n$-modules with the category whose objects are
$\Z/n\Z$-graded $\Q[\zeta]$-vector spaces $\oplus_{i=0}^{n-1}V_i$,
together with a map $d:V_i\lra V_{i+1}$ of degree $1$ such that
$d^n=0$, and morphisms are homogenous degree zero maps of graded
vector spaces commuting with $d$. Under this identification, it is
readily seen that the indecomposable projective modules are
precisely the shifts of the module
$$P_0:=(\Q[\zeta] \stackrel{\cdot 1}{\lra} \Q[\zeta] \stackrel{\cdot 1}{\lra}
\cdots \stackrel{\cdot 1}{\lra} \Q[\zeta]),$$ where there are $n$
terms of $\Q$ and the starting term sits in degree zero. The
simple modules are the grading shifts of the one dimensional module
$\Q[\zeta]_0:=\Q[\zeta]$, with $d$ acting as zero. Using the same
argument as above, we see that $K_0(H_n\udmod)$ is generated as an
$\Z[q, q^{-1}]/(q^n-1)$-module by $[\Q[\zeta]_0]$ subject to the
only relation
$$0=[P_0]=[\Q[\zeta]_0]+q[\Q[\zeta]_0]+\cdots +q^{n-1}[\Q[\zeta]_0],$$
and thus $K_0(H_n\udmod)\cong \Z[q]/(1+q+\cdots+q^{n-1})$. In
particular, when $n=p$, this gives rise to a characteristic zero
categorification of the rings of the $p$-th cyclotomic integers.
\end{itemize}


\section{Morphism spaces} In this section we further
analyze the $\Hom$-spaces introduced previously for the
categories $B\dmod$ and $\mc{C}(A,H)$. We will see that they are in
fact the spaces of $H$-invariants of some naturally enriched
$\Hom$-spaces that we will introduce in this section.

\subsection{The Hopf module Hom}
As before, we assume that $H$ is a finite dimensional (graded) Hopf
algebra over $\Bbbk$, or more generally, a finite dimensional
Hopf-algebra object in some $\Bbbk$-linear symmetric monoidal
category (for an example of such an object, take a graded super Hopf
algebra in the category of graded super vector spaces). Throughout
we will continue with the assumption that $A$ is a left $H$-module
algebra and the notation $B=A\#H$ (see the main example
\ref{the-main-example}).

\begin{defn}\label{def-hom} Let $M$, $N$ be $B$-modules. The
vector space $\Hom_{A}(M, N)$ becomes an $H$-module by defining for
any $f\in \Hom_{A}(M,N)$, $m\in M$, and $h\in H$
$$(h \cdot f)(m):=\sum h_{(2)}f(S^{-1}(h_{(1)})m).$$
When $A$ and $H$ are $\Z-$graded and $M$, $N$ are graded modules, we
define the enriched $\HOM_{A}$ space to be
$$\HOM_{A}(M,N)=\bigoplus_{r\in \Z}\Hom_A(M, N\{r\}),$$
where $N\{r\}$ denotes the same underlying $A$-module $N$ with
grading shifted up by $r$, and the $\Hom$ space on the right hand
side stands for the space of degree preserving maps of graded
$A-$modules. The graded $H-$module structure on $\HOM_A(M,N)$ is
given by the same formula for homogeneous elements in $H$ as that in
the ungraded case above.
\end{defn}

It is readily seen that when $M=A$, we have $\Hom_{A}(A,N)\cong N,$
and in the graded case, $\HOM_{A}(A,N)\cong N$, both as (graded)
$H$-modules.

\subsection{The space of chain maps}
The newly defined $H-$module $\Hom_{A}(M,N)$ (resp. graded
$H-$module $\HOM_{A}(M,N)$ in the graded case) for any hopfological
modules $M$, $N$ is closely related to the $\Hom$ spaces in the
abelian category $B\dmod$ and the homotopy category $\mc{C}(A,H)$.
We clarify this relation in this section. We will mostly consider
the ungraded case, as the graded case follows by similar arguments.

To avoid potential confusion, we will denote the abstract one
dimensional trivial $H$-module by $\Bbbk_0$, i.e. $\Bbbk_0 \cong
\Bbbk\cdot v_0$, where for any $h\in H$
$$h\cdot v_0= \epsilon(h)v_0.$$
When $H$ is graded, we let $v_0$ be homogeneous of degree zero.

\begin{lemma} \label{lemma-trivial-submodules}
Let $M$, $N$ be hopfological modules over $B$. Any $f \in
\Hom_{B}(M, N)$, regarded as an element in $\Hom_A(M,N)$, spans a
trivial submodule of $H$, i.e. for all $ h \in H$,
$$h\cdot f = \epsilon(h) f.$$ Conversely, any $f \in
\Hom_A(M,N)$ on which $H$ acts trivially extends to a $B$-module
homomorphism. In other words, we have a canonical isomorphism of
$\Bbbk$-vector spaces:
$$\Hom_B(M,N)=\Hom_{H}(\Bbbk_0, \Hom_A(M,N)).$$
\end{lemma}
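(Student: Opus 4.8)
The plan is to reduce the asserted equality to two elementary observations and one genuine computation. First, an $H$-module map $\phi:\Bbbk_0\lra \Hom_A(M,N)$ is completely determined by its value $f:=\phi(v_0)\in\Hom_A(M,N)$, and since $hv_0=\epsilon(h)v_0$, the $H$-linearity of $\phi$ is equivalent to the single condition $h\cdot f=\epsilon(h)f$ for all $h\in H$. Thus $\Hom_H(\Bbbk_0,\Hom_A(M,N))$ is canonically the subspace of $H$-invariants of $\Hom_A(M,N)$, and the lemma amounts to showing that an $A$-linear map $f$ is $B$-linear precisely when $h\cdot f=\epsilon(h)f$ for every $h$. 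Second, I would note that $B=A\#H$ is generated as a $\Bbbk$-algebra by the subalgebras $A\o 1$ and $1\o H$ (the latter is a subalgebra because $(1\o h)(1\o l)=1\o hl$, using $h_{(1)}\cdot 1=\epsilon(h_{(1)})1$, and the whole of $B$ is recovered from $a\o h=(a\o 1)(1\o h)$), and that the action of $1\o H$ on any $B$-module is exactly the $H$-module structure entering Definition \ref{def-hom}. Hence a $\Bbbk$-linear $f:M\lra N$ is $B$-linear if and only if it is simultaneously $A$-linear and $H$-linear in the sense $f(hm)=hf(m)$. Everything therefore comes down to matching the two conditions ``$f$ is $H$-linear'' and ``$h\cdot f=\epsilon(h)f$''.

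The forward implication is immediate. If $f$ is $H$-linear, then from the action formula $(h\cdot f)(m)=\sum h_{(2)}f(S^{-1}(h_{(1)})m)$ of Definition \ref{def-hom} I get
$$(h\cdot f)(m)=\sum h_{(2)}f(S^{-1}(h_{(1)})m)=\sum h_{(2)}S^{-1}(h_{(1)})f(m)=\epsilon(h)f(m),$$
using the antipode identity $h_{(2)}S^{-1}(h_{(1)})=\epsilon(h)$ recalled in Section 2. So a $B$-linear $f$ is in particular $A$-linear and $H$-invariant.

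The converse is the only step that requires real work, and it is where I expect the (modest) main obstacle to lie: recovering $H$-linearity from invariance. Here I would feed the invariance relation $\sum h_{(2)}f(S^{-1}(h_{(1)})m)=\epsilon(h)f(m)$ back into itself, applying it with the element $h_{(2)}$ in place of $h$ and with $h_{(1)}m$ in place of $m$, and then summing over the coproduct of a single $h$. The right-hand side collapses to $\sum\epsilon(h_{(2)})f(h_{(1)}m)=f(hm)$, while by coassociativity the left-hand side can be written as $\sum h_{(3)}f(S^{-1}(h_{(2)})h_{(1)}m)$; regrouping the inner two legs $h_{(1)}\o h_{(2)}$ as the coproduct of a single element and applying $S^{-1}(h_{(2)})h_{(1)}=\epsilon(h)$ turns it into $\sum h_{(2)}\epsilon(h_{(1)})f(m)=hf(m)$. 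This yields $f(hm)=hf(m)$, so an $A$-linear invariant $f$ is $H$-linear, hence commutes with the actions of both $A\o 1$ and $1\o H$ and therefore with all of $B$. Combined with the identification of the first paragraph, this establishes the canonical isomorphism $\Hom_B(M,N)=\Hom_H(\Bbbk_0,\Hom_A(M,N))$.
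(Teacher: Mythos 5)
Your proposal is correct and follows essentially the same route as the paper: the forward direction is the identical computation via $h_{(2)}S^{-1}(h_{(1)})=\epsilon(h)$, and your converse — substituting the invariance relation at $h_{(2)}$ and $h_{(1)}m$ and summing — is the same chain of equalities the paper writes starting from $f(h\cdot m)=\epsilon(h_{(2)})f(h_{(1)}\cdot m)$. The only difference is that you spell out the steps the paper dismisses as "clear" (that $B$ is generated by $A\o 1$ and $1\o H$, and that $\Hom_H(\Bbbk_0,-)$ picks out invariants), which is harmless.
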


\begin{proof}
Since $B$ contains $H$ as a subalgebra, $f$ is $H-$linear. Therefore
for any $h\in H$, $m\in M$, we have
$$
(h\cdot f)(m)  =  h_{(2)} f(S^{-1}(h_{(1)})\cdot m) =
h_{(2)}S^{-1}(h_{(1)})f(m) = \epsilon(h)f(m).
$$
For the converse, it suffices to see that $f$ is $H-$linear:
$$
\begin{array}{rcl}
f(h\cdot m) & = &\epsilon(h_{(2)})f(h_{(1)}\cdot m) = (h_{(2)}\cdot
f)(h_{(1)}\cdot m) =  h_{(3)}\cdot f
(S^{-1}(h_{(2)})\cdot h_{(1)}\cdot m)\\
& = &
 h_{(2)} \cdot f (\epsilon{(h_{(1)})}m)  =  h \cdot f(m).
\end{array}$$
This finishes the proof of the first part of the lemma. The last
claim is clear.
\end{proof}

The right hand side of the canonical identification in the lemma
involves taking $H$-invariants, of which we now recall the
definition.

\begin{defn}\label{def-z-zero} For any $H$-module $V$,
its \emph{space of $H$-invariants}, denoted $\mc{Z}(V)$, is
defined to be the $\Bbbk$-vector space (in fact an $H$-submodule):
$$\mc{Z}(V):=\Hom_{H}(\Bbbk_0,V)\cong\{v\in V| h\cdot v = \epsilon
(h)v, \forall h \in H\}\cong V^H.$$ Likewise, when $H$ and $V$ are
graded, we define the \emph{total space of homogeneous
$H$-invariants} $\mc{Z}^*(V)$ to be the graded $\Bbbk$-vector space
$$\mc{Z}^*(V):=\HOM_{H\dgmod}(\Bbbk_0,V)\cong V^H.$$
Moreover, in the graded case, the \emph{subspace of homogeneous
degree $n$ invariants} is defined to be the homogeneous degree $n$
part of $\mc{Z}^*(V)$.
$$\mc{Z}^n(V):=\{v \in V|~\textrm{deg}(v)=n,
~h\cdot v = \epsilon (h)v,~\forall~h \in H\},$$ so that
$\mc{Z}^*(V)=\oplus_{n\in \Z}\mc{Z}^n(V)$.
\end{defn}

In this notation, we can interpret the subspace of $H$-invariants in
$\Hom_A(M,N)$ as the analogous notion  of ``the space of chain
maps'' in the DG case between two hopfological modules $M$, $N$.
Indeed, the above lemma says that
$$\Hom_B(M,N)\cong \mc{Z}(\Hom_A(M,N))=\{f \in \Hom_A(M,N)|~
h\cdot f = \epsilon (h)f, \forall h \in H\},$$ and allows us to
realize the bifunctor $\Hom_{B}(-,-)$ as the composition of functors
$$
\begin{array}{ccccc}
B\dmod \times B\dmod & \lra & H\dmod & \lra & \vect\\
 (M, N) & \mapsto  & \Hom_A(M,N) & \mapsto & \mc{Z}(\Hom_A(M,N)),
\end{array}
$$
where $\vect$ stands for the category of
$\Bbbk$-vector spaces. From now on, we will refer to
$\mc{Z}(\Hom_A(M,N))=\Hom_{B}(M,N)$ as the \emph{space of chain
maps} between the two hopfological modules $M$ and $N$.

This immediately raises the related question: What's the analogue of
the \emph{space of chain maps up to homotopy}?

\subsection{The space of chain maps up to homotopy}
Our main goal in this section is to exhibit and explain the
following commutative diagram:
$$
\xymatrix{ B\dmod\times B\dmod\ar[rr]^-{\Hom_A(-,-)}\ar[rrdd]
\ar@{..>}@/_4.6pc/[ddrrrr]_(.25){\Hom_{\mc{C}(A,H)}(-,-)}&&
H\dmod \ar[dd]^{Q} \ddrruppertwocell^{\mc{Z}}{<3>{\pi}}  \\
&& && \\
&& H\udmod \ar[rr]^{\mc{H}} && \vect.\\
&& && &&}
$$
Here $Q$ is the natural localization (Verdier quotient) functor, the slanted
arrow on the left is the composition of $Q$ with $\Hom_A(-,-)$, and
$\mc{H}$ is the functor of taking ``stable invariants'' (see
Definition \ref{def-h-0}). We put $\pi$ on a double arrow to
indicate that it is a natural transformation of the two functors
$\pi:\mc{Z} \Rightarrow \mc{H}\circ Q$. As in the usual DG case,
$\pi$ will play the role of passing from the space of cocycles to
cohomology, and we will be more precise about its definition after
the next lemma. The composition of $\mc{Z}$ with $\Hom_A(-,-)$ gives
the bifunctor $\Hom_B(-,-)$, while the functor $\mc{H}\circ Q \circ
\Hom_A(-,-)$ (we will omit $Q$ when no confusion could arise) is
just the previously defined $\Hom_{\mc{C}(A,H)}(-,-)$ of the
homotopy category, which is labeled as the dotted arrow. Therefore,
we can roughly summarize the diagram as saying that, the functor
$\mc{Z}$ of taking $H$-invariants descends to a functor $\mc{H}$ on
the stable category $H\udmod$ (this explains the terminology we use
for $\mc{H}$), and the space of stable invariants
$\mc{H}(\Hom_A(M,N))$ computes the ``chain maps up to homotopy'',
which turns out to be the same as the hom space from $M$ to $N$ in
the homotopy category $\mc{C}(A,H)$.

To do this, we first need to take a closer look at the ideal of
null-homotopic morphisms in $B\dmod$. By the definition of
null-homotopy in $B\dmod$ (see Definition
\ref{def-null-homotopy-category-of-B} and Lemma
\ref{lemma-characterizing-null-homotopy}), to construct
$\Hom_{\mc{C}(A,H)}(M,N)$, we need to mod out $\Hom_B(M,N)$ by the
subspace of morphisms that factor through the natural inclusion map
$M \stackrel{\lambda_M}{\lra} M\o H$. Denote this subspace by
$I(M,N)$. Now let us look at its preimage in $\Hom_A(M,N)$ under the
isomorphism of Lemma \ref{lemma-trivial-submodules}.

\begin{lemma}\label{lemma-ideal-null-homotopy}
Under the canonical isomorphism of Lemma
\ref{lemma-trivial-submodules}, for any two hopfological modules $M$
and $N$, the space $I(M,N)$ of null-homotopic morphisms in
$\Hom_B(M,N)$ is naturally identified with
$$I(M,N)\cong \Lambda\cdot \Hom_A(M,N),$$ where the right hand
side is regarded as a $\Bbbk$-subspace of $\mc{Z}(\Hom_A(M,N))$.
A similar result holds in the graded case as well.
\end{lemma}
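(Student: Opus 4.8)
The plan is to understand the subspace $I(M,N) \subset \Hom_B(M,N)$ concretely and match it, under the isomorphism $\Hom_B(M,N) \cong \mc{Z}(\Hom_A(M,N))$ of Lemma \ref{lemma-trivial-submodules}, with the subspace $\Lambda \cdot \Hom_A(M,N)$. By Lemma \ref{lemma-characterizing-null-homotopy}, a morphism $f \in \Hom_B(M,N)$ is null-homotopic precisely when it factors through $\lambda_M: M \xrightarrow{\Id_M \o \Lambda} M \o H$. So I would start by writing down what such a factorization means: $f = g \circ \lambda_M$ for some $B$-module map $g: M \o H \lra N$. The key is then to translate this $B$-linear factorization into an assertion purely about $\Hom_A(M,N)$ and the action of the integral $\Lambda$.

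First I would analyze the right-hand side. Given any $A$-linear map $s \in \Hom_A(M,N)$, the element $\Lambda \cdot s$ lies in $\mc{Z}(\Hom_A(M,N))$: indeed, for any $h \in H$ we have $h \cdot (\Lambda \cdot s) = (h\Lambda)\cdot s = \epsilon(h)\Lambda \cdot s$ by the defining property $h\Lambda = \epsilon(h)\Lambda$ of the left integral, so $\Lambda \cdot \Hom_A(M,N)$ is genuinely a subspace of the $H$-invariants, as claimed. This shows the right-hand side makes sense. Then I would produce the two inclusions. For the inclusion $\Lambda \cdot \Hom_A(M,N) \subseteq I(M,N)$, starting from an arbitrary $s \in \Hom_A(M,N)$ I would exhibit an explicit $B$-module map $g: M \o H \lra N$ with $g \circ \lambda_M = \Lambda \cdot s$; the natural candidate is built from $s$ using the comodule/module compatibility, along the lines of the sections constructed in Lemmas \ref{prop-basic-property-H-mod} and \ref{lemma-A-summand-as-B-summand}, namely a map of the form $m \o h \mapsto h_{(2)} \cdot s(S^{-1}(h_{(1)})\cdot m)$ or a similar Sweedler expression, whose $B$-linearity is checked by the same antipode-counit manipulations used there. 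Composing with $\lambda_M$ (i.e. setting $h = \Lambda$ and using $\Delta(\Lambda)$) should recover $\Lambda \cdot s$ up to the integral property.

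For the reverse inclusion $I(M,N) \subseteq \Lambda \cdot \Hom_A(M,N)$, I would take a null-homotopic $f = g \circ \lambda_M$ with $g: M \o H \lra N$ a $B$-map, and reconstruct it as $\Lambda \cdot s$ for an appropriate $A$-linear $s$ extracted from $g$; the natural choice is $s(m) := g(m \o 1)$ or $s(m) := g(m \o h_0)$ for a suitable element, after which $B$-linearity of $g$ forces $f = \Lambda \cdot s$ via the integral identity and the fact that $\Lambda$ generates the relevant one-dimensional space of invariants in $H$. The main obstacle I anticipate is bookkeeping the antipode $S^{-1}$ and the comultiplication in these Sweedler computations so that the candidate maps are genuinely $B$-linear (not merely $A$-linear) and so that the two constructions are mutually inverse on the nose; this is where the property $h_{(2)}S^{-1}(h_{(1)}) = \epsilon(h)$ and the left-integral property $h\Lambda = \epsilon(h)\Lambda$ must be deployed carefully, exactly as in the proof of Lemma \ref{lemma-A-summand-as-B-summand}. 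Finally, the graded case follows verbatim by replacing $\Hom_A$ with $\HOM_A$ and tracking degrees, so I would simply remark that the same argument applies.
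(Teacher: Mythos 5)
Your proposal follows essentially the same route as the paper's proof: the extension map you propose, $m \o h \mapsto h_{(2)}\cdot s(S^{-1}(h_{(1)})\cdot m)$, is precisely the paper's $\tilde g(m\o h):=(h\cdot s)(m)$, and your reverse direction (restricting a $B$-linear $g$ to $s(m):=g(m\o 1)$ and using $B$-linearity plus the antipode--counit identities to recover $f=\Lambda\cdot s$) is exactly the paper's argument. The only work left implicit is the Sweedler verification of $B$-linearity, which you correctly locate and which the paper carries out in full.
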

\begin{proof}That $\Lambda\cdot \Hom_A(M,N)$ is contained in
$\mc{Z}(\Hom_A(M,N))$ follows easily from the left integral property
$h\cdot \Lambda=\epsilon(h)\Lambda$. We need to show that, if $f\in
\mc{Z}(\Hom_A(M,N))\cong \Hom_B(M,N)$ satisfies $f=\Lambda\cdot g$
for some $g\in \Hom_A(M,N)$, then $f$ is null-homotopic as a
$B$-module map, i.e. it factors through as $f:
M\stackrel{\lambda_M}{\lra} M\o H \stackrel{\tilde g}{\lra}N$ for
some $B$-module map $\tilde g$, and vice versa. To do this, we
extend $g$ to be a $B$-module map $\tilde g :M\o H\lra N$, by
setting $\tilde g (m \o h):=(h \cdot g)(m)$. This map $\tilde g$ is
$H$-linear since for any $h,l \in H$ and $m\in M$
$$\begin{array}{rcl}
\tilde g (h \cdot (m \o l))& = & \tilde g (h_{(1)} \cdot m\o h_{(2)}l)
 =  (h_{(2)}l\cdot g)(h_{(1)}\cdot m)\\
& = &  h_{(3)}l_{(2)}g(S^{-1}(h_{(2)}l_{(1)})\cdot h_{(1)}\cdot m) =
h_{(3)}l_{(2)}g(S^{-1}(l_{(1)})S^{-1}(h_{(2)})h_{(1)}\cdot m)\\
& = & h_{(2)}l_{(2)}g(\epsilon(h_{(1)})S(l_{(1)})\cdot m) =
h(l_{(2)}g(S^{-1}(l_{(1)})\cdot m))\\
& = & h((l\cdot g)(m))  =  h \tilde g (m\o l).
\end{array}$$

 Conversely, given an $f \in \Hom_{B}(M,N)= \Hom_{A}(M,N)^H$
which is null-homotopic, we need to exhibit a $g\in \Hom_A(M,N)$ so
that $f=\Lambda\cdot g$. The hint is to reverse the above
equalities and define $g$ to be the composition $$g: M \cong M\o 1
\hookrightarrow M \o H \stackrel{\tilde g}{\lra} N.$$ This is only
an $A$-module map, since the first identification is only an
$A$-linear. Then, for any $h\in H$, $m \in M$, we have
$$\begin{array}{rcl}
\tilde g(m\o h) & = & \tilde g(\epsilon(h_{(1)}) m \o h_{(2)}) =
  \tilde g(h_{(2)}S^{-1}(h_{(1)})\cdot m \o h_{(3)})\\
& = & \tilde g(h_{(2)}\cdot(S^{-1}(h_{(1)})\cdot m \o 1))  =
  h_{(2)} \tilde g (S^{-1}(h_{(1)})\cdot m \o 1) \\
& = & h_{(2)} g (S^{-1}(h_{(1)})\cdot m) =  (h \cdot g)(m),
\end{array}$$
where the fourth equality uses that $\tilde g$ is $H$-linear by
assumption, and the fifth equality holds by definition of $g$. Now
the lemma follows since $f(m)={\tilde g}(\lambda_M(m))={\tilde g}
(m \o \Lambda)=(\Lambda\cdot g)(m)$.
\end{proof}

In particular, when $A=\Bbbk$, we obtain an explicit way of
computing morphism spaces in the category $\mc{C}(\Bbbk,H)=H\udmod$.

\begin{cor}\label{cor-morphism-space-in-stable-H-mod}Let $H$ be a
finite dimensional Hopf algebra over $\Bbbk$. The morphism space of
two $H$-modules $M$, $N$ in the stable category $H\udmod$ is
canonically isomorphic to the quotient space
$(\Hom_{\Bbbk}(M,N))^H/(\Lambda\cdot\Hom_{\Bbbk}(M,N))$. In other
words, we have a bifunctorial isomorphism:
\[\Hom_{H\udmod}(M,N)\cong \mc{Z}(\Hom_{\Bbbk}(M,N))/ (\Lambda\cdot
\Hom_{\Bbbk}(M,N)).\] Likewise, in the graded case,
\[\HOM_{H\udgmod}(M,N)\cong \mc{Z}^*(\HOM_{\Bbbk}(M,N))/ (\Lambda\cdot
\HOM_{\Bbbk}(M,N)).\] \vskip-\baselineskip\qed
\end{cor}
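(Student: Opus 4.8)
The plan is to derive this statement purely as a specialization of the two preceding lemmas to the case $A=\Bbbk$ equipped with the trivial $H$-module structure. In that case the smash product becomes $B=\Bbbk\#H=H$, and by the first special case of the main example \ref{the-main-example} the homotopy category $\mc{C}(\Bbbk,H)$ coincides with the stable module category $H\udmod$. First I would recall that, by the very definition of this category, the morphism space is the quotient $\Hom_{H\udmod}(M,N)=\Hom_B(M,N)/I(M,N)$, where $I(M,N)$ is the subspace of maps factoring through $\lambda_M:M\lra M\o H$.

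Next I would invoke Lemma \ref{lemma-trivial-submodules} with $A=\Bbbk$ to identify the numerator: since $\Hom_A(M,N)=\Hom_{\Bbbk}(M,N)$, the lemma gives a canonical isomorphism $\Hom_B(M,N)=\Hom_H(M,N)\cong\mc{Z}(\Hom_{\Bbbk}(M,N))$, realizing the $H$-linear maps as the $H$-invariants in $\Hom_{\Bbbk}(M,N)$. Then I would invoke Lemma \ref{lemma-ideal-null-homotopy}, again with $A=\Bbbk$, which under the same identification describes the denominator as $I(M,N)\cong\Lambda\cdot\Hom_{\Bbbk}(M,N)$, a subspace of $\mc{Z}(\Hom_{\Bbbk}(M,N))$. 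Taking the quotient of these two yields exactly the claimed isomorphism $\Hom_{H\udmod}(M,N)\cong\mc{Z}(\Hom_{\Bbbk}(M,N))/(\Lambda\cdot\Hom_{\Bbbk}(M,N))$, and bifunctoriality is inherited from the canonicity of the isomorphisms in the two lemmas. The graded statement is obtained verbatim by replacing $\Hom$ and $\mc{Z}$ with their graded counterparts $\HOM$ and $\mc{Z}^*$ throughout.

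Since every ingredient is already established, there is no genuine obstacle; the only point requiring a moment of care is the identification $\mc{C}(\Bbbk,H)=H\udmod$, namely that modding out by maps factoring through $B$-modules of the form $N\o H$ (Definition \ref{def-null-homotopy-category-of-B}) agrees with Happel's stable-category construction of modding out by maps factoring through injective ($=$ projective) $H$-modules. This holds because, by part 1 of Proposition \ref{prop-basic-property-H-mod}, each $N\o H\cong N_0\o H$ is a free, hence injective, $H$-module, while conversely every injective $H$-module is a direct summand of a free one, so the two ideals of morphisms coincide. With this in hand the corollary is immediate, which is why it is stated with $\qed$ appended directly to the statement.
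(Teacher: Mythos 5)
Your proposal is correct and matches the paper's own (implicit) proof: the corollary is exactly the specialization of Lemma \ref{lemma-trivial-submodules} and Lemma \ref{lemma-ideal-null-homotopy} to $A=\Bbbk$, using the identification $\mc{C}(\Bbbk,H)=H\udmod$ from the main example \ref{the-main-example}, which is why the paper appends \qed to the statement. Your extra verification that the ideal of maps factoring through modules $N\o H$ coincides with Happel's ideal of maps factoring through injectives is a correct filling-in of that cited identification, not a different route.
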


The right hand side of the above isomorphism is defined for any
$H$-module $V$ in place of $\Hom_A(M,N)$, which we formalize in the
following definition.

\begin{defn}\label{def-h-0}For any $H$-module $V$ we define its
\emph{space of stable invariants} to be the $\Bbbk$-vector space
$$\mc{H}(V):=\mc{Z}(V)/(\Lambda \cdot V)\cong V^H/(\Lambda\cdot V).$$
It's readily seen that $\mc{H}:H\dmod \lra \Bbbk-\mathrm{vect}$ is a functor.
Likewise, in the graded case, we define the \emph{total space of
graded stable invariants} to be
$$\mc{H}^*(V):=\mc{Z}^*(V)/(\Lambda\cdot V)\cong V^H/(\Lambda\cdot V),$$
while the \emph{space of degree $n$ stable invariants}, denoted
$\mc{H}^n(V)$, is defined to be the homogeneous degree $n$ part of
$\mc{H}^*(V)$, for any $n\in \Z$.
\end{defn}

\begin{cor} \label{cor-H-zero-functorial}The functor $\mc{H}:H\dmod\lra
\vect$ descends to a cohomological functor
$$\mc{H}:H\udmod \lra \vect.$$Here, by cohomological we mean that
$\mc{H}$ takes distinguished triangles in $H\udmod$ into long exact
sequences of $\Bbbk$-vector spaces. Likewise, in the graded case,
$$\mc{H}^*: H\udgmod\lra \Bbbk\!-\!\mathrm{gvect},$$
$$\mc{H}^n:H\udgmod \lra \vect$$ are cohomological
functors as well.
\end{cor}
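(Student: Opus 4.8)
The plan is to recognize $\mc{H}$ as a representable functor on the triangulated category $H\udmod$ and then invoke the standard homological property of $\Hom$ in a triangulated category; this avoids any direct diagram chase. The first step is to observe that for any $H$-module $V$ there is a canonical isomorphism of $H$-modules $\Hom_{\Bbbk}(\Bbbk_0, V)\cong V$ given by evaluation $f\mapsto f(v_0)$. A one-line computation with the action formula of Definition \ref{def-hom} confirms this is $H$-linear: since $\Bbbk_0$ is the trivial module, $(h\cdot f)(v_0)=h_{(2)}f(S^{-1}(h_{(1)})v_0)=h_{(2)}\epsilon(h_{(1)})f(v_0)=h\,f(v_0)$, so the $H$-action on $\Hom_{\Bbbk}(\Bbbk_0,V)$ collapses to the given action on $V$.

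Combining this with Corollary \ref{cor-morphism-space-in-stable-H-mod} applied to $M=\Bbbk_0$ and $N=V$ then yields a bifunctorial chain of isomorphisms
$$\Hom_{H\udmod}(\Bbbk_0, V)\cong \mc{Z}(\Hom_{\Bbbk}(\Bbbk_0,V))/(\Lambda\cdot\Hom_{\Bbbk}(\Bbbk_0,V))\cong \mc{Z}(V)/(\Lambda\cdot V)=\mc{H}(V),$$
that is, $\mc{H}\cong \Hom_{H\udmod}(\Bbbk_0,-)$ as functors out of $H\dmod$. Because the right-hand side is by construction defined on the stable category, the naturality of this isomorphism forces $\mc{H}$ to annihilate every morphism of $H\dmod$ that factors through an injective ($=$ projective) object, so $\mc{H}$ factors through the quotient functor $H\dmod\lra H\udmod$; this is exactly the asserted descent. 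The graded case is identical, using the graded half of Corollary \ref{cor-morphism-space-in-stable-H-mod} together with $\HOM_{\Bbbk}(\Bbbk_0,V)\cong V$ to obtain $\mc{H}^*\cong \HOM_{H\udgmod}(\Bbbk_0,-)$, with the grading preserved so that the degree $n$ part recovers $\mc{H}^n$.

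With $\mc{H}$ so identified, the cohomological property is automatic. Given a distinguished triangle $X\lra Y\lra Z\lra TX$ in $H\udmod$, applying the representable functor $\Hom_{H\udmod}(\Bbbk_0,-)$ gives exactness at the middle term, and rotating the triangle repeatedly assembles these into the long exact sequence
$$\cdots \lra \mc{H}(T^{-1}Z)\lra \mc{H}(X)\lra \mc{H}(Y)\lra \mc{H}(Z)\lra \mc{H}(TX)\lra \cdots,$$
since $\Hom$ out of a fixed object in any triangulated category is a cohomological functor (see \cite[Chapter IV]{GM} or \cite[Chapter I]{Ha}). The same argument applied to $\mc{H}^*$ and $\mc{H}^n$ settles the graded versions.

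I do not expect a serious obstacle here; the one point that genuinely requires care is the \emph{naturality} (bifunctoriality) of the isomorphism $\mc{H}(V)\cong \Hom_{H\udmod}(\Bbbk_0,V)$, rather than a merely pointwise isomorphism. It is naturality that both legitimizes the descent of $\mc{H}$ to $H\udmod$ and allows the long exact sequence to be transported back from $\Hom_{H\udmod}(\Bbbk_0,-)$ to $\mc{H}$. This naturality is inherited directly from the bifunctoriality already recorded in Corollary \ref{cor-morphism-space-in-stable-H-mod}, combined with the evident naturality in $V$ of the evaluation isomorphism $\Hom_{\Bbbk}(\Bbbk_0,V)\cong V$.
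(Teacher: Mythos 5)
Your proposal is correct and follows essentially the same route as the paper's own proof: specialize Corollary \ref{cor-morphism-space-in-stable-H-mod} to $M=\Bbbk_0$, identify $\mc{H}(V)\cong \Hom_{H\udmod}(\Bbbk_0,V)$, and conclude the cohomological property from representability in a triangulated category. The only difference is that you spell out the $H$-linear evaluation isomorphism $\Hom_{\Bbbk}(\Bbbk_0,V)\cong V$ and the naturality point, which the paper leaves implicit.
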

\Pf{Taking $M$ to be the trivial module $\Bbbk_0$ in
Corollary \ref{cor-morphism-space-in-stable-H-mod}, we obtain
$$\mc{H}(N)\cong\Hom_{H\udmod}(\Bbbk_0,N).$$ Thus $\mc{H}$ descends to the
stable category, and it takes distinguished triangles into long
exact sequences. The graded case follows similarly.}

\begin{rmk}[An alternative proof of Corollary
\ref{cor-H-zero-functorial}] This corollary can be proven
independent of Lemma \ref{lemma-ideal-null-homotopy}, which we give
here.
\begin{itemize}
\item{Claim:} Let $V$
be any $H$-module and $v_0\in V$ a non-zero vector on which $H$ acts
trivially. Then the inclusion map $\Bbbk v_0\hookrightarrow V$
becomes $0$ in $H\udmod$ if and only if there exists an element $v
\in V$ such that
$$\Lambda\cdot v =v_0.$$ Thus we have a canonical
isomorphism of $\Bbbk$-vector spaces
$$\Hom_{H\udmod}(\Bbbk_0, V)\cong \mc{Z}(V)/(\Lambda\cdot V) \cong
\Hom_{H\dmod}(\Bbbk_0,V)/(\Lambda \cdot V),$$ which is functorial in
$V$.
\end{itemize}
\begin{proof}[Proof of claim.] The inclusion of the trivial submodule
$$\Bbbk_0\cong \Bbbk \Lambda \hookrightarrow H$$
implies that the injective envelope of the trivial submodule $\Bbbk
\Lambda$ is a direct summand of $H$, since $H$ is self-injective (part 2
of Proposition \ref{prop-basic-property-H-mod}). Denote the
injective envelope by $I$. There is a direct sum decomposition
$H=I\oplus I'$ of $H-$modules. Let $e: H \lra I$ be the projection.
Since $\Lambda e(1)=e(\Lambda)=\Lambda\in I$, $e(1)\in I$ is
non-zero.

Now let $V$ be as in the lemma and $\Bbbk v_0 \hookrightarrow V$ be
an inclusion of a trivial submodule which becomes stably zero. Then
the inclusion map must factor through an injective module, which we
may assume to be the injective envelope of $\Bbbk v_0$:
$$\Bbbk v_0 \cong \Bbbk_0 \lra I \stackrel{f}{\lra} V.$$
The image of $e(1)$ under $f$ is nonzero since $\Lambda
f(e(1))=f(\Lambda e(1))=f(\Lambda)=v_0$. The ``only if'' part
follows by taking $v=f(e(1))$.

Conversely if we have such a $v$ that $\Lambda \cdot v= v_0$, we
will show that $V$ contains an injective summand isomorphic to $I$
containing $\Bbbk v_0$, and this will finish the proof of the lemma.
Since an injective submodule of $V$ is always a direct summand,
without loss of generality, we may assume that $V=H\cdot v:=\{h\cdot
v| h \in H\}$. Consider the following commutative diagram:
$$\xymatrix{I \ar[r] \ar[rd]_f & H \ar[d] \\
\Bbbk v_0 \ar[r] \ar[d] & H\cdot v \ar[ld]^{g}\\
 I &,}$$
where $f$ is the composition of the inclusion of $I$ into $H$ and
the action map $H \lra H\cdot v$, and $g$ exists by injectivity of
$I$ and satisfies $g(v_0)=\Lambda $. Notice that $f \neq 0$ because
$\Lambda f(e(1))= f(\Lambda e(1))=f(\Lambda)=\Lambda v =v_0$ by our
assumption. Then the composition $g\circ f$ is an endomorphism of
$I$ satisfying $g\circ f (\Lambda)=g (v_0)= \Lambda$. Since $I$ is
indecomposable, $g\circ f$ is an automorphism. Therefore $f$ is an
injective homomorphism and maps $I$ isomorphically onto its image.
Again by the injectivity of $I$, the image is a direct summand of
$H\cdot v$, as claimed. The last statement is easy.
\end{proof}
\end{rmk}

\begin{rmk}\label{rmk-possible-confusion-hom-space} One possible
confusion about the definition of $\mc{H}(\HOM_A(M,N))$ is that,
although this space plays the role analogous as the total space of
chain maps up to homotopy of all different degrees in the DG case,
the latter in turn being the total cohomology group of the usual
$\RHom$ complex, it is in general different from the total
cohomology we defined earlier using the (``stablized'') restriction
functor $\underline{\mathrm{Res}}: \mc{C}(A,H)\lra H\udgmod$ for an
arbitrary $H$. In fact by Corollary \ref{cor-H-zero-functorial},
$\mc{H}$ is cohomological, and we lose information if we forget about
its derived terms. We will return to this point later when
discussing derived functors.
\end{rmk}

We summarize the previous results of this subsection in the next
proposition, which is just a reformulation of the commutative
diagram we exhibited at the beginning of this subsection.

\begin{prop}\label{prop-comparison-hom-spaces}Let $H$ be a finite
dimensional Hopf algebra over $\Bbbk$ and $A$ be a left
$H$-module-algebra. There are identifications of bifunctors:
$$\mc{Z}(\Hom_A(-,-))\cong \Hom_B(-,-):B\dmod\times B\dmod \lra \vect,$$
$$\mc{H}(\Hom_A(-,-))\cong \Hom_{\mc{C}(A,H)}(-,-):B\dmod\times B\dmod \lra \vect,$$
i.e. for any hopfological modules $M$, $N$ over $B=A\#H$, there are
isomorphism of $\Bbbk$-vector spaces
$$\mc{Z}(\Hom_A(M,N)) \cong \Hom_{B}(M,N),$$
$$\mc{H}(\Hom_A(M,N)) \cong \Hom_{\mc{C}(A,H)}(M,N),$$
bifunctorial in $M$ and $N$.
\end{prop}

\Pf{The first identification is Lemma
\ref{lemma-trivial-submodules}, while the second follows from Lemma
\ref{lemma-characterizing-null-homotopy} Lemma
\ref{lemma-ideal-null-homotopy}, and the definition of $\mc{H}$. }

The identifications in the proposition above also show that taking
$\mc{Z}$ or $\mc{H}$ commutes with direct sums of $\Hom$ spaces. The
following corollary will be needed later when dealing with compact
objects.

\begin{cor}\label{cor-taking-cohomology-commute-with-direct-sums}Let
$I$ be any index set and $M_i$, $N_i$, $i\in I$ be hopfological
modules. Then
$$\mc{Z}(\oplus_{i\in I}\Hom_A(M_i, N_i))\cong \oplus_{i\in I}\mc{Z}
(\Hom_A(M_i,N_i));$$
$$\mc{H}(\oplus_{i\in I}\Hom_A(M_i, N_i))\cong \oplus_{i\in I}\mc{H}
(\Hom_A(M_i,N_i)).$$
\end{cor}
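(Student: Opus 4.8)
The plan is to recognize that this corollary is not really about $\Hom_A$ at all, but about the behaviour of the two functors $\mc{Z}$ and $\mc{H}$ with respect to arbitrary direct sums of $H$-modules. So I would first set $V_i := \Hom_A(M_i,N_i)$, each equipped with its $H$-module structure from Definition \ref{def-hom}, and observe that the $H$-action on the external direct sum $\oplus_{i\in I} V_i$ is the componentwise one, $h\cdot (v_i)_{i\in I} = (h\cdot v_i)_{i\in I}$. After this reduction it suffices to prove, for any family $\{V_i\}_{i\in I}$ of $H$-modules, that $\mc{Z}(\oplus_i V_i)\cong \oplus_i \mc{Z}(V_i)$ and $\mc{H}(\oplus_i V_i)\cong \oplus_i \mc{H}(V_i)$, with both isomorphisms induced by the canonical inclusions $V_i \hookrightarrow \oplus_i V_i$; substituting $V_i=\Hom_A(M_i,N_i)$ at the end recovers the two displayed statements.

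For $\mc{Z}$ I would work from the identification $\mc{Z}(V)=V^H=\Hom_H(\Bbbk_0,V)$ of Definition \ref{def-z-zero}. Because the action on the direct sum is componentwise and any element $(v_i)_i$ of $\oplus_i V_i$ is finitely supported, the invariance condition $h\cdot (v_i)=\epsilon(h)(v_i)$ for all $h\in H$ is equivalent to $h\cdot v_i=\epsilon(h)v_i$ for every index $i$ separately. Hence an invariant tuple is exactly a finitely supported tuple of invariant vectors, which is precisely $\oplus_i \mc{Z}(V_i)$. The conceptual reason behind this computation — and the one genuinely load-bearing point in the whole argument — is that $\Bbbk_0$ is one-dimensional, hence finitely generated as an $H$-module, so any homomorphism $\Bbbk_0\lra \oplus_i V_i$ has image contained in a finite partial sum; this is what lets the invariants functor $\Hom_H(\Bbbk_0,-)$ commute with infinite direct sums.

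For $\mc{H}$ I would begin from $\mc{H}(V)=\mc{Z}(V)/(\Lambda\cdot V)$ as in Definition \ref{def-h-0}. Since $\Lambda$ also acts componentwise on $\oplus_i V_i$, one has $\Lambda\cdot(\oplus_i V_i)=\oplus_i(\Lambda\cdot V_i)$, and each $\Lambda\cdot V_i$ lies inside $\mc{Z}(V_i)$ by the left integral property $h\Lambda=\epsilon(h)\Lambda$ (exactly as used in Lemma \ref{lemma-ideal-null-homotopy}). Combining this with the computation of $\mc{Z}$ above and the elementary fact that the quotient of a direct sum by a direct sum of subobjects is the direct sum of the quotients, I would conclude
$$\mc{H}(\oplus_i V_i)=\frac{\oplus_i\mc{Z}(V_i)}{\oplus_i(\Lambda\cdot V_i)}\cong \oplus_i\frac{\mc{Z}(V_i)}{\Lambda\cdot V_i}=\oplus_i\mc{H}(V_i),$$
and then substitute $V_i=\Hom_A(M_i,N_i)$ to finish. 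The graded statements would follow verbatim with $\mc{Z},\mc{H}$ replaced by $\mc{Z}^*,\mc{H}^*$.

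I do not expect any serious obstacle: once the reduction to direct sums of $H$-modules is made, everything is formal manipulation of finitely supported tuples. The only point deserving care, and the one I would flag explicitly, is the commutation of the invariants functor $\mc{Z}$ with \emph{infinite} direct sums; this is not automatic for an arbitrary source module and relies essentially on $\Bbbk_0$ being finite-dimensional (equivalently, finitely generated over $H$). Alternatively, one could route the argument through Proposition \ref{prop-comparison-hom-spaces}, but the direct treatment of $\oplus_i V_i$ is cleaner since the direct sum $\oplus_i \Hom_A(M_i,N_i)$ is not itself a single $\Hom_A$ space.
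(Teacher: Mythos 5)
Your proof is correct and is essentially the paper's own argument, fully written out: the paper's (one-line) proof likewise rests on the fact that $\mc{Z}=\Hom_H(\Bbbk_0,-)$ commutes with arbitrary direct sums because $\Bbbk_0$ is finite dimensional, together with the componentwise action of $\Lambda$ giving the statement for $\mc{H}(V)=\mc{Z}(V)/(\Lambda\cdot V)$. The only cosmetic difference is that the paper phrases the conclusion via Proposition \ref{prop-comparison-hom-spaces}, which you correctly note is an optional detour rather than a necessary ingredient.
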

\Pf{This follows readily from the proposition and the fact that
$\Hom_H(\Bbbk_0,-)$ commutes with arbitrary direct sums.}

\subsection{Examples}
We will give three examples on what homotopic morphisms look like
for some of the Hopf algebras we discussed in Section 3. By Lemma
\ref{lemma-ideal-null-homotopy} these are precisely the morphisms of
the form $f=\Lambda \cdot h$ for some $h\in \Hom_A(M,N)$. Recall
that
$$(\Lambda\cdot h)(-)=\sum \Lambda_{(2)}h(S^{-1}(\Lambda_{(1)})(-)).$$

\begin{itemize}
\item When $H$ is the super Hopf algebra $\Bbbk[d]/(d^2)$, (i.e. we are in the
usual DG algebra case), $\Lambda=d$ and for any homogeneous $h\in
\Hom_A(M,N)$ of degree $|h|$,
$$d\cdot h=dh+(-1)^{|h|+1}hd.$$
The minus signs come from switching $d$ and $f$ in the category of
super vector spaces and $S^{-1}(d)=-d$. We also recall the familiar
diagram depicting a null-homotopic morphism in the DG case, for
comparison with the next two examples.
$$\xymatrix
{\cdots \ar[r]^-{d_M} & M^{i-1} \ar[d]_{f} \ar[r]^-{d_M}
\ar[dl]|*+<1ex,1ex>{\scriptstyle h} &
M^i\ar[dl]|*+<1ex,1ex>{\scriptstyle h} \ar[d]_{f}\ar[r]^-{d_M}
&M^{i+1}\ar[dl]|*+<1ex,1ex>{\scriptstyle h} \ar[d]_{f}\ar[r]^-{d_M}&
\cdots \ar[dl]|*+<1ex,1ex>{\scriptstyle h}
\\
\cdots \ar[r]_-{d_N} & N^{i-1}\ar[r]_-{d_N} & N^i\ar[r]_-{d_N}
&N^{i+1}\ar[r]_-{d_N}& \cdots . }$$

\item Let $H=H_n$ be the Taft algebra. In the examples of Section 2,
we have seen that a left integral of $H$ is given by
$\Lambda=\frac{1}{n}(\sum_{i=0}^{n-1} K^i)d^{n-1}$. Notice that if
$g=\sum_{i=0}^{n-1}g_i\in\Hom_A(M,N)$ is a decomposition of $g$ into
its homogeneous components,
$$\Lambda\cdot g=1/n(\sum_{i=0}^{n-1}K^i)g_j=1/n(\sum_{i=0}^{n-1}\zeta^{ij})g_j,$$
which can be non-zero only when $j=0$, in which case it equals
$g_0$. i.e. $1/n(\sum_{i=0}^{n-1} K^i)$ projects any vector onto its
degree zero component. Thus the effect of applying $\Lambda$ to any
$h\in \Hom_A(M,N)$ will only be seen in its homogeneous of degree
$(1-n)$ part. Without loss of generality we will assume
$\textrm{deg}(h)=1-n$. Then using the commutator relations, we
obtain that, on such an $h$,
$$\begin{array}{rcl}
d^{n-1}\cdot h& = &
\sum_{j=0}^{n-1}(-1)^{n-1-j}\zeta^{(1-n)(n-1-j)}{n-1 \choose
j}_\zeta d^j\circ h \circ
d^{n-1-j}\\
&=&\sum_{j=0}(-1)^{n}\zeta^{-(j+1)(j+2)/2}d^j\circ h \circ d^{n-1-j}~,
\end{array}$$
where ${n \choose
k}_\zeta=\frac{(n-1)_\zeta!}{(k)_\zeta!(n-1-k)_{\zeta}!}$ and for
any $j\in \N$, $(j)_\zeta:=1+\zeta+\cdots +\zeta^{j-1}$ is the
un-symmetrized quantum integer $j$. In the last step, we used that ${n-1 \choose j}_\zeta$ equals
$$\frac{(1+\cdots +\zeta^{n-1})\cdots (1+\cdots
+\zeta^{n-j-1})}{(1+\cdots +\zeta^{j-1})\cdots
1}=(-\zeta^{-1})\cdots(-\zeta^{-j-1})=(-1)^{j+1}\zeta^{(j+1)(j+2)/2}.$$
Since each of the coefficient $(-1)^{n}\zeta^{-(j+1)(j+2)/2}$ is
non-zero, we may rescale $h$ componentwise by this scalar to obtain
the formula for a null-homotopic $f$ (c.f. \cite{Ka, Sar2}):
$$f=\sum_{j=0}^{n-1}d^j\circ h \circ d^{n-1-j}.$$

\item Consider the (graded) Hopf algebra
$H=\Bbbk[\partial]/(\partial^p)$, where $\Bbbk$ is of positive
characteristic $p$. Similar as above, if $h\in \Hom_A(M,N)$, we have
$$\partial^{p-1}(h)=\sum_{i=0}^{p-1}(-1)^{p-1-i}{p-1 \choose i}
\partial^i\circ h \circ \partial^{p-1-i}=\sum_{i=0}^{p-1}
\partial^{i}\circ h \circ \partial^{p-1-i}.$$
The last equality holds because $(-1)^i {p-1 \choose i}=1$ in
$\Bbbk$. We depict such a morphism in the following diagram, in
comparison with the previous cases.
$$\xymatrix@C=1.5em
{\cdots\ar[r]^-{\partial_M} & M^{i-p+1}\ar[r]^-{\partial_M}
 \ar[d]_{f} & M^{i-p+2}
\ar[d]_{f}\ar[r]^-{\partial_M} &\cdots\ar[r]^-{\partial_M} &
M^i\ar[dlll]|*+<1ex,1ex>{\scriptstyle h}
\ar[d]_{f}\ar[r]^-{\partial_M}
&M^{i+1}\ar[dlll]|*+<1ex,1ex>{\scriptstyle h}
\ar[d]_{f}\ar[r]^-{\partial_M}& \cdots \ar[r]^-{\partial_M} &
M^{i+p-1}\ar[dlll]|*+<1ex,1ex>{\scriptstyle h}
\ar[d]_{f}\ar[r]^-{\partial_M} & \cdots
\\
\cdots\ar[r]_-{\partial_N} & N^{i-p+1}\ar[r]_-{\partial_N} &
N^{i-p+2}\ar[r]_-{\partial_N} &\cdots\ar[r]_-{\partial_N}
&N^i\ar[r]_-{\partial_N} &N^{i+1}\ar[r]_-{\partial_N}& \cdots
\ar[r]_-{\partial_N} & N^{i+p-1}\ar[r]_-{\partial_N} & \cdots . }$$
\end{itemize}


\section{Cofibrant modules} Adapting the corresponding definition from
Keller \cite{Ke1, Ke2} on the DG case, we define the notion of
cofibrant hopfological modules and give a functorial cofibrant
resolution (i.e. quasi-isomorphism) $\mathbf{p}M \lra M$ for any
hopfological module $M$. This will be utilized later when discussing
compact hopfological modules, derived functors and derived
equivalences between hopfological module categories.

In this section $H$  will be assumed as before to be a finite
dimensional Hopf algebra over a base field $\Bbbk$, $A$ be an
$H$-module algebra, and we set $B= A\#H$.

\subsection{Cofibrant modules}
First we introduce the notion of ``cofibrant hopfological modules''
in analogy with the DG case.

\begin{defn}\label{def-cofibrant-module} A $B$-module $P$ is called
\emph{cofibrant} if for any surjective quasi-isomorphism
$M\twoheadrightarrow N$ of $B-$modules, the induced map of
$\Bbbk$-vector spaces
$$\mc{Z}(\Hom_A(P, M))\lra \mc{Z}(\Hom_{A}(P,N))$$
is surjective. In the graded case, we require instead that the
graded $H-$module map
$$\mc{Z}^*(\HOM_{A}(P,M))\lra \mc{Z}^*(\HOM_{A}(P,N))$$
be surjective in the category of graded $\Bbbk$-vector spaces.
Notice that this is equivalent to requiring the same condition on
$\mc{Z}^0$, as $M\{r\}\lra N\{r\}$ is a surjective
quasi-isomorphism, for any $r\in \Z$, whenever $M\lra N$ is.
\end{defn}

Recall from Lemma \ref{lemma-trivial-submodules} that,
$\mc{Z}(\Hom_A(P,M))=\Hom_B(P,M)$ consists of ``chain maps'' between
the hopfological modules $P$ and $M$. Therefore the definition just
says that any $B$-module map from $P$ to $N$ factors through a
$B$-module map from $P$ to $M$. It is rather straightforward to see
that being a ``cofibrant module'' in the case of DG modules implies
the usual sense of being ``K-projective module'', as described, for
instance, in Bernstein and Lunts \cite{BL}. It says that for any
acyclic DG-module $M$, the complex $\HOM_{A}(P, M)$ is acyclic as a
$\Bbbk[d]/(d^2)$-module, i.e the homology of this complex is $0$.
Indeed, it can be verified by applying the defining property to the
surjective quasi-isomorphism
$$\mathrm{Cone}(\Id_{M})\lra M,$$
and observing that $\HOM_{A}(P,
\mathrm{Cone}(\Id_{M}))=\mathrm{Cone}(\Id_{\HOM_{A}(P, M)})$ is
contractible. The following lemma is motivated by this discussion.

\begin{lemma}\label{lemma-characterizing-cofibrant-objects} Let $P$
be a cofibrant hopfological module. Then, for any acyclic module $M
\in B\dmod$ (resp. $B\dgmod$), the $H-$module $\Hom_A(P,M)$ (resp.
$\HOM_{A}(P,M)$) has trivial stable invariants:
$$\mc{H}(\Hom_A(P,M))=0~(\text{resp.}~\mc{H}^*(\HOM_A(M,N))=0),$$
and thus in the homotopy category, we have
$$\Hom_{\mc{C}(A,H)}(P,M)=0.$$
\end{lemma}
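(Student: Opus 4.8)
The plan is to reduce both assertions to the single vanishing statement $\mc{H}(\Hom_A(P,M))=0$: the final claim $\Hom_{\mc{C}(A,H)}(P,M)=0$ is then immediate from Proposition \ref{prop-comparison-hom-spaces}, which identifies this morphism space with $\mc{H}(\Hom_A(P,M))$. Unwinding Definition \ref{def-h-0}, proving $\mc{H}(\Hom_A(P,M))=0$ amounts to showing the inclusion $\mc{Z}(\Hom_A(P,M))\subseteq \Lambda\cdot\Hom_A(P,M)$, since the reverse inclusion is automatic from the integral property $h\Lambda=\epsilon(h)\Lambda$. So I must show that every $H$-invariant of $\Hom_A(P,M)$, i.e. every chain map $P\lra M$ by Lemma \ref{lemma-trivial-submodules}, lies in $\Lambda\cdot\Hom_A(P,M)$.

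First I would produce a surjective quasi-isomorphism onto $M$ from a module on which the question is trivial. I claim the counit map
$$\rho:=\Id_M\o\epsilon: M\o H\lra M$$
does the job. It is surjective since $m\o 1\mapsto m$, and a short Sweedler computation shows it is a morphism of $B$-modules: one checks $A$-linearity and $H$-linearity separately, using that $A$ acts only on the first tensor factor and that $\epsilon$ is a counit. To see it is a quasi-isomorphism when $M$ is acyclic, note that $M\o H\cong M_0\o H$ is free, hence projective, as an $H$-module by Proposition \ref{prop-basic-property-H-mod}(1), so $\underline{\mathrm{Res}}(M\o H)=0$ in $H\udmod$; and $\underline{\mathrm{Res}}(M)=0$ as well since $M$ is acyclic. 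Thus $\underline{\mathrm{Res}}(\rho)$ is a morphism $0\lra 0$, trivially an isomorphism. In fact $M\o H$ is exactly the cone $C_{\Id_M}$ of Definition \ref{def-cone-construction} (the cone of an isomorphism is the pushout of $\lambda_M$ along an iso), which is why the informal remark preceding the lemma phrases this as the quasi-isomorphism $\mathrm{Cone}(\Id_M)\lra M$.

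Next I would exploit cofibrancy. Applying Definition \ref{def-cofibrant-module} to the surjective quasi-isomorphism $\rho$ shows that
$$\rho_\ast:\mc{Z}(\Hom_A(P,M\o H))\lra \mc{Z}(\Hom_A(P,M))$$
is surjective, where $\rho_\ast=\Hom_A(P,\rho)$ is post-composition with $\rho$. The crucial point is that the source already consists of null-homotopic maps: since $M\o H$ is a module of the form $N\o H$, it is a zero object of $\mc{C}(A,H)$, its identity factoring through itself in the sense of Definition \ref{def-null-homotopy-category-of-B}. Hence $\mc{H}(\Hom_A(P,M\o H))\cong\Hom_{\mc{C}(A,H)}(P,M\o H)=0$ by Proposition \ref{prop-comparison-hom-spaces}, which is to say $\mc{Z}(\Hom_A(P,M\o H))=\Lambda\cdot\Hom_A(P,M\o H)$.

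Finally I would combine these facts. Any $f\in\mc{Z}(\Hom_A(P,M))$ is, by surjectivity of $\rho_\ast$, of the form $\rho_\ast(\Lambda\cdot g)$ for some $g\in\Hom_A(P,M\o H)$. Because $\rho$ is $B$-linear, the induced map $\rho_\ast$ is a homomorphism of $H$-modules for the actions of Definition \ref{def-hom}, so it commutes with the action of $\Lambda$; hence $f=\rho_\ast(\Lambda\cdot g)=\Lambda\cdot\rho_\ast(g)\in\Lambda\cdot\Hom_A(P,M)$, which is the desired inclusion. The graded case is identical with $\mc{Z},\mc{H}$ replaced by $\mc{Z}^\ast,\mc{H}^\ast$. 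I expect the only genuinely delicate points to be the Sweedler verifications that $\rho$ is a $B$-module map and that $\rho_\ast$ is $H$-equivariant; the conceptual heart is simply the observation that pushing the integral $\Lambda$ through the $H$-linear map $\rho_\ast$ is precisely what converts ``null-homotopic upstairs on $M\o H$'' into ``null-homotopic downstairs on $M$.''
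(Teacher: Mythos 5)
Your proposal is correct and is essentially the paper's own proof: both apply the cofibrancy hypothesis to the surjective quasi-isomorphism $\Id_M\o\epsilon:M\o H\lra M$, lift an invariant chain map to the contractible module $\Hom_A(P,M\o H)$, where it must have the form $\Lambda\cdot g$, and then push $\Lambda$ through the $H$-equivariant post-composition map to conclude $\mc{Z}(\Hom_A(P,M))=\Lambda\cdot\Hom_A(P,M)$, i.e. $\mc{H}(\Hom_A(P,M))=0$. The only cosmetic differences are that the paper establishes the contractibility step via the identification $\Hom_A(P,M\o H)\cong\Hom_A(P,M)\o H$ together with Lemma \ref{lemma-ideal-null-homotopy}, and verifies the $H$-equivariance by an explicit Sweedler computation, whereas you obtain the same two facts from $M\o H\cong 0$ in $\mc{C}(A,H)$ with Proposition \ref{prop-comparison-hom-spaces}, and from the general fact that post-composition with a $B$-linear map commutes with the conjugation $H$-action.
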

\begin{proof}The proof follows from the discussion before the lemma by
replacing the surjection $\mathrm{Cone}(\Id_M)\longrightarrow M$
with the cone in the hopfological case $M \o H\xrightarrow{\Id_M \o
\epsilon} M$. More precisely, let $P$ be a cofibrant hopfological
module. Apply $\Hom_A(P,-)$ to the $B-$module map $ M\o
H\xrightarrow{\Id_M \o \epsilon} M$, we obtain the induced map:
$$\mc{Z}(\Hom_A(P,
M \o H))\twoheadrightarrow \mc{Z}(\Hom_A(P,M)),$$
which is a
surjection by the cofibrance assumption. Therefore, for any
$\phi \in \mc{Z}(\Hom_A(P,M))$, we can find $\Phi \in
\mc{Z}(\Hom_A(P,M\o H))$ which when composed with $\Id \o \epsilon$
gives us $\phi$. Since $\Hom_A(P,M\o H)= \Hom_A(P,M)\o H$ is
contractible, $\Phi=\Lambda \cdot \Psi$ for some $\Psi \in
\Hom_A(P,M\o H)$ (Lemma \ref{lemma-ideal-null-homotopy}). Then for
any $x\in P$, we have
$$\begin{array}{rcl}
(\Lambda\cdot((\Id \o \epsilon)\circ\Psi))(x) & =
&\Lambda_{(2)}\cdot((\Id \o \epsilon) \circ
\Psi(S^{-1}(\Lambda_{(1)}\cdot x))\\
& = & (\Id \o
\epsilon)(\Lambda_{(2)}\cdot\Psi(S^{-1}(\Lambda_{(1)}\cdot
x)) = (\Id \o \epsilon)((\Lambda\cdot \Psi)(x))\\
& = & (\Id \o \epsilon)(\psi(x)) = \phi(x),
\end{array}
$$
where the second equality holds since $\Id \o \epsilon$ is
$H$-linear. Therefore by Corollary
\ref{cor-morphism-space-in-stable-H-mod}, $\phi=0$ when passing to
the stable category. The last claim follows from Proposition
\ref{prop-comparison-hom-spaces}.
\end{proof}

Notice that, when $H$ is a finite dimensional local Hopf algebra,
$\mc{H}(\Hom_A(P,M))=0$ actually implies that the
total cohomology $\Hom_A(P,M)$ is $0$ in the stable category
$H\udmod$. This follows from the observation that any indecomposable
module in the case contains a trivial submodule. Therefore for such
$H$'s, we know that the $H$-module $\Hom_A(P,M)$ is projective and
injective as an $H$-module (we will just call such $H$-modules
acyclic when no confusion could arise). In fact, this will turn out
to be true for any $H$ and any cofibrant module $P$. We will show
this after introducing some necessary tools.

Our main goal in this section is to construct, for each $A-$module
$M$, a functorial cofibrant replacement. We make the following
definition.

\begin{defn}\label{def-property-P} We say that a $B$-module
satisfies \emph{property $(P)$} if it is isomorphic to a module $P$ in
the category $\mc{C}(A,H)$ for which the following three conditions hold
(c.f.~\cite[section 3]{Ke1}):
\begin{enumerate}
\item[(P1)] There is a filtration
$$0\subset F_{0}\subset F_{1}\subset \cdots F_{r} \subset F_{r+1} \subset
\cdots \subset P, $$ and the filtration is exhaustive in the sense
that $$P=\cup_{r\in \N}F_{r};$$
\item[(P2)] The inclusion $F_r \subset F_{r+1}$ splits as left $A$-modules (resp. graded left
$A$-modules when they are graded) for all $r \in \N$;\\
\item[(P3)] $F_0$, as well as the quotients $F_{r+1}/F_r$ for all $r\in \N$,
is isomorphic to direct sums of $B$-modules of the form $A\o V$,
where $V$ is an indecomposable $H$-module (resp. $A\o V \in B\dgmod$
and $V\in H\dgmod$ in the graded case).
\end{enumerate}
Equivalently, in the last condition (P3), we may drop the direct sum
requirement for indecomposable $V$'s but instead allow $V$ to be any
$H$-module.
\end{defn}

We need to clarify the relation between modules with property (P) and
cofibrant modules. First of all we will show that modules with
property (P) are cofibrant.

\begin{lemma}\label{lemma-property-p-cofibrant}Let $P\in B\dmod$
(resp. $B\dgmod$) be a module satisfying property (P), and $K$ be an
acyclic $B$-module. Then the $H$-module $\Hom_A(P,K)$ is projective
and injective as an $H$-module.
\end{lemma}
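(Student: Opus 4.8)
The plan is to reconstruct $\Hom_A(P,K)$ from the pieces $A\o V$ appearing in the filtration, exploiting that $H$ is Frobenius, so that the classes of projective and injective $H$-modules coincide (part 2 of Proposition \ref{prop-basic-property-H-mod}), and that both finite direct sums and arbitrary products of injective $H$-modules are again injective. First I would record the one input about $K$: the hypothesis that $K$ is acyclic means exactly that $\mathrm{Res}(K)\cong 0$ in $H\udmod$, i.e. $K$ is projective ($=$injective) as an $H$-module; this is what lets Lemma \ref{lemma-infinite-projective-H-modules} apply with $K$ in the second slot.

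\emph{Base case.} For any $H$-module $V$ the module $A\o V$ is free over $A$: restricting the $B$-action to $A\o 1\subset B$ one computes $(a\o 1)(a'\o v)=aa'\o v$, so $A\o V\cong A\o_\Bbbk V\cong A^{\oplus\dim V}$ as left $A$-modules. Hence an $A$-linear map out of $A\o V$ is determined by its values on $1\o V$, giving a natural isomorphism $\Hom_A(A\o V,K)\cong\Hom_\Bbbk(V,K)$. The point I would check carefully is that this is an isomorphism of $H$-modules, with the right-hand side carrying the structure of Lemma \ref{lemma-infinite-projective-H-modules} and the left-hand side that of Definition \ref{def-hom}; this is a direct Sweedler computation with the formula $(h\cdot f)(m)=\sum h_{(2)}f(S^{-1}(h_{(1)})m)$. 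Granting it, Lemma \ref{lemma-infinite-projective-H-modules} (with $M=V$ arbitrary and $N=K$ projective) gives that $\Hom_A(A\o V,K)$ is projective ($=$injective). For a direct sum $\bigoplus_i A\o V_i$ as in (P3) we obtain $\Hom_A(\bigoplus_i A\o V_i,K)\cong\prod_i\Hom_\Bbbk(V_i,K)$, a product of injectives, hence injective; in particular $\Hom_A(F_0,K)$ and every $\Hom_A(F_{r+1}/F_r,K)$ is injective.

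\emph{D\'{e}vissage and passage to the limit.} By (P2) each inclusion $F_r\subset F_{r+1}$ splits over $A$, so applying the contravariant functor $\Hom_A(-,K)$ yields a short exact sequence of $H$-modules
$$0\lra \Hom_A(F_{r+1}/F_r,K)\lra \Hom_A(F_{r+1},K)\lra \Hom_A(F_r,K)\lra 0,$$
whose left term is injective by the base case, so it splits. Inducting on $r$ from $\Hom_A(F_0,K)$, each $\Hom_A(F_r,K)$ is a finite direct sum of injectives, hence injective, and every transition map $\Hom_A(F_{r+1},K)\twoheadrightarrow\Hom_A(F_r,K)$ is a split surjection. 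Since $P=\cup_r F_r=\varinjlim_r F_r$ in $B\dmod$, the functor $\Hom_A(-,K)$ converts this colimit into the inverse limit, so $\Hom_A(P,K)\cong\varprojlim_r\Hom_A(F_r,K)$ as $H$-modules. Because the tower consists of split surjections with kernels $G_r:=\Hom_A(F_r/F_{r-1},K)$, compatible $H$-splittings identify $\Hom_A(F_r,K)\cong\bigoplus_{s\le r}G_s$ and the limit with the product $\prod_r G_r$; as each $G_r$ is injective and products of injectives are injective, $\Hom_A(P,K)$ is injective, hence also projective. The graded case is identical with $\HOM_A$ and $\mc{Z}^*$ replacing their ungraded analogues.

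The main obstacle I anticipate is the passage to the infinite limit: the finite stages follow immediately from d\'{e}vissage, but one must interchange the colimit $P=\varinjlim F_r$ with $\Hom_A(-,K)$ and then recognize the resulting inverse limit as a \emph{product} of injectives. This is exactly the shape of injectivity that survives infinite limits, whereas an infinite \emph{direct sum} of injectives would require invoking the noetherian hypothesis on $H$; arranging the argument so that only products (and finite sums) of injectives appear is the crux. A secondary point needing care is verifying the $H$-equivariance of the base-case adjunction $\Hom_A(A\o V,K)\cong\Hom_\Bbbk(V,K)$.
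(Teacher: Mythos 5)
Your proof is correct, and its first two steps coincide exactly with the paper's: the base case $\Hom_A(A\otimes V,K)\cong\Hom_\Bbbk(V,K)$ (as $H$-modules) handled by Lemma \ref{lemma-infinite-projective-H-modules}, and the induction along the filtration using the $A$-split sequences coming from (P2)--(P3). The divergence is in the passage from the stages $F_r$ to $P=\cup_r F_r$. You convert the filtered colimit into an inverse limit, $\Hom_A(P,K)\cong\varprojlim_r\Hom_A(F_r,K)$, and then use compatibly chosen $H$-linear sections of the tower of split surjections to identify this limit with the product $\prod_r G_r$ of injectives. The paper instead applies $\Hom_A(-,K)$ to the telescope short exact sequence $0\lra\bigoplus_{r}F_r\stackrel{\Psi}{\lra}\bigoplus_{s}F_s\lra P\lra 0$, obtaining a short exact sequence of $H$-modules $0\lra\Hom_A(P,K)\lra\prod_s\Hom_A(F_s,K)\lra\prod_r\Hom_A(F_r,K)\lra 0$ whose two right-hand terms are products of injectives ($=$ projectives, $H$ being Frobenius); hence the sequence splits and $\Hom_A(P,K)$ is a direct summand of an injective. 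These are two packagings of the same computation --- the paper's kernel is precisely your inverse limit --- but the telescope route needs only a single splitting of a single sequence and no compatibility check, whereas your route exhibits the finer structure $\Hom_A(P,K)\cong\prod_r G_r$ explicitly; if you keep your version, the inductive construction of compatible sections is the one point that should be written out rather than asserted. One further remark: your caution about infinite direct sums of injectives is not actually needed, since $H$ is finite dimensional, hence noetherian, so such direct sums are again injective (this is exactly the footnote in Lemma \ref{lemma-infinite-projective-H-modules}); still, arranging the argument through products, as both you and the paper do, is the cleaner choice.
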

\begin{proof}The proof is divided into three steps. First off, we check
that free modules of the form $A\o V$ have the claimed property of
the lemma. As $H$-modules, we have a canonical isomorphism:
 $$\Hom_A(A\o V,K)\cong \Hom_{\Bbbk}(V,K).$$
Thus the result for $A\o V$ follows from Lemma
\ref{lemma-infinite-projective-H-modules}.

Secondly, we use induction to prove that $\Hom_A(F_r,K)$ is
projective and injective (acyclic for short) for any $r \geq 0$. In
fact, assuming so for $F_r$, applying $\Hom_A(-,K)$ to the short
exact sequence of free $A-$modules:
$$0 \lra F_r \lra F_{r+1}\lra \bigoplus_{j\in J} A\o V_j \lra 0,$$
we obtain a short exact sequence of $H$-modules:
$$0 \lra \prod_{j \in J} \Hom_A(A\o V_j,K) \lra \Hom_A(F_{r+1}, K)
\lra \Hom_A(F_r, K)\lra 0.$$ By inductive hypothesis and the
previous step, $\Hom_A(F_r,K)$ and $\prod_{j\in J}\Hom_A(A\o V_j,K)$
are acyclic. Thus $\Hom_A(F_{r+1},K)$ is acyclic, since in $H\dmod$
it is isomorphic to the direct sum of these acyclic modules.

Finally, by definition, we have the following short exact sequence
of free $A-$modules:
$$0\lra \bigoplus_{r\in \N}F_r \stackrel{\Psi}{\lra}
\bigoplus_{s \in \N}F_s \lra P \lra 0,$$ where the map $\Psi$ is
given by the block upper triangular matrix:
$$\Psi=\left(
\begin{array}{ccccc}
\Id_{F_0} & -\iota_{01} & 0 & 0 & \dots\\
0 & \Id_{F_1} & -\iota_{12} & 0 & \dots\\
0 & 0 & \Id_{F_2} & -\iota_{23} & \dots\\
0 & 0 & 0 & \Id_{F_3} & \dots\\
\vdots & \vdots & \vdots & \vdots &\ddots
\end{array}
\right),
$$
where $\iota_{r,r+1}$ is the inclusion of $F_r$ into $F_{r+1}$.
Applying $\Hom_A(-,K)$ to the short exact sequence of free
$A-$modules, we obtain a short exact sequence of $H-$modules:
$$0 \lra \Hom_A(P,K) \lra \prod \Hom_A(F_s, K) \lra \prod \Hom_A(F_r,
 K)\lra 0.$$
By the second step, the two terms on the right are acyclic. Hence
$\Hom_A(P,K)$ is acyclic and the lemma follows.
\end{proof}

\begin{cor}\label{cor-property-P-implies-cofibrant} If $P$ is a
$B$-module with property (P), then it is cofibrant.
\end{cor}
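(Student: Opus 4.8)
The plan is to show that property (P) implies the cofibrancy condition of Definition \ref{def-cofibrant-module}. Recall that cofibrancy requires: for every surjective quasi-isomorphism $M \twoheadrightarrow N$ of $B$-modules, the induced map $\mc{Z}(\Hom_A(P,M)) \lra \mc{Z}(\Hom_A(P,N))$ is surjective. By Proposition \ref{prop-comparison-hom-spaces}, $\mc{Z}(\Hom_A(P,-)) \cong \Hom_B(P,-)$, so what we must produce, given a $B$-module map $P \lra N$, is a lift $P \lra M$ through the surjection $g: M \twoheadrightarrow N$. The natural strategy is to build such a lift inductively along the exhaustive filtration $F_0 \subset F_1 \subset \cdots$ supplied by (P1).

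\emph{First}, I would analyze the kernel of the quasi-isomorphism. Let $K = \ker(g)$, so that $0 \lra K \lra M \lra N \lra 0$ is a short exact sequence of $B$-modules; since $g$ is a quasi-isomorphism, $K$ is acyclic. By Lemma \ref{lemma-property-p-cofibrant}, the $H$-module $\Hom_A(P, K)$ is projective and injective, and the same holds with $P$ replaced by any $A\o V$ or any $F_r$. The obstruction to lifting will live in these Hom-spaces, and the acyclicity of $\Hom_A(-, K)$ is exactly what makes the obstruction vanish.

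\emph{Next}, I would set up the induction. Applying $\Hom_A(-, M) \lra \Hom_A(-, N)$ to the filtration and using that each inclusion $F_r \subset F_{r+1}$ splits over $A$ (P2), with successive quotients $F_{r+1}/F_r$ free of the form $\oplus A \o V$ (P3), one lifts the given map $F_0 \lra N$ to a map $F_0 \lra M$ using the injectivity/projectivity of $\Hom_A(F_0, K)$, then extends from $F_r$ to $F_{r+1}$ at each stage. Concretely, a chosen $B$-lift on $F_r$ together with the $A$-splitting of $F_r \subset F_{r+1}$ determines an obstruction class in $\mc{H}(\Hom_A(F_{r+1}/F_r, K))$ (or directly a lifting problem for the free quotient), which vanishes because $\Hom_A(F_{r+1}/F_r, K)$ is acyclic by the free-module case inside the proof of Lemma \ref{lemma-property-p-cofibrant}. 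Passing to the colimit over $r$ and using $P = \cup_r F_r$ assembles the compatible partial lifts into a single $B$-module map $P \lra M$ lifting the original.

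\emph{The main obstacle} I anticipate is the passage to the colimit: the individual lifts $F_r \lra M$ must be chosen \emph{compatibly} so that the restriction of the lift on $F_{r+1}$ agrees with the lift already fixed on $F_r$, rather than merely existing independently at each stage. This is precisely the point where one should not lift each $F_r$ afresh but extend step by step, and where the exactness of $0 \lra \Hom_A(P,-) \lra \prod \Hom_A(F_s,-) \lra \prod \Hom_A(F_r,-) \lra 0$ from the end of Lemma \ref{lemma-property-p-cofibrant} does the bookkeeping: surjectivity of $\Hom_A(-,M) \lra \Hom_A(-,N)$ at the level of the whole $P$ follows from surjectivity on each graded piece together with the $\lim^1$-type vanishing encoded in that telescoping sequence being short exact. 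Once compatibility is secured, the lifted map is automatically $B$-linear and descends to show $\mc{Z}(\Hom_A(P,M)) \twoheadrightarrow \mc{Z}(\Hom_A(P,N))$, which is the definition of cofibrancy.
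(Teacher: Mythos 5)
Your proposal is correct in substance, but it takes a genuinely different and considerably longer route than the paper. The paper's proof involves no induction and no lifting: it applies Lemma \ref{lemma-property-p-cofibrant} exactly once, to the whole module $P$. Since $P$ is projective as an $A$-module, applying $\Hom_A(P,-)$ to $0\lra K\lra M\lra N\lra 0$ yields a short exact sequence of $H$-modules; its kernel $\Hom_A(P,K)$ is injective as an $H$-module (this is the entire content of the lemma, whose own proof already carried out the filtration induction you propose to redo), so the sequence splits $H$-equivariantly; and although $\mc{Z}=\Hom_H(\Bbbk_0,-)$ is only left exact in general, it preserves split surjections, which is precisely the required surjectivity of $\Hom_B(P,M)\lra\Hom_B(P,N)$. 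Your inductive route can be completed: after correcting an arbitrary lift on $F_{r+1}$ by its difference with the chosen lift on $F_r$, the extension step reduces to extending a $B$-map $F_r\lra K$ over $F_{r+1}$, which follows from the same splitting trick applied to $0\lra\Hom_A(F_{r+1}/F_r,K)\lra\Hom_A(F_{r+1},K)\lra\Hom_A(F_r,K)\lra 0$. Two caveats, though. First, the obstruction you invoke actually lives in $\Ext^1_H(\Bbbk_0,\Hom_A(F_{r+1}/F_r,K))$, i.e.\ in $\mc{H}$ of a \emph{shift} of that module, not in $\mc{H}(\Hom_A(F_{r+1}/F_r,K))$ itself; both vanish because the module is injective, so this is harmless. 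Second, your worry about $\lim^1$ is a red herring: once the lifts are built by successive extension they glue over the exhaustive union $P=\cup_r F_r$ for free, and the telescoping sequence plays no role (it would be needed only if you lifted each $F_r$ independently, in which case surjectivity of the transition maps $\Hom_B(F_{r+1},K)\lra\Hom_B(F_r,K)$ --- again by the splitting trick --- supplies the Mittag--Leffler condition). In short, your argument is a valid, more constructive proof that exhibits the lift piece by piece and uses only piecewise acyclicity, but it duplicates the induction already packaged inside Lemma \ref{lemma-property-p-cofibrant}; the paper's point is that, with that lemma in hand, the corollary is purely formal.
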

\begin{proof}
Let $M \lra N$ be a surjective quasi-isomorphism in $B\dmod$. We
have a short exact sequence of $B$-modules:
$$0\lra K\lra M\lra N\lra 0,$$
where $K$ is acyclic by our assumption. Applying $\Hom_A(P,-)$ to
this short exact sequence, we obtain a short exact sequence of
$H$-modules:
$$0\lra \Hom_A(P,K)\lra\Hom_A(P,M)\lra \Hom_A(P,N)\lra 0,$$
since $P$ is projective as an $A$-module.  The above Lemma
\ref{lemma-property-p-cofibrant} says that $\Hom_A(P,K)$ considered
as an $H-$module is projective and injective, and thus the sequence
splits and we have a direct sum decomposition:
$$\Hom_A(P,M)\cong \Hom_A(P,K)\oplus \Hom_A(P,N).$$
Taking $H$-invariants on both sides (Proposition
\ref{prop-comparison-hom-spaces}) gives us:
$$\mc{Z}(\Hom_A(A\o V, M))\cong \mc{Z}(\Hom_A(A\o V, K))\oplus \mc{Z}(\Hom_A(A
\o V, N)),$$ whence the surjectivity  $\mc{Z}(\Hom_A(P,M))
\twoheadrightarrow \mc{Z}(\Hom_A(P,N))$ follows.
\end{proof}

\subsection{The bar resolution} Now we formulate the main result of
this section and its immediate consequences.

\begin{thm}\label{thm-bar-resolution}Let $H$ be a finite dimensional
(graded) Hopf algebra, $A$ be a left $H-$module algebra, and set
$B=A\#H$. For each module $M\in B\dmod$ (resp. $B\dgmod$), there is
a short exact sequence in $B\dmod$ (resp. $B\dgmod$) which is split
exact as a sequence of $A$-modules:
$$\xymatrix{0\ar[r]&M\ar[r] & \mathbf{a}M\ar[r] & \mathbf{\tilde{p}}M\ar[r]&0},$$
where $\mathbf{\tilde{p}}M$ satisfies property (P) and $\mathbf{a}M$
is an acyclic $B$-module. Moreover the construction of the short
exact sequence is functorial in $M$.
\end{thm}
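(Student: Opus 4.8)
The plan is to produce $\mathbf{a}M$ and $\mathbf{\tilde p}M$ by totalizing a functorial bar resolution of $M$ coming from the free--forgetful adjunction between $B\dmod$ and $H\dmod$. Restriction along the subalgebra $\Bbbk\o H\hookrightarrow B$ gives $\Res\colon B\dmod\lra H\dmod$, whose left adjoint is the free functor $\mathbf{F}V=A\o V$ of the main example \ref{the-main-example} (with $A$ acting on the left tensorand and $H$ diagonally). Writing $\bot=\mathbf{F}\circ\Res$, the counit $\varepsilon_M\colon A\o M\lra M$, $a\o m\mapsto am$, and the unit $\eta_V\colon V\lra A\o V$, $v\mapsto 1\o v$ (which is $H$-linear because the unit of $A$ is an $H$-module map) assemble into the augmented simplicial $B$-module $\bot^{\bullet+1}M$, whose associated complex
$$\cdots \lra \bot^{3}M\lra \bot^{2}M \lra \bot M \stackrel{\varepsilon_M}{\lra} M\lra 0$$
has all terms free of the form $A\o(A^{\o n}\o M)$. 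The extra degeneracy $\eta$ furnishes an $H$-linear contracting homotopy, so the augmented complex is split exact as a complex of $H$-modules. Everything here is manifestly functorial in $M$.

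Next I would totalize this complex into a single honest $B$-module, placing $M$ in the bottom degree. I build an exhaustive filtration $M=F_{-1}\subset F_{0}\subset F_{1}\subset\cdots$ by repeatedly forming the cone (pushout) of Definition \ref{def-cone-construction}: at the $r$-th stage one glues the free module $A\o(A^{\o r}\o M)$ onto $F_{r-1}$ along the bar differential, producing a short exact sequence
$$0\lra F_{r-1}\lra F_{r}\lra A\o(A^{\o r}\o M)\lra 0$$
which splits over $A$ since the quotient is $A$-free. Setting $\mathbf{a}M:=\bigcup_{r}F_{r}$ and $\mathbf{\tilde p}M:=\mathbf{a}M/M$, the induced filtration on $\mathbf{\tilde p}M$ starts at $\bot M=A\o M$ and has successive quotients $A\o(A^{\o r}\o M)$, each a free module $A\o V$ with $V$ an $H$-module; hence $\mathbf{\tilde p}M$ satisfies property (P) of Definition \ref{def-property-P} (and so is cofibrant by Corollary \ref{cor-property-P-implies-cofibrant}). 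Since $\mathbf{\tilde p}M$ is then $A$-projective, the short exact sequence $0\lra M\lra \mathbf{a}M\lra \mathbf{\tilde p}M\lra 0$ is split as a sequence of $A$-modules, as required.

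It remains to verify that $\mathbf{a}M$ is acyclic, that is, $\underline{\Res}(\mathbf{a}M)=0$ in $H\udmod$ (Definition \ref{def-cohomology-and-quasi-isomorphism}). Here I would use that $\mathbf{a}M$ is precisely the totalization of the augmented bar complex, whose $H$-linear contraction $\eta$ totalizes to an $H$-linear nullhomotopy of $\mathrm{Id}_{\mathbf{a}M}$; thus $\mathbf{a}M$ is contractible as an $H$-module, hence projective ($=$ injective) by the Frobenius property (Proposition \ref{prop-basic-property-H-mod}), hence acyclic. Functoriality of the whole short exact sequence is inherited from the comonad, as each $F_{r}$ and each gluing map is natural in $M$; the graded case is identical with $\o$ read internally.

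The step I expect to be the main obstacle is the totalization itself: in the DG case $H=\Bbbk[d]/(d^{2})$ the bar differential is simply added to the internal $d$-action to form the total differential, but for an arbitrary finite-dimensional $H$ there is no single perturbation formula, and the bar differential must instead be folded into the $B$-module structure through the iterated family of $A$-split extensions above. The delicate points are to check that these extensions can be carried out compatibly via the cone construction of Definition \ref{def-cone-construction} and the criterion of Lemma \ref{lemma-split-ses-lead-to-dt}, so that the bar differential genuinely assembles into the $B$-action on the colimit, and that the totalized contraction remains $H$-linear, which is what ultimately forces $\mathbf{a}M$ to be acyclic.
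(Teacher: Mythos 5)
Your outline coincides with the paper's own strategy (the simplicial bar complex of the free--forgetful comonad, totalized by iterated cones, with acyclicity coming from the $H$-linear extra degeneracy), but the proposal stops exactly where the proof's work lies, and the one concrete description you give of the totalization is wrong. By Definition \ref{def-cone-construction}, the cone of a map $u\colon X\lra F_{r-1}$ sits in a short exact sequence $0\lra F_{r-1}\lra C_u\lra T(X)\lra 0$ whose quotient is the \emph{shift} $T(X)=X\otimes(H/\Bbbk\Lambda)$, not $X$ itself. So gluing the bar terms by cones cannot yield quotients $F_r/F_{r-1}\cong A\otimes(A^{\otimes r}\otimes M)$: one factor of $H/\Bbbk\Lambda$ is introduced at each stage, and the correct subquotients are (reorderings of) $A\otimes(A^{\otimes r}\otimes M)\otimes(H/\Bbbk\Lambda)^{\otimes(r+1)}$. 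This does not endanger property (P) --- these are still free modules $A\otimes V$ --- but it changes what must be constructed: the map glued at stage $r+1$ is not the bar differential $\delta_{r+1}$ but a lift $\tilde\delta_{r+1}$ defined on the progressively shifted term $A^{\otimes(r+3)}\otimes(H/\Bbbk\Lambda)^{\otimes(r+1)}$ (tensored with $M$), obtained by descending $(\delta_{r+1}\otimes\mathrm{id},0)$ through the quotient by $\cdots\otimes\Bbbk\Lambda$. That this descent is legitimate, and that the resulting $\tilde\delta_{r+1}$ kills the image of $\delta_{r+2}$ so the induction can continue, is the inductive hypothesis the paper carries (it rests on $\delta_{r+1}\delta_{r+2}=0$); none of this is addressed in your sketch, and it is precisely the point you flag as ``the main obstacle.''

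The acyclicity step is a second genuine gap. The assertion that the simplicial contraction ``totalizes to an $H$-linear nullhomotopy of $\mathrm{Id}_{\mathbf{a}M}$'' is exactly the perturbation-type statement you yourself admit is unavailable for a general finite dimensional $H$, and nothing in the proposal supplies a substitute. What the paper actually does is use the $H$-linearity of the extra degeneracy only to split each bar term as an $H$-module, $A^{\otimes n}\cong A^{(n)}\oplus A^{(n-1)}$, and then prove by induction an explicit $H$-module identification $C_n\cong A^{(n+2)}\otimes(H/\Bbbk\Lambda)^{\otimes(n+1)}\oplus\bigoplus_{i=1}^{n+1}A^{(i)}\otimes(H/\Bbbk\Lambda)^{\otimes(i-1)}\otimes H$, tracking how each new cone pairs the matched summands into pieces of the form $X\otimes H$; passing to the union gives $\mathbf{a}A\cong N\otimes H$ for an explicit $H$-module $N$, which is what makes $\mathbf{a}A$ (and then $\mathbf{a}M=\mathbf{a}A\otimes_A M$) acyclic. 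In short, your proposal is a correct plan with the same skeleton as the paper's proof, but the two steps it defers --- coherence of the iterated gluing and acyclicity of the colimit --- are not details: they are the entire proof.
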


We will refer to the construction of the theorem, as well as the
cofibrant replacement in the next corollary, as the ``bar
resolution'' of any hopfological module $M$, which is the functorial
cofibrant replacement we claimed at the beginning of this section.

\begin{cor}\label{cor-cofibrant-replacement}Under the same conditions
as in Theorem \ref{thm-bar-resolution}, let $M$ be any hopfological
module $M\in B\dmod$.
\begin{enumerate}
\item[(i).] There is an associated distinguished triangle, functorial in
$M$ inside $\mc{C}(A,H)$:
$$M\lra \mathbf{a}M \lra \tilde{\mathbf{p}}M \lra TM.$$
\item[(ii).] In the derived category $\mc{D}(A,H)$, there is a functorial
isomorphism
$$\mathbf{p}M\stackrel{\cong}{\lra} M,$$
where $\mathbf{p}M:=T^{-1}(\tilde{\mathbf{p}}M)$ is a module with
property (P).
\item[(iii).] The isomorphism in (ii) arises as the image of a surjective
quasi-isomorphism $\mathbf{p}M\twoheadrightarrow M$ in $B\dmod$.
\end{enumerate}
\end{cor}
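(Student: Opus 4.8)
I would simply feed the short exact sequence $0\to M\to \mathbf{a}M\to \tilde{\mathbf{p}}M\to 0$ of Theorem \ref{thm-bar-resolution}, which is $A$-split, into Lemma \ref{lemma-split-ses-lead-to-dt}. This produces the distinguished triangle $M\to\mathbf{a}M\to\tilde{\mathbf{p}}M\to TM$ in $\mc{C}(A,H)$, and since both the short exact sequence (by the theorem) and the cone construction of Lemma \ref{lemma-split-ses-lead-to-dt} are functorial in $M$, the triangle is functorial as well.

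\textbf{Part (ii).} First I would verify that $\mathbf{p}M=T^{-1}(\tilde{\mathbf{p}}M)=\tilde{\mathbf{p}}M\o\ker(\epsilon)$ again satisfies property (P). By Proposition \ref{prop-invertible-shift-functor} the functor $T^{-1}=(-)\o\ker(\epsilon)$ is tensoring with a fixed finite-dimensional $H$-module, hence exact on $B\dmod$ and compatible with $A$-split inclusions; applying it to a property-(P) filtration $F_\bullet$ of $\tilde{\mathbf{p}}M$ gives the filtration $F_\bullet\o\ker(\epsilon)$, whose subquotients are $\bigl(\bigoplus A\o V\bigr)\o\ker(\epsilon)\cong\bigoplus A\o(V\o\ker\epsilon)$ via the associativity isomorphism $(A\o V)\o W\cong A\o(V\o W)$ of $B$-modules (coassociativity of $\Delta$). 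Since $V\o\ker\epsilon$ is again an $H$-module, the final clause of Definition \ref{def-property-P} shows property (P) is preserved. Then, passing the triangle of (i) into $\mc{D}(A,H)$, the acyclic module $\mathbf{a}M$ becomes a zero object, so the third morphism $\tilde{\mathbf{p}}M\to TM$ becomes an isomorphism there; applying $T^{-1}$ produces the functorial isomorphism $\mathbf{p}M\xrightarrow{\cong}M$.

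\textbf{Part (iii) --- the main obstacle.} The difficulty here is to upgrade the abstract derived isomorphism of (ii) to the image of a genuine \emph{surjective} $B$-module quasi-isomorphism. Rotating the triangle of (i) gives a morphism $\mathbf{p}M\to M$ in $\mc{C}(A,H)$ (namely $-T^{-1}$ of the connecting map) which becomes the isomorphism of (ii) in $\mc{D}(A,H)$. A morphism in $\mc{C}(A,H)$ is an honest homotopy class of $B$-module maps, and since $\underline{\Res}$ inverts quasi-isomorphisms it factors through $\mc{D}(A,H)$ and detects them; therefore \emph{any} $B$-module representative $f:\mathbf{p}M\to M$ of this class is automatically a quasi-isomorphism. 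What a bare representative need not be is surjective, and this is the point I expect to require care.

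To repair surjectivity I would enlarge $f$ by an acyclic property-(P) module that surjects onto $M$. The natural candidate is $E:=A\o\Res(M)\o H$: it is of the form $A\o V$ with $V=\Res(M)\o H$ an $H$-module, hence has property (P) by the last clause of Definition \ref{def-property-P}; its restriction to $H$ is a tensor product with the free module $H$, hence projective--injective by Lemma \ref{lemma-infinite-projective-H-modules}, so $E$ is acyclic; and it admits a $B$-linear surjection $g:E\twoheadrightarrow M$ gotten by composing $A\o\Res(M)\o H\xrightarrow{\Id\o\Id\o\epsilon}A\o\Res(M)$ with the module-algebra multiplication $A\o\Res(M)\to M$, $a\o m\mapsto am$ (both maps are $B$-linear, the second by the module-algebra axiom $h\cdot(am)=(h_{(1)}\cdot a)(h_{(2)}\cdot m)$, and $g$ is surjective because $a\o m\mapsto am$ already hits all of $M$). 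Then $(f,g):\mathbf{p}M\oplus E\to M$ is a surjective quasi-isomorphism out of a property-(P) module whose image in $\mc{D}(A,H)$ is precisely the isomorphism of (ii), since $E\cong 0$ there. The one bookkeeping subtlety to flag is that $\mathbf{p}M\oplus E$ differs from $T^{-1}(\tilde{\mathbf{p}}M)$ on the nose; one either absorbs $E$ into the notation $\mathbf{p}M$ or records that $\mathbf{p}M\oplus E$, together with $E$ and $g$ being functorial in $M$, is still a functorial property-(P) cofibrant replacement canonically isomorphic in $\mc{D}(A,H)$ to $T^{-1}(\tilde{\mathbf{p}}M)\cong M$.
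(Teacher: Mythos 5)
Your parts (i) and (ii) coincide with the paper's own argument (the paper in fact just asserts that $T^{-1}$ preserves property (P); your check via $(A\o V)\o W\cong A\o (V\o W)$ fills in that detail), so the real divergence is in (iii), where you take a genuinely different route. The paper never rotates the triangle and picks a representative: it goes back into the explicit bar construction, observes that as $(A,A)$-bimodules $\mathbf{p}A=\bigcup_n C_n\o \mathrm{ker}(\epsilon)$ decomposes using $(H/\Bbbk\Lambda)\o\mathrm{ker}(\epsilon)\cong \Bbbk\oplus Q$ with $Q$ projective, so that the multiplication $A\o A\cong A\o A\o\Bbbk\xrightarrow{\delta_0}A$ extends to a surjection $\mathbf{p}A\twoheadrightarrow A$ whose cone is $\mathbf{a}A$ up to contractible summands, and then induces this up along $-\o_A M$, right-exactness of the tensor product preserving surjectivity. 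Your alternative --- represent the rotated connecting map by an honest $B$-module map $f$ (your observation that \emph{any} representative is automatically a quasi-isomorphism, because $\underline{\mathrm{Res}}$ factors through the localization, is correct and clean), then repair surjectivity by adding the acyclic property-(P) module $E=A\o\mathrm{Res}(M)\o H$ with its canonical $B$-linear surjection --- is sound: every claim about $E$, $g$ and $(f,g)$ checks out against Definition \ref{def-property-P}, Lemma \ref{lemma-infinite-projective-H-modules} and the module-algebra axiom. The trade-off is this: your argument is softer and shows more, namely that \emph{any} cofibrant replacement can be upgraded to a surjective one without touching the bar complex; the paper's argument buys the statement on the nose --- the surjection really has source $T^{-1}(\tilde{\mathbf{p}}M)$, and it exhibits $\mathbf{p}M\cong\mathbf{p}A\o_A M$, an identification Section 8 later exploits to equip $\mathbf{p}X$ with a bimodule structure --- together with strict functoriality, which your construction does not deliver: $E$ and $g$ are functorial, but $f$ is an unnatural choice of representative of a homotopy class, so your closing claim that $\mathbf{p}M\oplus E$ is a \emph{functorial} replacement is the one overstatement. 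That is harmless for (iii) as literally phrased (which only asks for existence of the surjection realizing the isomorphism of (ii)), but it should be retracted or repaired, for instance by taking $f$ to be the paper's induced map rather than an arbitrary representative.
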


\begin{proof}
By applying Lemma \ref{lemma-split-ses-lead-to-dt} to the short
exact sequence of the theorem, we obtain a distinguished triangle in
$\mc{C}(A,H)$
$$\xymatrix{M\ar[r] & \mathbf{a}M\ar[r] & \mathbf{\tilde{p}}M\ar[r]& T(M)},$$
which is functorial in $M$ by Theorem \ref{thm-bar-resolution}.
Since $\mathbf{a}M$ is acyclic, it is isomorphic to $0$ in the
derived category. By passing to the derived category $\mc{D}(A,H)$
we obtain a functorial isomorphism
$$\mathbf{\tilde{p}}M\stackrel{\cong}{\lra} T(M).$$
Then apply $T^{-1}$ to this isomorphism $\mathbf{\tilde{p}}M \lra
T(M)$, and we define
$$\mathbf{p}M := T^{-1}(\mathbf{\tilde{p}}M)=\tilde{\mathbf{p}}M\o
\textrm{ker}(\epsilon),$$
which satisfies property (P) since
$\tilde{\mathbf{p}}M$ does. This proves (i) and (ii). We will
postpone the proof of part (iii) until the end of this section,
where the explicit surjective quasi-isomorphism is constructed.
\end{proof}

We reap some other direct consequences of the bar construction, the
first of which is the promised relationship between cofibrant
modules and modules with property (P).

\begin{cor}\label{cor-cofibrant-direct-summand-of-property-p} Let
$M$ be a cofibrant hopfological module. Then $M$ is a direct summand
of a $B$-module with property (P). Conversely, any $B\dmod$ direct
summand of a module with property (P) is cofibrant. In other words,
the class of cofibrant modules is the idempotent completion of the
class of modules with property (P) in the abelian category $B\dmod$.
\end{cor}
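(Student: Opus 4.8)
The plan is to prove the two containments that together identify the cofibrant modules with the $B\dmod$-direct summands of modules having property (P). Since $B\dmod$ is abelian, hence idempotent complete, the idempotent completion inside $B\dmod$ of any class of objects is precisely the class of its $B\dmod$-direct summands; so the displayed equivalence is exactly the conjunction of ``every cofibrant module is a direct summand of a property-(P) module'' and ``every direct summand of a property-(P) module is cofibrant.'' One half of the underlying implications is already in hand: Corollary \ref{cor-property-P-implies-cofibrant} gives that property (P) implies cofibrancy, and I will upgrade this to the statement that cofibrancy is closed under passage to direct summands. The other half uses the bar resolution to produce an explicit splitting.

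For the forward direction, let $M$ be cofibrant. By Corollary \ref{cor-cofibrant-replacement}(iii) there is a \emph{surjective} quasi-isomorphism $p\colon \mathbf{p}M\twoheadrightarrow M$ in $B\dmod$, where $\mathbf{p}M$ satisfies property (P). I would then apply the defining property of cofibrancy (Definition \ref{def-cofibrant-module}) with $P=M$ to this surjective quasi-isomorphism: the induced map
$$\mc{Z}(\Hom_A(M,\mathbf{p}M))\lra \mc{Z}(\Hom_A(M,M))$$
is surjective, and under the identification $\mc{Z}(\Hom_A(-,-))\cong \Hom_B(-,-)$ of Lemma \ref{lemma-trivial-submodules} it is exactly post-composition with $p$. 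Hence $\Id_M\in \Hom_B(M,M)$ lifts to a $B$-module map $s\colon M\lra \mathbf{p}M$ with $p\circ s=\Id_M$. Thus $s$ splits $p$, exhibiting $M$ as a $B\dmod$-direct summand of $\mathbf{p}M$, a module with property (P).

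For the converse it suffices, by Corollary \ref{cor-property-P-implies-cofibrant}, to show that cofibrancy is inherited by direct summands. So suppose $P=M\oplus N$ in $B\dmod$ with $P$ cofibrant, and let $X\twoheadrightarrow Y$ be any surjective quasi-isomorphism. Because the inclusions and projections of the two summands are $B$-module maps, hence $H$-invariant for the action of Definition \ref{def-hom}, the decomposition $\Hom_A(P,-)\cong \Hom_A(M,-)\oplus \Hom_A(N,-)$ is one of $H$-modules, and $\mc{Z}$ commutes with this direct sum by Corollary \ref{cor-taking-cohomology-commute-with-direct-sums}. Therefore the surjection $\mc{Z}(\Hom_A(P,X))\twoheadrightarrow \mc{Z}(\Hom_A(P,Y))$ furnished by cofibrancy of $P$ is the direct sum of the two component maps for $M$ and $N$; since a direct sum of $\Bbbk$-linear maps can be surjective only if each summand map is surjective, the map $\mc{Z}(\Hom_A(M,X))\to \mc{Z}(\Hom_A(M,Y))$ is surjective, i.e. $M$ is cofibrant. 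The graded case is identical, with $\mc{Z}$ replaced throughout by $\mc{Z}^*$.

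The only genuinely non-formal ingredient is the forward direction, and there everything rests on having the surjective quasi-isomorphism $\mathbf{p}M\twoheadrightarrow M$ in the abelian category $B\dmod$ itself, rather than merely an isomorphism in $\mc{D}(A,H)$; this is precisely the postponed Corollary \ref{cor-cofibrant-replacement}(iii), built from the bar resolution of Theorem \ref{thm-bar-resolution}. Granting that, the lifting of $\Id_M$ is immediate, so I anticipate no real obstacle beyond correctly invoking part (iii). The converse is purely formal, the single point requiring care being that the splitting of $\Hom_A(P,-)$ respects the $H$-action, which holds because the defining idempotent of the summand is a $B$-module endomorphism and hence lies in the $H$-invariants.
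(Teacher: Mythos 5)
Your proposal is correct, and the forward direction coincides with the paper's: both use Corollary \ref{cor-cofibrant-replacement}(iii) to get the surjective quasi-isomorphism $\mathbf{p}M\twoheadrightarrow M$ and then lift $\Id_M$ through it via the cofibrancy of $M$, splitting $M$ off $\mathbf{p}M$. The converse, however, is where you genuinely diverge. The paper argues through the $H$-module structure of $\Hom_A(-,K)$: writing $N\cong M\oplus M'$ with $N$ of property (P), it invokes Lemma \ref{lemma-property-p-cofibrant} to see that $\Hom_A(N,K)$ is projective and injective over $H$ for any acyclic $K$, notes that a direct summand of a projective-injective $H$-module is again projective-injective, and then reruns the proof of Corollary \ref{cor-property-P-implies-cofibrant} (splitting the sequence of $H$-modules before taking $\mc{Z}$). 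Your argument is purely formal: you never touch acyclic modules at all, only the definition of cofibrancy, the $H$-equivariant decomposition $\Hom_A(P,-)\cong\Hom_A(M,-)\oplus\Hom_A(N,-)$, and the observation that a block-diagonal surjection of $\Bbbk$-vector spaces has surjective blocks. This is shorter and shows the more general fact that cofibrancy, as a lifting property, is closed under direct summands in $B\dmod$. What the paper's longer route buys is the intermediate statement that $\Hom_A(M,K)$ is projective and injective over $H$ for every acyclic $K$ whenever $M$ is cofibrant; this is exactly the ingredient reused immediately afterwards in Corollary \ref{cor-cofibrant-implies-K-projective} (the ``K-projective'' characterization), so if one follows your route that corollary would need the summand-of-projective-injective argument supplied separately.
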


\begin{proof}By (iii) of Corollary \ref{cor-cofibrant-replacement}, we
have a surjective quasi-isomorphism $\mathbf{p}M\twoheadrightarrow
M$. Applying the $\Hom_B(M,-)$ to this surjection and using the
cofibrance condition, we see immediately that $M$ is a direct
summand of $\mathbf{p}M$, which is a module with property (P) by the
same corollary.

Conversely, if $M$ is a direct summand of a property (P) module $N$,
say $N\cong M\oplus M^{\prime}$, then $\Hom_A(N,-)\cong
\Hom_A(M,-)\oplus \Hom_A(M^{\prime},-)$ as functors from $B\dmod$ to
$H\dmod$. Since a direct summand of a projective and injective
$H$-module is still projective and injective, $\Hom_A(M,K)$ is
acyclic for any acyclic module $K$, using Lemma
\ref{lemma-property-p-cofibrant}. The same proof as in Corollary
\ref{cor-property-P-implies-cofibrant} shows that $M$ is cofibrant.
The rest of the corollary is clear.
\end{proof}

The next result gives the promised characterization of cofibrant
modules as an analogue of ``K-projective modules'' due to Bernstein
and Lunts \cite{BL}.
\begin{cor}\label{cor-cofibrant-implies-K-projective} A hopfological
module $M$ is cofibrant if and only if $M$ is projective as an
$A$-module, and for any acyclic module $K$, the $H$-module
$\Hom_A(M,K)$ is projective and injective.
\end{cor}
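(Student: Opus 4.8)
The plan is to prove the two implications separately, leaning on the structural results already established for modules with property (P), so that essentially no new computation is required.

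For the forward direction, suppose $M$ is cofibrant. By Corollary \ref{cor-cofibrant-direct-summand-of-property-p}, $M$ is a direct summand of a $B$-module $N$ satisfying property (P), say $N\cong M\oplus M'$. First I would observe that any property-(P) module is projective (indeed free) as an $A$-module: by (P2) the exhaustive filtration splits over $A$, so $N\cong F_0\oplus\bigoplus_{r}F_{r+1}/F_r$ as $A$-modules, and by (P3) each summand is a direct sum of free $A$-modules $A\o V$. Hence $N$ is $A$-projective, and therefore so is its direct summand $M$. Next, for any acyclic $K$, Lemma \ref{lemma-property-p-cofibrant} gives that $\Hom_A(N,K)$ is projective and injective as an $H$-module; since $\Hom_A(-,K)$ is additive, $\Hom_A(M,K)$ is a direct summand of $\Hom_A(N,K)$, and a direct summand of a projective-injective $H$-module is again projective and injective. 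This establishes both conditions.

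For the converse, suppose $M$ is $A$-projective and $\Hom_A(M,K)$ is projective and injective for every acyclic $K$. Here I would simply rerun the argument of Corollary \ref{cor-property-P-implies-cofibrant} verbatim, replacing its appeal to Lemma \ref{lemma-property-p-cofibrant} by the present hypothesis. Given a surjective quasi-isomorphism $X\twoheadrightarrow Y$, its kernel $K$ is acyclic, giving a short exact sequence $0\lra K\lra X\lra Y\lra 0$. Applying $\Hom_A(M,-)$, which is exact because $M$ is $A$-projective, yields a short exact sequence of $H$-modules
$$0 \lra \Hom_A(M,K) \lra \Hom_A(M,X) \lra \Hom_A(M,Y) \lra 0.$$
By hypothesis $\Hom_A(M,K)$ is injective ($=$ projective), so this sequence splits; applying the additive functor $\mc{Z}=\Hom_H(\Bbbk_0,-)$ then preserves surjectivity, yielding the defining surjectivity $\mc{Z}(\Hom_A(M,X))\twoheadrightarrow\mc{Z}(\Hom_A(M,Y))$ of Definition \ref{def-cofibrant-module}. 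Thus $M$ is cofibrant, and the graded case is identical with $\mc{Z}$ replaced by $\mc{Z}^*$.

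I do not expect any serious obstacle, since every ingredient is already in place; the only points requiring a moment's care are the forward direction's claim that property-(P) modules are $A$-projective, which is exactly what (P2) and (P3) are designed to guarantee, together with the standard fact (used already in Corollary \ref{cor-cofibrant-direct-summand-of-property-p}) that the classes of projective and injective $H$-modules are closed under direct summands.
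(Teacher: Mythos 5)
Your proof is correct and follows essentially the same route as the paper's: the paper likewise deduces the ``only if'' direction from Corollary \ref{cor-cofibrant-direct-summand-of-property-p}, Lemma \ref{lemma-property-p-cofibrant}, and the closure of projective-injective $H$-modules under direct summands, and proves the ``if'' direction by rerunning the argument of Corollary \ref{cor-property-P-implies-cofibrant} with the hypothesis replacing the appeal to Lemma \ref{lemma-property-p-cofibrant}. Your write-up merely makes explicit the details (notably the $A$-freeness of property~(P) modules coming from (P2) and (P3)) that the paper leaves to the reader.
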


\begin{proof} The ``if'' direction follows from the the same
argument we used in Corollary
\ref{cor-property-P-implies-cofibrant}. The ``only if'' part follows
from the above Corollary
\ref{cor-cofibrant-direct-summand-of-property-p}, the corresponding
result for property (P) modules \ref{lemma-property-p-cofibrant}, and
the fact that an injective submodule of any $H$-module is an $H$-direct
summand.
\end{proof}

The last immediate consequence of the theorem we record here is the
equivalence between $\mc{D}(A,H)$ and the homotopy category of
property (P) (resp. cofibrant) objects.

\begin{cor}\label{cor-equivalence-cofibrant-subcat}Let
$\mathcal{P}(A,H)$ (resp. $\mc{CF}(A,H)$) be the full triangulated
subcategory of $\mc{C}(A, H)$ whose objects consist of hopfological
modules satisfying property (P) (resp. cofibrant modules). Then:
 \begin{enumerate}\item The morphism space
between any two objects $P_1$, $P_2$ in $\mc{P}(A,H)$ (resp.
$\mc{CF}(A,H)$) coincides with the morphism space of these objects
in the derived category:
$$\Hom_{\mc{P}(A,H)}(P_1, P_2)\cong \Hom_{\mc{D}(A,H)}(P_1,P_2).$$ In
fact, for any $P$ with property (P) (resp. cofibrant), we
have:$$\Hom_{\mc{C}(A,H)}(P,-)\cong \Hom_{\mc{D}(A,H)}(P,-).$$
\item The composition of functors
$$\mc{P}(A,H) \subset \mc{C}(A,H)
\stackrel{Q}{\lra} \mc{D}(A,H)~~\left(\textrm{resp.}~\mc{CF}(A,H) \subset
\mc{C}(A,H) \stackrel{Q}{\lra} \mc{D}(A,H)\right),$$
 where $Q$ is the
localization functor, is an equivalence of triangulated categories.
\item The bar resolution is a functor $\mathbf{p}:\mc{D}(A,H)\lra \mc{P}(A,H)$
which is the left adjoint to the composition functor $\mc{P}(A,H)
\subset \mc{C}(A,H) \stackrel{Q}{\lra} \mc{D}(A,H)$.
\end{enumerate}
\end{cor}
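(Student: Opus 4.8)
The plan is to dispatch the three parts in order, the first being the crux and the other two formal consequences of it. Throughout, the essential geometric input is Lemma \ref{lemma-characterizing-cofibrant-objects}: a cofibrant $P$ satisfies $\Hom_{\mc{C}(A,H)}(P,K)=0$ for every acyclic $K$, i.e.\ $P$ is left-orthogonal to the thick subcategory $\mc{N}$ of acyclic modules whose Verdier quotient defines $\mc{D}(A,H)$. Since the quasi-isomorphisms form a localizing class (Theorem \ref{derived-stable-categories}), I can compute morphisms in $\mc{D}(A,H)$ by the calculus of fractions and run the standard orthogonality argument. For injectivity of $\Hom_{\mc{C}(A,H)}(P,M)\to\Hom_{\mc{D}(A,H)}(P,M)$: if a chain map $f:P\to M$ localizes to zero, there is a quasi-isomorphism $s$ with $sf=0$; rotating the triangle on $s$ shows $f$ factors through a shift of an object of $\mc{N}$, on which $P$ is orthogonal, so $f$ is already null-homotopic. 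For surjectivity: a morphism in $\mc{D}(A,H)$ is a roof $P\xleftarrow{s}P'\xrightarrow{g}M$ with $s$ a quasi-isomorphism, fitting into a triangle $P'\xrightarrow{s}P\to N\to TP'$ with $N\in\mc{N}$; applying $\Hom_{\mc{C}(A,H)}(P,-)$ and using $\Hom_{\mc{C}(A,H)}(P,N)=0=\Hom_{\mc{C}(A,H)}(P,T^{-1}N)$ shows $s_*$ is bijective, so $s$ admits a section $t$ in $\mc{C}(A,H)$ and the roof equals $Q(gt)$. Functoriality in the second variable is immediate, giving $\Hom_{\mc{C}(A,H)}(P,-)\cong\Hom_{\mc{D}(A,H)}(P,-)$; the property-(P) case is the same since such modules are cofibrant (Corollary \ref{cor-property-P-implies-cofibrant}).

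For Part 2 I would argue that the composite with the localization $Q$ is exact, fully faithful, and essentially surjective. Exactness holds because $Q$ is exact and the inclusions land in honest triangulated subcategories: closure of the cofibrant class under $T$ and cones follows from the characterization in Corollary \ref{cor-cofibrant-implies-K-projective} (being $A$-projective with $\Hom_A(-,K)$ acyclic for acyclic $K$ is preserved by these operations), and property (P) is stable under them by splicing filtrations. Full faithfulness is exactly Part 1, and essential surjectivity is Corollary \ref{cor-cofibrant-replacement}(ii), which produces for each $M$ a property-(P) module $\mathbf{p}M$ with $\mathbf{p}M\xrightarrow{\cong}M$ in $\mc{D}(A,H)$. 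An exact, fully faithful, essentially surjective functor is an equivalence of triangulated categories, settling both $\mc{P}(A,H)$ and $\mc{CF}(A,H)$. A byproduct I will use below is that any quasi-isomorphism between cofibrant modules is already an isomorphism in $\mc{C}(A,H)$, since $Q$ is fully faithful on these subcategories.

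For Part 3 I first check that $\mathbf{p}$ descends to a functor $\mc{D}(A,H)\to\mc{P}(A,H)$: Theorem \ref{thm-bar-resolution} makes $\mathbf{p}$ functorial on $\mc{C}(A,H)$, and a quasi-isomorphism $f$ is sent to $\mathbf{p}f$, which is a quasi-isomorphism between cofibrant modules by the naturality square with the resolutions $\mathbf{p}M\to M$, hence an isomorphism in $\mc{C}(A,H)$ by the byproduct above; so $\mathbf{p}$ inverts quasi-isomorphisms and descends. The adjunction isomorphism is then the composite
\[
\Hom_{\mc{P}(A,H)}(\mathbf{p}M,P)\cong\Hom_{\mc{D}(A,H)}(\mathbf{p}M,P)\cong\Hom_{\mc{D}(A,H)}(M,P),
\]
where the first isomorphism is Part 1 and the second is induced by $\mathbf{p}M\xrightarrow{\cong}M$ of Corollary \ref{cor-cofibrant-replacement}(ii). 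Naturality in $M$ and $P$ follows from functoriality of the bar resolution, exhibiting $\mathbf{p}$ as left adjoint to the inclusion-localization functor; since the latter is an equivalence by Part 2, $\mathbf{p}$ is in fact a quasi-inverse.

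The main obstacle is Part 1: carefully executing the calculus of fractions for the Verdier quotient and verifying that left-orthogonality of $P$ to the acyclics upgrades an arbitrary derived morphism to a genuine homotopy class of chain maps. The remaining work is bookkeeping, the only other delicate point being the verification that the cofibrant (and property-(P)) objects are closed under the triangulated operations, so that Part 2 genuinely produces an equivalence of \emph{triangulated} categories.
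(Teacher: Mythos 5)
Your proposal is correct and takes essentially the same route as the paper: your Part 1 is precisely the paper's argument, namely reducing to splitting any quasi-isomorphism $s\colon X\lra P$ by applying $\Hom_{\mc{C}(A,H)}(P,-)$ to the triangle on $s$ and invoking Lemma \ref{lemma-characterizing-cofibrant-objects}, then concluding through the calculus of fractions. Parts 2 and 3 are left as exercises in the paper, and your treatment of them (full faithfulness plus essential surjectivity via Corollary \ref{cor-cofibrant-replacement}, and the adjunction as a composite of the two isomorphisms) is the intended standard argument.
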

\begin{proof}The first claim follows from standard homological algebra
arguments, using Lemma \ref{lemma-characterizing-cofibrant-objects}.
It goes as follows. By definition of morphisms in $\mc{D}(A,H)$, it
suffices to show that, for any quasi-isomorphism $s:X\lra P$ in
$\mc{C}(A,H)$, where $P$ is either with property (P) or cofibrant,
there exists a morphism
$$t: P\lra X$$ in $\mc{C}(A, H)$ such that $ts=\Id_{P}$. The
cone of $s$ is acyclic, giving a distinguished triangle in
$\mc{C}(A,H)$: $X\stackrel{s}{\lra} P\lra \textrm{Cone}(s) \lra TX$.
Applying $\Hom_{\mc{C}(A,H)}(P,-)$ produces the desired isomorphism:
$$\Hom_{\mc{C}(A,H)}(P, X)\cong \Hom_{\mc{C}(A,H)}(P,P).$$
The result follows. The second and third claims are easy, and we
leave them as exercises to the reader.
\end{proof}

\begin{rmk} To summarize the notions we introduced in this section, we
have an inclusion of diagrams inside the abelian category $B\dmod$:
$$
(\textrm{Modules with property (P)}) \subset (\textrm{Cofibrant
modules}) \subset (\textrm{Hopfological modules}).
$$
The previous corollary can be summarized as saying that these
inclusions in turn give equivalences of the homotopy categories
$\mc{P}(A,H)$ and $\mc{CF}(A,H)$ with the derived category
$\mc{D}(A,H)$.
\end{rmk}

\subsection{Proof of Theorem \ref{thm-bar-resolution}}
\paragraph{The simplicial bar resolution of an
algebra.}Recall that for an algebra $A$ over $\Bbbk$ (the
construction works more generally over $\Z$), the simplicial bar
resolution of $A$ is a projective resolution of $A$ as a module over
the envelope algebra $A\otimes A^{op}$, i.e. as an $(A,A)-$bimodule.
We review its construction briefly here. Standard details about bar
resolutions can be found in Loday's monograph~\cite[Chapter I]{Lo}.

Let $(C_\bullet,d_i,s_i)$ be a simplicial module over the base field
$\Bbbk$, where $d_i$ is the face map, and $s_i$ is the degeneration
map, satisfying the commutator relation:
$$d_i d_j=d_{j-1}d_i~~\textrm{if}~i<j, \quad
 d_i s_j=
\begin{cases}
s_{j-1} d_i & \textrm{if}~i<j,\\
\textrm{id} & \textrm{if}~i=j,j+1,\\
s_j d_{i-1} & \textrm{if}~i>j+1.
\end{cases}$$
One can naturally associate with such a simplicial module a complex
by defining the differential $\delta : C_{n} \lra C_{n+1}$ as the
alternating sum of the face maps $\delta=\sum_{i=0}^{n-1}(-1)^id_i$.
One then checks readily using the commutator relations in the
definition that $(C_\bullet,\delta)$ becomes a complex. Now we apply
this construction to the Hochschild complex:

\begin{defn}
The Hochschild simplicial module of a $\Bbbk$-algebra $A$ is the
simplicial module $(C(A),d_i,s_i)$, where for each $n \geq 0$,
$C_{-n}=A^{\o(n+1)}$, and $C_{n+1}=0$. The face and degeneration
maps are defined by:
$$d_i(a_0\o a_1 \o \cdots \o a_n)=
\begin{cases}
a_0 \o\cdots\o a_ia_{i+1}\o \cdots \o a_n & \textrm{if}~0\leq i\leq n-1,\\
a_na_0\o a_1\cdots \o a_{n-1} & \textrm{if}~i=n,
\end{cases}$$
and
$$s_i(a_0\o a_1 \o \cdots \o a_n)=a_0 \o \cdots \o a_i\o 1\o a_{i+1}\cdots\o a_n.$$
\end{defn}

We have the well-known:

\begin{lemma} \label{lemma-simplicial-bar-contractiblity} The associated simplicial
 bar complex $(C_{-n}=A^{\otimes (n+1)},
\delta_n)$ is a contractible complex, giving a resolution of $A$ as
an $(A,A)$-bimodule by free bimodules.
\end{lemma}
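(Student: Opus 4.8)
The plan is to write down an explicit contracting homotopy and read both assertions off from it. First I would record the structural observations that give meaning to ``resolution by free bimodules''. Regarding $A^{\o(n+1)}$ as an $(A,A)$-bimodule by letting the leftmost tensor factor carry the left action and the rightmost factor the right action, it is the free bimodule $A\o A^{\o(n-1)}\o A$ on the middle space $A^{\o(n-1)}$ (and for $n=1$ it is simply $A\o A$, free of rank one). Each face map $d_i$ with $0\leq i\leq n-1$ multiplies two adjacent factors without wrapping around, hence is a bimodule map; in particular so is $\delta=\sum_{i=0}^{n-1}(-1)^id_i$. It is precisely because we omit the cyclic face $d_n$, which would splice $a_n$ with $a_0$, that $\delta$ is bimodule-linear, and (as already granted in the text via the simplicial identities) that $\delta^2=0$. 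In the lowest degree the map $\delta:A\o A\lra A$ is the multiplication $a_0\o a_1\mapsto a_0a_1$, which is the canonical bimodule surjection serving as the augmentation.

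Next I would introduce the homotopy $h:A^{\o(n+1)}\lra A^{\o(n+2)}$ defined by $h(a_0\o\cdots\o a_n):=1\o a_0\o\cdots\o a_n$, together with $h:A\lra A\o A$, $a\mapsto 1\o a$ in degree zero. The heart of the proof is the identity $\delta h+h\delta=\Id$ on every term. To verify it I would split the differential applied to $h(a_0\o\cdots\o a_n)=1\o a_0\o\cdots\o a_n$ into its $i=0$ term and the rest: the $d_0$ term collapses $1\cdot a_0$ and returns $a_0\o\cdots\o a_n$, while for $1\leq i\leq n$ the face $d_i$ leaves the inserted unit $1$ untouched and acts as $1\o d_{i-1}(a_0\o\cdots\o a_n)$. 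Reindexing $j=i-1$ and absorbing the sign $(-1)^i=-(-1)^j$ converts the remaining sum into $-\,h\delta(a_0\o\cdots\o a_n)$, which gives $\delta h=\Id-h\delta$ as desired; the degree-zero case reduces to $\delta h(a)=1\cdot a=a$ since $\delta$ vanishes on $A^{\o 1}$. From $\delta h+h\delta=\Id$ I would then conclude that the augmented complex $\cdots\lra A^{\o 3}\lra A^{\o 2}\lra A\lra 0$ is contractible as a complex of $\Bbbk$-vector spaces, hence acyclic, and combined with the freeness of each $A^{\o(n+1)}$ ($n\geq 1$) this exhibits it as a free $(A,A)$-bimodule resolution of $A$.

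I do not expect a serious obstacle here; the only point demanding care is bookkeeping. The homotopy $h$ is only $\Bbbk$-linear (in fact a map of right $A$-modules), not a bimodule map, so the contraction lives at the level of underlying vector spaces while the freeness of the terms is a separate structural statement; one must not conflate the two. The sign-and-reindexing manipulation in $\delta h+h\delta=\Id$ is routine once the $i=0$ term is peeled off, and the identity $\delta^2=0$ needed for $(C_\bullet,\delta)$ to be a complex is already established in the text, so nothing beyond the explicit homotopy above is required.
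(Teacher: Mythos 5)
Your proposal is correct and is essentially the paper's own proof: the paper's entire argument consists of exhibiting the same ``extra-degeneracy'' homotopy $s(a_0\o\cdots\o a_{n-1})=1\o a_0\o\cdots\o a_{n-1}$, and you have simply supplied the routine verification of $\delta s+s\delta=\Id$ together with the (correct) structural remarks that each term is a free bimodule, that $\delta$ is bimodule-linear because the cyclic face is omitted, and that the contraction is only right $A$-linear rather than bimodule-linear.
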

\Pf{A homotopy is given by the ``extra-degeneracy" $$s:A^{\o n}\lra
A^{\o(n+1)}\ , \ \ \  a_0\o\dots\o a_{n-1} \mapsto 1\o a_0\o \dots \o
a_{n-1},$$ for any $n\in \N$.}

\paragraph{Proof of Theorem \ref{thm-bar-resolution}: construction.}Now we begin
with the construction of the bar resolution. The first observation
to make is that, in the recap above, when $A$ is a left
$H-$module-algebra, all the face and degeneration maps are
$H-$module maps. For instance, the map $\delta_0:A\o A\lra A$,
$\delta_0(a_0\otimes a_1)=a_0a_1$ is the multiplication map, which
is an $H-$module map by definition. Now we apply the cone
construction (Definition \ref{def-cone-construction}) to this map
and obtain:
$$\textrm{Cone}(\delta_0)\cong A \o A\o (H/\Bbbk\Lambda)\oplus A,$$
the isomorphism viewed as a $(A,A)-$bimodule map ($A$ acts trivially
on the $(H/\Bbbk\Lambda)$ factor). However, this isomorphism is not
an $H-$module isomorphism. The $H-$ module structure on the cone is
defined in a slightly abstract way using the push-out property,
which is not preserved under this identification. We can give a more
explicit description as follows, but it's not necessary for the
construction below.

We complete $\Lambda$ to a basis $\{h_i|i=1,\dots, r, h_r=\Lambda,
r=\textrm{dim}(H)\}$ of $H$. We describe the left action of $H$ on
itself explicitly in this basis by setting:
$$h\cdot h_i=\sum_jc(h,i)_jh_j.$$
Now take a basis $\{a_k|k\in I\}$ of $A$, where $I$ is some index
set so that the cone has as a basis of elements:
$$\{a_k \o a_l \o h_i|i=1,\dots, r-1,k,l\in I\}\cup\{a_k|k\in I\}.$$
The $H-$action is given as follows:
$$h\cdot a_k=ha_k;$$
$$\begin{array}{lll}
h\cdot(a_k \o a_l \o h_i)&=&\sum_{(h)}\left( h_{(1)} a_k \o a_l \o
h_i+  a_k \o h_{(2)} a_l\o h_i+ a_k
\o  a_l \o h_{(3)}h_i\right)\\
& = & \sum_{(h)} \l \sum_{j=1}^{r-1} \left( c(h_{(3)},i)_j
a_k \o a_l \o h_j\right)+c(h_{(1)},i)_ra_k \o a_l \o h_r\\
& & + h_{(1)} a_k \o a_l \o h_i+ a_k \o
h_{(2)} a_l \o h_i\r\\
& = &\sum_{(h)} \l \sum_{j=1}^{r-1}\left( c(h_{(3)},i)_j
a_k \o a_l \o h_j\right)\\
& & +c(h_{(3)},i)_ra_k \o a_l \o \Lambda+c(h_{(3)},i)_r a_ka_l-c(h_{(3)},i)_r a_ka_l\\
& & + h_{(1)} a_k \o a_l \o h_i+ a_k \o
h_{(2)} a_l \o h_i \r\\
& \equiv & \sum_{(h)} \l \sum_{j=1}^{r-1}\left( c(h_{(3)},i)_j a_k
\o a_l \o h_j \right) -c(h_{(3)},i)_r
a_ka_l\\
& & +h_{(1)} a_k \o a_l \o h_i + a_k \otimes h_{(2)} a_l \o h_i\r,
\end{array}
$$
where in the last equality, we used that $ a_k\o a_l \o
\Lambda+a_ka_l\equiv 0$ in the cone. Notice that when $H$ is the
Hopf super algebra $\Bbbk[d]/(d^2)$, and if we take the basis of $H$
to be $\{1,d\}$, it is readily seen that the action of $d$ recovers
the usual ``connection map" from the cone to $T(A)$ in the standard
distinguished triangle associated with $\delta_0:A\otimes A\lra A$.

Next, we will lift the map $\delta_1:A\o A\o A\lra A\o A$ to a map
$\tilde{\delta_1}: A\o A\o A \o (H/\Bbbk \Lambda)\lra
\textrm{Cone}(\delta_0)$, as follows. First off we define a map:
$$\begin{array}{ccc}
 A\o A \o A \o H& \lra & A\o A \o H \oplus A\\
 a \o a^\prime \o a^{\prime\prime}\o h & \mapsto &
( \delta_1 (a \o a^{\prime}\o a^{\prime \prime}) \o h,0 )
\end{array}
$$
The submodule $A \o A \o A \o \Bbbk\Lambda$ of $A\o A\o A \o H$ is
mapped into the module $$\textrm{Im}({A \otimes A
\xrightarrow{ \lambda_{A\o A} \oplus \delta_0}  A\otimes A \o
H\oplus A }),$$ since
$\left(\delta_1(a \o a^\prime \o a^{\prime\prime})\o \Lambda, 0\right)=
(\left(\delta_1(a \o a^\prime \o a^{\prime\prime})\o \Lambda,
\delta_0\delta_1 (a \o a^\prime \o a^{\prime\prime})\right)),
$ where we used that $\delta_0\delta_1=0$. Therefore, this map descends
to the quotient and gives rise to $\tilde{\delta_1}$:
$$\tilde{\delta_1}:A\o A\o A \o (H/\Bbbk\Lambda)\lra \textrm{Cone}(\delta_0)$$
Also observe that $\tilde{\delta_1}$ kills elements in the submodule
$\textrm{Im}(\delta_2) \o (H/\Bbbk\Lambda)$.

Then we can construct the cone of $\tilde{\delta_1}$. Recall from
the definition of the cone construction that in
$\textrm{Cone}(\delta_0)$, $A$ is naturally an $H-$submodule, while
the quotient $\textrm{Cone}(\delta_0)/A$ is isomorphic to the
$H-$module $A\o A \o (H/\Bbbk\Lambda)$. Thus the cone of
$\tilde{\delta_1}$ has a filtration by $(A,A)-$bimodules:
$$0\subset A \subset \textrm{Cone}(\delta_0)\subset \textrm{Cone}(\tilde{\delta_1}),$$
whose subquotients are respectively $A$, $A^{\o 2}\o
(H/\Bbbk\Lambda)$, and $A^{\o 3} \o (H/\Bbbk\Lambda)^{\o 2}$. These
observations will allow us to construct the bar resolution
inductively.

Now assume we have inductively constructed:
\begin{enumerate}
\item $C_n=\textrm{Cone}(\tilde{\delta_n}:A^{\o(n+2)}\o (H/\Bbbk\Lambda)^{\o n}
\lra C_{n-1})\in B\dmod$;
\item For any $x\in (H/\Bbbk\Lambda)^{n}$, $a\in A^{\o(n+3)}$ we have
 $\tilde{\delta_n}(\delta_{n+1}(a)\o x)=0.$
\end{enumerate}
This assumption implies that $C_{n-1}$ is a submodule of $C_n$. Then
using another induction argument, we see that $C_n$ has an
exhaustive filtration
$$F^{\bullet}: 0=F^{-1}\subset F^0\subset \dots \subset F^{p-1}\subset F^{p}\subset \dots
\subset F^{n+1}=C_n,$$
whose subquotients $F^{n}/F^{n-1}$ are
isomorphic to $A^{\o (n+1)} \o (H/\Bbbk\Lambda)^{\o n}$. In
particular, this says that $C_n$ satisfies ``property (P)", and
therefore is a cofibrant $B$-module as defined earlier.

Now we construct the $B$-module map $\widetilde{\delta_{n+1}}$.
Tensoring with the identity map of $(H/\Bbbk\Lambda)^{\o n}$, we
have a map $A^{\o(n+3)}\o (H/\Bbbk\Lambda)^{\o n}\lra A^{\o(n+2)} \o
(H/\Bbbk\Lambda)^{\o n}$, which in turn gives rise to a map:
$$\begin{array}{ccc}
A^{\o(n+3)}\o(H/\Bbbk\Lambda)^{\o n}\o H & \lra &
 A^{\o(n+2)}\o (H/\Bbbk\Lambda)^{\o n}\o H \oplus C_{n-1}\\
 a \o x \o h & \mapsto & (\delta_{n+1}(a)\o x \o h,0)
\end{array},$$
where $h\in H$, $x\in (H/\Bbbk\Lambda)^{\o n}$, and $a \in
A^{\o(n+3)}$. This map descends to the desired
$$\widetilde{\delta_{n+1}}:A^{\o(n+3)} \o (H/\Bbbk\Lambda)^{\o(n+1)} \lra C_n$$
since elements of the form $a \o x \o \Lambda$ are sent to
$$a \o x \o \Lambda \mapsto ( \delta_{n+1}(a) \o x \o\Lambda,0)=
(\delta_{n+1}(a)\o x \o \Lambda,\tilde{\delta_{n}}( \delta_{n+1}(a)
\o x )),$$ and by our inductive hypothesis
$\tilde{\delta_{n}}(\delta_{n+1}(a)\o x)=0$. Finally, we verify the
inductive hypothesis 2 for $\widetilde{\delta_{n+1}}$, which
requires that it kills elements in the image of $\delta_{n+2}$:
$$\widetilde{\delta_{n+1}}( \delta_{n+2}(a)\o x\o \bar{h})=\delta_{n+1}
\delta_{n+2}(a)\o x \o\bar{h} =0,$$ where $\bar{h}\in
H/\Bbbk\Lambda$, $x \in (H/\Bbbk\Lambda)^{\o n}$, $a \in
A^{\o(n+4)}$, and we have used that $\delta_{n+1}\delta_{n+2}=0$.

In conclusion, we have constructed inductively a chain of
$(A,A)-$bimodules:
$$A=C_{-1}\subset C_{0}\subset C_1 \subset \dots \subset C_{n-1}
\subset C_{n} \subset \dots$$ whose subquotients are
$$C_{n}/C_{n-1}\cong A^{\o(n+2)} \o (H/\Bbbk\Lambda)^{\o (n+1)}.$$
We define
$$\mathbf{a}A := \bigcup_{n=-1}^{\infty} C_n,$$
which fits into a short exact sequence:
$$\xymatrix{0\ar[r]&A\ar[r] & \mathbf{a}A\ar[r] & \mathbf{\tilde{p}}A\ar[r]&0}.$$
We may regard any left $B$-module $M$ as an $A$-module by
restriction. Tensoring the above sequence by $M$ gives rise to the
short exact sequence
$$\xymatrix{0\ar[r]&M\ar[r] & \mathbf{a}M\ar[r] & \mathbf{\tilde{p}}M\ar[r]&0}$$
claimed in the theorem. Our next goal would then be to show that
$\mathbf{a}M$ in the above short exact sequence is contractible as
an $H$-module, for any hopfological module $M$.

\paragraph{Proof of Theorem \ref{thm-bar-resolution}:
contractibility.} Now we show that $\mathbf{a}M$ is acyclic, for any
$A-$module $M$. To do this we may safely forget about the $B-$module
structures involved and regard the modules as $H$-modules.  We will
show this for $\mathbf{a}A$; and the general case follows by the
same argument.

Observe that in the Lemma \ref{lemma-simplicial-bar-contractiblity},
the homotopy $s:A^{\o n}\lra A^{\o (n+1)}$ is an $H$-module map
since $A$ is an $H$-module algebra. Thus the homotopy allows us to
split the terms in the original bar complex of $A$ into $H$-modules
summands
$$A^{\o n}\cong A^{(n)}\oplus A^{(n-1)}$$
so that the boundary map $\delta:A^{\o n}\lra A^{\o(n-1)}$ (an
$H$-module map again) kills the $A^{(n)}$ factor and identifies the
$A^{(n-1)}$ factor with that in $A^{\o(n-1)}$. Now if we go back to
the definition of the cone $C_0$ as in the previous part, we see
that it was constructed as a pushout, and therefore, as $H$-modules,
we can identify it with:
$$\begin{array}{ccl}
C_0 & \cong &( A^{\o 2} \o H\oplus A)/(\{a \o a^\prime \o \Lambda,
aa^\prime|a,a^\prime \in A\})\\
&\cong &((A^{(2)}\oplus A)\o H \oplus A)/(\{(( a^{(2)}
,~ a) \o \Lambda, a)|a^{(2)}\in A^{(2)},a\in A\})\\
& \cong &( A^{(2)} \o H\oplus A\o H\oplus A)/(\{(
a^{(2)}\o \Lambda)|a^{(2)}\in A^{(2)}\} \oplus \{(a\o \Lambda , a)|a\in A\})\\
& \cong & A\o (H/\Bbbk\Lambda) \oplus A \o H.
\end{array}$$

Then at the second step, we constructed $C_1$ as the cone of
$\tilde\delta_1$, which was defined by first mapping $A^{\o 3} \o H$
onto $A^{\o 2} \o H\oplus A$ via $(\delta_1\o \Id_H, 0)$ and then
taking a quotient. With respect to the decompositions $A^{\o 3}\cong
A^{(3)}\oplus A^{(2)}$ and $A^{\o 2}\cong A^{(2)}\oplus A$, the map
is identified with the map $A^{(3)}\o H \oplus A^{(2)}\o H \lra
A^{(2)}\o H\oplus A \o H\oplus A$ which is the identity on the
$A^{(2)}\o H$ factor and zero on $A^{(3)}\o H$. Therefore,
$\tilde\delta_1$ written out in this componentwise form becomes:
$$\begin{array}{rcl}
\tilde\delta_1: A^{(3)}\o (H/\Bbbk\Lambda)\oplus A^{(2)}\o
(H/\Bbbk\Lambda) & \lra & A^{(2)} \o (H/\Bbbk\Lambda)
\oplus A \o H,\\
(a^{(3)}\o \overline{h^\prime},~a^{(2)}\o \overline{h} ) & \mapsto &
(a^{(2)}\o \overline{h},~0),
\end{array}$$
for any $a^{(3)}\in A^{(3)}$, $a^{(2)}\in A^{(2)}$, and
$\overline{h},~ \overline{h^\prime} \in H/\Bbbk\Lambda$. The cone of
$\tilde\delta_1$ is then identified as an $H$-module with
$$C_1\cong A^{(3)}\o(H/\Bbbk\Lambda)^{\o2}\oplus
A^{(2)}\o (H/\Bbbk\Lambda)\o H\oplus  A \o H.$$

Inductively, assume that as $H$-modules,
$$C_{n-1}\cong A^{(n+1)}\o (H/\Bbbk\Lambda)^{\o n}\oplus \bigoplus_{i=1}^{n}
(A^{(i)}\o (H/\Bbbk\Lambda)^{\o (i-1)}\o H)$$ and the $H$-module map
$\widetilde{\delta_n}$ in the construction procedure is given
componentwise by
$$\xymatrix{ A^{\o (n+2)} \o
(H/\Bbbk\Lambda)^{n} \ar[rrr]^{\widetilde{\delta_n}} &&&
C_{n-1} \\
\Updownarrow &&& \Updownarrow \\
 A^{(n+2)}\o(H/\Bbbk\Lambda)^{\o n}\ar[r]& 0 && A^{(n+1)} \o
 (H/\Bbbk\Lambda)^{\o n}\\
 \bigoplus &&& \bigoplus\\
 A^{(n+1)}\o (H/\Bbbk\Lambda)^{\o n} \ar[rrruu]^{=} &&&
 \oplus_{i=1}^{n}A^{(i)}\o (H/\Bbbk\Lambda)^{\o(i-1)} \o H.
}$$
Then as $H$-modules, the cone of $\widetilde{\delta_n}$ is
isomorphic to:
$$\begin{array}{rcl}
C_n=\textrm{Cone}(\widetilde\delta_n) & \cong &
A^{(n+2)}\o(H/\Bbbk\Lambda)^{\o (n+1)} \oplus
\textrm{Cone}(\Id_{A^{(n+1)}\o (H/\Bbbk\Lambda)^{\o n}})\\
& & \oplus  \bigoplus_{i=1}^{n}(A^{(i)}\o
(H/\Bbbk\Lambda)^{\o(i-1)} \o H)\\
& \cong & A^{(n+2)}\o(H/\Bbbk\Lambda)^{\o (n+1)} \oplus
(A^{(n+1)}\o (H/\Bbbk\Lambda)^{\o n}\o H)\\
& & \oplus  \bigoplus_{i=1}^{n}(A^{(i)}\o
(H/\Bbbk\Lambda)^{\o(i-1)} \o H)\\
 & \cong &A^{(n+2)}\o(H/\Bbbk\Lambda)^{\o (n+1)} \oplus
 \bigoplus_{i=1}^{n+1}(A^{(i)}\o (H/\Bbbk\Lambda)^{\o(i-1)}\o H).
\end{array}$$
Furthermore, $\widetilde{\delta_{n+1}}:A^{\o(n+3)}\o
(H/\Bbbk\Lambda)^{\o(n+1)}\lra C_n$, which is constructed as the
quotient of $(\delta_{n+1}\o \Id \o \Id, 0): A^{\o(n+3)}\o
(H/\Bbbk\Lambda)^{\o n}\o H \lra A^{\o(n+2)}\o (H/\Bbbk\Lambda)^{\o
n}\o H \oplus C_{n-1}$ by the submodule $A^{\o(n+3)}\o
(H/\Bbbk\Lambda)^{\o n}\o \Bbbk\Lambda$,
 decomposes as the
$H$-module map:
$$\xymatrix{
 A^{(n+3)}\o(H/\Bbbk\Lambda)^{\o (n+1)}\ar[r]& 0 && A^{(n+2)} \o
 (H/\Bbbk\Lambda)^{\o (n+1)}\\
 \bigoplus &&& \bigoplus\\
 A^{(n+2)}\o (H/\Bbbk\Lambda)^{\o (n+1)} \ar[rrruu]^{=} &&&
 A^{(n+1)} \o (H/\Bbbk\Lambda)^{\o n}\o H \oplus C_{n-1}.
}$$ This finishes the induction step, and establishes the $H$-module
isomorphism:
$$C_n=\textrm{Cone}(\widetilde{\delta_{n}})\cong A^{(n+2)}\o(H/\Bbbk\Lambda)
^{\o(n+1)}\oplus\bigoplus_{i=1}^{n+1} A^{(i)}\o (H/\Bbbk\Lambda)^{\o
(i-1)}\o H.$$ Taking the union of all $n$ gives us
$$
\mathbf{a}(A) \cong \bigoplus_{i=1}^{\infty}
A^{(i)}\o(H/\Bbbk\Lambda)^{\o (i-1)}\o H \cong
\left(\bigoplus_{i=1}^{\infty} A^{(i)} \o (H/\Bbbk\Lambda)^{\o
(i-1)}\right)\o H,$$ which is of the form $N\o H$ for some
$H-$module $N$, and the acyclicity follows. This finishes the proof
of Theorem \ref{thm-bar-resolution}. $\hfill\Box$

\begin{proof}[Proof of part (iii) of Corollary
\ref{cor-cofibrant-replacement}] Now we finish the proof of the
corollary. Notice that as $(A,A)$ bimodules,
$$\begin{array}{rcl}
\mathbf{p}(A)& = &\bigcup_{n=0}^{\infty}C_{n}\o
\textrm{ker}(\epsilon)\\
& \cong & A\o A \o H/(\Bbbk \Lambda) \o \textrm{ker}(\epsilon)
\oplus \cdots \oplus A^{\o (n+2)}\o (H/\Bbbk\Lambda)^{\o(n+1)}
\o \textrm{ker}(\epsilon) \oplus \cdots \\
& \cong & A\o A \o (\Bbbk \oplus Q) \oplus \cdots \oplus A^{\o
(n+2)}\o (H/\Bbbk\Lambda)^{\o(n)} \o (\Bbbk \oplus Q) \oplus \cdots,
\end{array}
$$
where $Q$ is a projective $H$-module (see Proposition 3 of
\cite{Kh}). It is then easily seen that the map $A\o A \o \Bbbk
\cong A \o A  \xrightarrow{\delta_0=m} A$ extends to $\mathbf{p}A
\twoheadrightarrow A$. The cone of this map, when ignoring the
contributions from factors containing tensor products with $Q$, is just $\mathbf{a}A$, which is
contractible. The corollary follows by inducing
$(\mathbf{p}A\twoheadrightarrow A)$ up to the resolution
$\mathbf{p}M\twoheadrightarrow M$.
\end{proof}

\begin{rmk}The more general notion of $H$-module algebra would be
``$H$-module category'', which is a graded category (including the
cyclic $\Z/(n)-$graded case as well) with a finite dimensional
(graded super) Hopf algebra action on the $\Hom$ spaces between
objects. A first example of such a category which is not an
$H$-module algebra (i.e. there are infinitely many objects) is the
category $H\dmod$. More generally, the graded module category over
$B=A\#H$ is another example of such a category. The algebra $A$
itself is an $H$-module category with a single object whose
endomorphism space is given by $A$, together with the defining $H$
action. Our treatment follows Keller's treatment of DG categories
\cite{Ke1} closely and the above story generalizes without much
difficulty to the categorical case.
\end{rmk}


\section{Compact modules} In this section, we follow Neeman's
original treatment in \cite{Nee} to discuss compact hopfological
modules. Thankfully, Neeman's original setup was general enough that
it can be applied here without essential modification. See also
Keller \cite[Section 5]{Ke1} for another account of Neeman's
treatment, where the notion of generators of a triangulated category
appears to be slightly different. However, it turns out that the two
notions are equivalent.

Throughout this section, we make the same assumption as in the
previous section that $H$ is a finite dimensional Hopf algebra over
the base field $\Bbbk$, and $A$ is an $H$-module algebra. We let
$\mc{D}$ denote a $\Bbbk$-linear triangulated category that admits
infinite direct sums.

\subsection{Generators}
We begin with a discussion of the notion of compact generators for
$\mc{D}(A,H)$.

\begin{defn}\label{def-compact-objects}An object $X\in \mc{D}$ is said to be
\emph{compact} if the functor $$\Hom_{\mc{D}}(X,-):\mc{D} \lra \vect$$ commutes
with arbitrary direct sums.
\end{defn}

The following lemma is obvious from the definition and the axioms of
triangulated categories.

\begin{lemma}\label{lemma-2-out-of-3-compact} In any distinguished
triangle in $\mc{D}$, if two out of the three objects in the
distinguished triangle are compact, so is the third. $\hfill\Box$
\end{lemma}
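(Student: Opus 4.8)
The plan is to reduce everything to a single natural comparison map and then feed the long exact sequences coming from the distinguished triangle into the five lemma. Fix an arbitrary family $\{M_i\}_{i\in I}$ of objects of $\mc{D}$; its direct sum $\bigoplus_{i\in I}M_i$ exists by the standing assumption on $\mc{D}$. For every object $W$ there is a canonical map
$$\phi_W:\bigoplus_{i\in I}\Hom_{\mc{D}}(W,M_i)\lra \Hom_{\mc{D}}\Big(W,\bigoplus_{i\in I}M_i\Big),$$
natural in $W$, and by Definition \ref{def-compact-objects} an object $W$ is compact exactly when $\phi_W$ is an isomorphism for every such family. I would first record the elementary fact that compactness is stable under the translation: since $T$ is an auto-equivalence we have $\Hom_{\mc{D}}(TW,-)\cong \Hom_{\mc{D}}(W,T^{-1}(-))$, and $T^{-1}$ commutes with direct sums, so $W$ is compact if and only if $TW$ (equivalently $T^{-1}W$) is.

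Next I would note that the two functors $F(-):=\bigoplus_{i}\Hom_{\mc{D}}(-,M_i)$ and $G(-):=\Hom_{\mc{D}}(-,\bigoplus_i M_i)$ are both contravariant cohomological functors on $\mc{D}$. For $G$ this is immediate from the axioms of triangulated categories; for $F$ it follows because each $\Hom_{\mc{D}}(-,M_i)$ is cohomological and an arbitrary direct sum of exact sequences of $\Bbbk$-vector spaces is again exact. Applying $F$ and $G$ to the given triangle $X\xrightarrow{u}Y\xrightarrow{v}Z\xrightarrow{w}TX$ produces, for each of them, a periodic long exact sequence of the shape
$$\cdots\lra F(TY)\lra F(TX)\lra F(Z)\lra F(Y)\lra F(X)\lra F(T^{-1}Z)\lra\cdots,$$
and the naturality of $\phi$ assembles the maps $\{\phi_W\}$ into a commutative ladder between the $F$-sequence and the $G$-sequence (the squares at the connecting maps commute because those maps are given by composition with shifts of $w$).

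Finally I would invoke the five lemma, recentering the five-term window at whichever object is not assumed compact. If $X$ and $Y$ are compact, then so are all their shifts, so in the window
$$F(TY)\lra F(TX)\lra F(Z)\lra F(Y)\lra F(X)$$
the four flanking vertical maps $\phi_{TY},\phi_{TX},\phi_Y,\phi_X$ are isomorphisms, whence the five lemma forces $\phi_Z$ to be an isomorphism. The case $X,Z$ compact is handled by recentering at $F(Y)$, where the flanking terms $F(TX),F(Z),F(X),F(T^{-1}Z)$ are shifts of $X$ and $Z$; and the case $Y,Z$ compact by recentering at $F(X)$, where the flanking terms are shifts of $Y$ and $Z$. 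Since $\{M_i\}$ was arbitrary, the third object is compact in each case. I expect no genuine obstacle: the only points needing care are the bookkeeping of $T$ in the periodic long exact sequence, so that the four neighbouring vertical maps are indeed shifts of the two objects assumed compact, and the exactness-of-direct-sums fact that makes $F$ cohomological.
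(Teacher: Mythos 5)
Your proof is correct. The paper offers no argument for this lemma at all---it is asserted as ``obvious from the definition and the axioms of triangulated categories''---and your argument (stability of compactness under the shift, the observation that both $\bigoplus_i\Hom_{\mc{D}}(-,M_i)$ and $\Hom_{\mc{D}}(-,\bigoplus_i M_i)$ are contravariant cohomological functors, naturality of the comparison map $\phi$, and the five lemma applied to the window recentered at the object not assumed compact) is precisely the routine verification that remark alludes to, with every step checking out, including the exactness of arbitrary direct sums of $\Bbbk$-vector spaces needed to make the first functor cohomological.
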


The next lemma gives us the easiest examples of compact objects in
$\mc{D}(A,H)$.

\begin{lemma}\label{lemma-A-tensor-V-is-compact}For any finite dimensional
$H$-module $V$, the hopfological module $A\o V$ is compact in
$\mc{D}(A, H)$.
\end{lemma}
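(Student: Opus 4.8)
The plan is to exploit that $A\o V$ is cofibrant, so that its morphism spaces in $\mc{D}(A,H)$ are already computed in the homotopy category $\mc{C}(A,H)$, where they are given by the explicit functor $\mc{H}(\Hom_A(A\o V,-))$; the latter is visibly additive once $V$ is finite dimensional. First I would observe that $A\o V$ satisfies property (P): taking the trivial filtration $0\subset A\o V$ together with the equivalent form of condition (P3) in Definition \ref{def-property-P} (in which $V$ is allowed to be an arbitrary $H$-module), $A\o V$ qualifies and is therefore cofibrant. Consequently, by the first part of Corollary \ref{cor-equivalence-cofibrant-subcat}, there is a natural isomorphism of functors on $\mc{D}(A,H)$,
$$\Hom_{\mc{D}(A,H)}(A\o V,-)\cong \Hom_{\mc{C}(A,H)}(A\o V,-).$$

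Next I would string together the remaining identifications. By Proposition \ref{prop-comparison-hom-spaces} together with the canonical $H$-module isomorphism $\Hom_A(A\o V,N)\cong \Hom_\Bbbk(V,N)$ (the same one used in the proof of Lemma \ref{lemma-property-p-cofibrant}), for any $B$-module $N$ we have
$$\Hom_{\mc{C}(A,H)}(A\o V,N)\cong \mc{H}(\Hom_A(A\o V,N))\cong \mc{H}(\Hom_\Bbbk(V,N)).$$
Now fix an arbitrary family $\{N_i\}_{i\in I}$ of objects. Since $V$ is \emph{finite dimensional}, the functor $\Hom_\Bbbk(V,-)\cong V^{*}\o(-)$ commutes with arbitrary direct sums, so $\Hom_\Bbbk(V,\bigoplus_i N_i)\cong\bigoplus_i\Hom_\Bbbk(V,N_i)$; and by Corollary \ref{cor-taking-cohomology-commute-with-direct-sums} the functor $\mc{H}$ likewise commutes with direct sums. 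Combining these with the displayed isomorphisms gives a chain of natural isomorphisms whose composite is the canonical comparison map,
$$\Hom_{\mc{D}(A,H)}\Bigl(A\o V,\textstyle\bigoplus_{i\in I} N_i\Bigr)\cong\bigoplus_{i\in I}\Hom_{\mc{D}(A,H)}(A\o V, N_i),$$
which is precisely the statement that $A\o V$ is compact.

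The one point requiring care—and the main, if modest, obstacle—is to confirm that the module-level direct sum $\bigoplus_{i\in I}N_i$ used above actually represents the coproduct in $\mc{D}(A,H)$. For this I would verify that direct sums descend to the localization: a direct sum of acyclic $B$-modules is again acyclic, because acyclicity means projectivity–injectivity as an $H$-module, and $H$, being finite dimensional, is Noetherian, so arbitrary direct sums of injective $H$-modules remain injective (as in the footnote, \cite[Theorem 3.46]{Lam}). Hence the direct-sum functor preserves quasi-isomorphisms, passes to $\mc{D}(A,H)$, and computes the coproduct there. With this established the chain of isomorphisms above is legitimate, and the proof is complete.
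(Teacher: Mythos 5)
Your proposal is correct and follows essentially the same route as the paper's proof: cofibrance of $A\o V$ plus Corollary \ref{cor-equivalence-cofibrant-subcat} to reduce to the homotopy category, then the chain $\Hom_{\mc{C}(A,H)}(A\o V,-)\cong \mc{H}(\Hom_A(A\o V,-))\cong \mc{H}(\Hom_\Bbbk(V,-))$, finite dimensionality of $V$, and Corollary \ref{cor-taking-cohomology-commute-with-direct-sums}. Your two added verifications---that $A\o V$ has property (P) via the trivial filtration, and that module-level direct sums represent coproducts in $\mc{D}(A,H)$ because direct sums of acyclics remain acyclic---are points the paper leaves implicit, and both are argued correctly.
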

\begin{proof} Since $A\o V$ is cofibrant, using
Lemma \ref{cor-equivalence-cofibrant-subcat}, we have:
$$\begin{array}{rl}
\Hom_{\mc{D}(A,H)}(A\o V, \oplus_{i\in I} M_i) & \cong
\Hom_{\mc{C}(A,H)}(A\o V, \oplus_{i \in I} M_i) \\
& \cong  \mc{H}(\Hom_A(A\o V, \oplus_{i \in I}M_i)) \cong
\mc{H}(\Hom_\Bbbk(V,\oplus_{i\in I}M_i))\\
& \cong  \oplus_{i\in I}\mc{H} (\Hom_\Bbbk(V,M_i)) \cong \oplus_{i
\in I} \mc{H} (\Hom_A(A\o V, M_i))\\
& \cong \oplus_{i \in I} \Hom_{\mc{D}(A,H)}(A\o V, M_i),
\end{array}$$
where, in the fourth equality, we used that $V$ is finite dimensional
(thus compact) and taking $\mc{H}$ commutes with direct sums
(Corollary \ref{cor-taking-cohomology-commute-with-direct-sums}).
The lemma follows.
\end{proof}

\begin{cor} \label{cor-compact-shifted-remains-compact} Let $A\o V$
be as in the previous lemma. Then $T^n(A\o V)$ is compact for any
$n\in \Z$.
\end{cor}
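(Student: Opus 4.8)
The plan is to exploit that the shift functor $T$ is an invertible exact autofunctor of $\mc{D}(A,H)$ which, together with its inverse, commutes with arbitrary direct sums; any such autoequivalence automatically preserves compact objects, and then a trivial induction disposes of all powers $T^n$. So I would first reduce the corollary to the single assertion that each of $T$ and $T^{-1}$ sends compact objects to compact objects, from which the case of general $T^n$ follows by induction on $|n|$ (with $T^0=\Id$).

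Before the main computation I would record the two structural facts being used. By Proposition \ref{prop-invertible-shift-functor}, $T$ is invertible on $\mc{D}(A,H)$ with $T(M)=M\o(H/\Bbbk\Lambda)$ and $T^{-1}(M)=M\o\ker(\epsilon)$, and by Theorem \ref{derived-stable-categories} both are exact functors on $\mc{D}(A,H)$ given by tensoring (on the right) with a fixed finite dimensional $H$-module. Since for any $H$-module $W$ one has $(\oplus_{i\in I}M_i)\o W\cong \oplus_{i\in I}(M_i\o W)$ already at the level of $B$-modules, and this isomorphism descends through $\mc{C}(A,H)$ to $\mc{D}(A,H)$, both $T$ and $T^{-1}$ commute with arbitrary direct sums in $\mc{D}(A,H)$.

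To prove the reduced statement, take $X$ compact and a family $\{M_i\}_{i\in I}$. Using the invertibility of $T$ and the fact that $T^{-1}$ commutes with direct sums,
\[
\Hom_{\mc{D}(A,H)}(TX,\oplus_{i\in I}M_i)
\cong \Hom_{\mc{D}(A,H)}(X,\oplus_{i\in I}T^{-1}M_i),
\]
and, since $X$ is compact,
\[
\Hom_{\mc{D}(A,H)}(X,\oplus_{i\in I}T^{-1}M_i)
\cong \oplus_{i\in I}\Hom_{\mc{D}(A,H)}(X,T^{-1}M_i)
\cong \oplus_{i\in I}\Hom_{\mc{D}(A,H)}(TX,M_i).
\]
Hence $TX$ is compact; the identical computation with the roles of $T$ and $T^{-1}$ interchanged shows that $T^{-1}X$ is compact. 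Applying this to $X=A\o V$, which is compact by Lemma \ref{lemma-A-tensor-V-is-compact}, gives the corollary.

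The only point I would flag explicitly, rather than leave implicit, is the commutation of $T^{\pm 1}$ with coproducts in $\mc{D}(A,H)$; once that and the invertibility isomorphisms are in hand, the argument is purely formal. Fortunately this is genuinely mild in the present setting: because $T^{\pm 1}$ is literally tensoring with the fixed finite dimensional $H$-module $H/\Bbbk\Lambda$ (resp.\ $\ker(\epsilon)$), the required isomorphism $(\oplus_i M_i)\o W\cong\oplus_i(M_i\o W)$ holds on the nose in $B\dmod$ and survives the passage to the homotopy and derived categories, so no subtle interchange of limits is involved.
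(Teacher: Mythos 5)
Your proof is correct, and it is precisely the route the paper alludes to in its opening remark (``this can be seen without the previous lemma since the shift functors are automorphisms of $\mc{D}(A,H)$''), carried out in full. The paper's actual displayed argument, however, is more concrete and shorter still: since $T$ and $T^{-1}$ are right tensoring with the finite dimensional $H$-modules $H/\Bbbk\Lambda$ and $\mathrm{Ker}(\epsilon)$, one has $T^n(A\o V)\cong A\o \bigl(V\o (H/\Bbbk\Lambda)^{\o n}\bigr)$ for $n\geq 0$ (and similarly with $\mathrm{Ker}(\epsilon)$ for $n<0$), which is again of the form $A\o W$ with $W$ a finite dimensional $H$-module, so Lemma \ref{lemma-A-tensor-V-is-compact} applies verbatim with no further argument. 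Comparing the two: your argument is purely formal and therefore more general --- it shows that in any triangulated category with arbitrary coproducts, any autoequivalence sends compact objects to compact objects, independent of what $T$ looks like; the paper's argument trades that generality for brevity and has the additional feature of exhibiting $T^n(A\o V)$ explicitly as an object of the same shape $A\o W$, which is the form used later when discussing generators. One small simplification you could make: the commutation of $T^{\pm 1}$ with coproducts, which you rightly flag as the only nontrivial input, is in fact automatic for any equivalence (equivalences preserve all colimits, coproducts in particular), so the explicit tensor-product verification, while perfectly valid, is not needed; the only remaining routine point, which you leave implicit, is that your chain of natural isomorphisms composes to the canonical map $\oplus_{i}\Hom_{\mc{D}(A,H)}(TX,M_i)\lra \Hom_{\mc{D}(A,H)}(TX,\oplus_i M_i)$ whose invertibility defines compactness.
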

\Pf{Of course, this can be seen without the previous lemma since the
shift functors are automorphisms of $\mc{D}(A,H)$. Alternatively,
recall that the shifts $T$, $T^{-1}$ are given by right tensoring
$A\o V$ with the finite dimensional $H$-modules $H/\Bbbk\Lambda$,
$\textrm{Ker}(\epsilon)$ respectively (Proposition
\ref{prop-invertible-shift-functor}). The compactness of $T(A\o
V)=A\o V\o (H/\Bbbk\Lambda)$ etc.~then follows directly from the
previous lemma.}

\begin{defn}[Neeman]\label{def-compact-generators} Let $\mc{D}$ be
as above. We say that $\mc{D}$ is \emph{generated by a set of
objects} if there exists a set $\mc{G}=\{G_i\in \mc{D}|i\in I\}$ so
that for any $X \in \mc{D}$, $X\cong 0$ if and only if
$$\Hom_\mc{D}(T^{n}(G_i), X)=0$$
for all $n \in \Z$ and $G_i \in \mc{G}$. $\mc{D}$ is said to be
\emph{compactly generated} if $\mc{D}$ is generated by a set
$\mc{G}$ consisting of compact objects.
\end{defn}
As an example of this definition, we show that $\mc{D}(A,H)$ admits
a set of compact generators.

\begin{prop}\label{prop-compact-generators-for-derived-categories}The
derived category $\mc{D}(A,H)$ is compactly generated by the finite
set of objects $\mc{G}:=\{A\o V\}$, where $V$ ranges over a finite
set of representatives of isomorphism classes of simple $H$-modules.
\end{prop}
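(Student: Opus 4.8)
The plan is to verify the two conditions in Definition \ref{def-compact-generators}: that each object of $\mc{G}$ is compact, and that $\mc{G}$ generates $\mc{D}(A,H)$ in the orthogonality sense. Since $H$ is finite dimensional it has only finitely many isomorphism classes of simple modules, so $\mc{G}$ is indeed a finite set, and each representative $V$ is finite dimensional. Compactness of every $A\o V\in\mc{G}$ is then immediate from Lemma \ref{lemma-A-tensor-V-is-compact}, and compactness is preserved under the shifts $T^n$ by Corollary \ref{cor-compact-shifted-remains-compact}. This disposes of the first condition.

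For the generation condition, suppose $X\in\mc{D}(A,H)$ satisfies $\Hom_{\mc{D}(A,H)}(T^n(A\o V),X)=0$ for every $n\in\Z$ and every simple $V$; I must show $X\cong 0$. The first step is to rewrite these Hom groups purely in terms of the restriction $Y:=\underline{\mathrm{Res}}(X)$, regarded as an $H$-module. Because each $A\o V$ satisfies property (P) and is therefore cofibrant, Corollary \ref{cor-equivalence-cofibrant-subcat} identifies its derived Hom with its homotopy Hom, which by Proposition \ref{prop-comparison-hom-spaces} equals $\mc{H}(\Hom_A(A\o V,X))$. Using the canonical $H$-module isomorphism $\Hom_A(A\o V,X)\cong\Hom_\Bbbk(V,X)$ together with Corollary \ref{cor-morphism-space-in-stable-H-mod}, this becomes $\Hom_{H\udmod}(V,Y)$. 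Since $\underline{\mathrm{Res}}$ is exact, commutes with $T$, and $T$ is invertible, applying the same computation to the shifts turns the hypothesis into $\Hom_{H\udmod}(V,T^mY)=0$ for all $m\in\Z$ and all simple $V$.

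It therefore remains to prove a statement about the stable category alone: if an $H$-module $Y$ satisfies $\Hom_{H\udmod}(V,T^mY)=0$ for all simple $V$ and all $m$, then $Y$ is injective (equivalently projective, since $H$ is self-injective by Proposition \ref{prop-basic-property-H-mod}), i.e. $Y\cong 0$ in $H\udmod$; this says exactly that $X$ is acyclic, hence $X\cong 0$ in $\mc{D}(A,H)$. To extract ordinary homological information I would use the cosyzygy short exact sequence $0\lra Y\lra I\lra TY\lra 0$ with $I$ injective, which yields a natural isomorphism $\Hom_{H\udmod}(S,TY)\cong\Ext^1_H(S,Y)$. Taking $m=1$ in the hypothesis gives $\Ext^1_H(S,Y)=0$ for every simple $S$. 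A d\'evissage along composition series then propagates the vanishing to $\Ext^1_H(M,Y)=0$ for every finite dimensional $M$, and since $H$ is Artinian, Baer's criterion (tested on the finite dimensional quotients $H/L$ by left ideals $L$) forces $Y$ to be injective.

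The main obstacle I anticipate is precisely this last step, because $Y=\underline{\mathrm{Res}}(X)$ may be infinite dimensional, so one cannot naively run an induction on composition length nor split off a maximal injective summand directly. Routing the argument through $\Ext^1$ and Baer's criterion is what makes the infinite dimensional case go through cleanly; the remaining ingredients — the cofibrancy of $A\o V$, the chain of Hom identifications, and the finiteness of the set of simples — are all either already available in the preceding sections or immediate from $H$ being finite dimensional.
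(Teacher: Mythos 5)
Your proof is correct, but it takes a genuinely different route from the paper's. The paper bootstraps the orthogonality hypothesis upward through ever larger classes of modules: from simples to all finite dimensional $H$-modules (induction on composition length), then to $A\o W$ for arbitrary $W$ (the colimit-filtration trick from Lemma \ref{lemma-infinite-projective-H-modules}), then to all property (P) modules (via the filtration sequence $0\to\oplus F_r\to\oplus F_s\to P\to 0$), and finally invokes the bar resolution $\mathbf{p}X\cong X$ of Corollary \ref{cor-cofibrant-replacement} to conclude $\Hom_{\mc{D}(A,H)}(X,X)\cong\Hom_{\mc{D}(A,H)}(\mathbf{p}X,X)=0$, hence $\Id_X=0$. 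You instead collapse the whole problem onto the base Hopf algebra: the chain of identifications $\Hom_{\mc{D}(A,H)}(A\o V,X)\cong\mc{H}(\Hom_\Bbbk(V,X))\cong\Hom_{H\udmod}(V,\underline{\mathrm{Res}}(X))$ (valid, since $A\o V$ is property (P), hence cofibrant) converts the hypothesis into vanishing of stable Homs out of simples, and then the cosyzygy isomorphism $\Hom_{H\udmod}(S,TY)\cong\Ext^1_H(S,Y)$, d\'evissage along composition series, and Baer's criterion (applicable to the finite-length quotients $H/L$ because $H$ is finite dimensional, and valid for infinite dimensional $Y$) force $\underline{\mathrm{Res}}(X)$ to be injective, i.e.\ $X$ acyclic, i.e.\ $X\cong 0$ in $\mc{D}(A,H)$. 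Your argument is shorter and more elementary in that it bypasses Theorem \ref{thm-bar-resolution} entirely (it needs only cofibrance of $A\o V$ and Corollary \ref{cor-equivalence-cofibrant-subcat}), and it even shows the hypothesis is needed only at a single shift ($m=1$, giving $\Ext^1$-vanishing against simples); what the paper's bootstrap buys is a template that recurs elsewhere in the text (the same filtration-and-induction pattern drives Corollary \ref{cor-two-tensor-equivalent} and Lemma \ref{lemma-derived-hom-well-defined}), so its proof doubles as a rehearsal of the technique, whereas yours leans on the specific ring-theoretic fact that acyclicity over $H$ can be tested by Baer's criterion.
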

\begin{proof}It suffices to show that, if an object $X\in
\mc{D}(A,H)$ satisfies $\Hom_{\mc{D}(A,H)}(A\o V, X)=0$ for all $A\o
V \in \mc{G}$, then $X \cong 0$ in $\mc{D}(A,H)$.

Firstly, we show that the hypothesis implies that
$\Hom_{\mc{D}(A,H)}(A\o W , X)=0$ for any finite dimensional
$H-$module $W$. We prove this by induction on the length of $W$, the
length $1$ case following by the assumption. Inductively, take any
finite dimensional irreducible submodule $W^\prime$ of $W$ and form
the quotient $W^{\prime\prime} =W/W^\prime$. $W^{\prime\prime}$ has
shorter length by construction, and we have a short exact sequence
of cofibrant modules
$$0\lra A\o W^\prime \lra A\o W \lra A\o W^{\prime\prime} \lra 0.$$
This short exact sequence becomes a distinguished triangle of
cofibrant modules in $\mc{D}(A,H)$ and applying
$\Hom_{\mc{D}(A,H)}(-,X)$ to the triangle leads to a long exact
sequence
$$\begin{array}{lll}\cdots \lra \Hom_{\mc{D}(A,H)}(T^n(A\o W^{\prime\prime}), X) & \lra &
\Hom_{\mc{D}(A,H)}(T^n(A\o W), X)\\
& \lra & \Hom_{\mc{D}(A,H)}(T^n(A\o W^\prime), X) \lra \cdots
\end{array}$$ The two end terms vanish by assumption and inductive
hypothesis, therefore so does the middle term.

 Next, we show that $\Hom_{\mc{D}(A,H)}(T^n(A\o W), X)$
vanishes for any indecomposable $H$-module $W$ ($W$ could be
infinite dimensional). The strategy is to filter $W$ by finite
dimensional submodules, which we used in the proof of Lemma
\ref{lemma-infinite-projective-H-modules}. Tensoring the short exact
sequence there with $A$, we obtain a short exact sequence of
$B$-modules
$$0\lra \bigoplus_{i\in I}A \o W_i \stackrel{\Id_A\o \Psi}{\lra}
\bigoplus_{i\in I} A\o W_i \lra A\o W \lra 0,$$ where each $W_i$ is
finite dimensional. Applying $\Hom_{\mc{D}(A,H)}(-,X)$ to the
corresponding distinguished triangle and using the previous step
finishes this step.

Thirdly, we prove the vanishing of $\Hom_{\mc{D}(A,H)}(T^n(P),X)$
for all $P$ with property (P). Consider the following short exact
sequence of $B=A\#H$ modules used in Lemma
\ref{lemma-property-p-cofibrant}:
$$0\lra \bigoplus_{r\in \N}F_r \stackrel{\Psi}{\lra}
\bigoplus_{s \in \N}F_s \lra P \lra 0.$$ An induction argument on
$q$ using the previous step shows that
$\Hom_{\mc{D}(A,H)}(T^n(F_r),X)=0$ for all $r\in \N$, $n\in \Z$.
Then applying $\Hom_{\mc{D}(A,H)}(-,X)$ to the distinguished
triangle associated with the above short exact sequence gives us a
long exact sequence
$$\begin{array}{lll}\cdots \lra \prod_{s\in \N}\Hom_{\mc{D}(A,H)}(T^n(F_s), X)
& \lra & \Hom_{\mc{D}(A,H)}(T^n(P), X)\\
& \lra & \prod_{r\in \N}\Hom_{\mc{D}(A,H)}(T^{n+1}(F_r), X) \lra
\cdots.
\end{array}$$
Both ends vanish and the claim follows

Finally, for any object $X\in \mc{D}(A,H)$, take its bar resolution
$\mathbf{p}X\cong X$ (\ref{cor-cofibrant-replacement}), where
$\mathbf{p}X$ satisfies property (P). Then
$$\Hom_{\mc{D}(A,H)}(X,X)\cong \Hom_{\mc{D}(A,H)}(\mathbf{p}X,X),$$
and the right hand side vanishes by the previous step. It follows
that $\Id_X\cong 0$ and $X\cong 0$, finishing the proof of the
lemma.
\end{proof}

\begin{rmk}[On the notion of generators]In the above proposition,
we can equivalently take one compact generator $A\o W$ where $W$ is
a direct sum of simple $H$-modules, one from each isomorphism
classes. Notice that, when $H$ is a local Hopf algebra of
\emph{finite type}, we can replace condition P3 of property (P)
(Definition \ref{def-property-P}) with the equivalent requirement
that $F_r/F_{r+1} \cong A$ instead. Here by finite type we mean that
the set of isomorphism classes of indecomposable modules over $H$ is
finite. Indeed, in this case, the dimensions of indecomposable
modules are bounded, and thus any direct sum of indecomposable
$H$-modules $V$ admits a finite step filtration whose subquotients
are isomorphic to the trivial $H$-module. Therefore by refining the
original filtration of condition P3 by inducing this filtration of
$V$'s, we obtain a new filtration whose subquotients are just
isomorphic to the free module $A$ (with appropriate grading shifts
in the graded case). In particular, this allows us to see
immediately that $A$ generates $\mc{D}(A,H)$ in the stronger sense of
Keller \cite[Section 4.2]{Ke1}:
\begin{itemize}
\item \emph{``$\mc{D}(A,H)$ is the smallest strictly
\footnote{A subcategory $\mc{D}^\prime$ of $\mc{D}$ is called
strictly full if any object of $\mc{D}$ that is isomorphic to some
object in $\mc{D}^\prime$ must itself be in $\mc{D}^\prime$.} full
triangulated subcategory in itself which contains $A$ and is closed
under taking arbitrary direct sums and forming distinguished
triangles.''}
\end{itemize}
It is readily seen that this seemingly stronger version of
generators implies the notion we used in Definition
\ref{def-compact-generators}.

By contrast, for almost all finite dimensional Hopf algebras $H$,
the set of isomorphism classes of indecomposable $H$-modules may
well be infinite, and there is in general no good parametrization of
these isomorphism classes. Over such an $H$, it seems that the
definition of property (P) using all indecomposable modules is more
natural and fits the construction of the bar resolution we gave
previously. Moreover, using the bar resolution, Proposition
\ref{prop-compact-generators-for-derived-categories} shows that a
natural set of compact generators is given by $\{A \o V\}$,  where
$V$ ranges over the representatives of isomorphism classes of simple
$H$-modules. Thus one might wonder whether in the generic case of
$H$ there would still be a similar relation between the two notions
of generators. By a localization theorem of Thomason-Neeman, they
are always equivalent.
\end{rmk}

\begin{cor}\label{cor-equivalence-of-generators} $\mc{D}(A,H)$ is
the smallest strictly full triangulated subcategory in itself that
contains $\mc{G}=\{A\o V\}$ and is closed under taking arbitrary
direct sums and forming distinguished triangles.
\end{cor}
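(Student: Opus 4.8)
The plan is to deduce this from Proposition \ref{prop-compact-generators-for-derived-categories} together with the Bousfield localization machinery of Neeman \cite{Nee, Nee2}. Write $\mc{S}$ for the smallest strictly full triangulated subcategory of $\mc{D}(A,H)$ that contains $\mc{G}=\{A\o V\}$ and is closed under arbitrary direct sums; this is precisely the localizing subcategory generated by $\mc{G}$, and being triangulated it is automatically closed under $T$, $T^{-1}$ and under forming cones. The remark preceding the statement already records that the ``strong'' notion of generation implies the one in Definition \ref{def-compact-generators}, so it suffices to prove the reverse implication, namely that $\mc{S}$ exhausts all of $\mc{D}(A,H)$.

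First I would invoke that, by Lemma \ref{lemma-A-tensor-V-is-compact} and Proposition \ref{prop-compact-generators-for-derived-categories}, the objects in $\mc{G}$ are compact and form a \emph{set} of compact generators, so that $\mc{D}(A,H)$ is a compactly generated triangulated category with arbitrary direct sums. Neeman's localization theorem then applies to $\mc{S}$, which is generated by a set of compact objects of $\mc{D}(A,H)$: the inclusion $\mc{S}\hookrightarrow \mc{D}(A,H)$ admits a right adjoint, and equivalently every object $X\in \mc{D}(A,H)$ fits into a distinguished triangle $X'\lra X \lra X'' \lra T(X')$ with $X'\in \mc{S}$ and $X''\in \mc{S}^{\perp}$, where $\mc{S}^{\perp}=\{Y\mid \Hom_{\mc{D}(A,H)}(S,Y)=0~\forall\, S\in \mc{S}\}$.

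The key remaining step is to check that $\mc{S}^{\perp}=0$. If $Y\in \mc{S}^{\perp}$, then in particular $\Hom_{\mc{D}(A,H)}(T^{n}(A\o V), Y)=0$ for every simple $H$-module $V$ and every $n\in \Z$, since each such $T^{n}(A\o V)$ lies in $\mc{S}$. By the characterization of compact generators (Definition \ref{def-compact-generators}) verified in Proposition \ref{prop-compact-generators-for-derived-categories}, this forces $Y\cong 0$. Hence $X''\cong 0$ in the localization triangle, so the map $X'\lra X$ is an isomorphism and $X\cong X'\in \mc{S}$. As $X$ was arbitrary, $\mc{S}=\mc{D}(A,H)$, which is the assertion, and the two notions of generation coincide.

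I expect the only genuine subtlety to be the correct invocation of Neeman's theorem. What is used is that a localizing subcategory of a compactly generated triangulated category which is generated by a \emph{set} of compact objects is itself compactly generated and yields a Bousfield (co)localization; this relies on Brown representability, and it is exactly here that the finiteness afforded by $\mc{G}$ being a set of compact objects enters. Once the localization triangle is available, the orthogonality argument eliminating $\mc{S}^{\perp}$ is purely formal.
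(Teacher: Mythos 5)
Your proof is correct, and it ultimately rests on the same two pillars as the paper's: Proposition \ref{prop-compact-generators-for-derived-categories} and Neeman's machinery for compactly generated categories. The difference is one of packaging. The paper's proof is a one-line application of the Thomason--Neeman theorem stated immediately after the corollary (Theorem \ref{thm-thomason-neeman}), taking $R=T^{\Z}(\mc{G})$: part 2 of that theorem says verbatim that if a set of compact objects closed under shifts generates $\mc{D}$ in the sense of Definition \ref{def-compact-generators}, then the smallest full subcategory containing it and closed under coproducts and triangles is all of $\mc{D}$. You instead open up that black box and re-derive exactly this implication: compact generation of the localizing subcategory $\mc{S}$, Brown representability giving the right adjoint to the inclusion and hence the localization triangle $X'\lra X\lra X''\lra T(X')$ with $X''\in\mc{S}^{\perp}$, and then the purely formal step $\mc{S}^{\perp}=0$ via Proposition \ref{prop-compact-generators-for-derived-categories}. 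Since this localization argument is precisely how Neeman proves his theorem, your proposal is a faithful expansion rather than a genuinely new route; what it buys is transparency about why the ``vanishing-Hom'' notion of generation implies the ``smallest localizing subcategory'' notion, making the corollary self-contained modulo Brown representability (which the paper records anyway as Theorem \ref{thm-brown-representability}), whereas the paper's citation is shorter but defers the entire content to the literature. One small point you should make explicit: to run Brown representability on $\mc{S}$ you need the objects $T^{n}(A\o V)$ to be compact \emph{in} $\mc{S}$, not merely in $\mc{D}(A,H)$; this is automatic because $\mc{S}$ is closed under the direct sums of $\mc{D}(A,H)$, so Hom out of these objects is computed identically in either category, but it is the hinge on which the adjoint-functor step turns.
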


\Pf{The proof is just a corollary of the following theorem, where we
take $R=T^{\Z}(\mc{G}):=\{T^n(G)|G\in \mc{G}, n \in \Z\}$, and
$\mc{G}$ is the set of compact generators we exhibited in
Proposition \ref{prop-compact-generators-for-derived-categories}.}

\begin{thm}[Thomason-Neeman]\label{thm-thomason-neeman} Let $\mc{D}$
be a compactly generated triangulated category. Let $R$ be a set of
compact objects of $\mc{D}$ closed under the shift functor $T$ of
$\mc{D}$. Let $\mc{R}$ be the smallest full subcategory of $\mc{D}$
containing $R$ and closed with respect to taking coproducts and
forming triangles. Then:
\begin{enumerate}
\item The category $\mc{R}$ is compactly generated by the set of
generators $R$.
\item If $R$ is also a set of generators for $\mc{D}$, then
$\mc{R}=\mc{D}$.
\item The compact objects in $\mc{R}$ equals $\mc{R}^c=\mc{D}^c\cap \mc{R}$.
In particular, if $R$ is closed under forming triangles and taking direct summands,
it coincides with $\mc{R}^c$.
\end{enumerate}
\end{thm}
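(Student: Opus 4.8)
The plan is to derive all three parts from a single construction: a Bousfield localization triangle attached to the set $R$. Write $R^{\perp}:=\{X\in\mc{D}\mid \Hom_{\mc{D}}(G,X)=0 \text{ for all } G\in R\}$; since $R$ is closed under $T$ this is the same as requiring $\Hom_{\mc{D}}(T^nG,X)=0$ for all $G\in R$ and $n\in\Z$. Because each $G\in R$ is compact and $R$ is $T$-closed, $R^{\perp}$ is a localizing subcategory: it is closed under $T$ and cones since $\Hom_{\mc{D}}(G,-)$ is cohomological, and closed under arbitrary coproducts precisely because compactness gives $\Hom_{\mc{D}}(G,\bigoplus_i Y_i)\cong\bigoplus_i\Hom_{\mc{D}}(G,Y_i)$. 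The central step will be to produce, for every $X\in\mc{D}$, a distinguished triangle $X'\to X\to X''\to T X'$ with $X'\in\mc{R}$ and $X''\in R^{\perp}$; this exhibits the inclusion $\mc{R}\hookrightarrow\mc{D}$ as admitting a right adjoint (Brown representability is the conceptual reason such an adjoint exists, but I will construct the triangle by hand).

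I would build $X'$ by the usual cellular homotopy-colimit method. Starting from $X_0=0$ mapping to $X$, and given $X_n\to X$ with cone $C_n$, I form the coproduct $P_n=\bigoplus G$ indexed by all $G\in R$ and all morphisms $G\to C_n$, together with the evaluation map $P_n\to C_n$, and use the octahedral axiom to manufacture $X_{n+1}$ lying over $X$ with a map $X_n\to X_{n+1}$, keeping $X_{n+1}\in\mc{R}$ since $X_n\in\mc{R}$ and $P_n$ is a coproduct of objects of $R$. Setting $X'=\operatorname{hocolim}_n X_n$ (the cone on $1-\text{shift}$ of $\bigoplus_n X_n$), we get $X'\in\mc{R}$ because $\mc{R}$ is closed under coproducts and cones. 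The main obstacle is checking that $X'':=\operatorname{cone}(X'\to X)$ lies in $R^{\perp}$: here compactness of each $G\in R$ gives $\Hom_{\mc{D}}(G,X')\cong\operatorname{colim}_n\Hom_{\mc{D}}(G,X_n)$, and the construction was arranged so that every map $G\to C_n$ dies at the next stage, which by a colimit diagram chase forces $\Hom_{\mc{D}}(G,X'')=0$.

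Granting the localization triangle, parts 1 and 2 are short. For part 1, each $G\in R$ remains compact in $\mc{R}$ because coproducts in $\mc{R}$ are computed in $\mc{D}$; and $R$ generates $\mc{R}$ because, for fixed $X$, the class $\{Y\mid \Hom_{\mc{D}}(T^nY,X)=0\ \forall n\}$ is localizing and contains $R$, hence contains $\mc{R}$, so $R^{\perp}$ equals the orthogonal of all of $\mc{R}$. Thus an object $X\in\mc{R}\cap R^{\perp}$ satisfies $\Hom_{\mc{D}}(X,X)=0$, forcing $X\cong 0$. For part 2, the hypothesis that $R$ generates $\mc{D}$ says exactly $R^{\perp}=0$, so in the localization triangle $X''\cong 0$ and $X\cong X'\in\mc{R}$; therefore $\mc{R}=\mc{D}$.

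Part 3 needs one further input: in a triangulated category compactly generated by a set of compact objects, every compact object is a direct summand of an object of the triangulated subcategory generated by that set. The inclusion $\mc{D}^c\cap\mc{R}\subseteq\mc{R}^c$ is immediate, coproducts being the same in $\mc{R}$ and $\mc{D}$. For the reverse, I would show (again by the cellular construction) that every object of $\mc{R}$ is a homotopy colimit of objects of the thick closure $\operatorname{thick}(R)$; then for a compact $X\in\mc{R}^c$ the identity $\Id_X$ factors through a finite stage $X_n\in\operatorname{thick}(R)$ by compactness, exhibiting $X$ as a direct summand of $X_n$, so $X\in\operatorname{thick}(R)$. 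Since $\mc{D}^c$ is a thick subcategory of $\mc{D}$ containing $R$, we get $\operatorname{thick}(R)\subseteq\mc{D}^c\cap\mc{R}$, and hence $\mc{R}^c=\mc{D}^c\cap\mc{R}=\operatorname{thick}(R)$. The final assertion is the special case in which $R$ is already closed under triangles and summands, so that $\operatorname{thick}(R)=R$.
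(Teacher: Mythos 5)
Your parts 1 and 2 are essentially correct, and they reconstruct the actual content behind the paper's one-line proof (the paper does not prove this theorem at all; it simply cites Theorem 2.1 of Neeman \cite{Nee}). Part 1 works exactly as you say: compactness of $G\in R$ passes to $\mc{R}$ since coproducts in $\mc{R}$ are computed in $\mc{D}$, and generation follows from the clean observation that, for fixed $X$, the class $\{Y\mid \Hom_{\mc{D}}(T^nY,X)=0\ \forall n\}$ is localizing (note this closure needs no compactness, while closure of $R^{\perp}$ under coproducts does). The Bousfield/cellular construction for part 2 is the standard Neeman mechanism, and your compressed claim does close up: surjectivity of $\mathrm{colim}_n\Hom_{\mc{D}}(G,X_n)\to\Hom_{\mc{D}}(G,X)$ holds from stage one on, injectivity follows by lifting a killed map $G\to X_n$ through $T^{-1}C_n$ and using that $TG\in R$ dies in $C_{n+1}$, and the long exact sequence then gives $\Hom_{\mc{D}}(G,X'')=0$.

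The genuine gap is in part 3. You claim the cellular construction shows every object of $\mc{R}$ is a homotopy colimit of objects of $\mathrm{thick}(R)$, and then factor $\Id_X$ through a stage "$X_n\in\mathrm{thick}(R)$". But the stages of your own tower are not in $\mathrm{thick}(R)$: $X_{n+1}$ is a cone on a map out of $P_n=\bigoplus G$, a coproduct indexed by \emph{all} morphisms $G\to C_n$ with $G\in R$, which is in general infinite. So each $X_n$ is a finite extension of possibly infinite coproducts of objects of $R$ --- an object of $\mc{R}$, but not of the thick closure. Compactness of $X$ does let $\Id_X$ factor through some $X_n$, exhibiting $X$ as a summand of such an extension; what is missing is the d\'evissage converting this into membership in $\mathrm{thick}(R)$: by induction on the length of the extension, one uses compactness of $X$ and the octahedral axiom to replace each infinite coproduct by a finite sub-coproduct through which the relevant maps factor. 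That step is precisely the Ravenel--Neeman argument the paper sketches separately in Theorem \ref{thm-characterizing-compact-objects}, and it is the real content of the final assertion of part 3 (if $R$ is thick then $R=\mc{R}^c$); it does not follow from anything you wrote. I would also point out that the equality $\mc{R}^c=\mc{D}^c\cap\mc{R}$ alone can be had more cheaply from your own localization triangle: $X\mapsto X'$ is right adjoint to the inclusion $\mc{R}\subset\mc{D}$, and it commutes with coproducts because $R^{\perp}$ is closed under coproducts (compactness of $R$ again), so a compact object of $\mc{R}$ is already compact in $\mc{D}$; but even with that, the ``in particular'' clause still requires the d\'evissage.
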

\Pf{This is part of Theorem 2.1 in \cite{Nee}.}

\subsection{Compact modules}
\paragraph{Brown representability theorem.} We recall the notion of
homotopy colimits in a triangulated category that admits infinite
direct sums. Homotopy colimits are used in the construction of
representable functors on the triangulated category (Brown's
representability theorem).

\begin{defn}\label{def-homotopy-colimits} Let $\mc{D}$ be as before.
Let $\{f_n:X_n\lra X_{n+1}|n\in \N\}$ be a sequence of morphisms in
$\mc{D}$. A homotopy colimit of this sequence is an object $X\in
\mc{D}$ that fits into a distinguished triangle as follows:
$$\bigoplus_{n\in \N}X_n\stackrel{\Psi}{\lra}\bigoplus_{n \in \N}X_n
\lra X \lra T\left(\bigoplus_{n\in \N}X_n\right),$$ where $\Psi$ is
given by the infinite matrix
$$\Psi=\left(
\begin{array}{ccccc}
\Id_{X_1} & -f_{1} & 0 & 0 & \dots\\
0 & \Id_{X_2} & -f_{2} & 0 & \dots\\
0 & 0 & \Id_{X_3} & -f_{3} & \dots\\
0 & 0 & 0 & \Id_{X_4} & \dots\\
\vdots & \vdots & \vdots & \vdots &\ddots
\end{array}
\right).
$$
Notice that such $X$ is unique up to isomorphisms in $\mc{D}$.
\end{defn}

\begin{thm}[Brown representability]\label{thm-brown-representability}
Let $\mc{D}$ be a triangulated category that admits infinite direct
sums. Suppose $\mc{D}$ is compactly generated by a set of generators
$\mc{G}$. A cohomological functor $F: \mc{D}\lra (\vect)^{op}$ is
representable if and only if it commutes with direct sums. When
representable, such an $F$ is represented by the homotopy colimit of
a sequence $\{f_r:X_r\lra X_{r+1}|r\in \N\}$ where $X_1$ as well as
the cone of any $f_n$ is represented by a possibly infinite direct
sum of objects of the form $T^n(G)$, with $G\in \mc{G}$ and $n\in
\Z$.
\end{thm}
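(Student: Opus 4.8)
The ``only if'' direction is immediate: if $F \cong \Hom_{\mc{D}}(-, X)$ is representable, then for any family $\{X_i\}$ we have $\Hom_{\mc{D}}(\bigoplus_i X_i, X) \cong \prod_i \Hom_{\mc{D}}(X_i, X)$, which says precisely that $F$ carries coproducts in $\mc{D}$ to coproducts in $(\vect)^{op}$, i.e. $F$ commutes with direct sums. For the substantial ``if'' direction, the plan is to build the representing object by Brown's cell-attachment procedure (in the form used by Neeman), producing a sequence $\{f_r : X_r \lra X_{r+1}\}$ together with compatible elements $x_r \in F(X_r)$, and then to set $X := \mathrm{hocolim}_r\, X_r$ as in Definition \ref{def-homotopy-colimits}. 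The elements $x_r$ will assemble into a single $x \in F(X)$, and Yoneda's lemma turns $x$ into a natural transformation $\eta : \Hom_{\mc{D}}(-, X) \Rightarrow F$ which we must prove is an isomorphism.

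I would construct the tower inductively so that at each stage the associated transformation is surjective on the generators and its kernel is progressively annihilated. For $X_1$, I take one copy of $T^n G$ for every triple $(G, n, \alpha)$ with $G \in \mc{G}$, $n \in \Z$ and $\alpha \in F(T^n G)$, and set $X_1 := \bigoplus_{(G,n,\alpha)} T^n G$; since $F$ sends this coproduct to the product $\prod_{(G,n,\alpha)} F(T^n G)$, the tuple $(\alpha)$ defines an element $x_1 \in F(X_1)$ whose restriction to the summand indexed by $\alpha$ is $\alpha$ itself, so that $g \mapsto F(g)(x_1)$ maps $\Hom_{\mc{D}}(T^n G, X_1)$ onto $F(T^n G)$ for every $G, n$. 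Given $(X_r, x_r)$, I collect all maps $g : T^n G \lra X_r$ with $F(g)(x_r) = 0$, form $K_r := \bigoplus_g T^{n_g} G_g$ with the tautological $k : K_r \lra X_r$, and define $f_r$ by completing $k$ to a triangle $K_r \xrightarrow{k} X_r \xrightarrow{f_r} X_{r+1} \lra TK_r$. Because the component of $F(k)(x_r) \in F(K_r) = \prod_g F(T^{n_g}G_g)$ at each $g$ is $F(g)(x_r) = 0$, the long exact sequence obtained by applying the cohomological $F$ to this triangle lets me lift $x_r$ to some $x_{r+1} \in F(X_{r+1})$ with $F(f_r)(x_{r+1}) = x_r$. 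Note that the cone of $f_r$ is $TK_r$, a direct sum of shifts of objects of $\mc{G}$, exactly as the theorem asserts.

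To produce $x \in F(X)$, I apply $F$ to the defining triangle $\bigoplus_r X_r \xrightarrow{\Psi} \bigoplus_r X_r \lra X \lra T(\bigoplus_r X_r)$ of the homotopy colimit; using that $F$ sends the coproduct to the product, the compatibility $F(f_r)(x_{r+1}) = x_r$ shows $(x_r)_r$ lies in the kernel of $F(\Psi)$, hence lifts to an $x \in F(X)$ restricting to $x_r$ along each canonical map $X_r \lra X$. It then remains to check that $\eta$ is an isomorphism, and here I first treat the generators. Since each $T^n G$ is compact (Definition \ref{def-compact-objects}), applying $\Hom_{\mc{D}}(T^n G, -)$ to the hocolim triangle yields $\Hom_{\mc{D}}(T^n G, X) \cong \mathrm{colim}_r \Hom_{\mc{D}}(T^n G, X_r)$. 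Surjectivity of $\eta_{T^n G}$ is inherited from the $X_1$ stage, while injectivity uses the kernel-killing: if $g : T^n G \lra X_r$ represents a class with $F(g)(x_r) = 0$, then $g$ is a summand of $k$, so $f_r \circ g = f_r \circ k \circ \iota_g = 0$ and $g$ already dies in $X_{r+1}$, hence in the colimit. Thus $\eta_{T^n G}$ is bijective for all $G, n$.

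Finally, to pass from the generators to all of $\mc{D}$, I consider the full subcategory of objects $Y$ for which $\eta_Y$ is an isomorphism. Both $\Hom_{\mc{D}}(-, X)$ and $F$ are cohomological and send coproducts to products, so this subcategory is closed under shifts, coproducts, and (by the five lemma applied to the two long exact sequences) under cones, and it contains every $T^n G$. By the Thomason--Neeman theorem \ref{thm-thomason-neeman} (part 2), the smallest such subcategory containing $\mc{G}$ is all of $\mc{D}$, whence $\eta$ is a natural isomorphism and $F \cong \Hom_{\mc{D}}(-, X)$. The point demanding the most care is set-theoretic: the coproducts defining $X_1$ and the $K_r$ are indexed by the \emph{sets} $F(T^n G)$ and $\mc{G}$, so it is essential that $\mc{D}$ be compactly generated by a genuine set (Definition \ref{def-compact-generators}); the other delicate ingredient is the compactness identity $\Hom_{\mc{D}}(T^n G, \mathrm{hocolim}_r\, X_r) \cong \mathrm{colim}_r \Hom_{\mc{D}}(T^n G, X_r)$, which is precisely what makes the inductive kernel-killing converge.
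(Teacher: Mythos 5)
Your proof is correct, and its core---the cell-attachment tower $X_1\lra X_2\lra\cdots$ with compatible classes $x_r\in F(X_r)$, the homotopy colimit, and the verification on shifted generators via compactness---is precisely the argument of Neeman that the paper cites in lieu of giving a proof (its entire proof is the reference to Neeman's Theorem 3.1). The one place you genuinely depart from that cited argument is the final globalization step: you invoke part 2 of Theorem \ref{thm-thomason-neeman} to conclude that the subcategory on which $\eta$ is invertible is all of $\mc{D}$. Within this paper that is formally allowed, since Theorem \ref{thm-thomason-neeman} is stated as a black box before the present theorem; but note that the usual proof of that statement---constructing a right adjoint to the inclusion of the localizing subcategory generated by the compacts---does so by applying Brown representability to that subcategory, so unwinding the citations risks circularity. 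Neeman's own proof instead closes the loop directly from Definition \ref{def-compact-generators}, and you can do the same with no change to your construction: given $Y$ and $y\in F(Y)$, run the same kernel-killing tower starting from $\bigl(X\oplus Y,(x,y)\bigr)$ to obtain a pair $(W,w)$ that again induces isomorphisms on all $T^n(G)$, together with a map $j\colon X\lra W$ satisfying $F(j)(w)=x$; since $\eta_w(j\circ g)=F(g)(x)=\eta_x(g)$ for every $g\colon T^n(G)\lra X$, the map $j$ induces isomorphisms on $\Hom_{\mc{D}}(T^n(G),-)$, so $\Hom_{\mc{D}}(T^n(G),\mathrm{cone}(j))=0$ for all $G,n$, hence $\mathrm{cone}(j)\cong 0$ by generation, $j$ is invertible, and $\eta_Y$ is surjective; injectivity follows by the same trick applied to the cone of a morphism $Y\lra X$ killed by $\eta$.

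Two minor repairs. First, the subcategory $\{Y\mid \eta_Y\text{ invertible}\}$ is not obviously shift-stable (nothing relates $\eta_Y$ to $\eta_{TY}$ for an abstract natural transformation of cohomological functors); you should work with $\{Y\mid \eta_{T^nY}\text{ invertible for all } n\in\Z\}$, which contains every $T^n(G)$, is closed under shifts by construction, under coproducts because both functors turn them into products, and under cones by the five lemma; this is the subcategory to which Theorem \ref{thm-thomason-neeman} (or the direct argument above) should be applied. Second, the identity $\Hom_{\mc{D}}(T^n(G),X)\cong \mathrm{colim}_r\Hom_{\mc{D}}(T^n(G),X_r)$, which you rightly flag as the crux, deserves its one-line justification: by compactness the homotopy colimit triangle yields an exact sequence in which the map induced by $\Psi$ on $\bigoplus_r\Hom_{\mc{D}}(T^n(G),X_r)$ is $1-\mathrm{shift}$ on a sequential system of vector spaces, hence injective, so the connecting maps vanish and the sequence collapses to the claimed isomorphism.
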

\Pf{See \cite[Theorem 3.1]{Nee}.}

\paragraph{Characterizing compact modules.} The fact
that $\mc{D}(A,H)$ is compactly generated allows us to give an
alternative characterization of compact hopfological modules as
summands of iterated extensions of a finite number of free modules
of the form $T^{n}(A\o V)$ where $V$ belongs to the set of simple
$H$-modules. The original idea of the proof is due to
Ravenel~\cite{Ra} and Neeman~\cite{Nee2}, and a very readable
account of the proof is given by Keller~\cite[Section 5.3]{Ke1},
which we follow.

\begin{defn}\label{def-iterated-extension} Let $\mc{D}$ be a
triangulated category as above and $\mc{U}$, $\mc{V}$ be two classes
of objects of $\mc{D}$. Let $\mc{U}*\mc{V}$ be the class of objects
$X$ in $\mc{D}$ that fit into a distinguished triangle of the form
$$G_1\lra X \lra G_2\lra T(G_1),$$ where $G_1\in \mc{U}$ and $G_2\in
\mc{V}$. The lemma below says that the operation $*$ is associative,
and therefore we can define unambiguously the class of length $n$
objects generated by $\mc{W}$ to be the class of objects in
$$\mc{W}*\mc{W}*\cdots*\mc{W},$$
where there are $n$ copies of $\mc{W}$. We will refer to objects
belonging to $\mc{W}*\mc{W}*\cdots*\mc{W}$ for some $n\in \N$ as a
\emph{finite extension of objects in $\mc{W}$}.
\end{defn}

\begin{lemma}\label{lemma-star-associativity}The above operation $*$
is associative in the sense that the two classes of objects
$(\mc{U}*\mc{V})*\mc{W}$, $\mc{U}*(\mc{V}*\mc{W})$ coincide.
\end{lemma}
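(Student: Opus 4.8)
The plan is to deduce associativity of $*$ directly from the octahedral axiom, proving the two inclusions $(\mc{U}*\mc{V})*\mc{W}\subseteq \mc{U}*(\mc{V}*\mc{W})$ and its reverse by symmetric arguments.

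For the first inclusion, I would start with an object $X\in(\mc{U}*\mc{V})*\mc{W}$. Unwinding Definition~\ref{def-iterated-extension}, there is a distinguished triangle $Y\lra X\lra W_0\lra T(Y)$ with $W_0\in\mc{W}$ and $Y\in\mc{U}*\mc{V}$, and the latter membership in turn provides a triangle $U\lra Y\lra V\lra T(U)$ with $U\in\mc{U}$ and $V\in\mc{V}$. Composing the morphisms $U\lra Y$ and $Y\lra X$ yields a morphism $U\lra X$, and I would apply the octahedral axiom to this composition.

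The key step is to read off the three cones correctly: the cone of $U\lra Y$ is $V$, the cone of $Y\lra X$ is $W_0$, and, writing $Z$ for the cone of the composite $U\lra X$, the octahedral axiom supplies two compatible distinguished triangles, namely $U\lra X\lra Z\lra T(U)$ and $V\lra Z\lra W_0\lra T(V)$. The second triangle exhibits $Z\in\mc{V}*\mc{W}$, and then the first triangle exhibits $X$ as an extension of $Z$ by $U$, so $X\in\mc{U}*(\mc{V}*\mc{W})$, as desired.

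For the reverse inclusion the argument is entirely symmetric: given $X\in\mc{U}*(\mc{V}*\mc{W})$ with triangles $U\lra X\lra Z\lra T(U)$ and $V\lra Z\lra W_0\lra T(V)$, I would rotate these to obtain composable morphisms $X\lra Z\lra W_0$, apply the octahedral axiom to this composition (whose relevant cones are now $T(U)$ and $T(V)$), and rotate back to produce an object $Y\in\mc{U}*\mc{V}$ fitting into $Y\lra X\lra W_0\lra T(Y)$, whence $X\in(\mc{U}*\mc{V})*\mc{W}$. The argument is purely formal once the octahedron is set up; the only real care needed anywhere is matching the vertices of the octahedron with the correct members of $\mc{U}$, $\mc{V}$, $\mc{W}$ and tracking the applications of $T$ and $T^{-1}$ under rotation of triangles, which is the sole (and very mild) obstacle.
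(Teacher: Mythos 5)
Your proof is correct and takes essentially the same route as the paper: both arguments apply the octahedral axiom to the composable pair of morphisms arising from the two nested distinguished triangles, and read off the memberships in $\mc{V}*\mc{W}$ and $\mc{U}*(\mc{V}*\mc{W})$ from the resulting octahedron. The only difference is presentational — the paper packages both inclusions into one symmetric diagram read vertically and horizontally, while you treat the reverse inclusion explicitly by rotating the triangles, a detail the paper leaves implicit.
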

\begin{proof}The octahedral axiom for the morphisms $u$ and $v$
and their composition gives us a commutative diagram:
$$\xymatrix{ & U\ar[d]_u\ar @{=}[r] & U\ar[d]_{v\circ u} & \\
T^{-1}(W)\ar[r] \ar @{=}[d] & X\ar[r]_v\ar[d] & Z\ar[r]
\ar[d] & W\ar @{=}[d] \\
T^{-1}(W) \ar[r]& V \ar[r] \ar[d] & Y
\ar[r]\ar[d] & W\\
& T(U)\ar @{=}[r] & T(U) &,}$$ where we take $Y=C_{v\circ u}$,
$V=C_u$ and $W=C_v$. The horizontal and vertical sequences are
distinguished triangles. Read vertically, the diagram says that $Z$
belongs to $(\mc{U}*\mc{V})*\mc{W}$, while read horizontally, it
gives another realization of $Z$ as an object of
$\mc{U}*(\mc{V}*\mc{W})$.
\end{proof}

\begin{thm}[Ravenel-Neeman]\label{thm-characterizing-compact-objects}
Let $\mc{D}$ be a triangulated category compactly generated by a set
of generators $\mc{G}$. Any compact object of $\mc{D}$ is then a
direct summand of a finite extension of objects of the form
$T^{n}(G)$, where $G\in \mc{G}$ and $n \in \Z$.
\end{thm}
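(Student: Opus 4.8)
The plan is to follow the Ravenel--Neeman strategy via Brown representability and homotopy colimits. Fix a compact object $X\in\mc{D}$ and consider the cohomological functor $\Hom_{\mc{D}}(-,X):\mc{D}\lra(\vect)^{op}$. It is representable, namely by $X$ itself, and since it sends coproducts in $\mc{D}$ to products of vector spaces, i.e.\ to coproducts in $(\vect)^{op}$, it commutes with direct sums in the sense required. Hence the second part of Theorem \ref{thm-brown-representability} presents its representing object $X$ as a homotopy colimit $X\cong \mathrm{hocolim}_r\, X_r$ of a sequence $f_r:X_r\lra X_{r+1}$ in which $X_1$ and every cone $\mathrm{Cone}(f_r)$ is a (possibly infinite) coproduct of objects $T^n(G)$ with $G\in\mc{G}$, $n\in\Z$.

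First I would extract $X$ as a summand of one of the stages. Applying $\Hom_{\mc{D}}(X,-)$ to the defining triangle of the homotopy colimit (Definition \ref{def-homotopy-colimits}) and using compactness of $X$ to pull $\Hom_{\mc{D}}(X,-)$ past the coproducts, the map $\Psi_*$ becomes the injective map $1-\mathrm{shift}$ on $\bigoplus_r\Hom_{\mc{D}}(X,X_r)$, so that $\Hom_{\mc{D}}(X,X)\cong\mathrm{colim}_r\Hom_{\mc{D}}(X,X_r)$. In particular $\Id_X$ is the image of some $g:X\lra X_N$, and composing with the canonical map $X_N\lra X$ exhibits $X$ as a direct summand (retract) of $X_N$. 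Using the associativity of the $*$-operation (Lemma \ref{lemma-star-associativity}), the iterated cone structure shows that $X_N$ is an $N$-fold extension of coproducts of objects $T^n(G)$; that is, $X_N$ lies in $\mc{W}*\cdots*\mc{W}$ ($N$ copies), where $\mc{W}$ is the class of such coproducts.

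What remains, and this is the crux, is to replace the infinite coproducts by finite ones, since the theorem demands a finite extension of the single objects $T^n(G)$. The plan is to prove, by induction on the length $n$ of the extension, the approximation lemma: for any compact $C$ and any morphism $a:C\lra Y$ with $Y$ an $n$-fold extension of coproducts of $\{T^n(G)\}$, there is an $n$-fold extension $Y'$ of \emph{finite} coproducts (hence a genuine finite extension of single objects $T^n(G)$), a morphism $\gamma:Y'\lra Y$, and a factorization $a=\gamma\circ\tilde{a}$ through $Y'$. The base case $n=1$ is exactly compactness: a map from $C$ into a coproduct $\bigoplus_i T^{n_i}(G_i)$ has only finitely many nonzero components, so it factors through a finite sub-coproduct, itself a finite extension of single generators via split triangles (Definition \ref{def-iterated-extension}). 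Granting the lemma and applying it to $g:X\lra X_N$ produces $\tilde{g}:X\lra Y'$ with $g=\gamma\tilde{g}$; then $X\xrightarrow{\tilde{g}}Y'\lra X$ is again $\Id_X$, so $X$ is a retract of the finite extension $Y'$, as desired.

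The main obstacle is the inductive step of the approximation lemma, where one must make the chosen finite sub-coproducts compatible with the connecting (gluing) map of the extension. Writing $Y$ through a triangle $Y^{(n)}\xrightarrow{u}Y\xrightarrow{p}Z\xrightarrow{w}TY^{(n)}$ with $Z$ a coproduct and $Y^{(n)}$ an $n$-fold extension, the composite $pa$ factors through a finite sub-coproduct $Z'\hookrightarrow Z$ via $b:C\lra Z'$, and $wpa=0$ forces $(w|_{Z'})\circ b=0$. The difficulty is that the restricted connecting map $w|_{Z'}:Z'\lra TY^{(n)}$ must itself be approximated into a finite sub-extension \emph{while remembering the relation} $(w|_{Z'})b=0$; a naive application of the inductive hypothesis to $w|_{Z'}$ loses this vanishing. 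I expect to resolve this by factoring $w|_{Z'}$ through the cone $Q=\mathrm{Cone}(b)$, which is compact since $C$ and $Z'$ are (Lemma \ref{lemma-2-out-of-3-compact}), applying the inductive hypothesis to the resulting map out of $Q$, and then using the compatible triangle so obtained to build $Y'$ and lift $a$. A final correction of the lift by a term coming from $Y^{(n)}$, again approximated through the inductive hypothesis after enlarging the finite sub-extension to absorb it, produces the desired $\tilde{a}$. Keeping these finite approximations mutually compatible, so that the correction lands in the \emph{same} finite sub-extension, is the delicate bookkeeping at the heart of the argument.
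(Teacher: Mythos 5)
Your proposal is correct and follows essentially the same route as the paper's (sketched) proof: Brown representability applied to $\Hom_{\mc{D}}(-,X)$, compactness used to factor $\Id_X$ through a finite stage $X_N$ of the homotopy colimit, and then the d\'evissage by induction on the extension length of $X_N$ to replace infinite coproducts by finite ones — your cone trick $Q=\mathrm{Cone}(b)$ is exactly the octahedral-type step the paper alludes to. One small simplification: the final ``delicate bookkeeping'' you worry about is unnecessary, since the correction term $e:C\lra Y^{(n)}$ can be absorbed by taking the direct sum $Y'\oplus Y_1''$ (a split triangle makes this again a finite extension) and adjusting the lift to $(\tilde{a},-e')$, so the two finite approximations never need to land in the same sub-extension.
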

\begin{proof}[Sketch of proof] See \cite{Ra, Nee2, Ke1}. The formulation
given here is the same as that of \cite[Theorem 5.3]{Ke1}. The idea
of proof is to apply Brown's representability theorem to the
cohomological functor $\Hom_{\mc{D}}(-,M)$ for any compact object
$M\in \mc{D}$. Then compactness of $M$ allows us to factor the
identity morphism of $X$ through some $X_i$, a finite step of the
homotopy-limit-approximation of $X$ (in the notation of
\ref{def-homotopy-colimits}). It can be seen from the second part of
the Brown representability theorem that $X_i \in T^{\Z}(\mc{G})
* T^{\Z}(\mc{G})* \cdots *T^{\Z}(\mc{G})$ for $i$ copies of $T^{\Z}
(\mc{G})$. Finally the theorem follows from a ``d\'evissage'' type
of argument on the length of $X_i$, using the octahedral axiom.
\end{proof}

\begin{cor}\label{cor-characterizing-compact-objects} Let
$\mc{D}^c(A,H)$ denote the strictly full subcategory of compact
hopfological modules in $\mc{D}(A,H)$. It is triangulated and
idempotent complete. Any $X\in \mc{D}^c(A,H)$ is a direct summand of
an object which is a finite extension of modules in
$T^{\Z}(\mc{G})=\{T^n(A\o V)\}$, where $n\in \Z$ and $V$ ranges over
the set of representatives of isomorphism classes of simple
$H$-modules. Furthermore, $\mc{D}^c(A,H)$ is the smallest strictly full
triangulated subcategory of $\mc{D}(A,H)$ that contains $\mc{G}$ which
is closed under taking direct summands.
\end{cor}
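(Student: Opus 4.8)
The plan is to derive the corollary by assembling the structural results already in hand: the Ravenel--Neeman characterization (Theorem~\ref{thm-characterizing-compact-objects}), the Thomason--Neeman localization theorem (Theorem~\ref{thm-thomason-neeman}), and the fact that $\mc{G}=\{A\o V\}$ is a finite set of compact generators (Proposition~\ref{prop-compact-generators-for-derived-categories}).

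First I would verify that $\mc{D}^c(A,H)$ is a strictly full triangulated subcategory. Shifting preserves compactness, since $\Hom_{\mc{D}(A,H)}(T^n(X),-)\cong \Hom_{\mc{D}(A,H)}(X,T^{-n}(-))$ commutes with direct sums whenever $X$ is compact; and if two vertices of a distinguished triangle are compact, then so is the third by Lemma~\ref{lemma-2-out-of-3-compact}. Thus $\mc{D}^c(A,H)$ is closed under $T^{\pm 1}$ and under cones, hence triangulated. For idempotent completeness I would note that a direct summand of a compact object is again compact: if $X\cong Y\oplus Z$ in $\mc{D}(A,H)$, then $\Hom_{\mc{D}(A,H)}(X,-)\cong \Hom_{\mc{D}(A,H)}(Y,-)\oplus \Hom_{\mc{D}(A,H)}(Z,-)$, and each summand inherits the commutation with direct sums. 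Since $\mc{D}(A,H)$ admits countable coproducts, every idempotent in it splits; applied to an idempotent endomorphism of a compact object, the resulting splitting takes place inside $\mc{D}^c(A,H)$, which is therefore idempotent complete.

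The second assertion is then immediate: taking the generating set $\mc{G}$ of Proposition~\ref{prop-compact-generators-for-derived-categories} in Theorem~\ref{thm-characterizing-compact-objects}, every compact object of $\mc{D}(A,H)$ is a direct summand of a finite extension of objects of the form $T^n(A\o V)$. For the final ``smallest subcategory'' statement I would argue by double inclusion. Let $\mc{S}$ be the smallest strictly full triangulated subcategory of $\mc{D}(A,H)$ containing $\mc{G}$ and closed under direct summands. By the first step $\mc{D}^c(A,H)$ is itself a strictly full triangulated subcategory closed under direct summands, and it contains $\mc{G}$ since each $A\o V$ is compact (Lemma~\ref{lemma-A-tensor-V-is-compact}); hence $\mc{S}\subseteq \mc{D}^c(A,H)$. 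Conversely, any finite extension of objects in $T^{\Z}(\mc{G})$ is built from shifts of objects of $\mc{G}$ by iterated cones, hence lies in every triangulated subcategory containing $\mc{G}$, in particular in $\mc{S}$; since an arbitrary $X\in\mc{D}^c(A,H)$ is by the second step a direct summand of such an extension and $\mc{S}$ is summand-closed, we obtain $\mc{D}^c(A,H)\subseteq \mc{S}$. Combining gives $\mc{S}=\mc{D}^c(A,H)$.

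I expect the only genuinely delicate point to be idempotent completeness, which in the self-contained argument rests on the external fact that a triangulated category with countable coproducts has split idempotents. One can sidestep this by reading the statement directly off Theorem~\ref{thm-thomason-neeman}(3): take $R=T^{\Z}(\mc{G})$, which generates $\mc{D}(A,H)$, so that $\mc{R}=\mc{D}(A,H)$ by part~(2) and $\mc{R}^c=\mc{D}^c(A,H)$ by part~(3), the latter already phrased in terms of finite extensions closed under summands.
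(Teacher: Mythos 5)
Your proposal is correct and takes essentially the same route as the paper, whose entire proof reads: combine Theorem \ref{thm-characterizing-compact-objects} with Proposition \ref{prop-compact-generators-for-derived-categories}, and deduce the last statement from Theorem \ref{thm-thomason-neeman}. The extra steps you supply (closure of $\mc{D}^c(A,H)$ under shifts and cones via Lemma \ref{lemma-2-out-of-3-compact}, idempotent completeness via splitting of idempotents in a triangulated category with countable coproducts, and the double-inclusion argument for the thick-subcategory claim) are exactly the routine verifications the paper delegates to the cited theorems, and your closing remark about reading everything off Theorem \ref{thm-thomason-neeman}(3) matches the paper's intended use of that result.
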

\Pf{Combine the previous theorem with Proposition
\ref{prop-compact-generators-for-derived-categories}. The last statement follows
from Theorem \ref{thm-thomason-neeman}.}


\begin{defn}\label{def-Grothendieck-group} Let $A$ be an $H$-module
algebra over a finite dimensional Hopf algebra $H$ over the base
field $\Bbbk$. We define the Grothendieck group $K_0(\mc{D}^c(A,H))$
(or $K_0(A,H)$ for short) to be the abelian group generated by the
symbols of isomorphism classes of objects in $\mc{D}^c(A,H)$, modulo
the relations
$$[Y]=[X]+[Z],$$
whenever there is a distinguished triangle inside $\mc{D}^c(A,H)$ of
the form
$$X\lra Y \lra Z \lra T(X).$$
\end{defn}

\begin{rmk}\label{rmk-higher-k-theory}
Since $\mc{D}^c(A,H)$ is a (right) triangulated module-category over
$H\udmod$, on the Grothendieck group level, $K_0(\mc{D}^c(A,H))$ is
a (right) module over $K_0(H\udmod)$. When $H$ is cocommutative,
$K_0(H\udmod)$ is a commutative ring and there is no need to
distinguish right or left modules over it.

More generally, we can define higher $K$-groups of $A$ by applying
Waldhausen-Thomason-Trobaugh's construction to $\mc{D}^c(A,H)$. We
expect a large chunk of the K-theoretic results of Thomason-Trobaugh
\cite{TT} and Schlichting \cite{Sch} to generalize to our case.
\end{rmk}

\subsection{A useful criterion}
As another application of Thomason-Neeman's Theorem
\ref{thm-thomason-neeman} and the notion of compactly generated
categories \ref{def-compact-generators}, we give a useful criterion
concerning the fully-faithfulness of exact functors on a compactly
generated triangulated category and natural transformations between
these functors. Of course the main example of such categories we
have in mind are the derived categories of $H$-module algebras. The
criterion will be needed in the next section.

\begin{lemma}\label{lemma-criterion-functors-natural-transformations}
Let $\mc{D}_1$, $\mc{D}_2$ be triangulated categories,
$F,F^{\prime}:\mc{D}_1\lra \mc{D}_2$ be exact functors between them,
and $\mu: F\Rightarrow F^{\prime}$ be a natural transformation of
these functors. Suppose furthermore that $\mc{D}_1$ admits arbitrary
direct sums and is compactly
generated by a set of generators $\mc{G}$, $F$, $F^\prime$ commute
with direct sums\footnote{This
amounts to saying that $F(\oplus_{i \in I} X_i)$ is a direct sum
object for $F(X_i)$, $i\in I$ inside $\mc{D}_2$ although $\mc{D}_2$
may not admit arbitrary direct sums.}. Then:
\begin{enumerate}
\item $F$ is fully-faithful if $F$ restricted to the full subcategory
consisting of objects in $T^{\Z}(\mc{G}):={\cup_{n\in
\Z}T^n(\mc{G})}$ is fully faithful and $F(G)$ is compact for all
$G\in \mc{G}$. The converse holds if $F$ is essentially surjective
on objects\footnote{By ``essentially surjective'' we mean that any
object of $\mc{D}_2$ is isomorphic to an object in the image of
$F$.}.
\item $\mu$ is invertible if and only if $\mu(G):F(G)\lra
F^{\prime}(G)$ is invertible for all $G \in \mc{G}$.
\end{enumerate}
\end{lemma}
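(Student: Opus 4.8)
I need to prove two statements about exact functors $F, F'$ on a compactly generated triangulated category $\mc{D}_1$: (1) a criterion for full-faithfulness, and (2) a criterion for a natural transformation $\mu$ to be invertible. Let me think about the strategy.

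For both parts, the key mechanism is the same: I want to reduce a statement about *all* objects of $\mc{D}_1$ to a statement about the generators $\mc{G}$, using the fact that $\mc{D}_1$ is the smallest strictly full triangulated subcategory containing $\mc{G}$ and closed under arbitrary direct sums (by Theorem \ref{thm-thomason-neeman}, part 2). The standard move is a "subcategory of validity" argument: define the full subcategory of objects for which the desired property holds, show it contains $\mc{G}$, and show it is triangulated and closed under direct sums; then it must be all of $\mc{D}_1$.

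Let me sketch each part.
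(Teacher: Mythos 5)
Your plan is the right one, and it is exactly the paper's: the paper proves both parts by the ``subcategory of validity'' argument, invoking Theorem \ref{thm-thomason-neeman} to conclude that a strictly full triangulated subcategory containing the generators and closed under direct sums is everything. However, your proposal stops at the plan --- it ends with ``Let me sketch each part'' and never executes either part, so as a proof it is incomplete, and the execution is where the real content lies.

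Concretely, two points in part 1 need care and are not addressed by your outline. First, full-faithfulness is a condition on \emph{pairs} of objects, so ``the full subcategory of objects for which the desired property holds'' is not well-defined as stated; you must argue one variable at a time. The paper fixes the first argument to be $T^n(G)$, $G\in\mc{G}$, and considers the subcategory of objects $X$ for which $\Hom_{\mc{D}_1}(T^n(G),X)\lra \Hom_{\mc{D}_2}(F(T^n(G)),F(X))$ is an isomorphism; closure of this subcategory under arbitrary direct sums is precisely where the hypothesis that each $F(G)$ is \emph{compact} (together with $F$ commuting with direct sums) enters --- without it the argument fails, and your outline never says where compactness is used. (A complete write-up then handles the first variable, where closure under direct sums instead uses that $\Hom(-,X)$ turns sums into products and that $F$ preserves sums.) Second, the converse statement in part 1 --- that essential surjectivity of $F$ forces $F(G)$ to be compact --- requires its own short argument, which your plan does not mention. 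Part 2 does fit your scheme directly (the subcategory where $\mu(X)$ is invertible is strictly full, triangulated, and closed under direct sums), but it too is left unproven.
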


\begin{proof}To prove 1, notice that the full
subcategory consisting of objects $X$ on which the functor $F$
induces an isomorphism of vector spaces
$$\Hom_{\mc{D}_1}(T^n(G),X)\cong \Hom_{\mc{D}_2}(F(T^n(G)),F(X))$$
form a strictly full triangulated subcategory of $\mc{D}_1$. By the
compactness assumption on $F(G)$, this subcategory contains
arbitrary direct sums. Now Theorem \ref{thm-thomason-neeman} applies
since $\mc{D}_1$ is compactly generated. The converse is true since
if $F$ is essentially surjective on objects, $F(G)$ is then
automatically compact whenever $G$ is.

The second claim follows by considering instead the full subcategory
in which $\mu{X}:F(X)\lra F^\prime(X)$ is invertible. Similar
arguments as above show that this subcategory is a strictly full
triangulated subcategory, and it contains all the compact
generators. Therefore it coincides with the whole category.
\end{proof}

\begin{cor}\label{cor-criterion-functor-induce-equivalence}
Let $F:\mc{D}_1\lra\mc{D}_2$ be an exact functor between
$\Bbbk$-linear triangulated categories which are compactly generated
and admit arbitrary direct sums. Suppose $F$ also commutes with
direct sums. Let $\mc{G}=\{G\}$ be a set of compact generators for
$\mc{D}_1$. Then $F$ induces an equivalence of triangulated
categories if and only if when restricted to the full subcategory
consisting of objects $T^{\Z}(\mc{G}):=\cup_{n\in \Z}T^n(\mc{G})$ it
is fully-faithful, and $F(\mc{G}):=\{F(G)|G\in\mc{G}\}$ is a set of
compact generators for $\mc{D}_2$.
\end{cor}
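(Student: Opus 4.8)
The plan is to deduce this corollary from part~1 of Lemma~\ref{lemma-criterion-functors-natural-transformations} together with the Thomason--Neeman Theorem~\ref{thm-thomason-neeman}. Since an equivalence of categories is exactly a functor that is both fully-faithful and essentially surjective, I would establish these two properties separately in each direction of the ``if and only if''. Note that only part~1 of the lemma (not the statement about natural transformations) enters.

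For the ``only if'' direction I would assume $F$ is an equivalence. Then $F$ is fully-faithful, and in particular fully-faithful when restricted to the full subcategory on $T^{\Z}(\mc{G})$, which is the first asserted condition. For the second, I would first invoke the converse clause of part~1 of Lemma~\ref{lemma-criterion-functors-natural-transformations}: since $F$ is fully-faithful and essentially surjective, each $F(G)$ is compact, so $F(\mc{G})$ consists of compact objects. To check that $F(\mc{G})$ generates $\mc{D}_2$, I would take any $Y\in\mc{D}_2$ with $\Hom_{\mc{D}_2}(T^n(F(G)),Y)=0$ for all $n\in\Z$ and $G\in\mc{G}$; using essential surjectivity write $Y\cong F(X)$, and then since $F$ is exact and fully-faithful, $\Hom_{\mc{D}_1}(T^n(G),X)\cong\Hom_{\mc{D}_2}(F(T^n(G)),F(X))=0$. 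As $\mc{G}$ generates $\mc{D}_1$, this forces $X\cong 0$, hence $Y\cong 0$, so $F(\mc{G})$ is a set of compact generators.

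For the ``if'' direction I would assume the two conditions. Full-faithfulness of $F$ then follows directly from part~1 of Lemma~\ref{lemma-criterion-functors-natural-transformations}, since its hypotheses are met: the restricted full-faithfulness is assumed outright, and the compactness of each $F(G)$ is part of the assumption that $F(\mc{G})$ is a set of compact generators. It remains to prove essential surjectivity, which is the core of the argument. I would consider the essential image of $F$, the strictly full subcategory of $\mc{D}_2$ of objects isomorphic to some $F(X)$. Because $F$ commutes with arbitrary direct sums this subcategory is closed under coproducts; and because $F$ is exact and (now) fully-faithful, it is closed under the shift $T$ and under cones, as any morphism between two objects of the image is of the form $F(f)$ whose cone is the image of the cone of $f$. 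Thus the essential image is a strictly full triangulated subcategory of $\mc{D}_2$, closed under coproducts, that contains $F(\mc{G})$ and hence the shift-closed set $R:=T^{\Z}(F(\mc{G}))$. Applying Theorem~\ref{thm-thomason-neeman} with this $R$, which is a set of compact generators of $\mc{D}_2$ by hypothesis, the smallest full subcategory of $\mc{D}_2$ containing $R$ and closed under coproducts and triangles is all of $\mc{D}_2$ (part~2 of that theorem). Since this smallest subcategory is contained in the essential image, $F$ is essentially surjective, and therefore an equivalence.

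The step I expect to demand the most care is precisely this essential surjectivity argument, together with the order in which the properties are proved: the closure of the essential image under cones relies on full-faithfulness (to lift a morphism $F(X)\lra F(Y)$ to a morphism $X\lra Y$), so full-faithfulness must be in hand \emph{before} one can argue that the essential image is triangulated and then invoke the Thomason--Neeman d\'evissage to conclude that it exhausts $\mc{D}_2$.
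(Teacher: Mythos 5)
Your proposal is correct and takes essentially the same route as the paper: full-faithfulness of $F$ via part 1 of Lemma \ref{lemma-criterion-functors-natural-transformations}, followed by essential surjectivity from Theorem \ref{thm-thomason-neeman} applied to the essential image of $F$, which contains the compact generators $F(\mc{G})$ and is closed under coproducts and triangles. The paper compresses all of this into two sentences (leaving the easy ``only if'' direction and the closure properties of the image implicit), but the substance — including your observation that full-faithfulness must be in hand before the essential image can be seen to be a triangulated subcategory — is the same.
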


\Pf{$F$ induces an equivalence of categories between $\mc{D}_1$ and
the image $F(\mc{D}_1)$. By Theorem \ref{thm-thomason-neeman}, the
image category coincides with $\mc{D}_2$.}


\section{Derived functors} In this section, we define the
derived functors associated with hopfological bimodules. Then we
proceed to prove a sufficient condition for two $H$-module algebras
to be derived Morita equivalent. As a corollary, we discuss when a
morphism of $H$-module algebras induces an equivalence of derived
categories. The arguments we use are modeled on the DG case, as in
Keller \cite[Section 6]{Ke1}.

Throughout this section, we will assume that
$H$ is also a \emph{(co)commutative} Hopf algebra. This condition is
needed when we define a left module-algebra structure on the
opposite algebra $A^{op}$ of a left module-algebra $A$, and when
dealing with derived functors and derived equivalences. We will make
some further remarks on this assumption later.

\subsection{The opposite algebra and tensor product}
By the construction of $B=A\#H$, it is readily seen that the opposite
algebra of $B$ is isomorphic to the smash product ring
$B^{op}=H^{op,cop}\#A^{op}$, where $H^{op,cop}$ denotes the Hopf
algebra $H$ with the opposite multiplication and opposite
comultiplication. Therefore, $A^{op}$ is naturally a right
$H^{op,cop}$-module algebra, or equivalently, a left
$H^{cop}$-module algebra ($H^{cop}$ becomes a Hopf algebra if we
equip with it the antipode map $S^{-1}$). By our assumption $H$ is
(co)commutative, and we can naturally identify $H^{cop}\cong H$
($S^{-1}=S$ in this case). Therefore, we have a left $H$-module
algebra structure on $A^{op}$.

\begin{defn} Let $H$ be a cocommutative Hopf algebra, and $A$ be an
$H$-module algebra as in the main example \ref{the-main-example}. We
define the \emph{opposite $H$-module algebra} $A^{op}$ to be the
same $H$-module as $A$ but with the opposite multiplication. An
analogous definition applies when $A$, $H$ are compatibly
$\Z$-graded.
\end{defn}

\begin{eg} We give an example showing the necessity of assuming
$H$ to be cocommutative. Consider an $n$-DG algebra $A$ equipped
with a differential $d$ of degree $1$ (see the second example of
Section 4). For any $a,~b\in A$, we have:
$$d(ab)=(da)b+\nu^{|a|}a(db),$$
where $\nu$ is an $n$-th root of unity and $|a|\in \Z$ denotes the
degree of $a$. As such an algebra can be regarded as a graded module algebra over
the Taft algebra $H_n$ at the $n$-th root of unity $\nu$ (see \cite{Bi} and
the second example of Section 3.4), which is
non-commutative and non-cocommutative. Now in $A^{op}$, whose
multiplication will be denoted by $\circ$, we have $a\circ
b=\xi^{|b||a|}ba$, where we allow $\xi$ to be some other $n$-th root
of unity. Then
$$ \begin{array}{rcl}
d(a\circ b)& = &d(\xi^{|b||a|}ba)=\xi^{|b||a|}((db)a+\nu^{|b|}b(da))\\
& = & \xi^{|b||a|}(\xi^{(|b|+1)|a|}a\circ
(db)+\nu^{|b|}\xi^{(|a|+1)|b|}(da)\circ b)\\
& = & \xi^{(2|b|+1)|a|} a \circ (db) +
\nu^{|b|}\xi^{|b|(2|a|+1)}(da)\circ b,
\end{array}$$
Compare with the relation we need to make $A^{op}$ differential
graded: $d(a\circ b) = (da)\circ b+\eta^{|a|}a\circ(db)$. Now assume
$A$ has non-zero terms in each degree, it is easy to see that in
order to make these expressions equal, we need $\xi=\pm 1$ and
$\nu=\xi^{-1}$. Thus it appears that the opposite algebra does not
carry a natural $n$-DG structure if $\nu \neq \pm 1$.
\end{eg}

\begin{defn}\label{def-tensor}Let $H$ be a cocommutative Hopf
algebra. Let $M$ be a left $A^{op}\#H$-module and $N$ be a left
$A\#H$-module. The tensor product space $M\o_{A}N$ is naturally an
$H$-module by setting, for any $m\in M$, $n\in N$ and $h\in H$,
$$h(m\o n):=\sum (h_{(1)}m)\o(h_{(2)}n).$$ The $H$-module
$M\o_{A}N$ is graded if $H$, $A$, $M$, $N$ are compatibly graded.
\end{defn}
We have, as $H$-modules, $M\otimes_{A}A\cong M$, and $A\otimes_{A}N
\cong N.$

One checks easily that, when $H$ is cocommutative, we have an
equivalence between the categories of right $A\#H$-modules and the
category of left $A^{op}\#H$-modules. Indeed, for any right
$A\#H$-module $M$, we define the corresponding left
$A^{op}\#H$-module to be the same underlying $H$-module with the
left $A^{op}$ action given by $a\circ m := ma$, for any element
$a\in A$ and $m\in M$. The compatibility of this left
$A^{op}$-structure with the $H$-module structure is guaranteed by
the cocommutativity of $H$.

Now, if $M$ is a $B$-module which is finitely presented as an
$A$-module (finitely generated if $A$ is noetherian), we have a
canonical isomorphism of $H$-modules
$$\Hom_{A}(M,N)\cong M^{\vee}\otimes_{A}N,$$
where $M^{\vee}$ denotes the $H-$module $\Hom_A(M,A)$, equipped with
the right $A$-module structure from that of the target $A$. A
similar identification holds in the graded case.

\subsection{Derived tensor}
Our first task is to define the derived tensor functor associated
with a hopfological bimodule and determine when it induces an
equivalence of derived categories. We will denote by
$A_1,~A_2$ two $H$-module algebras over a finite dimensional
(graded, super) cocommutative Hopf algebra $H$, and set
$B_1=A_1\#H$, $B_2=A_2\#H$.

\begin{defn}\label{def-tensor-module-algebra} Let $A_1$, $A_2$ be as
above, and define their \emph{tensor product $H$-module algebra}
$A_1\o A_2$ to be the usual tensor product of $A_1$, $A_2$ as a
$\Bbbk$-vector space and the algebra structure given by
$$(a_1\o a_2)\cdot(b_1\o b_2):=(a_1b_1)\o(a_2b_2),$$
for any $a_1,~b_1\in A_1$, $a_2~b_2\in A_2$.
We equip it with the $H$-action that, for any $h \in H$, $a_1\o a_2
\in A_1\o A_2$,
$$h\cdot(a_1\o a_2):=\sum h_{(1)}a_1\o h_{(2)}a_2.$$
It is readily checked that $A_1\o A_2$ indeed satisfies the axioms of
an $H$-module algebra under the assumption that $H$ is
cocommutative.
\end{defn}

Now let $A_1$, $A_2$ be as above and $_{A_1} X_{A_2}$ be an
$(A_1,A_2)$ hopfological bimodule, i.e. a module over the ring
$(A_1\o A_2^{op})\#H$. We define the associated tensor and hom
functors to be:
$$_{A_1} X_{A_2}\o_{A_2}(-): A_2\dmod \lra A_1\dmod,~ _{A_2}N \mapsto
~_{A_1}X\o_{A_2}N;$$
$$\Hom_{A_1}(_{A_1}X_{A_2},-): A_1\dmod \lra A_2 \dmod,~ _{A_1}M \mapsto
\Hom_{A_1}(X_{A_2}, M).$$ In the above definition and what follows,
we omit some of the subscripts whenever no confusion can arise. For
instance,
$\Hom_{A_1}(X_{A_2},M):=\Hom_{A_1}(_{A_1}X_{A_2},{_{A_1}M})$. The
natural left $A_2$-module structure on the right hand side is
compatible with the $H$-action under the assumption that $H$ is
cocommutative. Therefore $\Hom_{A_1}(X_{A_2},M)\in B_2\dmod$, and
more generally one easily checks that both maps above are compatible
with the $H$-actions on the algebras and modules, thus inducing
functors on the corresponding $B$-module categories. We leave the
analogous statements and their verification in the graded case to
the reader; their proofs are similar to the argument we use in the
next lemma.

\begin{lemma}\label{lemma-canonical-adjunction-preserves-H-module-structure}
The canonical adjunction between the tensor and hom functors in the
above definition associated with the bimodule $_{A_1} X_{A_2}$,
$$\Hom_{A_1}(X\o_{A_2}N, M)\cong\Hom_{A_2}(N,\Hom_{A_1}( X_{A_2}, M)),$$
is an isomorphism of $H$-modules, functorial in $M$ and $N$ for any
$M\in B_1\dmod$, $N \in B_2\dmod$. A similar statement holds in the
graded case.
\end{lemma}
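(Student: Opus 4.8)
The claim is that the standard tensor-hom adjunction isomorphism
$$\Phi:\Hom_{A_1}(X\o_{A_2}N, M)\xrightarrow{\ \cong\ }\Hom_{A_2}(N,\Hom_{A_1}(X_{A_2}, M))$$
is not merely an isomorphism of $\Bbbk$-vector spaces (which is classical) but an isomorphism of $H$-modules, where the $H$-actions on both sides are the ones from Definition \ref{def-hom}. So the real content is purely about intertwining the $H$-actions; I don't need to reprove that $\Phi$ is a bijection.

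Let me recall the setup. $X$ is an $(A_1,A_2)$-bimodule with compatible $H$-action (a module over $(A_1\o A_2^{op})\#H$), $M\in B_1\dmod$, $N\in B_2\dmod$. Both Hom-spaces carry $H$-module structures via the formula $(h\cdot f)(m)=\sum h_{(2)}f(S^{-1}(h_{(1)})m)$ from Definition \ref{def-hom}.

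**The classical map.** The usual adjunction sends $f\in\Hom_{A_1}(X\o_{A_2}N,M)$ to $\Phi(f)$ where $\Phi(f)(n)\in\Hom_{A_1}(X,M)$ is given by $\Phi(f)(n)(x)=f(x\o n)$. The inverse sends $g\in\Hom_{A_2}(N,\Hom_{A_1}(X,M))$ to $\Phi^{-1}(g)(x\o n)=g(n)(x)$. These are mutually inverse $\Bbbk$-linear isomorphisms by standard nonsense, and I'll cite that.

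**Strategy.** Let me think about what I actually need to verify.

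The plan is to verify directly that $\Phi(h\cdot f)=h\cdot\Phi(f)$ for all $h\in H$, using the Sweedler calculus, where on the left $H$ acts on $\Hom_{A_1}(X\o_{A_2}N,M)$ and on the right $H$ acts on $\Hom_{A_2}(N,\Hom_{A_1}(X,M))$. Since $\Phi$ is already known to be a bijection, once I show it's $H$-linear I'm done (its inverse is then automatically $H$-linear). Let me first unwind both sides into elementary formulas in terms of $x\in X$, $n\in N$, and Sweedler indices of $h$, then check they agree by pushing everything through the defining formula for the $H$-action on Hom-spaces, using the antipode axioms $S^{-1}(h_{(2)})h_{(1)}=\epsilon(h)$, the comodule/module compatibilities, and crucially the \emph{cocommutativity} of $H$.

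First I would compute the left-hand side. For $h\in H$, $n\in N$, $x\in X$,
$$\Phi(h\cdot f)(n)(x)=(h\cdot f)(x\o n)=\sum h_{(2)}\,f\bigl(S^{-1}(h_{(1)})\cdot(x\o n)\bigr),$$
and here $S^{-1}(h_{(1)})$ acts diagonally on the tensor product $X\o_{A_2}N$ via Definition \ref{def-tensor}, so $S^{-1}(h_{(1)})\cdot(x\o n)=\sum S^{-1}(h_{(1)})_{(1)}x\o S^{-1}(h_{(1)})_{(2)}n$. Using that $S^{-1}$ is a coalgebra anti-morphism and then invoking cocommutativity to straighten out the resulting Sweedler indices, I would rewrite this in a form comparable to the right-hand side.

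Then I would expand the right-hand side. The outer action is on $\Hom_{A_2}(N,-)$, the inner $H$-module being $\Hom_{A_1}(X,M)$, so
$$(h\cdot\Phi(f))(n)=\sum h_{(2)}\cdot\bigl(\Phi(f)(S^{-1}(h_{(1)})n)\bigr),$$
where the outer $h_{(2)}$ now acts via the $H$-structure on $\Hom_{A_1}(X,M)$, giving, when evaluated at $x\in X$,
$$(h\cdot\Phi(f))(n)(x)=\sum h_{(3)}\,\bigl(\Phi(f)(S^{-1}(h_{(1)})n)\bigr)\bigl(S^{-1}(h_{(2)})x\bigr)=\sum h_{(3)}\,f\bigl(S^{-1}(h_{(2)})x\o S^{-1}(h_{(1)})n\bigr).$$
The task reduces to matching this triple-Sweedler expression with the expansion of the left-hand side. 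Both ultimately read (after using coassociativity and cocommutativity to align the indices on $S^{-1}$) as $\sum h_{(3)}\,f\bigl(S^{-1}(h_{(2)})x\o S^{-1}(h_{(1)})n\bigr)$; the identification of the two is exactly where cocommutativity of $H$ is used, since it lets me commute the comultiplication past $S^{-1}$ and reorder the legs of $\Delta$ acting on $x$ versus $n$.

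**Anticipated main obstacle.** The bookkeeping of Sweedler indices through the antipode is the delicate part: I must carefully track how $\Delta$ applied inside $S^{-1}(h_{(1)})$ interacts with the diagonal action on $X\o_{A_2}N$, and verify the cocommutativity step is genuinely what forces the two sides to coincide (this is consistent with the paper's standing hypothesis that $H$ be cocommutative, and is presumably why that hypothesis is imposed). I would close by remarking that the graded case is identical after inserting the Koszul sign conventions already fixed in Section 5, so no separate argument is needed. I'll keep the verification at the level of indicating which Hopf-algebra identities ($S^{-1}$ anti-comultiplicativity, coassociativity, cocommutativity, and $S^{-1}(h_{(2)})h_{(1)}=\epsilon(h)$) are invoked, rather than writing every intermediate Sweedler line, since the computation is routine once the strategy is fixed.
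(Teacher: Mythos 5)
Your proposal is correct and follows essentially the same route as the paper's proof: expand the $H$-actions on both sides of the adjunction bijection in Sweedler notation and match the resulting expressions. One correction of emphasis, though: cocommutativity is \emph{not} what makes the two expansions agree. If you carry out the bookkeeping you sketch, the side computed through the tensor product becomes
$$\sum h_{(3)}\, f\bigl(S^{-1}(h_{(2)})\cdot x \o S^{-1}(h_{(1)})\cdot n\bigr)$$
using only coassociativity and the anti-comultiplicativity $\Delta(S^{-1}(k))=\sum S^{-1}(k_{(2)})\o S^{-1}(k_{(1)})$, while the side computed through the iterated Hom becomes $\sum h_{(3)}\, f\bigl(S^{-1}(h_{(1)})\cdot n\bigr)\bigl(S^{-1}(h_{(2)})\cdot x\bigr)$; under the adjunction identification these are \emph{literally the same expression} --- on both sides the innermost Sweedler leg $h_{(1)}$ hits $n$ and the next leg $h_{(2)}$ hits $x$ --- so no reordering of legs is ever required, and this is exactly how the paper's proof concludes. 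Where cocommutativity genuinely enters is upstream of the lemma: it is needed for the objects in the statement to make sense at all, namely for the diagonal $H$-action to be well defined on the tensor product over $A_2$ (Definition \ref{def-tensor}) and for $\Hom_{A_1}(X_{A_2},M)$ to carry a $B_2$-module structure, not for the verification that the adjunction intertwines the actions. So your ``anticipated main obstacle'' dissolves: once the two sides are written out, the computation closes on the nose.
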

\begin{proof} Recall that under the tensor-hom adjunction, we associate with any element
$f \in \Hom_{A_2}(N,\Hom_{A_1}( X_{A_2}, M))$ the element of
$\Hom_{A_1}(X\o_{A_2}N, M)$, still denoted $f$, which sends $x\o n$
to $f(n)(x)$. On one hand, for any $h\in H$, $h\cdot f \in
\Hom_{A_2}(N,\Hom_{A_1}( X_{A_2}, M))$ is given by
$$(h\cdot f)(-)=h_{(2)}f(S^{-1}(h_{(1)})\cdot-):N \lra
\Hom_{A_{1}}(X_{A_2},M).$$ Thus for any $n\in N$, $x\in X$, we have
from the above assignment
$$(h\cdot f)(x \o n)=(h_{(2)}\cdot f(S^{-1}(h_{(1)})\cdot n))(x)=h_{(3)}
f(S^{-1}(h_{(1)})\cdot n)(S^{-1}(h_{(2)})\cdot x).$$

On the other hand, when regarding $f$ as an element of
$\Hom_{A_1}(X\o_{A_2}N, M)$ using the adjunction, the $H$-action has
the effect
$$\begin{array}{rcl}
(h\cdot f)(x \o n) & = & h_{(2)}f(S^{-1}(h_{(1)})\cdot (x\o n))  =
h_{(3)}(f(S^{-1}(h_{(2)})\cdot x\o S^{-1}(h_{(1)})\cdot n))\\
& = & h_{(3)}(f(S^{-1}(h_{(1)})\cdot n)(S^{-1}(h_{(2)})\cdot x)).
\end{array}$$
This shows that the two expressions are equal and the lemma follows.
\end{proof}

Taking stable invariants $\mc{H}$ (Proposition
\ref{prop-comparison-hom-spaces}) of the above canonical isomorphism
gives us the corresponding adjunction in the homotopy categories.

\begin{cor}\label{cor-adjunction-homotopy-category} The functors
$_{A_1}X\o_{A_2}(-)$, $\Hom_{A_1}(X_{A_2},-)$ descend to adjoint
functors in the homotopy category:
$$\Hom_{\mc{C}(A_1,H)}(X\o_{A_2}N, M)\cong\Hom_{\mc{C}(A_2,H)}(N,\Hom_{A_1}( X_{A_2}, M))$$
functorially in $M\in B_1\dmod$, $N \in B_2\dmod$. $\hfill\Box$
\end{cor}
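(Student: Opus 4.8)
The plan is to obtain the homotopy-level adjunction by simply applying the stable-invariants functor $\mc{H}$ to the abelian-category adjunction already established in Lemma \ref{lemma-canonical-adjunction-preserves-H-module-structure}. That lemma provides, for $M\in B_1\dmod$ and $N\in B_2\dmod$, a natural isomorphism of $H$-modules
$$\Hom_{A_1}(X\o_{A_2}N, M)\cong\Hom_{A_2}(N,\Hom_{A_1}(X_{A_2}, M)),$$
bifunctorial in $M$ and $N$. Since $\mc{H}\colon H\dmod\lra\vect$ is a functor (Definition \ref{def-h-0}), applying it to both sides produces an isomorphism of $\Bbbk$-vector spaces that is still bifunctorial in $M$ and $N$. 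I would then invoke Proposition \ref{prop-comparison-hom-spaces} to rewrite the left-hand side as $\Hom_{\mc{C}(A_1,H)}(X\o_{A_2}N, M)$ and the right-hand side as $\Hom_{\mc{C}(A_2,H)}(N,\Hom_{A_1}(X_{A_2}, M))$, which is exactly the claimed isomorphism; its bifunctoriality is inherited from the lemma together with the functoriality of $\mc{H}$.

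Before this formal passage is legitimate, I would first record that the two functors $F:=X\o_{A_2}(-)$ and $G:=\Hom_{A_1}(X_{A_2},-)$ genuinely descend to exact functors $\mc{C}(A_2,H)\lra\mc{C}(A_1,H)$ and $\mc{C}(A_1,H)\lra\mc{C}(A_2,H)$. By Lemma \ref{lemma-characterizing-null-homotopy} it is enough to check that each carries null-homotopic maps to null-homotopic maps, i.e.\ that each is compatible with the canonical maps $\lambda_{(-)}$. For $F$ I would use the natural $B_1$-isomorphism $X\o_{A_2}(N\o H)\cong (X\o_{A_2}N)\o H$ coming from associativity of the module-category action of $H\dmod$, and verify that under it $F(\lambda_N)$ is carried to $\lambda_{FN}$; then a map factoring through $\lambda_N$ is sent to one factoring through $\lambda_{FN}$, hence still null-homotopic. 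For $G$ the analogous natural $B_2$-isomorphism $\Hom_{A_1}(X_{A_2}, M\o H)\cong \Hom_{A_1}(X_{A_2},M)\o H$ is available precisely because $H$ is finite dimensional, and again one checks it intertwines $G(\lambda_M)$ with $\lambda_{GM}$. This makes $F$ and $G$ well defined on the homotopy categories, so the isomorphism of the first paragraph is indeed an adjunction of functors.

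The substantive step is therefore the descent just described, not the passage through $\mc{H}$, which is purely formal. The one point demanding care is confirming that the associativity isomorphism for $F$ (respectively the finite-dimensionality isomorphism for $G$) really is a map of $B$-modules and that it intertwines the relevant $\lambda$-maps; both reduce to short computations in Sweedler notation, using that $H$ acts diagonally on the tensor factors, that $\Lambda$ is a left integral, and cocommutativity of $H$. Once these compatibilities are in hand, the corollary follows immediately by applying $\mc{H}$ to the isomorphism of Lemma \ref{lemma-canonical-adjunction-preserves-H-module-structure} and reading off both sides via Proposition \ref{prop-comparison-hom-spaces}.
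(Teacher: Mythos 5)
Your proposal is correct and follows essentially the same route as the paper: the paper's proof consists precisely of applying the stable-invariants functor $\mc{H}$ to the $H$-module adjunction of Lemma \ref{lemma-canonical-adjunction-preserves-H-module-structure} and identifying both sides with homotopy-category morphism spaces via Proposition \ref{prop-comparison-hom-spaces}. Your additional verification that $X\o_{A_2}(-)$ and $\Hom_{A_1}(X_{A_2},-)$ descend to the homotopy categories (via the $B$-linear isomorphisms $X\o_{A_2}(N\o H)\cong (X\o_{A_2}N)\o H$ and $\Hom_{A_1}(X_{A_2},M\o H)\cong \Hom_{A_1}(X_{A_2},M)\o H$ intertwining the $\lambda$-maps) is a legitimate point of care that the paper leaves implicit, but it does not change the argument.
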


\begin{defn}\label{def-derived-bimodule-tensor} Let $_{A_1}X_{A_2}$
be as above. We define the (left) derived tensor functor
$_{A_1}X\Lo_{A_2}(-)$ to be the composition:
$$_{A_1}X\Lo_{A_2}(-):\mc{D}(A_2,H)\stackrel{\mathbf{p}}{\lra}
\mc{P}(A_2,H) \xrightarrow{X\o_{A_2}(-)} \mc{C}(A_1,H)
\stackrel{Q}{\lra} \mc{D}(A_1,H)$$
$$_{A_2}M \mapsto {_{A_1}X\o_{A_2}} \mathbf{p}M$$
where $\mathbf{p}$ is the functorial bar resolution of Corollary
\ref{cor-cofibrant-replacement} and $Q$ is the canonical
localization functor.
\end{defn}

\begin{prop}\label{prop-derived-functors-from-bimodule-maps} Let
$_{A_1}X_{A_2}$, $_{A_1}Y_{A_2}$ be $(A_1,A_2)$ hopfological
bimodules, and let
$$\mu: {_{A_1}X_{A_2}} \lra {_{A_1}Y_{A_2}}$$
be a map of hopfological bimodules. Then:
\begin{enumerate}
\item Suppose ${_{A_1}X_{A_2}}$ is cofibrant when regarded as
a $B_1$-module. The functor $$_{A_1}X\Lo_{A_2}(-): \mc{D}(A_2,H) \lra \mc{D}(A_1,H)$$
is an equivalence of categories if and only if $A_2\lra
\Hom_{A_1}(X_{A_2},X_{A_2})$ is a quasi-isomorphism, and
$\{{_{A_1}X\o V}\}$, when regarded as left $B_1$-modules, is a set of
compact cofibrant generators $\mc{D}(A_1,H)$. Here $V$ ranges over a
finite set of representatives of isomorphism classes of simple
$H$-modules.
\item The map of bimodules $\mu$ induces an invertible natural
transformation of functors $$\mu^{\mathbf{L}}: {_{A_1}X
\Lo_{A_2}(-)} \Rightarrow {_{A_1}Y \Lo_{A_2}(-)}$$ if and only if
$\mu$ is a quasi-isomorphism in $(A_1\o A_2^{op})\#H\dmod$.
\end{enumerate}
\end{prop}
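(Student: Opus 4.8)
The plan is to reduce both parts to the criteria of Section~7: Corollary~\ref{cor-criterion-functor-induce-equivalence} for part~1, and Lemma~\ref{lemma-criterion-functors-natural-transformations}(2) for part~2. Write $F:={_{A_1}X\Lo_{A_2}(-)}$ and $F':={_{A_1}Y\Lo_{A_2}(-)}$. Being a composite of the bar resolution $\mathbf{p}$, the functor $X\o_{A_2}(-)$, and the localization $Q$, the functor $F$ is exact; and since $\mathbf{p}(\oplus_i M_i)$ and $\oplus_i\mathbf{p}M_i$ are both cofibrant replacements of $\oplus_i M_i$, hence isomorphic in $\mc{D}(A_2,H)$ by Corollary~\ref{cor-cofibrant-replacement}, while $X\o_{A_2}(-)$ commutes with direct sums, $F$ commutes with direct sums in $\mc{D}(A_1,H)$; likewise for $F'$. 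Take $\mc{G}=\{A_2\o V\}$ ($V$ running over simple $H$-modules) to be the compact generators of $\mc{D}(A_2,H)$ from Proposition~\ref{prop-compact-generators-for-derived-categories}. Each $A_2\o V$ has property (P), hence is cofibrant, so $\mathbf{p}(A_2\o V)\cong A_2\o V$ and $F(A_2\o V)\cong X\o_{A_2}(A_2\o V)\cong X\o V$, and similarly $F'(A_2\o V)\cong Y\o V$. Finally, as $X$ is cofibrant it is a direct summand of a property (P) module $P$; tensoring the filtration of $P$ with $V$ shows $P\o V$ again has property (P), so each $X\o V$ is cofibrant.

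For part~1 I would apply Corollary~\ref{cor-criterion-functor-induce-equivalence}: $F$ is an equivalence iff (a) $F$ is fully faithful on $T^{\Z}(\mc{G})$, and (b) $F(\mc{G})=\{X\o V\}$ is a set of compact generators of $\mc{D}(A_1,H)$. By the previous paragraph, condition (b) is exactly the second hypothesis of the proposition. For (a), all objects involved are cofibrant, so morphisms may be computed as $\mc{H}(\Hom_{A_1}(-,-))$ (Corollary~\ref{cor-equivalence-cofibrant-subcat}, Proposition~\ref{prop-comparison-hom-spaces}). Writing $W'$ for the finite-dimensional $H$-module obtained by tensoring $W$ with the shift factors $H/\Bbbk\Lambda$ or $\mathrm{ker}(\epsilon)$, so that $T^k(A_2\o W)\cong A_2\o W'$ and $T^k(X\o W)\cong X\o W'$, the identity $\Hom_{A_1}(X\o V,N)\cong\Hom_\Bbbk(V,\Hom_{A_1}(X,N))$ and the finite-dimensionality of $V,W$ identify the map induced by $F$,
\[
\Hom_{\mc{D}(A_2,H)}(A_2\o V,T^k(A_2\o W))\lra\Hom_{\mc{D}(A_1,H)}(X\o V,T^k(X\o W)),
\]
with $\mc{H}(\Hom_\Bbbk(V,\alpha\o\Id_{W'}))$, where $\alpha:A_2\lra\Hom_{A_1}(X_{A_2},X_{A_2})$ is the structural map.

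The key point is then that this family of maps consists of isomorphisms iff $\alpha$ is a quasi-isomorphism. If $\alpha$ is a quasi-isomorphism its cone $C$ is acyclic, so $C\o W'$ is acyclic by Lemma~\ref{lemma-infinite-projective-H-modules}, and since $\mc{H}$ is cohomological (Corollary~\ref{cor-H-zero-functorial}) the induced maps are isomorphisms. Conversely, taking $W$ trivial and using $\mc{H}(\Hom_\Bbbk(V,T^kC))\cong\Hom_{H\udmod}(T^{-k}V,C)$ (Corollary~\ref{cor-morphism-space-in-stable-H-mod}), the hypothesis forces $\Hom_{H\udmod}(T^{-k}V,C)=0$ for all $k\in\Z$ and all simple $V$; as the simple modules generate $H\udmod\cong\mc{D}(\Bbbk,H)$ (Proposition~\ref{prop-compact-generators-for-derived-categories} with $A=\Bbbk$), this gives $C\cong 0$, i.e.\ $\alpha$ is a quasi-isomorphism. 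Combining, conditions (a) and (b) are equivalent to the two stated hypotheses, proving part~1.

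For part~2, Lemma~\ref{lemma-criterion-functors-natural-transformations}(2) applies since $F,F'$ are exact, commute with direct sums, and $\mc{D}(A_2,H)$ is compactly generated by $\mc{G}$: the transformation $\mu^{\mathbf{L}}$ is invertible iff each component $\mu^{\mathbf{L}}(A_2\o V)$ is. Under the identifications $F(A_2\o V)\cong X\o V$, $F'(A_2\o V)\cong Y\o V$, this component is $\mu\o\Id_V:X\o V\lra Y\o V$, so invertibility of $\mu^{\mathbf{L}}$ says exactly that the cone $C_\mu$ of $\mu$ in $(A_1\o A_2^{op})\#H\dmod$ satisfies $C_\mu\o V$ acyclic for every simple $V$. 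If $\mu$ is a quasi-isomorphism then $C_\mu$ is acyclic and each $C_\mu\o V$ is acyclic by Lemma~\ref{lemma-infinite-projective-H-modules}; conversely, taking $V\cong\Bbbk_0$ and using $C_\mu\o\Bbbk_0\cong C_\mu$ shows $C_\mu$ acyclic, i.e.\ $\mu$ is a quasi-isomorphism. The step demanding most care is the $H$-equivariant identification of the morphism spaces in part~1 (requiring $V,W$ finite-dimensional and compatibility of the adjunction isomorphisms with both the $H$-action and the functor $\mc{H}$), together with the verification that $F,F'$ descend to direct-sum-preserving functors on the derived categories even though $\mathbf{p}$ preserves direct sums only up to quasi-isomorphism.
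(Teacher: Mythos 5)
Your proof is correct and takes essentially the same route as the paper's: both parts are reduced to Corollary \ref{cor-criterion-functor-induce-equivalence} and Lemma \ref{lemma-criterion-functors-natural-transformations}(2), with compact generation supplied by Proposition \ref{prop-compact-generators-for-derived-categories} and fully-faithfulness on $T^{\Z}(\mc{G})$ checked by identifying the induced map on Hom-spaces with $\mc{H}(\Hom_\Bbbk(V,\alpha\o\Id_{W'}))$, exactly as in the paper's chain of isomorphisms. If anything, your write-up is more complete than the paper's, which only spells out the ``if'' directions and treats as immediate the points you verify explicitly --- that $\mathbf{p}$ interacts correctly with direct sums, that $X\o V$ is automatically cofibrant, the cone-plus-generation argument forcing $\alpha$ to be a quasi-isomorphism when the functor is an equivalence, and the $V\cong\Bbbk_0$ trick in part 2.
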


\begin{proof}The first statement of the proposition is a consequence
of Corollary \ref{cor-criterion-functor-induce-equivalence},
provided we know that $\mc{D}(A_i,H)$ is compactly generated by the
set of generators $\mc{G}=\{A_i\o V\}$, $i=1,2$ (Proposition
\ref{prop-compact-generators-for-derived-categories}). We check that
under our assumption, the conditions of the corollary are satisfied.
Since $T^n(M)\cong M\o W$ for some finite dimensional $H$-module $W$
(see \ref{def-shift-functor} and
\ref{prop-invertible-shift-functor}), we have for any $A_2\o V,
A_2\o V^\prime \in \mc{G}$, which are property (P) modules:
$$\Hom_{A_2}(T^n(A_2\o V), T^m(A_2\o V^\prime
))\cong \Hom_{A_2}(A_2\o V\o W, A_2\o V^\prime \o W^\prime)$$
$$\cong A_2\o \Hom_\Bbbk(V\o W,V^\prime\o W^\prime)\stackrel{\alpha}{\lra}
\Hom_{A_1}(X,X)\o \Hom_\Bbbk(V\o W, V^\prime\o W^\prime)$$
$$\cong\Hom_{A_1}(X\o V\o W, X\o V^\prime\o W^\prime)\cong \Hom_{A_1}
(X\o_{A_2}(A_2\o V\o W),X\o_{A_2}(A_2\o V^\prime\o W^\prime)),$$
where $\alpha$ is a quasi-isomorphism by assumption. Since $V$,
$V^\prime$, $W$ and $W^\prime$ are finite dimensional, we can pull
$\Hom(V\o W,V^\prime\o W^\prime)$ in and out of the $A_1$-hom
spaces. Taking stable invariants of the first and last hom-spaces
shows that the morphism spaces in the derived categories are
isomorphic as well (here we use that $_{A_1}X$ is cofibrant),
thereby establishing the fully-faithfulness of
the tensor functor when restricted to $T^\Z(\mc{G})$. Furthermore,
the hypothesis says that the modules $_{A_1}X\Lo_{A_2}(A_2\o V)\cong
{_{A_1}X\o_{A_2}(A_2\o V) \cong {_{A_1}X \o V}}$ for the $V$ as in the
assumption constitute a set of compact cofibrant generators of
$\mc{D}(A_1,H)$. Finally, the functor commutes with direct sums
since tensor product does so.

For the second part, note that $X\o_{A_{2}}(A_{2}\o V) \cong X\o V$
is quasi-isomorphic to $Y\o_{A_{2}}(A_{2}\o V) \cong Y\o V$ for all
simple $H$-modules $V$ if and only if $X$ is quasi-isomorphic to
$Y$. Now use part 2 of Lemma
\ref{lemma-criterion-functors-natural-transformations}.
\end{proof}

\begin{cor}\label{cor-two-tensor-equivalent} Let $_{A_1}X_{A_2}$ be
a hopfological bimodule and $_{A_1}(\mathbf{p}X)_{A_2}$ be its bar
resolution in $(A_{1}\o A_{2}^{op})\#H\dmod$. Then $\mathbf{p}X\lra
X$ induces a canonical isomorphism of functors
$$_{A_1}X\Lo_{A_2}(-)\cong {_{A_1}(\mathbf{p}X)\o_{A_2}(-)}: \mc{D}(A_2,H)\lra \mc{D}(A_1,H).$$
\end{cor}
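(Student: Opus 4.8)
The plan is to realize both functors through a common ``doubly resolved'' functor and to compare them by a span of natural transformations. Write $F_1(N):={_{A_1}X}\o_{A_2}\mathbf{p}N$ for the derived tensor functor $_{A_1}X\Lo_{A_2}(-)$ of Definition \ref{def-derived-bimodule-tensor}, put $F_2(N):={_{A_1}(\mathbf{p}X)}\o_{A_2}N$, and set $G(N):={_{A_1}(\mathbf{p}X)}\o_{A_2}\mathbf{p}N$. The surjective quasi-isomorphism $\mathbf{p}X\twoheadrightarrow X$ of bimodules (Corollary \ref{cor-cofibrant-replacement}) and the functorial surjective quasi-isomorphism $\mathbf{p}N\twoheadrightarrow N$ induce natural transformations
$$F_1(N)\xleftarrow{\;\phi_N\;}G(N)\xrightarrow{\;\psi_N\;}F_2(N),$$
where $\phi_N$ comes from tensoring $\mathbf{p}X\to X$ with $\mathbf{p}N$ on the right over $A_2$, and $\psi_N$ from tensoring $\mathbf{p}N\to N$ with $\mathbf{p}X$ on the left. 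Both are natural in $N$ since $\mathbf{p}(-)$ is a functor (Corollary \ref{cor-equivalence-cofibrant-subcat}). It therefore suffices to prove that $\phi_N$ and $\psi_N$ are quasi-isomorphisms: they then become invertible in $\mc{D}(A_1,H)$, and the composite $\psi_N\circ\phi_N^{-1}$ is the desired natural isomorphism $F_1\cong F_2$.

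For $\phi_N$ I would use the property-(P) filtration of $\mathbf{p}N$ (Definition \ref{def-property-P}), whose inclusions $F_r\subset F_{r+1}$ are split as $A_2$-modules and whose subquotients are direct sums of modules $A_2\o V$. Since $(-)\o_{A_2}(-)$ preserves split exact sequences of $A_2$-modules, tensoring with $X$ (resp. $\mathbf{p}X$) on the left produces $A_1$-split filtrations on $F_1(N)$ (resp. $G(N)$) whose subquotients, using the absorption $Y\o_{A_2}(A_2\o V)\cong Y\o V$, are $X\o V$ (resp. $\mathbf{p}X\o V$). On each subquotient $\phi_N$ restricts to $(\mathbf{p}X\to X)\o\Id_V$; as $\mathbf{p}X\to X$ is a quasi-isomorphism and tensoring with the $H$-module $V$ is an exact functor on $H\udmod$ that preserves isomorphisms (Proposition \ref{prop-basic-property-H-mod}), each such map is a quasi-isomorphism.

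For $\psi_N$, which I expect to be the main obstacle, I would instead filter $\mathbf{p}X$ by its property-(P) filtration as a module over $(A_1\o A_2^{op})\#H$, with $A_1\o A_2^{op}$-split inclusions and subquotients of the form $(A_1\o A_2^{op})\o V$. The key computation is that, as a \emph{right} $A_2$-module, $(A_1\o A_2^{op})\o V$ is free, so that for any $B_2$-module $M$ one has a canonical identification of $H$-modules $\big((A_1\o A_2^{op})\o V\big)\o_{A_2}M\cong (A_1\o V)\o M$ with the diagonal $H$-action (Definition \ref{def-tensor}). Because the $A_1\o A_2^{op}$-splittings restrict to right-$A_2$-splittings, tensoring over $A_2$ yields $A_1$-split filtrations of $G(N)$ and $F_2(N)$; on the subquotients $\psi_N$ becomes $\Id_{A_1\o V}\o(\mathbf{p}N\to N)$, a quasi-isomorphism since $\mathbf{p}N\to N$ is one and $(A_1\o V)\o(-)$ preserves isomorphisms in $H\udmod$. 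The same computation (together with the compatibility $\mathbf{p}X\o_{A_2}(M\o H)\cong(\mathbf{p}X\o_{A_2}M)\o H$, which guarantees descent to $\mc{C}(A_1,H)$) shows that $\mathbf{p}X\o_{A_2}(-)$ sends quasi-isomorphisms to quasi-isomorphisms, so that $F_2$ genuinely descends to a functor $\mc{D}(A_2,H)\lra\mc{D}(A_1,H)$, as asserted by the corollary.

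Finally, to pass from ``quasi-isomorphism on all subquotients'' to ``quasi-isomorphism'', I would argue that the mapping cones of $\phi_N$ and $\psi_N$ are acyclic. Each cone carries an $A_1$-split exhaustive filtration $C^{(0)}\subset C^{(1)}\subset\cdots$ whose subquotients are the (acyclic) cones of the subquotient maps; the telescope construction then gives an $A_1$-split short exact sequence $0\lra\bigoplus_r C^{(r)}\lra\bigoplus_s C^{(s)}\lra C\lra 0$, hence a distinguished triangle in $\mc{C}(A_1,H)$ via Lemma \ref{lemma-split-ses-lead-to-dt}. An induction using that projective-injective $H$-modules are closed under extensions (any extension of a projective by a projective splits) shows each $C^{(r)}$ is acyclic; since $H$ is finite dimensional, hence Noetherian, the direct sums $\bigoplus_r C^{(r)}$ remain acyclic, and restricting the triangle to $H\udmod$ forces $C\cong 0$. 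Thus $\phi_N$ and $\psi_N$ are quasi-isomorphisms, completing the proof. The essential difficulty is the right-$A_2$-module analysis of $\mathbf{p}X$ underlying $\psi_N$; the remainder is the now-standard filtration bookkeeping.
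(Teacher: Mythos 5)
Your proof is correct, and its skeleton --- comparing the two functors through the span $X\o_{A_2}\mathbf{p}N \xleftarrow{\phi_N} \mathbf{p}X\o_{A_2}\mathbf{p}N \xrightarrow{\psi_N} \mathbf{p}X\o_{A_2}N$ --- is the same comparison the paper makes, but you prove invertibility of $\phi_N$ by a genuinely different, more elementary argument. The paper disposes of $\phi_N$ in one line: since $\mathbf{p}X\to X$ is a quasi-isomorphism of bimodules, Proposition \ref{prop-derived-functors-from-bimodule-maps}(2) gives ${}_{A_1}X\Lo_{A_2}(-)\cong{}_{A_1}(\mathbf{p}X)\Lo_{A_2}(-)$ at once; that proposition in turn rests on the compact-generation criterion of Lemma \ref{lemma-criterion-functors-natural-transformations}, hence on the Thomason--Neeman machinery of Section 7. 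The paper then spends its actual effort on your $\psi_N$: it tensors the telescope sequence $0\to\oplus_r F_r\to\oplus_s F_s\to\mathbf{p}X\to 0$ of the bimodule filtration with $\mathbf{p}M\to M$, passes to a morphism of distinguished triangles, and reduces by the two-out-of-three property and induction on $r$ to the subquotient case $(A_1\o A_2^{op})\o V$, which is exactly your computation. So your $\psi_N$ argument coincides with the paper's, while your $\phi_N$ argument replaces the appeal to Section 7 by a second, symmetric d\'evissage on the property-(P) filtration of $\mathbf{p}N$, whose subquotients $A_2\o V$ absorb to give $(\mathbf{p}X\to X)\o\Id_V$. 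What you gain is a self-contained proof, independent of Brown representability and Thomason--Neeman, with both halves being instances of a single filtration argument; the cost is doing the subquotient-to-whole bookkeeping twice, which you handle soundly (the cone of a filtration-preserving map is filtered with subquotients the cones of the subquotient maps, projective-injective $H$-modules are closed under extensions, and under arbitrary direct sums because $H$ is finite dimensional, hence Noetherian). Your explicit check that $\mathbf{p}X\o_{A_2}(-)$ preserves null-homotopies and quasi-isomorphisms --- needed for the right-hand side to be a functor on $\mc{D}(A_2,H)$ at all --- makes explicit a point the paper leaves implicit.
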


\begin{proof} By the previous result, we have an isomorphism of
functors
$$_{A_1}X\Lo_{A_2}(-)\cong {_{A_1}(\mathbf{p}X)\Lo_{A_2}(-)}:
\mc{D}(A_2,H)\lra \mc{D}(A_1,H).$$ To this end, it suffices to show
that, if a bimodule $_{A_1}P_{A_2}$ has property (P), then
$P_{A_2}\o_{A_{2}} M$ is quasi-isomorphic to $ P_{A_2}\o_{A_{2}}
\mathbf{p}M$ for any $M\in B_2\dmod$.

 Tensoring the short exact sequence of free $A_1\o A_2^{op}$-modules
$$0\lra \bigoplus_{r\in \N}F_r \lra \bigoplus_{s \in \N}F_s \lra P
\lra 0$$
we used in Lemma \ref{lemma-property-p-cofibrant} with the
bar resolution $\mathbf{p}M \lra M$ and passing to the homotopy
category $\mc{C}(A_1,H)$ (Lemma \ref{lemma-split-ses-lead-to-dt}),
we have a morphism of distinguished triangles
$$\xymatrix{\bigoplus_{r\in \N}(F_r  \o_{A_2} \mathbf{p}M) \ar[d]\ar[r] &
\bigoplus_{s \in \N}(F_s \o_{A_2} \mathbf{p}M) \ar[d] \ar[r] & P
\o_{A_2} \mathbf{p}M \ar[d]\ar[r] & T(\bigoplus_{r\in \N}(F_r\o_{A_2} \mathbf{p}M)) \ar[d]\\
\bigoplus_{r\in \N}(F_r  \o_{A_2} M) \ar[r] & \bigoplus_{s \in
\N}(F_s \o_{A_2} M) \ar[r] & P \o_{A_2} M \ar[r] & T(\bigoplus_{r\in
\N} (F_r\o_{A_2} M)).}
$$ Taking cohomology (passing to $H\udmod$ via $\underline{\mathrm{Res}}$) and using the
``two-out-of-three'' property of triangulated categories (see, for
instance, \cite[Corollary 4, Section IV.1]{GM}), we are
reduced to exhibiting the claimed property for each $F_r$, $r\in
\N$. An induction argument on $r$ further reduces us to the special
case when $P=A_1\o A_2 \o V$, which is easily seen to be true:
$$(A_1\o A_2 \o V) \o_{A_2}\mathbf{p}M \cong A_1\o V \o \mathbf{p}M \cong A_1\o V \o M \cong
(A_1\o A_2 \o V)\o_{A_2} M,$$
where the first and last isomorphisms
are that of modules, while the middle one is only a quasi-isomorphism.
\end{proof}

\begin{cor}\label{cor-composition-tensor-functors} Let $A_1$, $A_2$,
$A_3$ be $H$-module algebras, and $_{A_1}X_{A_2}$, $_{A_2}Y_{A_3}$
be hopfological bimodules. Then there is an isomorphism of functors
$$_{A_1}X_{A_2}\Lo_{A_2}(_{A_2}Y_{A_3}\Lo_{A_3}(-))\cong (_{A_1}Z_{A_3}\Lo_{A_3}(-))
:\mc{D}(A_3,H)\lra \mc{D}(A_1,H),$$ where
$_{A_1}Z_{A_3}={_{A_1}(\mathbf{p}X)}\o_{A_2} Y_{A_3}$ and
${_{A_1}(\mathbf{p}X)_{A_2}}$ stands for the bar resolution of $X$
as an $(A_1, A_2)$-bimodule.
\end{cor}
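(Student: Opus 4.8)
The plan is to reduce everything to ordinary (underived) tensor products with property-(P) bimodules, where associativity is manifest, and then to invoke the criterion that a quasi-isomorphism of bimodules induces an invertible natural transformation of derived tensor functors. First I would replace both derived tensors by underived tensors with their bar resolutions. By Corollary \ref{cor-two-tensor-equivalent} there are isomorphisms of functors $_{A_1}X\Lo_{A_2}(-)\cong {_{A_1}(\mathbf{p}X)\o_{A_2}(-)}$ and $_{A_2}Y\Lo_{A_3}(-)\cong {_{A_2}(\mathbf{p}Y)\o_{A_3}(-)}$, where $\mathbf{p}X$ and $\mathbf{p}Y$ are the bar resolutions of $X$, $Y$ as modules over $(A_1\o A_2^{op})\#H$ and $(A_2\o A_3^{op})\#H$ respectively. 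Since $\mathbf{p}X$, $\mathbf{p}Y$ are cofibrant, hence projective (thus flat) as one-sided modules (Corollary \ref{cor-cofibrant-implies-K-projective}, using that we work over a field), the underived functors $\mathbf{p}X\o_{A_2}(-)$ and $\mathbf{p}Y\o_{A_3}(-)$ preserve quasi-isomorphisms and descend to the derived categories. Ordinary associativity of the tensor product of bimodules, which is compatible with the diagonal $H$-action, then yields an isomorphism of functors $_{A_1}X\Lo_{A_2}(_{A_2}Y\Lo_{A_3}(-))\cong (\mathbf{p}X\o_{A_2}\mathbf{p}Y)\o_{A_3}(-)$ on $\mc{D}(A_3,H)$.

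Next I would verify that $\mathbf{p}X\o_{A_2}\mathbf{p}Y$ is itself a property-(P) $(A_1,A_3)$-bimodule. The property-(P) filtration of $\mathbf{p}X$ is split as right $A_2$-modules, so applying $-\o_{A_2}\mathbf{p}Y$ and then refining by the filtration of $\mathbf{p}Y$ produces an exhaustive, split filtration whose subquotients are the tensor products $(A_1\o A_2^{op}\o V)\o_{A_2}(A_2\o A_3^{op}\o W)\cong (A_1\o A_3^{op})\o(A_2\o V\o W)$; these are free $(A_1,A_3)$-bimodules with associated $H$-module $A_2\o V\o W$ (with $H$ acting diagonally, including on $A_2$ through its module-algebra structure). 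Thus Corollary \ref{cor-two-tensor-equivalent} applies once more to identify $(\mathbf{p}X\o_{A_2}\mathbf{p}Y)\o_{A_3}(-)$ with the derived functor $(\mathbf{p}X\o_{A_2}\mathbf{p}Y)\Lo_{A_3}(-)$.

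Finally I would compare $\mathbf{p}X\o_{A_2}\mathbf{p}Y$ with $Z=\mathbf{p}X\o_{A_2}Y$. The quasi-isomorphism $\mathbf{p}Y\to Y$ induces a map of $(A_1,A_3)$-bimodules $\mathbf{p}X\o_{A_2}\mathbf{p}Y\to \mathbf{p}X\o_{A_2}Y=Z$, which I claim is a quasi-isomorphism. Writing the surjective quasi-isomorphism as $0\to K\to \mathbf{p}Y\to Y\to 0$ with $K$ acyclic (Corollary \ref{cor-cofibrant-replacement}), flatness of $\mathbf{p}X$ over $A_2$ keeps the sequence exact after applying $\mathbf{p}X\o_{A_2}(-)$, and the filtration of $\mathbf{p}X$ shows $\mathbf{p}X\o_{A_2}K$ admits a filtration with subquotients $A_1\o V\o K$. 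These are projective-injective $H$-modules by Lemma \ref{lemma-infinite-projective-H-modules}, so $\mathbf{p}X\o_{A_2}K$ is acyclic by exactly the argument of Lemma \ref{lemma-property-p-cofibrant}. Hence the displayed map is a quasi-isomorphism, and part 2 of Proposition \ref{prop-derived-functors-from-bimodule-maps} (applied with $A_1$, $A_3$ in the roles of $A_1$, $A_2$) produces an invertible natural transformation $(\mathbf{p}X\o_{A_2}\mathbf{p}Y)\Lo_{A_3}(-)\cong Z\Lo_{A_3}(-)={_{A_1}Z\Lo_{A_3}(-)}$. Composing the three functor isomorphisms established above gives the claimed identification. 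I expect the main obstacle to be the quasi-isomorphism claim in this last step: one must confirm that $\mathbf{p}X\o_{A_2}(-)$ preserves acyclicity, which genuinely combines the one-sided projectivity of the cofibrant bimodule $\mathbf{p}X$ with the Frobenius (self-injectivity) input to Lemma \ref{lemma-property-p-cofibrant}.
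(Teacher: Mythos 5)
Your argument is correct, but it takes a substantially longer route than the paper's, whose entire proof is ``Easy by Corollary \ref{cor-two-tensor-equivalent}.'' The point is that Definition \ref{def-derived-bimodule-tensor} makes $Z\Lo_{A_3}(-)$ equal \emph{by definition} to $Z\o_{A_3}\mathbf{p}(-)$, so one gets, naturally in $M$,
$$Z\Lo_{A_3}M=(\mathbf{p}X\o_{A_2}Y)\o_{A_3}\mathbf{p}M\cong \mathbf{p}X\o_{A_2}\left(Y\o_{A_3}\mathbf{p}M\right)=\mathbf{p}X\o_{A_2}\left(Y\Lo_{A_3}M\right)\cong X\Lo_{A_2}\left(Y\Lo_{A_3}M\right),$$
where the middle isomorphism is ordinary associativity and the last one is Corollary \ref{cor-two-tensor-equivalent} applied to the object $Y\Lo_{A_3}M$; this is exactly why the paper defines $Z$ as $\mathbf{p}X\o_{A_2}Y$ rather than $\mathbf{p}X\o_{A_2}\mathbf{p}Y$ --- no resolution of $Y$ and no comparison map are ever needed. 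You instead resolve both bimodules, verify that $\mathbf{p}X\o_{A_2}\mathbf{p}Y$ is a property-(P) $(A_1,A_3)$-bimodule, prove that $\mathbf{p}X\o_{A_2}\mathbf{p}Y\to\mathbf{p}X\o_{A_2}Y=Z$ is a quasi-isomorphism, and conclude with part 2 of Proposition \ref{prop-derived-functors-from-bimodule-maps}. Those steps are all sound: the double-filtration argument, and the acyclicity of $\mathbf{p}X\o_{A_2}K$ via Lemma \ref{lemma-infinite-projective-H-modules} and the mechanism of Lemma \ref{lemma-property-p-cofibrant}, are exactly the right tools, and your detour even buys a byproduct the paper does not state, namely an explicit property-(P) bimodule model of $Z$ (hence of the composed functor). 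One caution about your first step: flatness of $\mathbf{p}X$ over $A_2$ gives exactness of $\mathbf{p}X\o_{A_2}(-)$, but it does \emph{not} by itself imply preservation of quasi-isomorphisms in this setting --- that requires preservation of acyclic objects, which is the genuinely Hopf-theoretic point and is precisely what you only establish at the end; as written, ``flat, hence descends to the derived category'' is circular shorthand, so you should either cite Corollary \ref{cor-two-tensor-equivalent} itself for the descent or move your acyclicity argument to the front.
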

\Pf{Easy by Corollary \ref{cor-two-tensor-equivalent}.}

\subsection{Derived hom}
We next focus on the derived hom functor and exhibit a derived
version of the adjunctions
\ref{lemma-canonical-adjunction-preserves-H-module-structure},
\ref{cor-adjunction-homotopy-category}.

\begin{defn}\label{def-derived-bimodule-hom} Let $_{A_1}X_{A_2}$ be
a hopfological bimodule are before. Let $\mathbf{p}X$ be the bar
resolution of $X$ as a left $B_1$-module. By our construction,
$\mathbf{p}X=\mathbf{p}A_1\o_{A_1}X$ is also a right $B_2$-module.
We define the derived hom functor $\RHom_{A_1}(X_{A_2},-)$ to be the
composition:
$$\RHom_{A_1}(X_{A_2},-): \mc{D}(A_1,H) \xrightarrow{\Hom_{A_1}(\mathbf{p}X,-)}
\mc{C}(A_2,H) \stackrel{Q}{\lra} \mc{D}(A_2,H)$$
$$_{A_1}M \mapsto \Hom_{A_1}((\mathbf{p}X)_{A_2}, M).$$
The next lemma guarantees that $\Hom_{A_1}(\mathbf{p}X,-)$ is well
defined on the derived category $\mc{D}(A_1,H)$.
\end{defn}

\begin{lemma}\label{lemma-derived-hom-well-defined} If $_{A_1}{\tilde
X}_{A_2}$ has property (P) as a left $B_1$-module, then
$\Hom_{A_1}({\tilde X}_{A_2},K)$ is an acyclic $B_2$-module whenever
$K\in B_1\dmod$ is acyclic. Consequently, $\RHom_{A_1}({\tilde
X}_{A_2},-)$ descends to a functor:
$$\RHom_{A_1}({\tilde
X}_{A_2},-):\mc{D}(A_1,H)\lra \mc{D}(A_2,H).$$ Likewise, the result
holds when ``property (P)'' is replaced by ``cofibrant'' in the
statement.
\end{lemma}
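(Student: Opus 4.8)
The plan is to reduce the acyclicity claim directly to Lemma~\ref{lemma-property-p-cofibrant} (and, in the cofibrant case, to Corollary~\ref{cor-cofibrant-implies-K-projective}), and then to obtain the descent to the derived category from a formal exactness argument. The key observation is that, by Definition~\ref{def-cohomology-and-quasi-isomorphism}, acyclicity of a $B_2$-module $N$ is a condition on its restriction to $H$ alone: $N$ is acyclic if and only if $\underline{\mathrm{Res}}(N)\cong 0$ in $H\udmod$, that is, if and only if $N$ is projective ($=$ injective) as an $H$-module. Now the $H$-module underlying the $B_2$-module $\Hom_{A_1}(\tilde{X}_{A_2},K)$ is, by construction, the $H$-module of Definition~\ref{def-hom} attached to $\tilde{X}$ and $K$ viewed as left $B_1$-modules (the $H$-action there is the same whether $\tilde{X}$ is seen as a $B_1$-module or as an $(A_1\o A_2^{op})\#H$-module). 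Therefore, when $\tilde{X}$ has property (P) as a left $B_1$-module and $K\in B_1\dmod$ is acyclic, Lemma~\ref{lemma-property-p-cofibrant} applied with $B=B_1$, $A=A_1$, $P=\tilde{X}$ says precisely that this $H$-module is projective and injective, which is the asserted $B_2$-acyclicity. The cofibrant case is verbatim the same, invoking Corollary~\ref{cor-cofibrant-implies-K-projective} in place of Lemma~\ref{lemma-property-p-cofibrant} (equivalently, writing a cofibrant $\tilde{X}$ as a summand of a property (P) module via Corollary~\ref{cor-cofibrant-direct-summand-of-property-p} and using that summands of projective-injective $H$-modules are again such).

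For the descent statement, write $\Phi:=\Hom_{A_1}(\tilde{X}_{A_2},-):B_1\dmod\lra B_2\dmod$. First I would record the tensor identity: for any finite dimensional $H$-module $V$ and any $B_1$-module $M$ there is a natural $B_2$-module isomorphism
$$\Phi(M\o V)\cong \Phi(M)\o V,$$
where the right-hand $\o V$ denotes the $B_2$-module obtained by tensoring with the $H$-module $V$ (see Proposition~\ref{module-category}). This holds because, as $A_1$-modules, $M\o V$ is a finite direct sum of copies of $M$, and the $H$- and $A_2$-equivariance is a Sweedler calculation of the same type as in Lemma~\ref{lemma-infinite-projective-H-modules}. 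Two consequences follow: taking $V=H$, any map factoring through some $N\o H$ is sent by $\Phi$ to a map factoring through $\Phi(N)\o H$, so $\Phi$ preserves null-homotopic morphisms (Definition~\ref{def-null-homotopy-category-of-B}) and hence descends to $\Phi:\mc{C}(A_1,H)\lra\mc{C}(A_2,H)$; and taking $V=H/\Bbbk\Lambda$ shows $\Phi$ commutes with the shift functor $T$.

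Next I would check that $\Phi$ is exact. By Lemma~\ref{lemma-split-ses-lead-to-dt} every distinguished triangle in $\mc{C}(A_1,H)$ is isomorphic to one coming from an $A_1$-split short exact sequence of $B_1$-modules; applying $\Phi$ to such a sequence gives a short exact sequence of $B_2$-modules, split as $A_2$-modules (an $A_1$-linear splitting $s$ induces the $A_2$-linear splitting $\phi\mapsto s\circ\phi$, a one-line check against the right $A_2$-action $(a\cdot\phi)(x)=\phi(xa)$ on $\tilde{X}$), hence a distinguished triangle in $\mc{C}(A_2,H)$ by Lemma~\ref{lemma-split-ses-lead-to-dt} again. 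Thus $\Phi$ is an exact functor of triangulated categories. With this in hand the descent is immediate: given a quasi-isomorphism $f:M\lra N$ in $\mc{C}(A_1,H)$, complete it to a standard triangle $M\lra N\lra C\lra TM$ with $C$ acyclic; then $\Phi(M)\lra\Phi(N)\lra\Phi(C)\lra T\Phi(M)$ is a distinguished triangle whose third vertex $\Phi(C)$ is acyclic by the first paragraph, so $\Phi(f)$ becomes invertible in $\mc{D}(A_2,H)$. Hence $Q\circ\Phi$ inverts all quasi-isomorphisms and factors through the localization $\mc{C}(A_1,H)\lra\mc{D}(A_1,H)$, yielding the desired functor $\RHom_{A_1}(\tilde{X}_{A_2},-):\mc{D}(A_1,H)\lra\mc{D}(A_2,H)$.

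I expect the only genuinely delicate point to be the bookkeeping of the three simultaneous actions — verifying that the $H$-action governing $B_2$-acyclicity of $\Hom_{A_1}(\tilde{X}_{A_2},K)$ is literally the one of Definition~\ref{def-hom} used in Lemma~\ref{lemma-property-p-cofibrant}, and that the tensor identity $\Phi(M\o V)\cong\Phi(M)\o V$ respects the $A_2$- and $H$-structures at once. These are precisely the finite-dimensionality inputs that keep the possibly infinite dimensional bimodule $\tilde{X}$ under control, but each is a routine Sweedler-notation computation requiring no idea beyond what already appears in the proof of Lemma~\ref{lemma-property-p-cofibrant}.
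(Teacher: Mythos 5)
Your proof is correct, but your route to the acyclicity claim is genuinely different from the paper's. The paper re-runs the filtration argument in the bimodule setting: it takes the short exact sequence $0\lra \bigoplus_{r}F_r \lra \bigoplus_{s}F_s\lra \tilde{X}\lra 0$ coming from property (P), applies $\Hom_{A_1}(-,K)$ to get a short exact sequence of $B_2$-modules, and reduces by induction to the free pieces $A_1\o N$, where Lemma \ref{lemma-infinite-projective-H-modules} finishes the job (exactly as in the proofs of Lemma \ref{lemma-property-p-cofibrant} and Corollary \ref{cor-two-tensor-equivalent}); the cofibrant case is then deduced via the direct-summand description of Corollary \ref{cor-cofibrant-direct-summand-of-property-p}. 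You instead observe that acyclicity of a $B_2$-module is a condition on its underlying $H$-module alone, and that this underlying $H$-module is literally the $\Hom_{A_1}(\tilde{X},K)$ of Definition \ref{def-hom} formed from the left $B_1$-structures, so Lemma \ref{lemma-property-p-cofibrant} (resp.\ Corollary \ref{cor-cofibrant-implies-K-projective}) can be cited verbatim. This is shorter, avoids duplicating the induction, and makes transparent why the right $A_2$-structure is irrelevant to acyclicity; what the paper's redo buys is an explicit display of the relevant sequences as sequences of $B_2$-modules, but as you correctly note that extra bookkeeping is not needed for the conclusion. For the descent statement the paper says only ``consequently,'' whereas you spell out the factorization through the localization; your argument is right, and the only point left implicit in it is that $\Phi=\Hom_{A_1}(\tilde{X}_{A_2},-)$ sends a standard triangle to a distinguished triangle with the correct connecting morphism, i.e.\ that $\Phi$ commutes with the cone construction. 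That compatibility follows from your natural isomorphism $\Phi(M\o H)\cong \Phi(M)\o H$, under which $\Phi(\lambda_M)$ is identified with $\lambda_{\Phi(M)}$, together with the pushout description of the cone and the five lemma applied to the two $A_2$-split short exact sequences — routine, but worth a line if this were written up in full.
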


\begin{proof}The proof is similar to that of Corollary
\ref{cor-two-tensor-equivalent}. Consider the short exact sequence
of $B_1$-modules $0\lra \bigoplus_{r\in \N}F_r \lra \bigoplus_{s \in
\N}F_s \lra {\tilde X} \lra 0$ associated with ${\tilde X}$. Since
each $F_r,~r\in \N$, and ${\tilde X}$ are free as $A_1$-modules,
applying $\Hom_{A_1}(-,K)$ yields a short exact sequence of
$B_2$-modules:
$$0\lra \Hom_{A_1}({\tilde
X}, K) \lra \prod_{s \in \N}\Hom_{A_1}(F_s,K)\lra  \prod_{r\in
\N}\Hom_{A_1}(F_r,K)\lra 0.$$ Thus it suffices to show that
$\Hom_{A_1}(F_r,K)$ is acyclic for each $r\in \N$. An induction on
$r$ further reduces us to the case of free modules of the form
$A_1\o N$ where $N$ is some indecomposable $H$-module. This case now
follows from Lemma \ref{lemma-infinite-projective-H-modules} since
$\Hom_{A_1}(A_1\o N, K)\cong \Hom_\Bbbk(N,K)$.

The last claim follows readily from the first part of the lemma and
Corollary \ref{cor-cofibrant-direct-summand-of-property-p}.
\end{proof}

\begin{rmk}More generally, it's easy to see that $\RHom_{A_1}(-,-)$ is a
bifunctor
$$\RHom_{A_1}(-,-):\mc{D}(A_1\o A_2^{op},H)^{op}\times \mc{D}(A_1,H)
\lra \mc{D}(A_2,H).$$ In particular, when $A_2\cong \Bbbk$, we have
a bifunctor
$$\RHom_{A_1}(-,-):\mc{D}(A_1,H)^{op}\times \mc{D}(A_1,H)
\lra H\udmod.$$

There is another derived $\Hom$-space one can associate with any two
hopfological modules $M$ and $N$, namely the space of chain maps up
to homotopy
$$\mc{H}(\Hom_A(M,N))=\Hom_{\mc{C}(A,H)}(M,N).$$
By
Proposition \ref{prop-comparison-hom-spaces} and the remark that
follows it, this is the space of (stable) invariants in
$\Hom_A(\mathbf{p}M, N)$, and thus it usually contains less
information than the $\RHom$ above. Another reason that we use the
definition above is that it satisfies the right adjunction property
with the derived tensor product functor as shown in the next lemma.
Notice that in the DG case, i.e. $H=\Bbbk[d]/(d^2)$, the natural map
of $\HOM$-spaces
$\RHOM_A(M,N)\stackrel{\mc{H}}{\lra}\HOM_{\mc{C}(A,H)}(M,N)$ is an
isomorphism since the only stably non-zero modules are the graded
shifts of the trivial module $\Bbbk_0$.
\end{rmk}

\begin{lemma}\label{lemma-Rhom-adjoint-to-Ltensor} $\RHom(X,-)$ is right adjoint
to $X\Lo_{A_1}(-)$ as functors between $\mc{D}(A_i,H)$, $i=1,2$.
\end{lemma}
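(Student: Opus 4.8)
The plan is to reduce the derived adjunction to the homotopy-category adjunction of Corollary \ref{cor-adjunction-homotopy-category} by replacing every object by a suitably cofibrant representative, and then to read off the isomorphism through the dictionary between morphisms in $\mc{C}(A_i,H)$ and in $\mc{D}(A_i,H)$ furnished by Corollary \ref{cor-equivalence-cofibrant-subcat}. Concretely, I want to produce, for $N\in \mc{D}(A_2,H)$ and $M\in \mc{D}(A_1,H)$, a functorial isomorphism
$$\Hom_{\mc{D}(A_1,H)}({_{A_1}X}\Lo_{A_2}N,\,M)\cong \Hom_{\mc{D}(A_2,H)}(N,\,\RHom_{A_1}(X_{A_2},M)).$$

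First I would normalize the bimodule. By Corollary \ref{cor-two-tensor-equivalent} the functor ${_{A_1}X}\Lo_{A_2}(-)$ is canonically isomorphic to ${_{A_1}(\mathbf{p}X)}\o_{A_2}(-)$, while by Definition \ref{def-derived-bimodule-hom} the functor $\RHom_{A_1}(X_{A_2},-)$ is computed using the \emph{same} resolution $\mathbf{p}X$; moreover Lemma \ref{lemma-derived-hom-well-defined} shows that once a bimodule has property (P) as a left $B_1$-module, the undderived $\Hom_{A_1}$ already descends to the derived category. Hence I may assume from the outset that $X$ itself has property (P) as a left $B_1$-module, in which case $\RHom_{A_1}(X_{A_2},M)=\Hom_{A_1}(X_{A_2},M)$ and ${_{A_1}X}\Lo_{A_2}N={_{A_1}X}\o_{A_2}\mathbf{p}N$, where $\mathbf{p}N$ is the bar resolution of Corollary \ref{cor-cofibrant-replacement}.

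The key technical step is to verify that the $B_1$-module ${_{A_1}X}\o_{A_2}\mathbf{p}N$ is again cofibrant. For this I would filter $\mathbf{p}N$ by its property-(P) filtration, whose subquotients are direct sums of modules $A_2\o V$, and observe that ${_{A_1}X}\o_{A_2}(A_2\o V)\cong {_{A_1}X}\o V$. Since $X$ has property (P), it is filtered with subquotients $A_1\o W$, and the identification $(A_1\o W)\o V\cong A_1\o(W\o V)$, which rests on coassociativity of the diagonal $H$-action, shows that each $X\o V$ again has property (P); because $X$ is $A_1$-projective, tensoring the $A_2$-split filtration of $\mathbf{p}N$ with $X$ yields an $A_1$-split exhaustive filtration, and refining it by the property-(P) filtrations of the subquotients realizes ${_{A_1}X}\o_{A_2}\mathbf{p}N$ as a property-(P), hence cofibrant, $B_1$-module. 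With cofibrancy in hand I chain the isomorphisms
$$\Hom_{\mc{D}(A_1,H)}(X\o_{A_2}\mathbf{p}N,M)\cong \Hom_{\mc{C}(A_1,H)}(X\o_{A_2}\mathbf{p}N,M)\cong \Hom_{\mc{C}(A_2,H)}(\mathbf{p}N,\Hom_{A_1}(X_{A_2},M)),$$
using Corollary \ref{cor-equivalence-cofibrant-subcat} for the first step and Corollary \ref{cor-adjunction-homotopy-category} for the second, and then pass back via Corollary \ref{cor-equivalence-cofibrant-subcat} (as $\mathbf{p}N$ is cofibrant) and the quasi-isomorphism $\mathbf{p}N\cong N$ to arrive at $\Hom_{\mc{D}(A_2,H)}(N,\Hom_{A_1}(X_{A_2},M))$.

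Finally, naturality of the resulting bijection in both $M$ and $N$ is inherited from the functoriality of the three cited isomorphisms and of the bar resolution $\mathbf{p}$. I expect the main obstacle to be precisely the cofibrancy claim for ${_{A_1}X}\o_{A_2}\mathbf{p}N$: one must keep careful track of the interlocking left-$A_1$, right-$A_2$, and diagonal-$H$ actions, and it is here that the cocommutativity hypothesis on $H$ is used, in order to see that tensoring over $A_2$ carries property (P) to property (P); every remaining step is a formal consequence of the homotopy-level adjunction together with the cofibrant-replacement machinery of Section 6.
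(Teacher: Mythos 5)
Your proposal is correct and follows essentially the same route as the paper's own proof: normalize to the bar resolution $\mathbf{p}X$ via Corollary \ref{cor-two-tensor-equivalent}, observe that $\mathbf{p}X\o_{A_2}\mathbf{p}N$ has property (P) as a $B_1$-module, and then chain the derived-to-homotopy identification (Corollary \ref{cor-equivalence-cofibrant-subcat}), the homotopy-level adjunction (Corollary \ref{cor-adjunction-homotopy-category}), and the quasi-isomorphism $\mathbf{p}N\cong N$ together with the definition of $\RHom$. The cofibrancy verification you single out as the main obstacle is exactly the step the paper dispatches with its parenthetical ``(check for $N=A_2\o V$)'', so your filtration argument merely fills in that asserted detail rather than departing from the paper's method.
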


\begin{proof}Notice that $\mathbf{p}X\o_{A_2} N$ has property (P) as a
$B_1$-module whenever $N\in B_2\dmod$ does (check for $N=A_2\o V$).
Therefore if $M\in B_1\dmod$ and $N\in B_2\dmod$, we have:
$$\begin{array}{rcl}
\Hom_{\mc{D}(A_1,H)}(X\Lo_{A_2}N,M) & \cong
&\Hom_{\mc{D}(A_1,H)}(\mathbf{p}X\o_{A_2}\mathbf{p}N,M) \\
& \cong & \Hom_{\mc{C}(A_1,H)}(\mathbf{p} X\o_{A_2}\mathbf{p}N,M)\\
& \cong & \Hom_{\mc{C}(A_2,H)}(\mathbf{p}N, \Hom_{A_1}(\mathbf{p}X,M))\\
& \cong & \Hom_{\mc{D}(A_2,H)}(N, \RHom_{A_1}(X,M)).
\end{array}$$
Here the first isomorphism holds by Corollary
\ref{cor-two-tensor-equivalent}; the second holds since $\mathbf{p}
X\o_{A_1}\mathbf{p}N$ has property (P) so that we can use Corollary
\ref{cor-equivalence-cofibrant-subcat}; the third holds by
adjunction \ref{cor-adjunction-homotopy-category} in the homotopy category,
while the fourth holds by
Corollary \ref{cor-equivalence-cofibrant-subcat} and the definition
of $\RHom$.
\end{proof}

\begin{defn}\label{def-dual-bimodule}Let $_{A_1}X_{A_2}$ be a
hopfological bimodule as before. We define its \emph{$A_1$-dual} to
be
$$_{A_2}\check{X}_{A_1}:=\Hom_{A_1}(\mathbf{p}X_{A_2},A_1),$$
where its left $A_2$ structure is inherited from the right
$A_2$-module structure of $\mathbf{p}X$, while the right $A_1$
structure comes from that of $A_1$.
\end{defn}

Notice that there is a canonical map
$$_{A_2}\check{X}\Lo_{A_1}M\cong \Hom_{A_1}(\mathbf{p}X,A_1)\o_{A_1} M\lra
 \Hom_{A_1}(\mathbf{p}X, M)\cong \RHom_{A_1}(X,M),$$
 which is an isomorphism whenever $M$ is of the form $A_1\o V$ for
 any finite dimensional $H$-module $V$.

\begin{prop}\label{prop-quasi-inverse-tensor}If $X\Lo_{A_1}(-):\mc{D}(A_1,H)
\lra\mc{D}(A_2,H)$ is an equivalence, its quasi-inverse is given by
$_{A_2}\check{X}_{A_1}\Lo(-):\mc{D}(A_2,H) \lra\mc{D}(A_1,H)$.
\end{prop}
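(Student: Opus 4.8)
The plan is to reduce the statement to two facts that are essentially already available: first, that $\RHom_{A_1}(X,-)$ is a quasi-inverse of the equivalence $X\Lo_{A_1}(-)$ by pure adjunction formalism; and second, that the dual-bimodule functor ${}_{A_2}\check X_{A_1}\Lo(-)$ is naturally isomorphic to $\RHom_{A_1}(X,-)$, so that the two conclusions combine.

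First I would invoke Lemma \ref{lemma-Rhom-adjoint-to-Ltensor}, which says that $\RHom_{A_1}(X,-)$ is right adjoint to $X\Lo_{A_1}(-)$. Since $X\Lo_{A_1}(-)$ is assumed to be an equivalence of triangulated categories, it admits a quasi-inverse $G$, and $G$ is in particular right adjoint to $X\Lo_{A_1}(-)$. As right adjoints are unique up to canonical natural isomorphism, we obtain $\RHom_{A_1}(X,-)\cong G$, so $\RHom_{A_1}(X,-)$ is itself a quasi-inverse of $X\Lo_{A_1}(-)$; in particular it is an equivalence and hence commutes with arbitrary direct sums.

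Next I would identify ${}_{A_2}\check X_{A_1}\Lo(-)$ with $\RHom_{A_1}(X,-)$ using the canonical natural transformation exhibited just after Definition \ref{def-dual-bimodule},
\[
\eta_M:\ {}_{A_2}\check{X}\Lo_{A_1}M\cong \Hom_{A_1}(\mathbf{p}X,A_1)\o_{A_1}M\lra \Hom_{A_1}(\mathbf{p}X,M)\cong \RHom_{A_1}(X,M),
\]
which was already observed there to be an isomorphism whenever $M=A_1\o V$ for a finite-dimensional $H$-module $V$. Both functors joined by $\eta$ are exact and commute with direct sums: the derived tensor functor does so because the ordinary tensor product and the bar resolution are additive (as in the last line of the proof of Proposition \ref{prop-derived-functors-from-bimodule-maps}), and $\RHom_{A_1}(X,-)$ does so by the previous paragraph. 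Since $\mc{D}(A_1,H)$ is compactly generated by the finite set $\mc{G}=\{A_1\o V\}$ of Proposition \ref{prop-compact-generators-for-derived-categories}, and $\eta$ is invertible on every generator $A_1\o V\in\mc{G}$, part 2 of Lemma \ref{lemma-criterion-functors-natural-transformations} forces $\eta$ to be invertible throughout. Hence ${}_{A_2}\check X_{A_1}\Lo(-)\cong \RHom_{A_1}(X,-)$, and combining this with the first step shows it is a quasi-inverse of $X\Lo_{A_1}(-)$, as claimed.

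The main obstacle I anticipate is organizational rather than conceptual: one must check that the comparison map $\eta$ really is a natural transformation of \emph{exact} functors (compatible with the triangulated structures and with direct sums) so that the hypotheses of Lemma \ref{lemma-criterion-functors-natural-transformations} are literally met, and that the sources, targets, and sidedness of the various functors are matched correctly. The one genuinely substantive input, the isomorphism $\eta_{A_1\o V}$ on the compact generators, is already granted by the remark following Definition \ref{def-dual-bimodule}; so once the adjunction formalism and the compact-generation criterion are aligned, the argument closes with no further computation.
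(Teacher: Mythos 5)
Your proposal is correct and follows essentially the same route as the paper: invoke Lemma \ref{lemma-Rhom-adjoint-to-Ltensor} to conclude that $\RHom_{A_1}(X,-)$, being a right adjoint of an equivalence, is its quasi-inverse (hence commutes with direct sums), then use the canonical map after Definition \ref{def-dual-bimodule} together with part 2 of Lemma \ref{lemma-criterion-functors-natural-transformations} applied to the compact generators $A_1\o V$ to identify $\check{X}\Lo(-)$ with $\RHom_{A_1}(X,-)$. Your write-up merely makes explicit the hypothesis-checking (exactness, commutation with direct sums, compact generation) that the paper's terse proof leaves implicit.
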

\Pf{By the adjunction \ref{lemma-Rhom-adjoint-to-Ltensor}, if
$X\Lo_{A_1}(-)$ is an equivalence, its quasi-inverse is given by
$\RHom_{A_1}(X_{A_2},-)$. Therefore $\RHom_{A_1}(X_{A_2},-)$
commutes with direct sums, and the corollary now follows from part
two of Lemma \ref{lemma-criterion-functors-natural-transformations}
and the observation we made before this proposition.}

\subsection{A special case} We specialize the previous results to
the case of $H$-module algebras $\phi:A_2\lra A_1$, and the bimodule
$_{A_1}X_{A_2}:={_{A_1}{A_{1}}_{A_2}}$. Here the right $A_2$-module
structure on $A_1$ is realized via the morphism $\phi$, i.e.
$a_1\cdot a_2:=a_1\phi(a_2)$ where $a_i\in A_i$, $i=1,2$.

\begin{defn}\label{def-induction-restriction-functors}We define the
\emph{induction functor}
$$\phi^*:\mc{D}(A_2,H)\lra \mc{D}(A_1,H),~\phi^*(M):= A_1\Lo_{A_2}M$$
and the \emph{restriction functor}
$$\phi_*:\mc{D}(A_1,H)\lra \mc{D}(A_2,H),~\phi_*(N):= \RHom_{A_1}(A_2, N)\cong
{_{A_2}N}.$$ Note that
$\RHom_{A_1}({A_1}_{A_2},N)\cong\Hom_{A_1}({A_1}_{A_2},N)\cong
{_{A_2}N}$ where $A_2$ acts on $N$ via the morphism $\phi$. The
first isomorphism holds since $A_1$ has property (P) as a left
$B_1$-module.
\end{defn}

The derived adjunction (Lemma \ref{lemma-Rhom-adjoint-to-Ltensor})
gives us:
$$\Hom_{\mc{D}(A_2,H)}(\phi^*(N),M)\cong \Hom_{\mc{D}(A_1,H)}(N,\phi_*(M)),$$

We have the following immediate corollary, concerning when a
morphism of $H$-module algebras induces a derived equivalence of
their module categories. The result in the DG case is already proven
in \cite[Theorem 10.12.5.1]{BL}.

\begin{cor} \label{cor-morita-equivalent-hopfological-algebra}
Let $\phi:A_2 \lra A_1$ be a quasi-isomorphism of $H$-module
algebras. Then the induction and restriction functors
$$\phi^*: \mc{D}(A_2, H) \lra \mc{D}(A_1, H),$$
$$\phi_*: \mc{D}(A_1, H) \lra \mc{D}(A_2, H),$$
are mutually-inverse equivalences of triangulated categories.
\end{cor}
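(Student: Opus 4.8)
The plan is to deduce the corollary directly from the machinery already assembled, chiefly Proposition~\ref{prop-derived-functors-from-bimodule-maps} and Proposition~\ref{prop-quasi-inverse-tensor}, applied to the bimodule $_{A_1}X_{A_2} := {_{A_1}{A_1}_{A_2}}$, where the right $A_2$-action is pulled back along $\phi$. First I would observe that, under this choice, the induction functor $\phi^*$ of Definition~\ref{def-induction-restriction-functors} is precisely the derived tensor $_{A_1}X\Lo_{A_2}(-)$, and the restriction functor $\phi_*$ is $\RHom_{A_1}(X_{A_2},-)$, which by Lemma~\ref{lemma-Rhom-adjoint-to-Ltensor} is right adjoint to $\phi^*$. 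Thus the adjunction is automatic, and the whole content of the corollary is that $\phi^*$ is an \emph{equivalence}; its quasi-inverse will then necessarily be the right adjoint $\phi_*$, and the two displayed functors will be mutually inverse.

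To show $\phi^*$ is an equivalence I would invoke part~1 of Proposition~\ref{prop-derived-functors-from-bimodule-maps}, checking its two hypotheses for $X = {_{A_1}{A_1}_{A_2}}$. The first hypothesis, that $_{A_1}X$ is cofibrant as a $B_1$-module, holds because $_{A_1}{A_1}$ is free (hence property~(P), hence cofibrant by Corollary~\ref{cor-property-P-implies-cofibrant}). The remaining two conditions are: (i) that the canonical map $A_2 \lra \Hom_{A_1}(X_{A_2},X_{A_2}) \cong \Hom_{A_1}(A_1,A_1) \cong A_1$ be a quasi-isomorphism; and (ii) that $\{{_{A_1}X \o V}\} = \{A_1 \o V\}$ be a set of compact cofibrant generators of $\mc{D}(A_1,H)$. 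For (i), one identifies this canonical map, under $\Hom_{A_1}(A_1,-)\cong(-)$, with $\phi$ itself, which is a quasi-isomorphism by assumption. For (ii), the objects $A_1\o V$ are cofibrant (free), compact by Lemma~\ref{lemma-A-tensor-V-is-compact}, and form a generating set by Proposition~\ref{prop-compact-generators-for-derived-categories}. Hence both hypotheses are met and $\phi^*$ is an equivalence.

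Finally, Proposition~\ref{prop-quasi-inverse-tensor} tells us that once $\phi^* = X\Lo_{A_2}(-)$ is an equivalence, its quasi-inverse is the derived tensor with the dual bimodule, equivalently (by the adjunction of Lemma~\ref{lemma-Rhom-adjoint-to-Ltensor}) the functor $\RHom_{A_1}(X_{A_2},-)$, which is exactly $\phi_*$. Since a quasi-inverse of an equivalence is unique up to natural isomorphism and coincides with both the left and right adjoint, $\phi^*$ and $\phi_*$ are mutually inverse equivalences of triangulated categories, as claimed.

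The step I expect to be the main obstacle is the careful verification of hypothesis~(i): one must confirm that the \emph{canonical} map $A_2\lra\Hom_{A_1}(A_1,A_1)$ appearing in Proposition~\ref{prop-derived-functors-from-bimodule-maps} really is $\phi$ under the natural identifications, and that these identifications respect the $H$-module structures on both sides (so that ``quasi-isomorphism,'' i.e. isomorphism after $\underline{\mathrm{Res}}$ to $H\udmod$, is preserved). This is where the cocommutativity assumption on $H$ and the compatibility lemmas of Section~7.1 are silently used; the remaining bookkeeping is routine given the earlier results.
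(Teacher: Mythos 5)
Your proposal is correct and coincides with the paper's own primary justification: the paper's proof begins by asserting that the corollary ``is a direct consequence of part 1 of Proposition~\ref{prop-derived-functors-from-bimodule-maps},'' which is precisely the argument you spell out, including the key identifications that $\phi^* = {_{A_1}A_1}\Lo_{A_2}(-)$, that the canonical map $A_2 \lra \Hom_{A_1}(A_1,A_1)\cong A_1$ is $\phi$ itself, and that $\{A_1\o V\}$ is a set of compact cofibrant generators. (The paper then additionally records a second, self-contained proof following Bernstein--Lunts, constructing explicit natural quasi-isomorphisms $\alpha:\Id_{\mc{D}(A_2,H)}\Rightarrow \phi_*\circ\phi^*$ and $\beta:\phi^*\circ\phi_*\Rightarrow \Id_{\mc{D}(A_1,H)}$ via the bar resolution, which bypasses the compact-generation machinery your route relies on.)
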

\begin{proof}This is a direct consequence of part 1 of Proposition
\ref{prop-derived-functors-from-bimodule-maps}. We give a second
direct proof of this important special case following \cite[Theorem
10.12.5.1]{BL}.

 We will show directly that under our assumption, there are quasi-
isomorphisms of functors:
$$\alpha : \Id_{\mc{D}(A_2,H)} \Rightarrow \phi_*\circ\phi^*,$$
$$\beta : \phi^*\circ\phi_* \Rightarrow \Id_{\mc{D}(A_1,H)}.$$

For this purpose, let $N$ be an object of $\mc{D}(A_2,H)$, and let
$\mathbf{p}N\xrightarrow{\mathbf{p}}N$ be its bar resolution in
$\mc{D}(A_2,H)$. Then set $\alpha:=\mathbf{p}^{-1}\circ \gamma$,
where $\gamma$ is the morphism:
\begin{align*}
\gamma: \mathbf{p}N&\lra A_1\o_{A_2}\mathbf{p}N\\
 n & \mapsto 1\o n.
\end{align*}
Now, $\gamma$ is a quasi-isomorphism since it can be rewritten as
$$\gamma=\phi\o \Id_{\mathbf{p}N}:A_2\o_{A_2}\mathbf{p}N
 \lra A_1\o_{A_2}\mathbf{p}N,$$
and since  $A_1$ and $A_2$ are isomorphic in $H\udmod$.

To define $\beta$, let $M$ be in $\mc{D}(A_1,H)$. $M$ can be
regarded as an object in $\mc{D}(A_2,H)$ via restriction, and we let
$\mathbf{p}M\xrightarrow{\mathbf{p}}M$ be its bar resolution in
$\mc{D}(A_2,H)$. Then $\phi^*\phi_*(M)\cong A_1\o_{A_2}\mathbf{p}M$.
Define $\beta$ to be
\begin{align*}
\beta: A_1\o_{A_2}\mathbf{p}M&\lra M\\
a_1\o m &\mapsto a_1\cdot\mathbf{p}(m).
\end{align*}
To check that it is an isomorphism, consider the commutative diagram
below:
$$\xymatrix{A_2\o_{A_2}\mathbf{p}M\ar[rrd]^{\mathbf{p}}
\ar[d]_{\phi\o \Id_{\mathbf{p}M}}&&\\
A_1\o_{A_2}\mathbf{p}M \ar[rr]^{\beta}& & M.\\}$$ Both $\phi\otimes
\Id_{\mathbf{p}M}$ and $\mathbf{p}$ become isomorphisms under
restriction to $H\udmod$. Therefore $\beta$ is a quasi-isomorphism
of $B_1$-modules, hence an isomorphism in the derived category, as
claimed. The corollary follows.
\end{proof}

\begin{cor} \label{cor-Hopfo-contractibility} Let $A$ be a left
$H-$module algebra. Then $\mc{D}(A,H)\cong 0$ if and only if there
exists an element $x\in A$ such that
$$\Lambda\cdot x  =1.$$
Furthermore, if $x$ is central in $A$, $\mc{C}(A,H)\cong 0$.
\end{cor}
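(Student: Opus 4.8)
The plan is to treat the two assertions separately, reducing each to an explicit computation with the integral $\Lambda$.

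For the first claim I would first reduce the vanishing of the whole category to the vanishing of a single object. Since $\mc{D}(A,H)$ is compactly generated by $\mc{G}=\{A\o V\}$ with $V$ simple (Proposition \ref{prop-compact-generators-for-derived-categories}), it is zero iff every generator is. Because $\mc{D}(A,H)\cong 0$ forces $A\cong 0$, and conversely $A\cong 0$ forces every $A\o V\cong 0$ as tensoring with $V$ is exact (Theorem \ref{derived-stable-categories}), the category vanishes iff $A\cong 0$ in $\mc{D}(A,H)$. Next I would compute the relevant endomorphism space. The module $A=A\o\Bbbk_0$ satisfies property (P), hence is cofibrant (Corollary \ref{cor-property-P-implies-cofibrant}), so Corollary \ref{cor-equivalence-cofibrant-subcat} gives $\Hom_{\mc{D}(A,H)}(A,A)\cong \Hom_{\mc{C}(A,H)}(A,A)$, and Proposition \ref{prop-comparison-hom-spaces} together with the identification $\Hom_A(A,A)\cong A$ of $H$-modules yields $\Hom_{\mc{C}(A,H)}(A,A)\cong\mc{H}(A)=A^H/(\Lambda\cdot A)$. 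Under these isomorphisms $\mathrm{Id}_A$ corresponds to the class of the unit $1\in A$. Since an object of a triangulated category is zero iff its identity morphism is, $A\cong 0$ holds iff $1\in\Lambda\cdot A$, i.e.\ iff some $x\in A$ satisfies $\Lambda\cdot x=1$, which proves the first equivalence.

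For the second claim I would produce, for every $B$-module $M$, an explicit contracting homotopy of $\mathrm{Id}_M$. Assuming $x$ central with $\Lambda\cdot x=1$, let $g\in\Hom_A(M,M)$ be left multiplication by $x$; centrality of $x$ is exactly what makes $g$ an $A$-module map. By Lemma \ref{lemma-ideal-null-homotopy} it then suffices to show $\Lambda\cdot g=\mathrm{Id}_M$, for then $\mathrm{Id}_M$ is null-homotopic, every $M$ is zero in $\mc{C}(A,H)$, and hence $\mc{C}(A,H)\cong 0$. Expanding with $(\Lambda\cdot g)(m)=\sum\Lambda_{(2)}\cdot g(S^{-1}(\Lambda_{(1)})\cdot m)$ and the module-algebra compatibility $h\cdot(a\cdot m)=\sum(h_{(1)}\cdot a)(h_{(2)}\cdot m)$, I would rewrite $(\Lambda\cdot g)(m)=\sum(\Lambda_{(2)}\cdot x)(\Lambda_{(3)}S^{-1}(\Lambda_{(1)})\cdot m)$. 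I would then invoke cocommutativity of $H$ to relabel the threefold coproduct so that the leg acting on $x$ becomes $\Lambda_{(1)}$ and the two legs contracted by the antipode become adjacent, giving $\sum(\Lambda_{(1)}\cdot x)(\Lambda_{(3)}S^{-1}(\Lambda_{(2)})\cdot m)$. Applying the antipode identity $\sum h_{(2)}S^{-1}(h_{(1)})=\epsilon(h)$ to the adjacent legs and then the counit axiom collapses this to $(\Lambda\cdot x)\cdot m=1\cdot m=m$, as desired.

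The main obstacle is this second computation, and specifically the role of cocommutativity: in $\sum(\Lambda_{(2)}\cdot x)(\Lambda_{(3)}S^{-1}(\Lambda_{(1)})\cdot m)$ the two coproduct legs destined for antipode cancellation are separated by the leg acting on $x$, and only cocommutativity lets me permute them into adjacent position before cancelling — which is precisely the standing hypothesis of this section. I would also stress the conceptual point that separates the two parts: the element $x$ already forces $\mathrm{Id}_A$ to be null-homotopic, so $A\cong 0$ in $\mc{C}(A,H)$ with no centrality assumption, but collapsing the \emph{entire} homotopy category requires a homotopy for every $M$ at once, and left multiplication by $x$ is $A$-linear on all $M$ only when $x$ is central. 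This is why centrality is needed for $\mc{C}(A,H)\cong 0$ yet not for $\mc{D}(A,H)\cong 0$.
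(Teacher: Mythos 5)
Your proposal is correct, and for the first equivalence it takes a genuinely different route from the paper. The paper's forward direction is constructive: from $\Lambda\cdot x=1$ it builds an explicit $H$-module retract $A\o H\lra A$, $a\o h\mapsto (h\cdot r_x)(a)$, of $\lambda_A$, using \emph{right} multiplication $r_x$ (which is left $A$-linear with no centrality hypothesis, and which keeps the two antipode-cancelling legs of the iterated coproduct adjacent, so that computation needs no cocommutativity); having shown $A$ is contractible as an $H$-module, the paper collapses the whole category by invoking the derived Morita equivalence of Corollary \ref{cor-morita-equivalent-hopfological-algebra}. You instead identify $\Hom_{\mc{D}(A,H)}(A,A)\cong\mc{H}(A)=A^H/(\Lambda\cdot A)$ with $\Id_A\mapsto[1]$ (legitimate, since $A$ has property (P), so Corollaries \ref{cor-property-P-implies-cofibrant}, \ref{cor-equivalence-cofibrant-subcat} and Proposition \ref{prop-comparison-hom-spaces} apply), and you globalize from $A\cong 0$ to $\mc{D}(A,H)\cong 0$ via compact generation (Proposition \ref{prop-compact-generators-for-derived-categories}). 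This treats both directions of the equivalence symmetrically in one stroke and uses only the machinery of Sections 5--7, whereas the paper's route leans on the derived-functor formalism of Section 8; what the paper's argument buys in exchange is the explicit contracting retract. (The paper's converse direction is essentially the same as yours: Lemma \ref{lemma-ideal-null-homotopy} applied to $\Id_A$.) For the second claim your argument coincides with the paper's one-line observation -- left multiplication by the central $x$ is an $A$-linear contracting homotopy on every $M$ -- but you supply the computation the paper omits, and you correctly locate where cocommutativity (the standing hypothesis of Section 8, where this corollary sits) is genuinely used: in $\sum(\Lambda_{(2)}\cdot x)(\Lambda_{(3)}S^{-1}(\Lambda_{(1)})\cdot m)$ the legs to be cancelled are separated by the leg acting on $x$; only for $M=A$ could one instead slide $x$ to the far right by centrality and avoid permuting, and for a general left module there is no right action to exploit. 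Your closing dichotomy -- the element $x$ alone kills $\Id_A$, but collapsing all of $\mc{C}(A,H)$ requires an $A$-linear homotopy on every module simultaneously, which is where centrality enters -- is exactly the right conceptual explanation.
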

\begin{proof} We will show that, under the assumption, the
$H-$module map $A\stackrel {\lambda_A}{\lra} A \o H$ admits an
$H-$module retract, defined as $$A \o H\lra A,~a\o h\mapsto (h\cdot
r_x)(a),$$ where the $r_x:A\lra A$ is the right multiplication on
$A$ by $x$. Then as shown in the proof of Lemma
\ref{lemma-ideal-null-homotopy}, this is an $H$-module map and we
have
$$\begin{array}{lll}a \o \Lambda &\mapsto &
\Lambda_{(2)}\cdot(r_{x}(S^{-1}(\Lambda_{(1)})\cdot a))\\
& = & \Lambda_{(2)}\cdot(S^{-1}(\Lambda_{(1)})\cdot a\cdot x)\\
& = & (\Lambda_{(2)}\cdot(S^{-1}(\Lambda_{(1)})\cdot a))(\Lambda_{(3)}\cdot x)\\
& = & (\epsilon(\Lambda_{(1)})a)(\Lambda_{(2)}\cdot x) \\
& = & a(\Lambda \cdot x)\\
& = & a~.
\end{array}$$
Therefore, $A$ is contractible as an $H$-module and Corollary
\ref{cor-morita-equivalent-hopfological-algebra} implies that
$\mc{D}(A,H)$ is trivial.

The converse follows by applying Lemma
\ref{lemma-ideal-null-homotopy}, since $A$ itself considered as a
hopfological module is acyclic in this case. The last claim follows
by observing that, if $x$ is central, left multiplication by $x$ on
any $B$-module $M$ is an $A$-module homotopy from $\Id_M$ to zero.
\end{proof}


\section{Special examples} In this section, we apply the
previous results to a very special class of $H$-module algebras on
which $H$ acts trivially. As a consequence we deduce that the
Grothendieck group for the ground field $K_0(\mc{D}^c(\Bbbk,H))$
coincides with $K_0(H\udmod)$.

\subsection{Variants of derived categories}
First off, we introduce the analogue of the usual notion of the
bounded derived category in the hopfological case. For simplicity,
we will only do this when the $H$-module algebra $A$ is noetherian.
Since $H$ is finite dimensional, $B=A\o H$ is a finite $A$-module,
and therefore the noetherian condition on $A$ is equivalent to that
on $B$.

\begin{defn}\label{def-bounded-derived-category} Let $A$ be a noetherian
$H$-module algebra. The bounded derived category $\mc{D}^b(A,H)$ is
the strictly full subcategory of $\mc{D}(A,H)$ consisting of objects
which are isomorphic to some finitely generated $A$-module.

Likewise, define the finite derived category $\mc{D}^f(A,H)$ to be
the strictly full subcategory of $\mc{D}(A,H)$ consisting of objects
which are isomorphic to some finite length $A$-module.
\end{defn}

Notice that if $A$ is finite dimensional, the two notions
$\mc{D}^b(A,H)$ and $\mc{D}^f(A,H)$ coincide with each other. In any
case, it is readily seen that there is an embedding
$\mc{D}^f(A,H)\subset \mc{D}^b(A,H)$, and there is always a
bifunctorial pairing
$$\mc{D}^c(A,H)\times \mc{D}^f(A,H)\lra \mc{D}^f(\Bbbk,H),~~(P, M) \mapsto
\RHom_{A}(P,M),$$ where the category $\mc{D}^f(\Bbbk,H)\subset
H\udmod$ is just the bounded (also finite) derived category of
$\Bbbk$.

\begin{defn}\label{def-Grothendieck-group-bounded-version}Let $A$ be a
noetherian $H$-module algebra. We define the bounded Grothendieck
group of $A$, denoted $G_0(\mc{D}^b(A,H))$ (or $G_0(A,H)$ for short)
to be the abelian group generated by the symbols of isomorphism
classes of objects in $\mc{D}^b(A,H)$, modulo the relations
$$[Y]=[X]+[Z]$$
whenever there is a distinguished triangle inside $\mc{D}^b(A,H)$ of
the form
$$X\lra Y \lra Z \lra T(X).$$
Likewise, we define the finite Grothendieck group
$G_0^f(A,H):=G_0(\mc{D}^f(A,H))$ in an analogous fashion.
\end{defn}

\subsection{Smooth basic algebras}
Now we exhibit a class of examples where the Grothendieck groups
$K_0(A,H)$ can be recovered from the usual Grothendieck group
$K_0(A)$.

\begin{defn} \label{def-basic-algebra} Let $A$ be an (graded) artinian
algebra over a ground field $\Bbbk$. We say that $A$ is \emph{basic
in its Morita equivalence class} if all simple modules over $A$ are
one-dimensional over $\Bbbk$.
\end{defn}

Equivalently, $A$ is basic in its Morita equivalence class if and
only if $A/J(A)\cong \Bbbk\times \cdots \times \Bbbk$, where $J(A)$
is the (graded) Jacobson radical. Here the number of copies of
$\Bbbk$ equals the number of isomorphism classes of simple
$A$-modules, or equivalently, that of indecomposable projective $A$
modules.

\begin{defn} \label{def-smooth-algebra} A $\Bbbk$-algebra $A$
is called \emph{smooth} if it has a finite projective resolution as
an $(A,A)$-bimodule,
\end{defn}

In this section we mainly focus on the class of (graded finite
dimensional) smooth, basic artinian algebras. Some examples of such
algebras are provided by the path algebras over oriented quivers
without oriented cycles. In fact, such path algebras are hereditary
and have length one (i.e. two-term) projective resolutions as
bimodules over themselves. In what follows, we will abbreviate the
above hypothesis on our algebra $A$ by simply saying that
\begin{center}
  $A$ is a \emph{smooth basic algebra},
\end{center}
meaning that it's artinian (or graded finite dimensional), smooth,
and basic in its Morita equivalence class. We will regard such an
$A$ as an $H$-module algebra by letting $H$ act trivially on it.
Notice that a $B$-module may carry some non-trivial $H$-action.

\begin{lemma} \label{lemma-projective-trivially-cofibrant}
Let $A$ be an $H$-module algebra with $H$ acting trivially
on it, and let $P$ be a finitely generated projective $A$-module
with trivial $H$ action. Then given any finite dimensional
$H$-module $V$, $P\o V$ is cofibrant in $B\dmod$.
\end{lemma}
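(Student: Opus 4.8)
The plan is to exhibit $P\o V$ as a $B$-module direct summand of a module with property $(P)$ and then invoke Corollary \ref{cor-cofibrant-direct-summand-of-property-p}, which identifies the cofibrant modules with the direct summands of property $(P)$ modules. One could equally well verify the two conditions of Corollary \ref{cor-cofibrant-implies-K-projective} by hand, but the direct-summand route is cleaner and isolates the single nontrivial point.

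First I would observe that the triviality of the $H$-action on $A$ makes the passage from $A$-module splittings to $B$-module splittings automatic. Since $H$ acts trivially on $A$, the free module $A^n$ carries the trivial $H$-action as a $B=A\#H$-module; concretely $(a\o h)\cdot x=\epsilon(h)(a\cdot x)$ for $x\in A^n$ and $a\o h\in B$. Consequently any $A$-submodule of $A^n$ is closed under the $H$-action and is therefore a $B$-submodule, and every $A$-module direct sum decomposition of $A^n$ is simultaneously a decomposition in $B\dmod$. This is exactly the step where the hypothesis that $H$ acts trivially on $A$ (and on $P$) is used, and it is the only place where anything beyond formal bookkeeping is required.

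Next, because $P$ is finitely generated projective over $A$, I can choose an integer $n$ and an $A$-module $Q$ with $P\oplus Q\cong A^n$. By the previous paragraph this is an isomorphism of $B$-modules, all carrying the trivial $H$-action, so $P$ equipped with its trivial $H$-action is a $B$-module direct summand of $A^n$. Tensoring on the right with the finite-dimensional $H$-module $V$ is an additive endofunctor of $B\dmod$ coming from the module-category structure of the main example \ref{the-main-example}, hence it preserves direct summands; therefore $P\o V$ is a $B$-module direct summand of $A^n\o V$. Writing $A^n\cong A\o\Bbbk^n$ with $\Bbbk^n$ the trivial $H$-module, I would identify $A^n\o V\cong A\o(\Bbbk^n\o V)\cong A\o V^{\oplus n}\cong (A\o V)^{\oplus n}$ as $B$-modules.

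Finally, the module $(A\o V)^{\oplus n}\cong A\o V^{\oplus n}$ satisfies property $(P)$: taking the one-step filtration $0\subset F_0=A\o V^{\oplus n}$ and invoking the equivalent form of condition (P3) in Definition \ref{def-property-P}, which permits a summand $A\o W$ for an arbitrary $H$-module $W$, verifies (P1)--(P3) at once. Thus $P\o V$ is a direct summand of a property $(P)$ module, and Corollary \ref{cor-cofibrant-direct-summand-of-property-p} yields its cofibrance. The hard part is really just the lifting of the projective splitting from $A\dmod$ to $B\dmod$ in the second and third paragraphs; once the triviality of the $H$-action has done its work there, the remaining steps are entirely formal.
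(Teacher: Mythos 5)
Your proof is correct and follows essentially the same route as the paper's: the paper likewise reduces to the observation that $P$ (with its trivial $H$-action) is a $B$-module direct summand of $A^n$, so that $P\o V$ is a summand of $(A\o V)^{\oplus n}$, whose cofibrance is clear via property (P) and Corollary \ref{cor-cofibrant-direct-summand-of-property-p}. You have merely spelled out the two steps the paper leaves implicit — that triviality of the $H$-action upgrades the $A$-module splitting to a $B$-module splitting, and that $A\o V^{\oplus n}$ satisfies property (P) via the one-step filtration.
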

\begin{proof} It suffices to show that $A\o V$ is cofibrant since in
this situation $P$ is a direct summand of $A^n$ (with trivial
$H$-module structure) for some $n\in \N$. The cofibrance of $A\o V$
is clear.
\end{proof}

By the characterization of compact modules in $\mc{D}(A,H)$
(Corollary \ref{cor-characterizing-compact-objects}), compact
cofibrant modules are direct summands of free modules in the derived
category. When $A$ is artinian, the direct summand can be taken in
the abelian category $B\dmod$, as shown in the next result. Note that
here we do not assume the $H$-action on $A$ is trivial.

\begin{lemma}\label{lemma-artinian-idempotent-lifting}
Let $A$ be an artinian $H$-module algebra and $M \in \mc{D}^c(A,H)$ be a compact
object. Then $M$ is isomorphic to a finite projective $A$-module in
the derived category.
\end{lemma}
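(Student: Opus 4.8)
The plan is to represent $M$ by an honest $B$-module that is finitely generated projective over $A$, and then to peel off the contractible summands using the Krull--Schmidt theorem, which becomes available precisely because $A$ is artinian.

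First I would invoke Corollary \ref{cor-characterizing-compact-objects} to write $M$ as a direct summand in $\mc{D}(A,H)$ of a finite extension $E$ of objects of the form $T^{n}(A\o V)$ with $V$ simple. Since the shift functors are given by tensoring with the finite dimensional $H$-modules $H/\Bbbk\Lambda$ and $\mathrm{ker}(\epsilon)$ (Proposition \ref{prop-invertible-shift-functor}), each such object is isomorphic to $A\o W$ for a finite dimensional $H$-module $W$; in particular it is a cofibrant $B$-module which is free of finite rank over $A$. I would then realize the finite extension $E$ by an actual $B$-module, step by step: each extension step is a cone $C_u$ of a map between cofibrant modules, which by Definition \ref{def-cone-construction} and Lemma \ref{lemma-split-ses-lead-to-dt} sits in an $A$-split short exact sequence $0\lra G_2\lra C_u\lra T(G_1)\lra 0$, so that $C_u\cong G_2\oplus T(G_1)$ as $A$-modules and hence remains finitely generated projective over $A$; moreover $C_u$ stays cofibrant because the cofibrant modules form a triangulated subcategory (Corollary \ref{cor-equivalence-cofibrant-subcat}). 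Inducting on the length of the extension, I obtain a cofibrant $B$-module $E$, finitely generated projective over $A$, of which $M$ is a direct summand in $\mc{D}(A,H)$.

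Now I would bring in the artinian hypothesis. Since $A$ is artinian, finitely generated $A$-modules have finite length (Hopkins--Levitzki), so $E$ has finite length as a $B$-module; therefore $\End_B(E)$ is semiperfect and $E$ admits a Krull--Schmidt decomposition $E\cong \bigoplus_{i=1}^{k} E_i$ into indecomposable $B$-modules with local endomorphism rings. Each $E_i$ is a summand of $E$, hence cofibrant (Corollary \ref{cor-cofibrant-direct-summand-of-property-p}) and finitely generated projective over $A$. For cofibrant objects the morphism spaces in $\mc{C}(A,H)$ and $\mc{D}(A,H)$ coincide (Corollary \ref{cor-equivalence-cofibrant-subcat}), so $\End_{\mc{D}(A,H)}(E_i)$ is the quotient of the local ring $\End_B(E_i)$ by the ideal of null-homotopic endomorphisms; this quotient is again local unless $E_i$ is contractible, in which case it is $0$. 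Discarding the contractible $E_i$, I obtain $E\cong \bigoplus_{E_i\ \mathrm{noncontractible}} E_i$ in $\mc{D}(A,H)$ as a finite direct sum of objects with local endomorphism rings.

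Finally, since $\mc{D}^c(A,H)$ is idempotent complete (Corollary \ref{cor-characterizing-compact-objects}), the Krull--Schmidt--Azumaya theorem applies to this decomposition: every direct summand of $E$ in $\mc{D}(A,H)$, in particular $M$, is isomorphic to $\bigoplus_{i\in S}E_i$ for some subset $S$ of the noncontractible indices. As this is a genuine $B$-module which is finitely generated projective over $A$, the lemma follows. The step I expect to require the most care is the passage from the abstract finite extension supplied by Corollary \ref{cor-characterizing-compact-objects} to a concrete cofibrant $B$-module that is projective of finite type over $A$ (the cone-by-cone construction together with its $A$-splitness and preservation of cofibrancy); the Krull--Schmidt bookkeeping afterwards is routine once finite length over $A$ is in hand.
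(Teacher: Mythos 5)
Your proof is correct, but it takes a genuinely different route from the paper's. The paper starts from the same realization you construct — implicitly taking for granted that $M$ is a summand in $\mc{D}(A,H)$ of a cofibrant $B$-module $P$ which is finitely generated free over $A$, so your careful cone-by-cone construction actually fills in a step the paper leaves unspoken — but then proceeds by \emph{idempotent lifting} rather than by Krull--Schmidt. Namely, the summand corresponds to an idempotent $e\in\Hom_{\mc{D}(A,H)}(P,P)=\Hom_{\mc{C}(A,H)}(P,P)$, so that $e^2-e=\Lambda\cdot f$ is null-homotopic by Lemma \ref{lemma-ideal-null-homotopy}; since the artinian hypothesis gives $P$ finite length, Fitting's lemma applied to the $B$-endomorphism $\Lambda\cdot f$ splits $P$ into a summand on which $\Lambda\cdot f$ is invertible, hence contractible (the mechanism of Corollary \ref{cor-Hopfo-contractibility}), and a summand on which $e^2-e$ is nilpotent, where $e$ lifts to an honest idempotent $B$-module endomorphism by Newton's method; the image of the lifted idempotent is the desired finite projective $A$-module. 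You never lift an idempotent: you decompose the realization $E$ by Krull--Schmidt as a finite-length $B$-module, discard the indecomposables whose identity is null-homotopic, observe that the survivors retain local endomorphism rings in the homotopy category, hence in $\mc{D}(A,H)$ by Corollary \ref{cor-equivalence-cofibrant-subcat}, and then invoke the Krull--Schmidt--Azumaya exchange theorem in the idempotent complete category $\mc{D}^c(A,H)$. Both arguments use the artinian hypothesis in the same way (finite length, hence Fitting in some guise --- the locality of $\End_B(E_i)$ for indecomposable finite-length $E_i$ \emph{is} Fitting's lemma), and both dispose of contractible pieces by recognizing that an identity map of the form $\Lambda\cdot(\mathrm{something})$ is null-homotopic. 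What the paper's version buys is self-containedness and an explicit output (the image of a concretely lifted idempotent); what yours buys is the avoidance of the lifting computation, at the cost of citing a standard but nontrivial categorical Azumaya theorem, which the paper's framework does support since $\mc{D}^c(A,H)$ is idempotent complete. One cosmetic remark: with the labels of Definition \ref{def-iterated-extension}, realizing the triangle $G_1\lra X\lra G_2\lra T(G_1)$ amounts to forming the cone of a module map $T^{-1}(G_2)\lra G_1$, so the $A$-split sequence reads $0\lra G_1\lra X\lra T(T^{-1}(G_2))\lra 0$ rather than the one you wrote; this swap of roles is harmless for your induction, since either orientation exhibits the realization as an $A$-split extension of modules of the form $A\o W$.
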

\begin{proof} A direct summand of a finitely generated free
$A$-module $P$ in the derived category is given by an endomorphism
$e:P\lra P$ such that $e^2=e$ in
$\Hom_{\mc{D}(A,H)}(P,P)=\Hom_{\mc{C}(A,H)}(P,P)$. Therefore, by
Lemma \ref{lemma-ideal-null-homotopy}, $e^2-e=\Lambda \cdot f$ for
some $f\in \Hom_{A}(P,P)$. By the artinian assumption, the
endomorphism algebra of a free module is finite dimensional. Using the
classical Fitting's lemma \footnote{See, for instance Benson
\cite[Lemma 1.4.4]{Be} for the form of the lemma that is used
here.}, we can decompose $P$ into a direct sum of $B$-modules (since
$\Lambda\cdot f$ is a map of $B$-modules),
$$P\cong
\textrm{Im}(\Lambda\cdot f)^{N}\oplus \textrm{Ker}(\Lambda \cdot
f)^{N},$$ for $N$ sufficiently large. Here $\Lambda\cdot f$ acts as
an automorphism on $\textrm{Im}(\Lambda\cdot f)^{N}$, and it acts on
$\textrm{Ker}(\Lambda\cdot f)^{N}$ nilpotently. We may remove the
summand $\textrm{Im}(\Lambda\cdot f)^{N}$ since it is contractible
by Corollary \ref{cor-Hopfo-contractibility}.
$\textrm{Ker}(\Lambda\cdot f)^{N}$ is still a projective $A$-module.
Now $\Lambda\cdot f$ is nilpotent on $\textrm{Ker}(\Lambda\cdot
f)^{N}$ and we may lift the idempotent $e$ easily using Newton's
method, which we leave to the reader as an exercise (see
\cite[Theorem 1.7.3]{Be}).
\end{proof}

Therefore, $\mc{D}^c(A,H)$ consists of modules which are images of
finitely generated, projective $A$-modules under the localization
map. We now look at these modules more closely.

\begin{lemma} \label{lemma-finite-cofibrant-replacement-basic-algebra}
Let $A$ be a smooth basic algebra, and $M$ be a finitely dimensional
$B$-module. Then $M$ is quasi-isomorphic to some finite dimensional
projective $A$-module.
\end{lemma}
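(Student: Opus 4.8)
The plan is to first establish that $M$ is a \emph{compact} object of $\mc{D}(A,H)$, and then invoke Lemma \ref{lemma-artinian-idempotent-lifting}: since a smooth basic algebra is in particular artinian, every compact object is isomorphic in $\mc{D}(A,H)$ to a finite-dimensional projective $A$-module, which is exactly the quasi-isomorphism asserted. Thus the entire content is the compactness of a finite-dimensional $B$-module $M$, and this is where the smoothness hypothesis (Definition \ref{def-smooth-algebra}) enters.

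To exploit smoothness, I would fix a finite projective resolution of the diagonal bimodule
$$0 \lra Q_m \lra \cdots \lra Q_0 \lra A \lra 0,$$
where each $Q_j$ is a direct summand of a finite free $(A,A)$-bimodule $(A\o A)^{k_j}$ (possible since $A$ is finite-dimensional). Because $H$ acts trivially on $A$, I equip every $Q_j$ and every $(A\o A)^{k_j}$ with the trivial $H$-action; then every bimodule map between such objects --- in particular all the differentials and all the idempotents cutting out the $Q_j$ --- is automatically $H$-equivariant. Hence the displayed sequence is exact already in the category of $(A\o A^{op})\#H$-modules. Moreover, since every term is projective (hence free, after passing to summands) as a \emph{right} $A$-module and $A$ is free over itself, this bounded exact complex of projective right $A$-modules is split exact as a sequence of right $A$-modules.

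Next I would apply $-\o_A M$. Right-$A$-splitness guarantees that
$$0 \lra Q_m\o_A M \lra \cdots \lra Q_0\o_A M \lra M \lra 0$$
stays exact, now as a sequence of $B$-modules. The key computation is the $B$-module identification $(A\o A)\o_A M \cong A\o (\Res\, M)$, where $\Res\, M$ is $M$ viewed only as an $H$-module: the left $A$-action survives on the first factor, while the trivial $H$-action on $A\o A$ forces $H$ to act through $M$, which reproduces precisely the $H\udmod$-module-category action that defines $A\o V$ (see Proposition \ref{module-category} and the main example \ref{the-main-example}). Consequently each $Q_j\o_A M$ is a $B$-module direct summand of $(A\o V)^{k_j}$ with $V=\Res\, M$ finite-dimensional, hence compact by Lemma \ref{lemma-A-tensor-V-is-compact} together with the idempotent-completeness of $\mc{D}^c(A,H)$ (Corollary \ref{cor-characterizing-compact-objects}).

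Finally, I would break the finite exact sequence into short exact sequences of $B$-modules, each of which becomes a distinguished triangle in $\mc{D}(A,H)$ by Lemma \ref{ses-lead-to-dt}. A descending induction on the successive kernels, using the two-out-of-three principle for compactness (Lemma \ref{lemma-2-out-of-3-compact}), shows that every kernel --- and ultimately $M$ itself --- is compact; Lemma \ref{lemma-artinian-idempotent-lifting} then collapses this finite iterated extension of projectives to a single finite-dimensional projective $A$-module and finishes the argument. The main obstacle, and the only place where the triviality of the $H$-action is essential, is arranging the bimodule resolution to be simultaneously $H$-equivariant and compatible with $-\o_A M$, so that its terms land among summands of the standard compact generators $A\o V$; triviality of the action is exactly what lets us transport a finite projective bimodule resolution into the hopfological category with no $\Ext$-obstruction to equivariance. (One should note that this particular statement uses only smoothness and the artinian property; the basic hypothesis plays no role here and is relevant to the surrounding $K_0$-computation.)
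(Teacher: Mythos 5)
Your proof is correct, and its first half is the same as the paper's: both use smoothness to produce a finite projective $(A,A)$-bimodule resolution, observe that triviality of the $H$-action makes every bimodule map (differentials, idempotents, augmentation) automatically $H$-equivariant, and tensor with $M$ over $A$, with right-$A$-splitness of the resolution guaranteeing exactness. The divergence is in how the resolution is turned into the asserted quasi-isomorphism. The paper totalizes the lifted resolution ``as in the bar construction'' into a single hopfological module $\widetilde{P}_\bullet\o_A M$, which is finite dimensional, projective over $A$, cofibrant by Lemma \ref{lemma-projective-trivially-cofibrant}, and maps onto $M$ by a surjective quasi-isomorphism; this is direct and constructive, and (as the paper points out after the statement) the replacement is functorial in $M$, a feature your argument does not provide. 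You instead keep the complex untotalized, identify each term $Q_j\o_A M$ as a $B$-module direct summand of $(A\o \Res M)^{k_j}$, hence compact by Lemma \ref{lemma-A-tensor-V-is-compact}, and run a d\'evissage through the associated distinguished triangles (Lemmas \ref{ses-lead-to-dt} and \ref{lemma-2-out-of-3-compact}) to conclude that $M$ is compact, after which Lemma \ref{lemma-artinian-idempotent-lifting} produces the finite-dimensional projective replacement. This is legitimate and non-circular, since Lemma \ref{lemma-artinian-idempotent-lifting} is proved independently of the present lemma; but it leans on heavier machinery (the Ravenel--Neeman characterization of compact objects via Corollary \ref{cor-characterizing-compact-objects}, plus Fitting's lemma inside Lemma \ref{lemma-artinian-idempotent-lifting}) to conjure the projective module that the paper simply builds by hand. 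In exchange, your route proves along the way that every finite-dimensional $B$-module is compact, i.e.\ the inclusion $\mc{D}^f(A,H)\subseteq \mc{D}^c(A,H)$, which is precisely the nontrivial half of Proposition \ref{prop-basic-algebra-equivalence-two-derived-categories}; the paper instead derives that proposition as a consequence of the lemma. Your closing observation that the basic hypothesis is not needed for this particular statement is also accurate --- it matches the paper's own proof, where basicness enters only later in the $K_0$ computation.
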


Before giving the proof, we recall that the simplicial bar
resolution of $A$ as an $(A,A)$-bimodule results in an infinite
cofibrant hopfological replacement (\ref{thm-bar-resolution}), even
for finitely generated modules over a finite dimensional algebra
$A$. However, the lemma says that if $A$ is smooth, there is instead
a much smaller cofibrant replacement, i.e. a finite dimensional
projective $A$-module. This is made possible since the finite
dimension and smoothness of $A$ provides us with a finite
dimensional projective $(A,A)$-bimodule resolution of $A$ as opposed
to the infinite simplicial bar complex we used before. Moreover, the
proof also shows that this cofibrant replacement is functorial,
in the same way as the bar resolution.

\begin{proof} Since $A$ is smooth, it has a finite projective
$(A,A)$-bimodule resolution $P_\bullet \lra A \lra 0$. Now as in the
bar construction \ref{thm-bar-resolution}, we can lift this
resolution to a hopfological resolution $\widetilde{P}_\bullet \lra
A$, since the differentials in the chain complex are (trivially)
$H$-module maps. Now for each finite dimensional $B$-module $M$, we
tensor this complex with $M$ to obtain $\widetilde{P}_\bullet\o_A
M\lra M \lra 0$. $\widetilde{P}_\bullet \o_A M$ is finite
dimensional since $P_\bullet$, $A$ and $M$ are. It is also cofibrant
by Lemma \ref{lemma-projective-trivially-cofibrant}. The claim
follows.
\end{proof}

\begin{prop}\label{prop-basic-algebra-equivalence-two-derived-categories}
If $A$ is smooth basic, then there is an equivalence of triangulated
categories $$\mc{D}^c(A,H)\cong \mc{D}^f(A,H).$$
\end{prop}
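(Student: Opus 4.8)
The plan is to show that $\mc{D}^c(A,H)$ and $\mc{D}^f(A,H)$, which are by definition both strictly full triangulated subcategories of $\mc{D}(A,H)$, in fact contain exactly the same objects; the identity functor then provides the asserted equivalence. First I would dispose of the easy inclusion $\mc{D}^c(A,H)\subseteq\mc{D}^f(A,H)$. Let $M$ be a compact object. Since $A$ is artinian, Lemma \ref{lemma-artinian-idempotent-lifting} says that $M$ is isomorphic in $\mc{D}(A,H)$ to a finitely generated projective $A$-module; because $A$ is finite dimensional, such a module is finite dimensional, hence of finite length over $A$, and therefore lies in $\mc{D}^f(A,H)$. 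Note this direction uses only the artinian (and basic) hypothesis, not smoothness.

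The substantive direction is $\mc{D}^f(A,H)\subseteq\mc{D}^c(A,H)$. Here I would start with a finite length, equivalently finite dimensional, $B$-module $M$. Invoking Lemma \ref{lemma-finite-cofibrant-replacement-basic-algebra}, the smoothness and basicness of $A$ produce a quasi-isomorphism $N\lra M$ in which $N$ is a finite dimensional projective $A$-module; moreover the construction in the proof of that lemma (together with Lemma \ref{lemma-projective-trivially-cofibrant}) exhibits $N$ as a cofibrant $B$-module. Thus $M\cong N$ in $\mc{D}(A,H)$, and since compactness is an isomorphism-invariant property it suffices to prove that $N$ is compact.

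To verify compactness of $N$ I would mimic the proof of Lemma \ref{lemma-A-tensor-V-is-compact}. Because $N$ is finite dimensional it is finitely presented as an $A$-module ($A$ being noetherian), so $\Hom_A(N,-)$ commutes with arbitrary direct sums. Combining this with the cofibrancy of $N$, which gives $\Hom_{\mc{D}(A,H)}(N,-)\cong\Hom_{\mc{C}(A,H)}(N,-)\cong\mc{H}(\Hom_A(N,-))$ via Corollary \ref{cor-equivalence-cofibrant-subcat} and Proposition \ref{prop-comparison-hom-spaces}, together with the fact that $\mc{H}$ commutes with direct sums (Corollary \ref{cor-taking-cohomology-commute-with-direct-sums}), one computes for any family $\{M_i\}$ of objects
\[
\Hom_{\mc{D}(A,H)}(N,\oplus_{i}M_i)\cong\mc{H}(\Hom_A(N,\oplus_{i}M_i))\cong\oplus_{i}\mc{H}(\Hom_A(N,M_i))\cong\oplus_{i}\Hom_{\mc{D}(A,H)}(N,M_i).
\]
Hence $N$, and therefore $M$, is compact, establishing the second inclusion. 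The two inclusions give $\mc{D}^c(A,H)=\mc{D}^f(A,H)$ as subcategories of $\mc{D}(A,H)$, which is the desired equivalence.

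The main obstacle is precisely the second inclusion: one must replace $M$ by an object that is \emph{simultaneously} finite dimensional (so that it is finitely presented over $A$ and $\Hom_A(N,-)$ preserves coproducts) and cofibrant (so that derived and homotopy $\Hom$-spaces agree). This is exactly where the smoothness of $A$ is indispensable, since it replaces the infinite simplicial bar resolution of Theorem \ref{thm-bar-resolution} by a finite dimensional projective bimodule resolution; for a general $H$-module algebra the cofibrant replacement is infinite dimensional and the finiteness argument above breaks down.
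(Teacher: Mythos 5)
Your proposal is correct and takes essentially the same route as the paper: the inclusion $\mc{D}^c(A,H)\subseteq\mc{D}^f(A,H)$ via Lemma \ref{lemma-artinian-idempotent-lifting}, and the reverse inclusion via the finite cofibrant replacement provided by Lemma \ref{lemma-finite-cofibrant-replacement-basic-algebra}. The only difference is that you spell out why that finite dimensional cofibrant replacement is compact (finitely generated over $A$ plus cofibrancy, mimicking Lemma \ref{lemma-A-tensor-V-is-compact}), a step the paper's proof leaves implicit when it asserts essential surjectivity.
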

\begin{proof}Lemma \ref{lemma-artinian-idempotent-lifting} shows that
any compact module is isomorphic to a finite dimensional projective
$A$-module. Since $\mc{D}^f(A,H)$ is by definition strictly full,
there is an inclusion functor $\mc{D}^c(A,H)\subset \mc{D}^b(A,H)$.
On the other hand, any object in $\mc{D}^f(A,H)$, being isomorphic
to some finite dimensional module, has a finite cofibrant
replacement by the previous Lemma
\ref{lemma-finite-cofibrant-replacement-basic-algebra}. Hence the
inclusion functor is essentially surjective. The proposition
follows.
\end{proof}

The following corollary is immediate by taking $A=\Bbbk$ in the
above proposition.

\begin{cor}Under the canonical isomorphism $\mc{D}(\Bbbk) \cong H\udmod$,
$\mc{D}^c(\Bbbk)$ is isomorphic to the strictly full subcategory of
$H\udmod$ which consists of objects that are quasi-isomorphic to
finite dimensional $H$-modules. \hfill $\square$
\end{cor}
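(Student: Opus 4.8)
The plan is to recognize the final corollary as the special case $A=\Bbbk$ of Proposition \ref{prop-basic-algebra-equivalence-two-derived-categories}, so the only real work is to check that the ground field qualifies as a smooth basic algebra and then to unwind the definition of $\mc{D}^f(\Bbbk,H)$ under the canonical identification $\mc{D}(\Bbbk,H)\cong H\udmod$.

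First I would verify that $A=\Bbbk$, viewed as an $H$-module algebra on which $H$ acts trivially, is a smooth basic algebra in the sense of Definitions \ref{def-basic-algebra} and \ref{def-smooth-algebra}. It is artinian (indeed finite dimensional over itself); it is basic in its Morita equivalence class, since its only simple module is $\Bbbk$ itself, which is one-dimensional, so that $\Bbbk/J(\Bbbk)\cong \Bbbk$; and it is smooth, because the enveloping algebra $\Bbbk\o\Bbbk^{op}\cong\Bbbk$ is a field, whence $\Bbbk$ is free, hence projective, as a bimodule over itself and admits the trivial (length-zero) finite projective resolution. With these three conditions in hand, Proposition \ref{prop-basic-algebra-equivalence-two-derived-categories} applied to $A=\Bbbk$ yields an equivalence of triangulated categories $\mc{D}^c(\Bbbk,H)\cong\mc{D}^f(\Bbbk,H)$.

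It then remains to identify the right-hand side inside $H\udmod$. Recall that $\mc{D}(\Bbbk,H)\cong H\udmod$ canonically: since $B=\Bbbk\#H=H$, the restriction functor $\underline{\mathrm{Res}}:\mc{C}(\Bbbk,H)\lra H\udmod$ is the identity, so every quasi-isomorphism in $\mc{C}(\Bbbk,H)$ is already an isomorphism and the localization is trivial. Under this identification, an object of $\mc{D}^f(\Bbbk,H)$ is, by Definition \ref{def-bounded-derived-category}, an $H$-module that is isomorphic in $\mc{D}(\Bbbk,H)$ — equivalently, quasi-isomorphic — to a finite length $\Bbbk$-module. Thus $\mc{D}^f(\Bbbk,H)$ corresponds exactly to the strictly full subcategory of $H\udmod$ whose objects are quasi-isomorphic to finite dimensional $H$-modules, and combining this with the displayed equivalence gives the corollary.

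There is essentially no hard step here; the statement is immediate once the proposition is available. The only point requiring a modicum of care is the bookkeeping between the two notions of ``finite'' that enter: ``finite length as a $\Bbbk$-module'', which is what appears in the definition of $\mc{D}^f$, versus ``finite dimensional as an $H$-module'', which is how the target subcategory of $H\udmod$ is phrased. These coincide because over a field finite length is the same as finite dimensionality, and passing to the underlying $\Bbbk$-vector space identifies finite dimensional $H$-modules with finite length $\Bbbk$-modules in both directions, while isomorphism in $\mc{D}(\Bbbk,H)$ is precisely quasi-isomorphism.
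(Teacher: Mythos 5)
Your proposal is correct and takes essentially the same approach as the paper, whose entire proof is the remark that the corollary is immediate by setting $A=\Bbbk$ in Proposition \ref{prop-basic-algebra-equivalence-two-derived-categories}. Your additional steps---checking that $\Bbbk$ is artinian, basic, and smooth, and unwinding $\mc{D}^f(\Bbbk,H)$ inside $H\udmod$ using the triviality of the localization $\mc{C}(\Bbbk,H)\cong\mc{D}(\Bbbk,H)$---are exactly the routine verifications the paper leaves implicit, and they are all carried out correctly.
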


When $A$ is artinian, the $\RHom$-pairing
$$\RHom(-,-):\mc{D}^c(A,H)\times \mc{D}^f(A,H)\lra H\udmod$$
descends to the Grothendieck groups
$$[\RHom_A(-,-)]:K_0(A,H)\times G_0(A,H)\lra K_0(H\udmod).$$
Denote $R:=K_0(H\udmod)$ for the moment. Notice that if $V$ is a
finite dimensional $H$-module algebra, and $P$, $M$ are $B$-modules,
there is a canonical isomorphism of $H$-modules
$$\Hom_A(P \o V, M)\cong \Hom_A(P,M \o V^*) \cong \Hom_A(P,M)\o V^*.$$
On the Grothendieck group level, this says that the pairing above is
sesquilinear in the sense that it is linear in the second argument,
and $*\ $-linear in the first argument, where
$$*:R\lra R, \ \ \ \ \ [V]\mapsto[V^*]$$
is an involution of the ring $R$.

\begin{prop}\label{prop-basic-algebra-K-zero} Let $A$ be a smooth basic
algebra. Then there is an isomorphism of Grothendieck groups:
$$K_0(\mc{D}^c(A,H))\cong K_0(A)\o_{\Z} K_0(H\udmod),$$
where $K_0(A)$ denotes the usual Grothendieck group of the algebra
$A$. Likewise, when $A$ is graded finite dimensional,
$$K_0(\mc{D}^c(A,H))\cong K_0(A)\o_{\Z[q,q^{-1}]} K_0(H\udmod).$$
\end{prop}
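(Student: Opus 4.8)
The plan is to construct an explicit homomorphism $\Phi$ and exhibit a left inverse, then show $\Phi$ is surjective. Since $A$ is basic artinian, $K_0(A)$ is free abelian on the classes $[P_1],\dots,[P_n]$ of the indecomposable projective $A$-modules, where $n$ is the number of isomorphism classes of simple $A$-modules; write $P_i=Ae_i$ for primitive orthogonal idempotents and $S_i=P_i/J(A)P_i$ for the one-dimensional simple tops. I would define
$$\Phi:K_0(A)\o_{\Z}K_0(H\udmod)\lra K_0(\mc{D}^c(A,H)),\quad [P_i]\o[V]\mapsto[P_i\o V].$$
First I would check that $\Phi$ is well defined: the assignment is additive in $P_i$ since $(P\oplus P')\o V\cong P\o V\oplus P'\o V$, and additive in $V$ because tensoring a distinguished triangle $V'\lra V\lra V''\lra TV'$ of $H\udmod$ on the left by $P_i$ yields a distinguished triangle in $\mc{D}^c(A,H)$ (the tensor bifunctor being exact and preserving compactness, cf. Corollary \ref{cor-compact-shifted-remains-compact}); hence this bilinear map descends to $\o_\Z$.

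For injectivity I would build a left inverse from the $\RHom$-pairing recalled above. Since $P_j\o V$ is cofibrant (Lemma \ref{lemma-projective-trivially-cofibrant}), $\RHom_A(P_j\o V,S_i)\cong\Hom_A(P_j\o V,S_i)$, and using the canonical isomorphism $\Hom_A(P_j\o V,S_i)\cong\Hom_A(P_j,S_i)\o V^*$ together with the basic hypothesis $\Hom_A(P_j,S_i)\cong\delta_{ij}\Bbbk_0$, one computes $[\RHom_A(P_j\o V,S_i)]=\delta_{ij}[V^*]$ in $K_0(H\udmod)$. Setting $\Psi_i([M]):=*\,[\RHom_A(M,S_i)]$, where $*$ denotes the involution $[V]\mapsto[V^*]$, gives group homomorphisms with $\Psi_i([P_j\o V])=\delta_{ij}[V]$ (as $V^{**}\cong V$). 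Thus $\Psi:=(\Psi_1,\dots,\Psi_n)$ is a left inverse of $\Phi$ under the identification $K_0(A)\o_\Z K_0(H\udmod)\cong K_0(H\udmod)^{\oplus n}$ afforded by the basis $\{[P_i]\}$, and $\Phi$ is injective.

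Surjectivity is the crux, and is where both hypotheses on $A$ enter. By Lemma \ref{lemma-artinian-idempotent-lifting} every compact object is isomorphic in $\mc{D}(A,H)$ to a finite-dimensional $B$-module $Q$ that is projective over $A$. Because $H$ acts trivially on $A$, the radical $J=J(A)$ is $H$-stable and each $J^kQ$ is a $B$-submodule; the short exact sequences of the radical filtration give, via Lemma \ref{ses-lead-to-dt}, the relation $[Q]=\sum_k[J^kQ/J^{k+1}Q]$ in $K_0$. Each subquotient is a module over $(A/J)\o H\cong\prod_i(\Bbbk\o H)$, hence splits as $\bigoplus_i S_i\o V_i^{(k)}$ with $V_i^{(k)}$ a finite-dimensional $H$-module, so $[Q]=\sum_{i,k}[S_i\o V_i^{(k)}]$. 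It then remains to see that each $[S_i\o V]$ lies in $\mathrm{Im}(\Phi)$: here smoothness provides a finite projective $A$-resolution $0\lra Q_m\lra\cdots\lra Q_0\lra S_i\lra0$ with trivial $H$-action (its differentials being $H$-linear), and tensoring over $\Bbbk$ with the finite-dimensional $V$ preserves exactness, yielding $[S_i\o V]=\sum_j(-1)^j[Q_j\o V]\in\mathrm{Im}(\Phi)$ since each $Q_j\o V$ is a direct sum of modules $P_\bullet\o V$, all compact by Lemma \ref{lemma-finite-cofibrant-replacement-basic-algebra}. This establishes surjectivity, so $\Phi$ is an isomorphism. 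The main obstacle is precisely this surjectivity step: without finite global dimension the passage from simples back to projectives would not terminate inside $K_0$, and without the basic hypothesis the radical subquotients would not cleanly separate the projective index from the $H$-module data. The graded statement follows by the identical argument with $\Hom,\RHom$ replaced by $\HOM,\RHOM$ and the grading shift $q$ tracked throughout; the shared action of $q$ on $K_0(A)$ and on $K_0(H\udgmod)$ forces the tensor product to be taken over $\Z[q,q^{-1}]$.
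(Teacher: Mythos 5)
Your proof is correct, and its skeleton coincides with the paper's: the classes $[P_i\o V]$ generate $K_0(\mc{D}^c(A,H))$, and injectivity (linear independence over $R=K_0(H\udmod)$) is detected by pairing against the simples $S_j$ via $\RHom$, using that $P_i$ is cofibrant (so $\RHom_A(P_i\o V,S_j)\cong\Hom_A(P_i,S_j)\o V^*$) and that $A$ is basic (so the pairing is perfect). Where you genuinely diverge is the surjectivity step. The paper disposes of generation in one sentence, asserting that Lemma \ref{lemma-artinian-idempotent-lifting} already gives that the $[P_i]$ generate over $R$; but that lemma only produces, for each compact object, a finite projective $A$-module carrying some compatible --- possibly nontrivial --- $H$-action, and such a $B$-module need not decompose as $\oplus_i P_i\o V_i$. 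Your radical-filtration argument (using that $J(A)$ is $H$-stable because $H$ acts trivially on $A$, and that basicness forces the subquotients to split as $\oplus_i S_i\o V_i^{(k)}$), combined with the smooth finite projective resolution of each $S_i$ with trivial $H$-action, supplies exactly the missing passage from such a module back to the $R$-span of the $[P_i]$; it also makes transparent where each hypothesis (trivial action, basic, smooth) enters. So your proof is a completion rather than a rederivation of the paper's. Two points of bookkeeping: when you pass from the filtration to relations ``in $K_0$'' you are implicitly using Proposition \ref{prop-basic-algebra-equivalence-two-derived-categories} (so that the finite-dimensional subquotients and the short exact sequences producing relations live in $\mc{D}^c(A,H)\cong\mc{D}^f(A,H)$), and this should be cited; and compactness of the modules $Q_j\o V$ at the end follows from Lemma \ref{lemma-A-tensor-V-is-compact} together with closure of compacts under direct summands (or again from $\mc{D}^c\cong\mc{D}^f$), not from Lemma \ref{lemma-finite-cofibrant-replacement-basic-algebra}.
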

\begin{proof} Let $\{P_i,i=1,\cdots,n\}$ and $\{S_j,j=1,\cdots,n\}$
be a complete list of isomorphism classes of indecomposable
projective and simple $A$-modules respectively, and
$R=K_0(H\udmod)$. Lemma \ref{lemma-artinian-idempotent-lifting} says
that $K_0(A,H)$ as an $R$-module is generated by the symbols
$[P_i]$, $i=1,\cdots,n$. In the usual $K_0(A)$,
$\{[P_i]|i=0,\cdots,n\}$ forms a basis. Thus it suffices to show
that the symbols $[P_i]$ are linearly independent over $R$ in
$K_0(A,H)$. To do this, we use the above sesquilinear pairing
$$[\RHom_A(-,-)]:K_0(A,H)\times K_0(A,H)\lra R.$$
Here we identify $K_0$ with $G_0$ using the previous Proposition
\ref{prop-basic-algebra-equivalence-two-derived-categories}. Since
$A$ is basic, we have
$$\Hom_A(P_i,S_j)=\left\{
\begin{array}{ll}
\Bbbk & i=j; \\
0 & \textrm{otherwise.}
\end{array}
\right.
$$
Since $P_i$ is cofibrant (Lemma
\ref{lemma-projective-trivially-cofibrant}),
$\Hom_A(P_i,S_j) \cong \RHom_A(P_i,S_j)$ (Lemma
\ref{lemma-derived-hom-well-defined}). Hence the sesquilinear
pairing is perfect and $\{[P_i]|i=1,\cdots,n\}$ forms an $R$-basis
of $K_0(A,H)$. The graded analogue is proved in a similar way using
the pairing $\RHOM_A$, and the proposition follows.
\end{proof}

In the special case when $A=\Bbbk$, the proposition says that
$K_0(H\udmod)$ is the Grothendieck ring of the ground field.

\begin{cor}\label{cor-two-notions-Grothendieck-group-coincide-ground-
field} We have an isomorphism of abelian groups: \[K_0(\Bbbk,H)\cong
K_0(H\udmod).\]\vskip-\baselineskip\qed
\end{cor}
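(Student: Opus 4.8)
The plan is to derive this statement as the special case $A = \Bbbk$ of Proposition \ref{prop-basic-algebra-K-zero}, so the only genuine work is to confirm that the ground field $\Bbbk$, viewed as an $H$-module algebra with trivial $H$-action, meets the hypothesis of that proposition and to identify its ordinary Grothendieck group. First I would verify the three conditions bundled into the phrase ``smooth basic algebra''. The field $\Bbbk$ is trivially artinian, being finite dimensional over itself. It is basic in its Morita equivalence class in the sense of Definition \ref{def-basic-algebra}, since its unique simple module is $\Bbbk$ itself, which is one-dimensional. Finally, it is smooth in the sense of Definition \ref{def-smooth-algebra}, because $\Bbbk$ regarded as a $(\Bbbk,\Bbbk)$-bimodule is free of rank one and hence is its own finite projective bimodule resolution.

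Next I would record the elementary computation of the usual Grothendieck group $K_0(\Bbbk)\cong \Z$, generated by the class of the single indecomposable projective module $\Bbbk$. With these two observations in hand, Proposition \ref{prop-basic-algebra-K-zero} applies verbatim to yield
$$K_0(\mc{D}^c(\Bbbk,H)) \cong K_0(\Bbbk)\o_{\Z}K_0(H\udmod) \cong \Z\o_{\Z}K_0(H\udmod) \cong K_0(H\udmod),$$
where the last isomorphism is the canonical one collapsing a tensor product over $\Z$ against $\Z$ itself. Since by Definition \ref{def-Grothendieck-group} the symbol $K_0(\Bbbk,H)$ is precisely shorthand for $K_0(\mc{D}^c(\Bbbk,H))$, this chain of isomorphisms is exactly the asserted identification $K_0(\Bbbk,H)\cong K_0(H\udmod)$.

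I do not anticipate any real obstacle here: the entire content of the corollary already resides in the preceding proposition, and the present statement is a routine specialization. The only step meriting a moment's attention is checking that $\Bbbk$ honestly satisfies the smoothness condition, which is immediate once one observes that $\Bbbk$ is free as a bimodule over itself and so admits a length-zero projective bimodule resolution. Everything else is formal manipulation of the tensor product over $\Z$.
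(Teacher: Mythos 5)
Your proof is correct and is exactly the paper's route: the paper derives this corollary by the single observation that it is "immediate by taking $A=\Bbbk$ in the above proposition," i.e.\ Proposition \ref{prop-basic-algebra-K-zero}. Your verification that $\Bbbk$ is smooth, basic, and artinian, together with $K_0(\Bbbk)\cong\Z$ and the collapse of the tensor product over $\Z$, simply makes explicit what the paper leaves implicit.
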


\begin{rmk}\label{rmk-two-notions-Grothendieck-group-coincide-ground-field}
When the ring $A$ is a commutative algebra, the usual tensor product
of $A$-modules descends to an internal tensor product on
$\mc{D}^c(A,H)$. On the Grothendieck group level, it turns
$K_0(A,H)$ into a ring (not necessarily commutative). The above
corollary can then be strengthened into an isomorphism of rings. We
leave the details to the reader.
\end{rmk}

\vspace{0.1in}

\noindent { \sl \small You Qi, Department of Mathematics, Columbia
University, New York, NY 10027}

\noindent  {\tt \small email: yq2121@math.columbia.edu}


\begin{thebibliography}{10}
\bibitem{AD} M.~Angel and R.~D\'{\i}az,  On $N$-differential graded algebras,
arXiv:math/0504398.

\bibitem{Bi} J. Bichon, N-complexes et alg\'ebres de Hopf,
\emph{C.~R.~Acad. Sci. Paris., Ser.~I}, {\bf 337}, 441--444, 2003.

\bibitem{Be} D. J. Benson, Representations and cohomology I,
\emph{Cambridge studies in advanced mathematics} {\bf 30}, 1991.

\bibitem{BL} J.~Bernstein and V.~Lunts, Equivariant sheaves and
functors, \emph{LNM} {\bf 1578}, 1994, Springer-Verlag.

\bibitem{CSW} C. Cibils, A. Solotar, and R. Wisbauer, $N$-complexes
as functors, amplitude cohomology and fusion rules, \emph{Comm. Math. Phys.}
{\bf 272}, 837--849, 2007, arXiv: math/0605569.

\bibitem{CF} L.~Crane, I.~Frenkel, Four-dimensional topological
quantum field theory, Hopf categories, and the canonical bases,
\emph{J. Math. Phys.}  {\bf 35}, 5136--5154, 1994, arXiv:
hep-th/9405183.

\bibitem{Dub} M. Dubois-Violette, Generalized differential spaces with $d^N=0$
and the q-differential calculus, \emph{Czech. J. of Phy.}, {\bf 46},
1227--1233, 1996, arXiv: q-alg/9609012.

\bibitem{GM}  S. I.~Gelfand and Y. I.~Manin, Methods of homological algebra
(second edition), Springer-Verlag, 2003.


\bibitem{Ha} D.~Happel, Triangulated categories in the representation theory of finite
dimensioal algebras, \emph{London Math. Soc. Lect. Note Ser.} {\bf
119}, 1988, Cambridge University Press.

\bibitem{Ka} M.~Kapranov, On the q-analog of homological algebra, arXiv: q-alg/9611005v1.

\bibitem{Kal} D.~Kaledin, Lecture notes for the mini-course ``Non-commutative geometry from the
homological point of views'', KIAS, Seoul, Oct. 2009.
available at http://imperium.lenin.ru/$\sim$kaledin/seoul/.

\bibitem{KW} C.~Kassel and M.~Wambst, Alg\`ebre homologique des N-complexes et homologie de
Hochschild aux racine de l'unit\'e, \emph{Publ. Res. Inst. Math. Sci.} {\bf 34}, 91--114, 1998, arXiv: q-alg/9705001.

\bibitem{Ke1} B.~Keller, Deriving DG categories, \emph{Ann. Scient \'Ec.
Norm. Sup., $4^e$ s\'erie}, t.~{\bf 27}, 63--102, 1994.

\bibitem{Ke2} B.~Keller, On differential graded categories,
\emph{International Congress of Mathematicians.} Vol. II, 2006, arXiv:math/0601185.

\bibitem{Kh} M.~Khovanov, Hopfological algebra and categorification
at a root of unity: the first steps, arXiv: math/0509083.

\bibitem{KL} M.~Khovanov and A.~Lauda, A diagrammatic approach to
categorification of quantum groups I, \emph{Represent. Theory} {\bf
13}, 309--347, 2009, arXiv:0803.4121.

\bibitem{Ku} G.~Kuperberg, Non-involutory Hopf algebras and
3-manifold invariants, \emph{Duke Math. J.}, {\bf 84}, 83--129, 1996,
arXiv:q-alg/9712047.

\bibitem{Lam} T. Y. Lam,  Lectures on modules and rings,
\emph{GTM} {\bf 189}, 1999, Springer-Verlag.

\bibitem{Lo} J. L.~Loday, Cyclic homology, 2nd edition, Springer
Grundlehren der mathematischen Wissenschaften, 1998.

\bibitem{May1} W.~Mayer, A new homology theory I, \emph{Ann. Math.}
{\bf 43}, 370--380, 1942.

\bibitem{May2} W.~Mayer, A new homology theory II, \emph{Ann. Math.}
{\bf 43}, 594--605, 1942.

\bibitem{Maj} S.~Majid, Some comments on bosonisation and
biproducts, \emph{ Czech. J. of Phy.} {\bf 47}, 151--171, 1997, arXiv:
q-alg/9512028.

\bibitem{Mo} S. Montgomery, Hopf algebras and their actions on
rings, \emph{CBMS Regional conference series in mathematics} {\bf
82}, Amer. Math. Soc., 1993.

\bibitem{Nee} A.~Neeman, The Grothendieck duality theorem via Bousfield's
techniques and Brown representability, \emph{J. Amer. Math. Soc.}
{\bf 9}, 205--236, 1996, arXiv: alg-geom/9412022.

\bibitem{Nee2} A.~Neeman, The connection between the K-theory localization
theorem of Thomason, Trobaugh and Yao and the smashing subcategories
of Bousfield and Ravenel, \emph{ Ann. Sci. \'{E}cole Norm. Sup.
$4^e$ s\'erie}, t. {\bf 25}, 547--566, 1992.

\bibitem{Pa} B.~Pareigis, A non-commutative non-cocommutative Hopf algebra
in ``nature'', \emph{J. of Algebra} {\bf 70}, 356--374, 1981.

\bibitem{Ra} D. C.~Ravenel, Localization with respect to certain
periodic homology theories, \emph{ Amer. J. of Math.} {\bf 106},
351--414, 1984.

\bibitem{Sar2} S. Sarkaria, Some simplicial (co)homologies, IHES preprint 1995,
available on http://kssarkaria.org/ .

\bibitem{Sch} M.~Schlichting, Negative K-theory of derived
categories, \emph{Math. Z.} {\bf 253}, 97--134, 2006.

\bibitem{Si}A. Sitarz, On the tensor product construction for q-differential algebras,
\emph{Lett. Math. Phy.}, {\bf44}, 17--21, 1998, arXiv:q-alg/9705014.

\bibitem{Spa} E. H. Spanier, The Mayer homology theory,
\emph{Bull. AMS} { \bf 55}, 102--112, 1949.

\bibitem{TT}  R. W.~Thomason, T.~Trobaugh, Higher algebraic K-theory
of schemes and of derived categories, \emph{The Grothendieck
Festschrift: a collection of articles written in honor of the 60th
birthday of Alexander Grothendieck}, edited by P. Cartier etc.
Springer, 2006.

\bibitem{To} B.~To\"en, The homotopy theory of dg-categories and
derived Morita theory, \emph{Invent. Math.} {\bf 167}, 615--667,
2007, arXiv:math/0408337v7.

\end{thebibliography}
\end{document}